\documentclass[11pt]{amsart}

\usepackage{mathrsfs}
\usepackage{amsfonts,amssymb,amsmath,amsthm}
\usepackage{latexsym}
\usepackage{bm}
\usepackage{stmaryrd}

\usepackage{geometry}
\usepackage{indentfirst}
\usepackage{titletoc}
\usepackage{appendix}
\usepackage{enumitem}
\usepackage{needspace}
\usepackage{changepage}
\usepackage{etoolbox}
\usepackage{todonotes}

\usepackage{graphicx}
\usepackage{float}
\usepackage{booktabs}
\usepackage{longtable}
\usepackage{array}
\usepackage{arydshln}
\usepackage{tikz}
\usepackage{xcolor}
\usetikzlibrary{arrows.meta,decorations.markings}

\usepackage{cite}
\makeatletter
\patchcmd{\@sect}{\@tocwrite{#1}{#8}}{\@tocwrite{#1}{#7}}{}{\PackageError{aos}{Cannot patch displayed section contents}{}}
\patchcmd{\@sect}{\@tocwrite{#1}{#8}}{\@tocwrite{#1}{#7}}{}{\PackageError{aos}{Cannot patch run-in section contents}{}}
\makeatother
\usepackage[colorlinks=true,linkcolor=blue!45!black,citecolor=green!30!black,urlcolor=blue!50!black]{hyperref}

\allowdisplaybreaks[4]
\newcommand{\blackhyperref}[2]{%
  {\hypersetup{linkcolor=black}\hyperref[#1]{#2}}%
}

\newcommand{\R}{\mathbb R}

\newcommand{\Z}{\mathbb Z}
\newcommand{\Nat}{\mathbb N}

\newcommand{\calA}{\mathcal A}
\newcommand{\calB}{\mathcal B}

\newcommand{\calE}{\mathcal E}

\newcommand{\calH}{\mathcal H}

\newcommand{\calM}{\mathcal M}

\newcommand{\Id}{\mathrm{Id}}

\DeclareMathOperator{\diam}{diam}

\DeclareMathOperator{\supp}{supp}

\DeclareMathOperator{\dist}{dist}

\DeclareMathOperator{\sgn}{sign}

\usepackage{microtype}
\newcommand{\D}{\mathbb D}
\newcommand{\T}{\mathbb T}
\newcommand{\1}{\mathbf1}
\newcommand{\dd}{\,\mathrm d}
\newcommand{\Real}{\operatorname{Re}}
\newcommand{\Ai}{\operatorname{Ai}}
\newcommand{\Var}{\operatorname{Var}}
\newcommand{\calT}{\mathcal T}
\newcommand{\calU}{\mathcal U}
\DeclareMathOperator*{\essinf}{ess\,inf}

\numberwithin{equation}{section}
\newtheorem{theorem}{Theorem}[section]
\newtheorem{lemma}[theorem]{Lemma}
\newtheorem{cor}[theorem]{Corollary}
\newtheorem{prop}[theorem]{Proposition}

\theoremstyle{definition}
\newtheorem{defi}[theorem]{Definition}
\newtheorem*{defi*}{Definition}

\newtheoremstyle{compactintro}{0pt}{}{\normalfont}{}{\bfseries}{.}{5pt plus 1pt minus 1pt}{}
\theoremstyle{compactintro}
\newtheorem*{question}{Question}
\newtheorem*{observation}{Observation}
\AddToHook{env/question/before}{\vspace{-.5\baselineskip}}
\AddToHook{env/observation/before}{\vspace{-.5\baselineskip}}

\theoremstyle{definition}
\newtheorem{remark}[theorem]{Remark}
\newtheorem*{remark*}{Remark}

\AddToHook{env/remark/begin}{\emergencystretch=2em}
\AddToHook{env/remark*/begin}{\emergencystretch=2em}

\theoremstyle{definition}
\newtheorem{example}[theorem]{Example}

\begin{document}

\title[Asymptotic observable symmetry I]{Asymptotic observable symmetry for spacetime observability of waves I: The disc}

\author{Shengquan Xiang}
\address{School of Mathematical Sciences, Peking University, Beijing 100871, China}
\email{shengquan.xiang@math.pku.edu.cn}
\date{}

\begin{abstract}
Building on the spacetime observable symmetry discovered in \cite{NWX}, we introduce asymptotic observable symmetry (AOS) for the wave equation. Its central object is a family of generated approximate velocities constructed from fixed profiles, carrying the same propagation-induced symmetry at different scales and asymptotically realized by exact waves. We investigate AOS on the disc through radial propagation and reflection. 
This gives a family of spacetime measurable observation sets satisfying the essential spacetime measurable geometric control condition for a fixed positive observation time. However, the observability inequality fails on each of these fixed sets.
\end{abstract}

\subjclass[2020]{35L05, 93B07, 35B60, 35P20, 37C83}
\keywords{Wave, \; observability, \; asymptotic observable symmetry, \; geometric control condition}
\hypersetup{pdftitle={Asymptotic observable symmetry for spacetime observability of waves I: The disc},pdfauthor={Shengquan Xiang}}

\maketitle

\setcounter{tocdepth}{1}
\tableofcontents

\section{Introduction}\label{sec:intro}

\subsection{Observability, geometry, and symmetry}\label{subsec:intro-obs}

Let $\Omega\subset\R^2$ be a smooth bounded domain. We consider the Dirichlet wave equation
\begin{equation}\label{eq:wave}
\begin{cases}
u_{tt}-\Delta u=0,& \forall(t,x)\in\R\times\Omega,\\
u=0,& \forall (t,x)\in\R\times\partial\Omega,\\
u[0]=(u_0,u_1)\in H_0^1(\Omega)\times L^2(\Omega).
\end{cases}
\end{equation}
Here $u[t]:=\bigl(u(t,\cdot),\partial_tu(t,\cdot)\bigr)$. Its conserved energy is
\begin{equation*}
E[u](t):=\frac12\int_\Omega(|\nabla u|^2+|u_t|^2)\dd x.
\end{equation*}
Given $T>0$ and a spacetime measurable set $G\subset(0,T)\times\Omega$, we ask whether
\begin{equation}\label{eq:obs}
E[u](0)\le C_G\int_G|\partial_tu(t,x)|^2\dd x\dd t
\end{equation}
holds for every solution of \eqref{eq:wave}. Throughout the paper, \emph{observability} refers to this inequality.

Geometry plays a central role in wave observability, following early work on boundary control by Russell \cite{Russell1971a} and on geometric decay by Rauch--Taylor \cite{RT1974}. Lions' $\Gamma$-condition gives boundary observability for sufficiently large time \cite{Lions1988}; see also \cite{Komornik1994,TW2009,Zuazua1990,Zhang2000}. 
In the classical smooth setting, Bardos--Lebeau--Rauch \cite{BLR1992} obtained sharp observability results by introducing the \emph{geometric control condition} (GCC), formulated in terms of generalized rays reaching the observation region within the prescribed time; see also \cite{BG1997,BZ2004,Lebeau1996,LL2016,LRLTT2017, LLZ2026}.

In this paper the observation region is only spacetime measurable, and need not be a cylinder or an open set. We use the following quantitative, measure-theoretic version of ray coverage. It concerns regular interior rays and is unchanged by spacetime null modifications of $G$; see Appendix~\ref{sec:meas}.

\vspace{4mm}
\begin{adjustwidth}{2.5mm}{2.5mm}
\begin{minipage}{\linewidth}
\begin{defi*}
A measurable set $G\subset(0,T)\times\Omega$ satisfies the \emph{essential spacetime measurable GCC} if there exists $\eta>0$ such that
\begingroup
\renewcommand{\theHequation}{gcc}
\begin{equation}\tag{GCC}\label{eq:gcc}
\essinf_{\rho\in S^*\Omega}D_G(\rho)\ge\eta \;\; \textrm{ where } \; D_G(\rho):=\int_0^T\1_G(t,\pi\Phi_t\rho)\dd t
\end{equation}
\endgroup
Thus almost every ray spends at least $\eta$ time in $G$. Here, we use the regular unit-speed billiard flow $\Phi_t$, the projection $\pi:S^*\Omega\to\Omega$, and the essential infimum relative to Liouville measure.
\end{defi*}
\end{minipage}
\end{adjustwidth}
\vspace{4mm}

Ray coverage and wave observation are different assertions, particularly in the spacetime observation setting. A ray records one propagation direction, whereas $\partial_tu$ is the sum of the components arriving at a spacetime point. In the torus setting \cite{NWX}, together with Niu and Wang, the author discovered that two transported characteristic sets can have the same trace on a spacetime observation region. 
We introduced the \emph{observable symmetry condition} (OSC), which characterizes the absence of nontrivial \emph{observable symmetries}. In the torus setting, OSC together with GCC yields a necessary and sufficient characterization of observability. The symmetry theory is then extended to the interval setting \cite{HX}.

\begin{adjustwidth}{5mm}{5mm}
\begin{question}
\itshape
The spacetime observation, geometry, and symmetry problem asks:
\begin{enumerate}[label=\textup{(\arabic*)},leftmargin=8mm]
\item Classify all observable symmetries in each geometry.
\item Determine how much GCC can be realized within each symmetry class.
\item Determine whether GCC, together with a quantitative condition on observable symmetry, implies observability.
\end{enumerate}
\end{question}
\end{adjustwidth}

\subsection{The main results}\label{subsec:intro-main}

The present paper and its sequel \cite{AOSII} address this problem through reflection in the disc and local glancing near positively curved boundary arcs of general smooth domains.
\begin{itemize}[leftmargin=6mm]
\item In this paper, we introduce asymptotic observable symmetry and complete the radial reflection construction in the disc.
\item The sequel \cite{AOSII} treats glancing and proves the corresponding non-observability result on every smooth bounded planar domain for every positive observation time.
\end{itemize}
In each construction we identify the allowed symmetry relations and study \hyperref[eq:gcc]{GCC} realization in a concrete class. The general classification and the sufficient condition in the third question remain beyond these results.

Let $\D:=\{x\in\R^2:|x|<1\}$. The following theorem gives the square-profile case of our disc construction. Define the $2\pi$-periodic profile
\begin{equation}\label{eq:square}
q_0(s):=2\1_{[0,\pi)}(s)-1,\quad \forall s\in\R/2\pi\Z.
\end{equation}

\Needspace{10\baselineskip}
\begin{theorem}\label{thm:disc}
There exist explicit time $T_\D>0$ and spacetime measurable set $G\subset(0,T_\D)\times\D$ satisfying \eqref{eq:gcc}\footnote{In fact, in our construction, every unit-speed generalized ray spends a uniformly positive amount of time in $G\cup\bigl((0,T_\D)\times\partial\D\bigr)$; see Remark~\ref{rem:disc-completion}.} such that the observability inequality \eqref{eq:obs} fails.

More precisely, $G$ is the limit of decreasing regions $G^{(N)}$. There exist explicit functions $\{S_k\}_{k\in\Nat^*}$ generated from the same profile $q_0$ at every scale, and solutions $u_k$ of \eqref{eq:wave} such that 
\begin{equation}\label{eq:disc-finite}
\begin{cases}
\partial_tu_k=S_k+r_k \textrm{ \; and \; } E[u_k](0)=1, \; \; \;\;\; \; \;\;\forall k\in \mathbb{N}^*,\\
\1_{G^{(k)}}S_k=\1_GS_k=0,\; \; \forall k\in \mathbb{N}^*, \\
\|r_k\|_{L^2((0,T_\D)\times\D)}\rightarrow 0 \textrm{ \; as \; } k\rightarrow \infty,
\end{cases}
\end{equation}
and, in polar coordinates, each $S_k$ vanishes outside $(0,T_\D)\times(0,1)\times(\calA_k\cup\calB_k)$, where $\calA_k,\calB_k\subset\R/2\pi\Z$ are angular intervals centered at opposite angles.
\end{theorem}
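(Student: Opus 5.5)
We build high-frequency wave packets that travel along diameters of the disc and reflect at the boundary. Their time derivative is, up to small errors, a transported copy of the square profile $q_0$ at frequency scale $k$. We then choose $G$ so that it avoids the support of each transported profile on a set where ray coverage is still guaranteed.

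\textbf{Step 1: approximate solution.} Fix a direction $\theta_k$ and an angular window of width $\delta_k\to0$ around $\theta_k$; this window will be $\calA_k$, and $\calB_k=\calA_k+\pi$. On the diameter through $\theta_k$, consider one-dimensional d'Alembert solutions $f(t-s)+g(t+s)$, where $s\in(-1,1)$ is the signed distance along the diameter. The Dirichlet condition at $s=\pm1$ forces the odd $4$-periodic extension. Take $f$ built from $q_0(\lambda_k\,\cdot)$ with $\lambda_k\to\infty$, mollified at scale $\lambda_k^{-1}\epsilon_k$, and cut off transversally by an angular bump adapted to $\calA_k\cup\calB_k$. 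In polar coordinates this gives the explicit $S_k(t,r,\phi)=\chi_k(\phi)\bigl[q_k(t-r)\pm q_k(t+r)\bigr]$, with signs determined by the side $\calA_k$ or $\calB_k$.

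A radial wave needs the $r^{-1/2}$ amplitude, and the origin is a focal point. I will therefore work with transverse width $\delta_k$ large compared with the wavelength, $\delta_k\lambda_k\to\infty$, so that Gaussian-beam or WKB estimates show that the true solution $u_k$ with matching data satisfies $\|\partial_tu_k-S_k\|_{L^2}\to0$ on the fixed time $T_\D$. The error $r_k$ collects three contributions: the transverse diffraction error $O\bigl((\lambda_k\delta_k^2)^{-1}\bigr)$ relative size, the mollification error, and the contribution near the origin. The last is handled by the fact that the profile is $L^2$-small on the set $r<\delta_k$ once amplitudes are normalized. Finally, normalize $E[u_k](0)=1$.

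\textbf{Step 2: the set.} Because $q_0$ is a square wave, $\partial_t$ of the transported profile is a sum of traveling $\pm$ pieces. Once the profile is precomposed so that $S_k$ itself is the time derivative, $S_k$ is essentially $\chi_k\bigl[q_k(t-r)+q_k(t+r)\bigr]$. This vanishes exactly where the incoming and outgoing square waves have opposite signs. That zero set is a checkerboard in $(t,r)$ of density $1/2$, and it is independent of the angle within the window. This is the observable symmetry: two transported characteristic sets cancel.

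Define $Z_k\subset(0,T_\D)\times\D$ as this checkerboard intersected with $(0,1)\times(\calA_k\cup\calB_k)$, and put $G^{(N)}:=Z_N\cup\bigcup_{k>N}\bigl(\text{region outside the }k\text{-th window}\bigr)$, suitably arranged. The precise requirement is $G=\bigcap_N G^{(N)}$ with $\1_GS_k=\1_{G^{(k)}}S_k=0$. The simplest choice is windows with pairwise disjoint angular supports, $\calA_k$ centered at $\theta_k$ with $\sum\delta_k<\pi$, and
\[
G:=\bigcup_k Z_k\ \cup\ \Bigl((0,T_\D)\times\bigl\{\phi\notin \textstyle\bigcup_k(\calA_k\cup\calB_k)\bigr\}\Bigr).
\]
Then $S_k$ vanishes on $G$, because inside its own window $G$ is $Z_k$ and outside it $S_k=0$. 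Since $\|r_k\|\to0$, this yields $\int_G|\partial_tu_k|^2\to0$, so \eqref{eq:obs} fails.

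\textbf{Step 3: GCC (main obstacle).} Every ray must spend time $\ge\eta$ in $G$. Rays whose angle leaves the windows pick up time in the complement region. The difficulty is rays that pass through the origin almost along a window direction, and rays hugging the boundary. For a ray inside the sector of window $k$, the checkerboard $Z_k$ has density $1/2$ along any timelike-transverse curve. A unit-speed ray moves in $r$ at speed $|\dot r|\le1$, which is the same as the characteristic speed, so there is degeneracy. Rays exactly radial ride a characteristic and could stay in the complement of $Z_k$.

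I will fix this by choosing the profile phase so that the two square waves ($t-r$ and $t+r$) interleave: a radial ray of slope $+1$ sees $q_k(t+r)$ varying at rate $2\lambda_k$, so the pattern still alternates along it. Only the non-radial part of a ray's motion produces smaller $|\dot r|$, and there the density-$1/2$ estimate works by direct integration. Boundary-glancing rays lie at angles that exit any thin sector quickly, and these are covered by the footnote completion with $\partial\D$. Choosing $T_\D$ of a few diameters, say $T_\D=6$, and checking the essential infimum by a case split on the angular momentum of the ray is the step I expect to be most delicate.
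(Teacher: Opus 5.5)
There is a genuine gap, and it sits at the origin: the opposite sector does not carry your square profile, so the checkerboard there and the GCC argument built on it both fail. Treating the diameter as a one-dimensional interval (d'Alembert with an odd periodic extension) misses that in two dimensions the origin is a focal point. After conjugating by $\sqrt r$, the radial equation keeps the potential $(m^2-\tfrac14)/r^2$ even for $m=0$. The asymptotics $J_{|m|}(z)\approx(2/(\pi z))^{1/2}\cos(z-|m|\pi/2-\pi/4)$ then show that the outgoing and incoming branches of each Dirichlet mode differ by $e^{2i\delta_{|m|}}=i(-1)^{|m|}$. The factor $i$ is a Gouy phase: a profile entering the origin from $\calA_k$ leaves through $\calB_k$ Hilbert-transformed. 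This is why the paper's opposite packet is $V_-=\calH q(d_1+\pi/2)+\calH q(d_2)$ rather than $\pm V_+$.

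Bounding the $L^2$ mass of $S_k$ on $\{r<\delta_k\}$ does not address this, because the issue is the connection law across the focus, not the mass there. Indeed, the geometric-optics residual $\hbar\,\Box(r^{-1/2}\chi_k)F$ behaves like $r^{-5/2}$ and is not square integrable at $r=0$. This is why the paper uses exact Bessel modes, with radial indices tuned so that $j_{|m|,n_{p,|m|}}\approx pK$. So your explicit $S_k$, carrying $q_k$ up to sign in both sectors, is not the leading part of any exact wave.

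For the square profile, $\calH q_0$ is a multiple of $\log|\tan(s/2)|$, which has no intervals of constancy. Hence $\{V_-=0\}$ is a null set by Fubini, and no positive-measure part of $\widehat\calB_k$ can belong to $G$ if $\1_GS_k=0$ is to hold. Your $Z_k\cap\widehat\calB_k$ must be dropped, and the opposite sectors left wholly unobserved. Two smaller points: $S_k$ must carry the amplitude $r^{-1/2}$, and plane waves $f(t-s)$ along a diameter do not satisfy the Dirichlet condition on the curved boundary, which is why the radial phases $t\pm r$ are used.

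With $\calB_k$ unobserved, the real GCC obstacle is one your Step~3 does not address: the diameter through $\psi_k$ spends half its time in $\calB_k$. The paper handles this as follows.
\begin{itemize}
\item It places all $\calA_k$ in one short arc, takes $a_k=a_*2^{-k}$ and $b_k\le a_k^4$, and pairs $t\mapsto 2t_*-t$ on each chord.
\item Paired points have angular difference within $2\arctan(|L|/|y|)$ of $\pi$. This gives $T_B\le T_A+4+C\sqrt{|L|}$ for $|L|\le 1/2$.
\item A fast/slow matching lemma then places a fixed fraction $c(q)$ of $T_A$ in $\Gamma_k$, up to errors $C\sqrt{|L|}+C\sum_kK_k^{-1}$.
\item For $|L|\ge\ell$, no matching is used. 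The time spent in an angular interval $I$ is at most $C_T|I|/|L|$, so choosing $a_*$ small forces $T_{\rm out}\ge 3T/4$, uniformly up to tangency. The $\partial\D$ completion only concerns gliding rays, not near-glancing regular ones.
\end{itemize}

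Your claim that matching has density $1/2$ on the non-radial parts of rays ``by direct integration'' is also unjustified. Where $\dot r\approx 0$, both phases $K_k(t\pm r)$ advance at the same rate, and the matching fraction is governed by the offset $2K_kr$, so no uniform lower bound follows. The paper sidesteps this by keeping the total sector width small rather than relying on matching there.
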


The more general result Theorem~\ref{thm:disc-gcc} treats the larger class of nonzero real finite-step antiperiodic profiles.
\vspace{2mm}

The main ingredient is an asymptotic version of observable symmetry, called \emph{asymptotic observable symmetry} (AOS), which we introduce in this paper. In the torus example, an exact propagation formula turns the profile balance into exact cancellation in a single wave. In the disc construction, the profile formulas describe the leading parts of wave velocities. The profile symmetry and its associated geometric balance remain exact at every scale, while their realization by a sequence of waves is asymptotic. AOS emphasizes {\it profile, symmetry, scale}, and {\it asymptotics}. This yields non-observability on a fixed spacetime measurable set satisfying \eqref{eq:gcc}.

\subsection{Asymptotic observable symmetry}\label{subsec:intro-aos}

Recall the torus example of \cite{NWX}, using the square profile $q_0$ in \eqref{eq:square}. It has a primitive $Q_0$, with $Q'_0=q_0$. The exact wave and its velocity are
\begin{gather*}
u(t,x)= Q_0(x+t)+Q_0(x-t),\\
\partial_tu(t,x)= q_0(x+t)-q_0(x-t).
\end{gather*}
The set $G_{\T}:=\{q_0(x+t)=q_0(x-t)\}$ matches the two transported half-circles and their complements. On $(0,2\pi)\times\T$, every characteristic spends time $\pi$ in $G_{\T}$, yet $\partial_tu=0$ there.

\smallskip
Passing from characteristic translations to curved boundaries changes this description. A prescribed pair of transported profiles need no longer give an exact wave. It can, however, give the \emph{leading part} of a \emph{high-frequency} wave. This suggests keeping the geometric profile relation exact while allowing its wave realization to be asymptotic.

We call this phenomenon asymptotic observable symmetry (AOS). In the two-branch case, its four core components are the following. The last two introduce the scale dependence and asymptotic realization, which are central to AOS; Section~\ref{sec:aos} gives the precise formulation and also treats multiple branches $\{f_\alpha\}_{\alpha=1}^J$. Appendix~\ref{sec:supp-aos} provides supplementary discussion of compatible profiles, velocity approximation, and localization.
\begin{itemize}[leftmargin=6mm]
\item \emph{Propagated profiles.} Two profiles $f,g$ are generated from one reference profile through the relations imposed by propagation.
\item \emph{Symmetry relations.} A fixed symmetry relation between the profiles gives a reference matching region $\mathcal M$ on which
\begin{equation*}
f(\alpha)=g(\beta),\quad \forall(\alpha,\beta)\in\mathcal M.
\end{equation*}
\item \emph{A scaled family of generated approximate velocities.} Keep this symmetry relation and propagate it at \emph{different scales} $\varepsilon_k\to0^+$:
\begin{equation}\label{eq:intro-family}
S_k(t,x)=A_{\varepsilon_k}(t,x)\Bigl[f\bigl(\Theta_k^+(t,x)\bigr)-g\bigl(\Theta_k^-(t,x)\bigr)\Bigr].
\end{equation}
The propagation phases $\Theta_k^\pm$ and amplitude $A_{\varepsilon_k}$ change with the scale. The same symmetry, encoded by the reference matching region $\mathcal M$, gives spacetime matching regions depending on the scales:
\begin{equation*}
\Gamma_k:=\{(\Theta_k^+,\Theta_k^-)\in\mathcal M\},
\quad\text{thus}\quad S_k=0\quad\text{on }\Gamma_k.
\end{equation*}
\item \emph{Asymptotic realization.} We first arrange an observation region for the first $N$ generated approximate velocities $S_1,\ldots,S_N$ and denote it by $G^{(N)}$. We choose $G^{(N)}\to G$. Exact waves $u_k$ then realize the profiles $f,g$ at scale $k$.

\Needspace{7\baselineskip}
With $Q_T:=(0,T)\times\Omega$, the \emph{asymptotic realization formula} is, for $E[u_k](0)=1$,
\begin{equation}\label{eq:intro-real}
\left\{\begin{aligned}
& \partial_tu_k=\underbrace{A_{\varepsilon_k}\bigl[f(\Theta_k^+)-g(\Theta_k^-)\bigr]}_{S_k}+r_k,\\[3pt]
&\|\1_GS_k\|_{L^2(Q_T)}+\|r_k\|_{L^2(Q_T)}\longrightarrow0.
\end{aligned}\right.
\end{equation}
We call $\1_GS_k$ the \emph{profile observation part}, controlled through profile matching with symmetry, and $r_k$ the \emph{exact-wave realization error part}, arising from realizing the propagated profiles by exact waves.
\end{itemize}

\begin{remark}
The generated approximate velocities $S_k$ in AOS describe the leading parts of the time derivatives of the high-frequency waves $u_k$. The profile matching is exact, whereas its realization by solutions is asymptotic.
The leading-order nature of the construction suggests that the AOS approach can also accommodate lower-order perturbations; see \cite{AOSII} for a discussion in the glancing setting.
\end{remark}

\Needspace{10\baselineskip}
\subsection{A visible example in the disc}\label{subsec:intro-disc}

We explain the radial construction in Theorem~\ref{thm:disc}. The intuition for AOS has five parts, followed by its \hyperref[eq:gcc]{GCC} realization.

\noindent{\it 1. From one-dimensional propagation to the disc.}
In polar coordinates the wave equation reads
\begin{equation*}
u_{tt}-u_{rr}=\frac1r u_r+\frac1{r^2}u_{\theta\theta} \textrm{ \; with \; }
 u(t,1,\theta)=0.
\end{equation*}
The additional terms and the circular boundary prevent us from directly repeating the one-dimensional translation formula. Instead, we construct solutions of \eqref{eq:wave} whose \emph{leading} time derivatives retain a prescribed profile structure.
\vspace{2mm}

\noindent{\it 2. Profile realization in opposite sectors.}
We start from a general profile and localize its generated expression in two opposite sectors.

\begin{adjustwidth}{5mm}{5mm}
\begin{observation}
Fix $T>0$ and a nonzero real $2\pi$-periodic ``profile'' $q\in L^2(0,2\pi)$ satisfying $q(s+\pi)=-q(s)$. Given shrinking angular intervals $\calA_k,\calB_k\subset\R/2\pi\Z$ centered at opposite angles, we can coordinate modes and scales to obtain solutions $u_k$ of \eqref{eq:wave} such that
\begin{gather*}
\partial_tu_k=S_k[q]+r_k \textrm{ \; and \; } E[u_k](0)=1,\\
\|r_k\|_{L^2((0,T)\times\D)}\longrightarrow0 \textrm{ \;  as \; } k\rightarrow \infty,\\
S_k[q]=0\quad\text{outside }(0,T)\times(0,1)\times(\calA_k\cup\calB_k).
\end{gather*}
Section~\ref{subsubsec:disc-generated} defines this expression and explains its sector support; Section~\ref{subsec:disc-real} proves its realization by exact waves.
\end{observation}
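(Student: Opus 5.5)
We build $u_k$ as a superposition of high-frequency Dirichlet eigenfunctions of the disc, $J_m(j_{m,n}r)e^{im\theta}$, with $m$ and $n$ coupled to the scale $\varepsilon_k$. The starting point is the large-argument Bessel asymptotics in the radial-dominated regime $m\ll j_{m,n}$:
\[
J_m(\lambda r)\approx\sqrt{\tfrac{2}{\pi\lambda r}}\cos\bigl(\lambda r-\tfrac{m\pi}2-\tfrac\pi4\bigr).
\]
For a fixed angular frequency $m$, the zeros satisfy $j_{m,n}\approx(n+\tfrac m2-\tfrac14)\pi$. Hence superpositions over $n$ at fixed $m$ behave like 1D Dirichlet waves on $(0,1)$ in the variable $r$, weighted by $r^{-1/2}$, with reflection at $r=1$ and a phase shift through the origin. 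The antiperiodicity $q(s+\pi)=-q(s)$ matches this structure: in one dimension the odd Fourier modes of an antiperiodic profile are exactly those compatible with the half-integer shifts. So we expand $q=\sum_{\ell\text{ odd}}\hat q_\ell e^{i\ell s}$. The candidate is
\[
u_k=\Real\sum_{\ell}c_{k,\ell}\,\frac{J_{m_k}(j_{m_k,n_{k,\ell}}r)}{j_{m_k,n_{k,\ell}}}\,e^{ij_{m_k,n_{k,\ell}}t}\,\chi_k(\theta),
\]
where $n_{k,\ell}\approx \ell N_k$. Since $\partial_t$ of each term carries the factor $e^{ij t}$, the velocity reassembles into $q(N_k\pi(t\pm r)+\text{phase})$-type traveling profiles.

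The angular localization is what produces the two opposite sectors. We replace the single mode $e^{im\theta}$ by a window of modes $\sum_{|m-M_k|\le K_k}a_me^{im\theta}$. Equivalently, we superpose over $m$ in a narrow band so that the angular function is concentrated on $\calA_k$. Since the radial solution passes through the origin, the $\theta\mapsto\theta+\pi$ partner appears automatically: $J_m$ at $r$ corresponds to propagation through the center into the opposite angle, with $e^{im\pi}=(-1)^m$. This is how $\calA_k\cup\calB_k$ arises, and the factor $(-1)^m$ combines with the antiperiodicity of $q$. The leading term we define is
\[
S_k[q](t,r,\theta)=A_{\varepsilon_k}(r,\theta)\bigl[q(\Theta_k^+)-q(\Theta_k^-)\bigr],
\]
with $\Theta_k^\pm=N_k(t\pm r)+\text{const}$ and $A_{\varepsilon_k}\sim r^{-1/2}\times(\text{angular bump supported in }\calA_k\cup\calB_k)$. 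We choose the bump $\chi_k$ compactly supported rather than merely concentrated, so that the support statement for $S_k$ is exact. The discrepancy between $\chi_k$ and its finite Fourier truncation then goes into $r_k$. Normalization $E[u_k](0)=1$ follows from orthogonality of eigenfunctions and the standard norm $\|J_m(j_{m,n}\cdot)\|^2_{L^2(r\,dr)}=\tfrac12J_{m+1}(j_{m,n})^2\approx(\pi j_{m,n})^{-1}$.

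The remainder $r_k$ collects three sources:
\begin{enumerate}
\item[(a)] Bessel asymptotic errors, of size $O(m^2/\lambda)$ uniformly for $r\ge\delta$. These are made small by choosing $M_k+K_k\ll N_k^{1/2}$.
\item[(b)] Zero-location errors $j_{m,n}-(n+\tfrac m2-\tfrac14)\pi=O(m^2/n)$. Over time $T$ these produce phase errors $O(Tm^2/n)$, which must also vanish, again forced by $m\ll\sqrt n$.
\item[(c)] The region $r<\delta_k$ near the origin and the tail of the $\ell$-expansion.
\end{enumerate}
The near-origin region contributes little because energy density there is controlled by $\int_0^{\delta}|J_m(\lambda r)|^2\lambda r\,dr/\lambda\lesssim\delta$ after normalization. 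The $\ell$-tail is handled by truncating $q$ at $|\ell|\le L_k$ with $L_k\to\infty$ slowly, and using $q\in L^2$.

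The main obstacle is the angular variation versus radial dominance trade-off. The $m$-dependence of $j_{m,n}$ couples angle and radius, through the term $\tfrac m2\pi$ in the phase. A window of width $K_k$ in $m$ therefore produces a time-dependent angular drift. We will first try taking $m$ of fixed parity and $K_k\to\infty$ slowly, writing $e^{i j_{m,n}t}\approx e^{i(n-\frac14)\pi t}e^{i m\pi t/2}$. This shows the angular packet rotates by $\pi t/2$-type shifts unless we absorb them. To avoid this, we instead pick $M_k$ and the time scaling so that the rotation is an exact structural shift, or we pick $m\in 4\Z$-lattice-compatible phases. If that fails, we restrict to $T$ small relative to $1/K_k$-independent bounds, since the Observation allows any fixed $T$ but the sectors may shrink.
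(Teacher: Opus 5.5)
There is a genuine gap, and it is exactly the obstacle you flag at the end without resolving it.

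If the radial index is $n\approx\ell N_k$ independently of $m$, then $j_{|m|,n}\approx\pi(n+|m|/2-\tfrac14)$ makes both the time frequency and the radial phase $jr$ depend on $m$. The $m$-th term acquires the factor $e^{-i\pi|m|(t-r)/2}$ on the outgoing branch and $e^{-i\pi|m|(t+r)/2}$ on the incoming one. So for a window $m\approx M_k>0$, the angular packet sits at $\theta\approx\pi(t\mp r)/2+\mathrm{const}$. At each fixed time it is spread over an arc of length $\pi/2$ as $r$ runs over $(0,1)$. Hence it lies in no shrinking sector for any $T>0$, and restricting $m$ to $4\Z$ does not remove these factors.

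The missing idea is to let the radial index depend on $|m|$, so that all retained modes share one frequency. With $K=\pi(n_0-\tfrac14)$, take $n_{p,|m|}=pn_0-\tfrac{|m|}{2}-\tfrac{p-1}{4}$. Then $j_{|m|,n_{p,|m|}}=pK+O(n_0^{-1})$ for fixed cutoffs $p\le P$, $|m|\le 2Q$. If you let $n_0\to\infty$ last, only fixed-order Bessel asymptotics are needed, rather than your uniform regime $M_k+K_k\ll N_k^{1/2}$. This index is an integer only if $p$ is odd, with $m$ even for $p\equiv1\pmod 4$ and $m$ odd for $p\equiv3\pmod 4$. This parity coupling is where antiperiodicity really enters. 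You then remove the constant phase $e^{-i(|m|\pi/2+\pi/4)}$ of the outgoing Bessel branch through the coefficients. In the first sector this leaves the outgoing profile $q(K(t-r))$ and the incoming profile $-q(K(t+r)+\pi/2)$.

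The parity coupling also shows that your leading term is not what these modes produce on $\calB_k$. The angular amplitude of the $p\equiv1$ modes is a sum over even $m$, hence $\pi$-periodic. That of the $p\equiv3$ modes is a sum over odd $m$, hence $\pi$-antiperiodic. So $(-1)^m$ is not one global sign that $q(s+\pi)=-q(s)$ could absorb. On the opposite sector the two residue classes recombine with opposite relative sign, and the branch profiles become $\calH q(K(t-r)+\pi/2)$ and $\calH q(K(t+r))$. These are not $\pm$ translates of $q$ once $q$ has Fourier modes in both classes mod $4$, as the square profile $q_0$ does. So the target $A[q(\Theta^+)-q(\Theta^-)]$ on all of $\calA_k\cup\calB_k$ is not what the construction yields; the two packets carry different profiles.

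The paper separates the packets with the equal-weight sums $E_Q$ (even $m$) and $O_Q$ (odd $m$), using $E_Q\approx O_Q$ near $0$ and $E_Q\approx -O_Q$ near $\pi$ in $L^2$. It then replaces $(E_Q\pm O_Q)/2$ by $E_Q\1_{\calA_k}$ and $E_Q\1_{\calB_k}$. The squared $L^2$ cost is $O\bigl(Q^{-2}+(a_k+b_k)/Q+1/(Q\min\{a_k,b_k\})\bigr)$, which goes into $r_k$. Your compactly supported bump could replace this step, but only under two conditions. You must split it into its $\pi$-periodic and $\pi$-antiperiodic parts, assigned to the two residue classes. Its width must be at most $\min\{a_k,b_k\}$, so that its half-turn image lies in $\calB_k$.
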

\end{adjustwidth}

\vspace{2mm}
\noindent{\it 3. Additional cancellation through profile matching.}
We can now obtain additional cancellation \emph{inside} the first sector, where $\theta\in\calA_k$. There the generated expression is
\begin{equation}\label{eq:intro-disc-prof}
S_k[q](t,r,\theta)=A_{\varepsilon_k}(r,\theta)
\Bigl[q\bigl(K_k(t-r)\bigr)-q\bigl(K_k(t+r)+\pi/2\bigr)\Bigr].
\end{equation}
Choose $q=q_0$ from \eqref{eq:square}. Its constant intervals give a ``symmetry'' through ``profile matching'': the two nonzero branch values agree on moving regions $\Gamma_k\subset(0,T)\times(0,1)\times\calA_k$, and hence $\1_{\Gamma_k}S_k=0$. See Figure~\ref{fig:disc-localization}. We retain $\Gamma_k$ in the first sector and leave the opposite sector unobserved.

\begin{figure}[htbp]
\centering
\includegraphics[
  width=\textwidth,
  trim={0mm 9mm 0mm 12mm},
  clip]{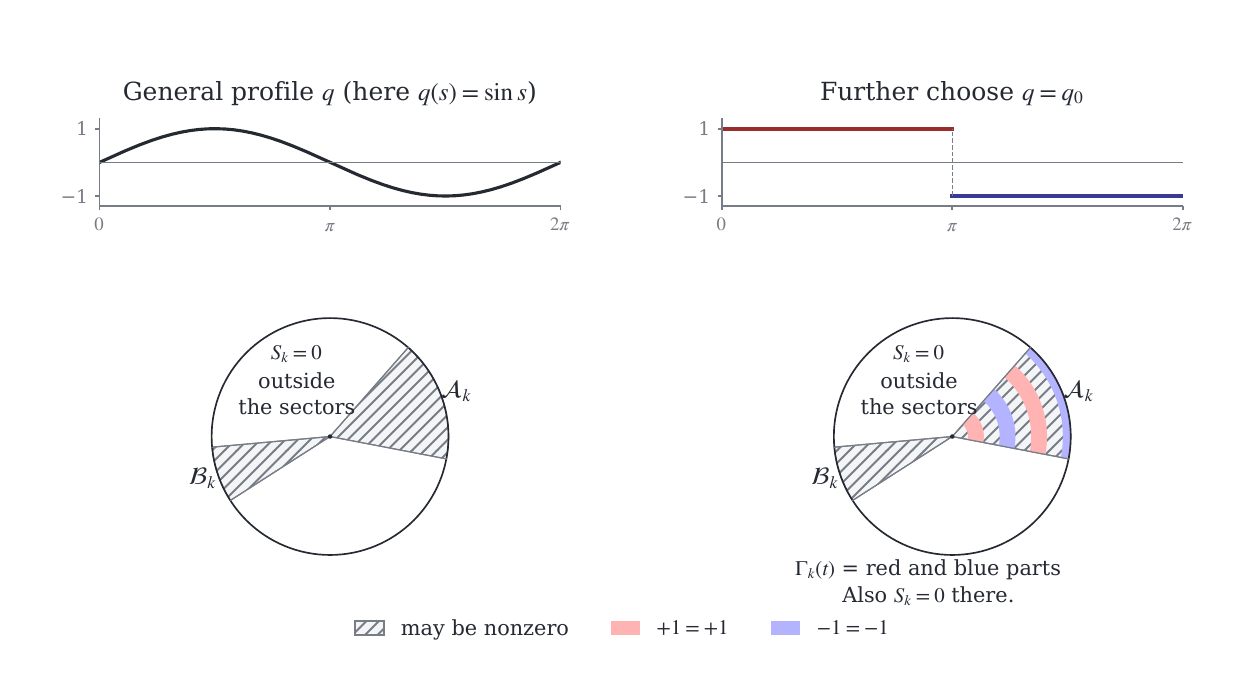}
\caption{Left: a general antiperiodic profile $q$ generates $S_k[q]$ that vanishes outside the two outlined sectors. Right: choosing $q=q_0$ also gives the colored matching region $\Gamma_k(t)$ in the first sector. Red and blue mean equal branch values $+1$ and $-1$, respectively; in both colors $S_k=0$. Both panels describe generated approximate velocities $S_k$; exact wave derivatives retain an $L^2$ realization error.}\label{fig:disc-localization}
\end{figure}

\vspace{2mm}
\noindent{\it 4. Scales and a fixed observation region.}
The same profile symmetry can be placed at every \textbf{scale} $K_k^{-1}$, in pairwise disjoint sector pairs. Outside the first $N$ pairs we observe everything; inside them we retain only $\Gamma_1,\ldots,\Gamma_N$. This defines $G^{(N)}$; see Figure~\ref{fig:disc-scales}. Adding a pair does not change earlier matching regions, so
\begin{gather*}
G^{(N+1)}\subset G^{(N)} \;\;\forall N,\textrm{ \;  \; and \; } G:=\bigcap_{N\ge1}G^{(N)},\\
\1_{G^{(N)}}S_k=0, \quad\forall k\in\{1,\ldots,N\}.
\end{gather*}

\begin{figure}[htbp]
\centering
\includegraphics[ width=\textwidth,
  trim={0mm 8mm 0mm 13mm},
  clip]{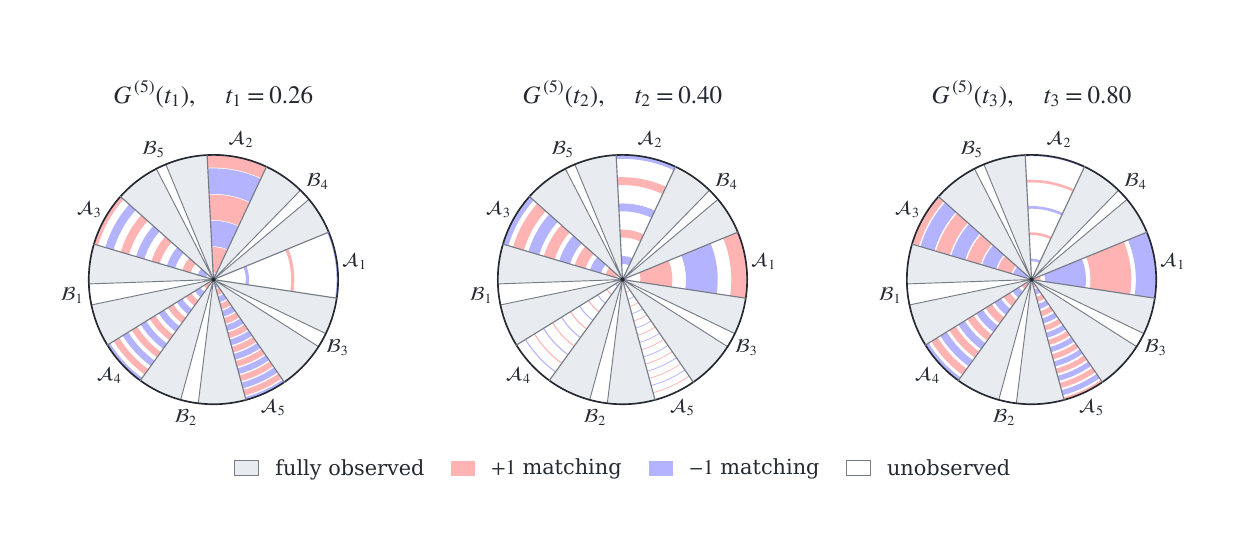}
\caption{Three time slices of the same $G^{(5)}$, with all five sector pairs and radial scales fixed. Gray is fully observed; red and blue are the matching parts of the first sectors; white is unobserved, including every opposite sector.}\label{fig:disc-scales}
\end{figure}

\noindent{\it 5. Asymptotics and non-observability.}
As $G^{(k)}$ decreases to $G$, the realization error $r_k$ tends to zero in $L^2$. Using $G\subset G^{(k)}$ and $\1_{G^{(k)}}S_k=0$, we obtain
\begin{align*}
\|\1_G\partial_tu_k\|_{L^2((0,T)\times\D)}
&\le\|\1_{G^{(k)}}\partial_tu_k\|_{L^2((0,T)\times\D)}\\*
&=\|\1_{G^{(k)}}r_k\|_{L^2((0,T)\times\D)}
\le\|r_k\|_{L^2((0,T)\times\D)}\longrightarrow0.
\end{align*}

\vspace{2mm}
\noindent{\it GCC realization.} It remains to make $G$ satisfy \eqref{eq:gcc}. For this purpose, we may place all first sectors in one short angular arc and all opposite sectors in its half-turn image, with the latter much narrower. This confines the wholly unobserved opposite sectors to a narrow region. Together with profile matching where $\theta\in\calA_k$ and suitable radial scales, the ray estimates in Section~\ref{sec:disc-class} give a uniform positive observation time.

\subsection{Literature review}\label{subsec:literature}

The literature on observability and controllability for wave equations is too extensive to review in full. We briefly discuss further contributions beyond those recalled in Section~\ref{subsec:intro-obs}. For a recent survey on the geometric characteristics of observable regions, we refer to Wang \cite{WangICM2026}.

\subsubsection{Control and related results for waves}
The geometry of wave propagation also plays an important role in decay, stabilization and observation in moving or partially observed regions.
Lebeau \cite{Lebeau1996} studies decay for damped waves, and Burq \cite{Burq1998} treats local-energy decay in exterior obstacle problems. Joly--Laurent \cite{JL2013} later prove exponential stabilization under GCC for subcritical, defocusing, analytic nonlinearities, while Burq--G\'erard \cite{BG2020} treat rough damping on the torus. Other works address moving observation regions \cite{CCM2014,LRLTT2017}, time-dependent domains \cite{Shao2019}, and regional or partial observability when global GCC fails \cite{DEZ2025}. Related geometric conditions also arise for degenerate Kolmogorov-type parabolic equations \cite{BHHR2015}.

Further questions in wave control concern nonlinear dynamics and qualitative or quantitative unique continuation. Zuazua \cite{Zuazua1990} establishes exact controllability for a class of semilinear wave equations. For bilinear control, Beauchard--Perrin--Pozzoli \cite{BPP2026} establish global approximate controllability from nonzero initial states for a Klein--Gordon equation on tori and determine the minimum control time in dimensions one and two. On the unique continuation side, Tataru \cite{Tataru1995} and Robbiano--Zuily \cite{RZ1998} treat operators with partially analytic coefficients. Laurent--L\'eautaud \cite{LL2019} develop quantitative versions yielding approximate wave control with cost estimates.

\vspace{2mm}

Besides the observation geometry, observability and its quantitative estimates depend on the structure and regularity of the coefficients. Zhang \cite{Zhang2000Potential,Zhang2000} establishes explicit observability estimates for wave equations with potentials and more general lower-order terms through explicit Carleman estimates. Duyckaerts--Zhang--Zuazua \cite{DZZ2008} study the dependence on potentials and the optimality of observability estimates for parabolic and hyperbolic systems. 
Very recently, Liao--L{\"u}--Zhang \cite{LLZ2026} construct, on a suitably chosen planar domain, a smooth real-valued time-periodic potential and an open observation set satisfying the classical GCC, together with a nonzero smooth Dirichlet wave that vanishes on the observed cylinder for all time. Their counterexample concerns a failure of unique continuation. 

Under geometric control, Laurent--L\'eautaud \cite{LL2016} quantify the dependence on a bounded potential and on observation times near the geometric control time. At lower coefficient regularity, Dehman--Ervedoza \cite{DE2017} study wave observability with rough coefficients, while Burq--Dehman--Le Rousseau \cite{BDLR2024} prove observability for $C^1$ metrics on $C^2$ manifolds with boundary under a geometric condition.

For geometric wave equations, the geometry and topology of the target impose additional constraints on controllability. For wave maps from a circle to a sphere, Krieger--Xiang \cite{KX2024} prove semi-global controllability and stabilization below the first nonconstant harmonic-map energy. Coron--Krieger--Xiang \cite{CKX2025} then establish global exact controllability for spheres of dimension at least two. 
The subsequent work \cite{CKXgeneral2025} treats compact orientable Riemannian targets without boundary, establishing global controllability precisely between states with homotopic position maps. We refer to \cite{Coron2007,TW2009} for broader accounts.

\subsubsection{Observability on measurable spatial sets}

For cylindrical observation $(0,T)\times\omega$, a basic question is which conditions on a merely measurable spatial set $\omega$ ensure observability.
On $\R^n$, thickness of $\omega$, namely uniform positive density in translates of a fixed box, characterizes heat null controllability \cite{EV2018,WWZZ2019}. Wang--Wang--Zhang--Zhang \cite{WWZZ2019} also establish its equivalence with interpolation and spectral inequalities. Burq--Moyano \cite{BM2023} prove spectral inequalities from measurable spatial sets and apply them to heat observation and control with Lipschitz coefficients. For Schr\"odinger equations on two-dimensional tori, Bourgain--Burq--Zworski \cite{BBZ2013} prove observability from every nonempty open spatial set for real-valued $L^2$ potentials. For the free equation, Burq--Zworski \cite{BZ2019} obtain observability from every spatial set of positive measure. Related observability and unique continuation inequalities for the free equation on $\R^n$, involving one or two observation times, are obtained by Wang--Wang--Zhang \cite{WWZ2019}.

\subsubsection{Observability on spacetime measurable sets}

Allowing observation to be restricted in both time and space leads to observability problems on measurable spacetime sets. Phung--Wang \cite{PW2013} prove parabolic observability on products $E\times\omega$, with $E\subset(0,T)$ of positive measure and $\omega$ nonempty and open. Going beyond product sets, Apraiz--Escauriaza--Wang--Zhang \cite{AEWZ2014} obtain heat observability from arbitrary positive-measure subsets of spacetime on suitable domains.

For Schr{\"o}dinger, KdV and wave equations, these questions involve different mechanisms of dispersion, propagation and cancellation. For Schr\"odinger equations on tori with bounded potentials, Burq--Zhu \cite{BurqZhu2025} prove observability from arbitrary positive-measure spacetime sets in one dimension, with conditional criteria and results for certain measurable regions in higher dimensions. On products of positive-measure spatial and temporal sets, Niu--Wang--Xiang \cite{NWXKdV2026} prove local exact controllability near equilibria for the periodic KdV equation. For waves, the observable symmetries in \cite{NWX,HX} and the AOS constructions of the present paper identify cancellation obstructions not detected by ray coverage alone.

\subsection{History of the project and use of artificial intelligence}\label{subsec:history-ai}

This project began in September 2025, shortly after the author and his coauthors completed the first draft of \cite{NWX}. The original goal was substantially broader: to investigate, using ideas from harmonic analysis and geometric measure theory, whether GCC together with symmetry could fully characterize spacetime observability. The first stage explored reflection phenomena in the interval, rectangle, and disc settings, with the expectation that the underlying symmetry structure should extend to convex domains. It was in the disc setting that the author formulated the notion of asymptotic observable symmetry (AOS). AI tools were used at this stage primarily for technical calculations.

In early August 2026, an AI-assisted exploration pointed to a different propagation structure in the disc, involving glancing propagation and a caustic. This propagation picture was new to the author, and its possible connection with AOS had not previously been considered. After studying the resulting glancing construction, the author realized that its essential ingredients were local and could be adapted to general smooth bounded domains, without a convexity assumption. 
The resulting general-domain construction, presented in the sequel \cite{AOSII} coauthored by the author and Niu, is considerably more involved, and AI tools contributed extensively to its technical development, including asymptotic calculations, parameter analysis, and intermediate derivations. Their use made it practical to develop this technically demanding construction to its present level of detail within the timescale of the project; without such assistance, the author would likely have continued exploring these questions for much longer.  AI tools were also used in both papers for figure preparation and LaTeX typesetting.

\medskip\noindent\emph{Organization of the paper.}
Section~\ref{sec:aos} develops the AOS framework. Section~\ref{sec:disc} constructs the radial disc example, and Section~\ref{sec:disc-class} proves \hyperref[eq:gcc]{GCC} for its fixed observation region. Appendix~\ref{sec:supp-aos} provides supplementary discussion of compatible profiles, velocity approximation, and localization. Appendix~\ref{sec:meas} explains spacetime measurable observation and GCC. The sequel \cite{AOSII} treats glancing and general smooth bounded planar domains.

\begingroup
\renewcommand{\addcontentsline}[3]{}
\subsection*{Acknowledgments}
Shengquan Xiang is partially  supported by NSFC 12301562 and by the Shanghai Qiguang Natural Science Development Foundation.
\endgroup

\section{Asymptotic observable symmetry}\label{sec:aos}

As illustrated in Section~\ref{subsec:intro-aos}, AOS retains a fixed profile symmetry at different scales, while exact waves realize the resulting family asymptotically. We first recall the torus construction, then develop the four core components of AOS. We conclude with the definition and its consequence for observability.

\subsection{Observable symmetry in the torus case}\label{subsec:aos-torus}

Recall the construction of \cite{NWX} on $\T:=\R/2\pi\Z$. To generate two opposite travelling branches from the same real \emph{profile} $q$, we use the \emph{profile space}
\begin{equation*}
X:=\left\{q\in L^2(\T;\R):\int_\T q(s)\dd s=0\right\}.
\end{equation*}
The zero-mean condition gives a periodic primitive $Q\in H^1(\T)$, that is, $q=Q'$. Thus $u(t,x):=Q(x+t)+Q(x-t)$ is an exact energy solution of $u_{tt}-u_{xx}=0$. Introduce its \emph{propagation phases}
\begin{equation}\label{eq:tor-pha}
\Theta^+(t,x):=x+t \textrm{ \; and \; }
\Theta^-(t,x):=x-t \quad\text{in }\T.
\end{equation}
Using \eqref{eq:tor-pha}, we write the observed derivative as
\begin{equation}\label{eq:tor-gen}
\partial_tu(t,x)=q\bigl(\Theta^+(t,x)\bigr)-q\bigl(\Theta^-(t,x)\bigr).
\end{equation}
Both branch profiles are $q$, with signed \emph{amplitudes} $1$ and $-1$.
\vspace{2mm}

We now choose $q=q_0$, the square profile in \eqref{eq:square}. Set $P=[0,\pi)\subset\T$, so that $q_0=2\1_P-1$. Equal profile values occur when both reference points\footnote{Here ``reference'' refers to the variable $s\in\T$ in which $q$ is specified, before composition with the phases.} belong to $P$ or both belong to $P^c$. This gives the two matching regions
\begin{equation}\label{eq:tor-match}
\begin{split}
\calM_{q_0}&:=(P\times P)\cup(P^c\times P^c),\\
G_\T&:=\{(t,x)\in(0,2\pi)\times\T:\bigl(\Theta^+(t,x),\Theta^-(t,x)\bigr)\in\calM_{q_0}\}.
\end{split}
\end{equation}
Consequently,
\begin{equation*}
\1_{G_\T}\partial_tu
=\1_{G_\T}\bigl[q_0(\Theta^+)-q_0(\Theta^-)\bigr]=0.
\end{equation*}
Figure~\ref{fig:torus} shows this matching in spacetime. Along either characteristic $x=x_0\pm t$, one phase is fixed while the other traverses $\T$ twice during $0<t<2\pi$.

\begin{figure}[htbp]
\centering
\begin{tikzpicture}[x=0.72cm,y=0.72cm,>=stealth]
  \fill[red!30] (0,0) -- (3,0) -- (1.5,1.5) -- cycle;
  \fill[blue!30] (3,0) -- (6,0) -- (4.5,1.5) -- cycle;
  \fill[blue!30] (0,3) -- (1.5,1.5) -- (3,3) -- (1.5,4.5) -- cycle;
  \fill[red!30] (3,3) -- (4.5,1.5) -- (6,3) -- (4.5,4.5) -- cycle;
  \fill[red!30] (0,6) -- (1.5,4.5) -- (3,6) -- cycle;
  \fill[blue!30] (3,6) -- (4.5,4.5) -- (6,6) -- cycle;

  \draw[dashed,gray] (0,3) -- (3,0);
  \draw[dashed,gray] (0,6) -- (6,0);
  \draw[dashed,gray] (3,6) -- (6,3);
  \draw[dashed,gray] (0,3) -- (3,6);
  \draw[dashed,gray] (0,0) -- (6,6);
  \draw[dashed,gray] (3,0) -- (6,3);
  \draw (0,0) rectangle (6,6);
  \draw[->] (-0.2,0) -- (6.45,0) node[right] {$x$};
  \draw[->] (0,-0.2) -- (0,6.45) node[above] {$t$};
  \draw (3,0) -- (3,0.1) node[below=3pt] {$\pi$};
  \draw (6,0) -- (6,0.1) node[below=3pt] {$2\pi$};
  \draw (0,3) -- (0.1,3) node[left=3pt] {$\pi$};
  \draw (0,6) -- (0.1,6) node[left=3pt] {$2\pi$};
  \node[below left] at (0,0) {$0$};

  \node at (1.5,0.5) {$+1$};
  \node at (4.5,0.5) {$-1$};
  \node at (1.5,3) {$-1$};
  \node at (4.5,3) {$+1$};
  \node at (1.5,5.5) {$+1$};
  \node at (4.5,5.5) {$-1$};

  \node[anchor=west] at (7.1,6.0) {$\calM_{q_0}=(P\times P)\cup(P^c\times P^c)$};
  \node[anchor=west] at (7.1,4.8) {$G_\T=(\Theta^+,\Theta^-)^{-1}(\calM_{q_0})$};
  \fill[red!30] (7.1,3.5) rectangle (7.5,3.9);
  \node[anchor=west] at (7.7,3.7) {$q_0(\Theta^+)=q_0(\Theta^-)=+1$};
  \fill[blue!30] (7.1,2.4) rectangle (7.5,2.8);
  \node[anchor=west] at (7.7,2.6) {$q_0(\Theta^+)=q_0(\Theta^-)=-1$};
  \node[anchor=west] at (7.1,1.3) {$\partial_tu=q_0(\Theta^+)-q_0(\Theta^-)=0$};
\end{tikzpicture}

\caption{The torus counterexample. Red and blue cells mark the common profile values $+1$ and $-1$, respectively. Their union is $G_\T$, where $\partial_tu=0$, although the observation region satisfies GCC.}\label{fig:torus}
\end{figure}

\subsection{Propagated profiles}\label{subsec:aos-profiles}

In \eqref{eq:tor-gen}, both branches read the same profile. Other propagation geometries can change the profile when passing from one branch to the other: choosing the first branch profile $f$ determines the second, $g$, and the boundary may impose a compatibility relation between them.

We write the \emph{branch profiles} as $f_\alpha:=L_\alpha q$, $\alpha\in\{1,\ldots,J\}$, where $L_\alpha$ are fixed linear maps defined on $X$.\footnote{The torus construction and the caustic construction in \cite[Section~2]{AOSII} use, respectively,
\begin{equation*}
(f,g)=(q,q) \textrm{ \; and \; }
 (f,g)=(q,\calH q).
\end{equation*}
The corresponding maps are $(L_1,L_2)=(\Id,\Id)$ and $(\Id,\calH)$. In the second model, $\calH$ has multiplier $-i\sgn n$ on $e^{2\pi in\theta}$. The chosen quarter-period boundary shift also requires $f=\tau_{1/4}g$, where $\tau_aq(\theta)=q(\theta+a)$. Thus $q$ must satisfy $q=\tau_{1/4}\calH q$.} A branch at scale $\varepsilon$ reads its branch profile at the propagation phase and carries an amplitude:
\begin{equation*}
\underbrace{q\in X}_{\text{profile}}
\ \longmapsto\
\underbrace{f_\alpha:=L_\alpha q}_{\text{branch profile}}
\ \longmapsto\
\underbrace{A_{\alpha,\varepsilon}(t,x)f_\alpha\bigl(\Theta_{\alpha,\varepsilon}(t,x)\bigr)}_{\text{branch contribution on spacetime}}.
\end{equation*}
We collect this information in the following definition. $\calT_\varepsilon q$ is designed to approximate $u_t$.

\begin{defi}\label{def:model}
Fix a Hilbert space\footnote{All generated approximate velocities and waves are real-valued; the real part in \eqref{eq:reconstruct} also allows complex Fourier representations of their branches. We regard $X$ as a real Hilbert space when using such representations.} $X$ of profiles, independently of scale. Fix a profile $q\in X$ and write its branch profiles as $f_\alpha:=L_\alpha q$. A propagation model specifies a family of real-linear operators, one for each admissible scale $\varepsilon$, of the form
\begin{equation}\label{eq:reconstruct}
\calT_\varepsilon q(t,x)
 =\Real\sum_{\alpha=1}^J A_{\alpha,\varepsilon}(t,x)
 f_\alpha\bigl(\Theta_{\alpha,\varepsilon}(t,x)\bigr),
 \qquad\text{where }f_\alpha:=L_\alpha q.
\end{equation}
The data and requirements are as follows:
\begin{enumerate}[label=\textup{(\roman*)},leftmargin=9mm]
\item The number of branches $J$ and the maps $L_\alpha$, defined on $X$, are fixed.
\item The propagation phases $\Theta_{\alpha,\varepsilon}$ and branch amplitudes $A_{\alpha,\varepsilon}$ may vary with the scale.
\item The operators $\calT_\varepsilon:X\to L^2((0,T)\times\Omega)$ satisfy, for every $q\in X$,
\begin{equation*}
\|\calT_\varepsilon q\|_{L^2((0,T)\times\Omega)}\le C\|q\|_X.
\end{equation*}
\item We group branch contributions localized in the same region into a {\it packet}. For their index set $I\subseteq\{1,\ldots,J\}$, the packet is
\begin{equation*}
\Real\sum_{\alpha\in I}A_{\alpha,\varepsilon}(t,x)f_\alpha\bigl(\Theta_{\alpha,\varepsilon}(t,x)\bigr).
\end{equation*}
\end{enumerate}
\end{defi}

The representation above specifies the branch structure. Velocity approximation and localization require appropriate choices of profiles, phases and amplitudes, consistent with propagation and the boundary condition. See Appendix~\ref{sec:supp-aos} for supplementary details.

\subsection{Symmetry relations}\label{subsec:aos-symmetry}

We next identify the geometric relation that makes the propagated profiles cancel. There are two regions to distinguish:
\begin{itemize}[leftmargin=6mm]
\item The matching region $\calM_q$ lies in the profile variables.
\item The matching region $\Gamma_\varepsilon$ lies in spacetime.
\end{itemize}

Again we look at \cite{NWX}. Let $P_+$ and $P_-$ be measurable sets in $\T$. Their transported sets $(\Theta^+)^{-1}(P_+)$ and $(\Theta^-)^{-1}(P_-)$ lie in spacetime. Observable symmetry compares their traces on $G$:
\begin{equation*}
\1_G\1_{P_+}(\Theta^+)=\1_G\1_{P_-}(\Theta^-).
\end{equation*}
Taking $P_+=P_-=P$ and $G=G_\T$ from \eqref{eq:tor-match} recovers the example.
\vspace{2mm}

\noindent\textit{Step 1. Balance between branch profiles: $\calM_q$.}
Fix the weights $w_\alpha$ of the symmetry between the branch profiles. Write $\theta=(\theta_1,\ldots,\theta_J)$ for the tuple of profile coordinates and choose a matching region
\begin{equation}\label{eq:ref-balance}
\calM_q\subseteq\left\{\theta:\Real\sum_{\alpha=1}^J w_\alpha(\theta)f_\alpha(\theta_\alpha)=0\right\}.
\end{equation}
We require a nontrivial balance: on a positive-measure part of $\calM_q$, at least two real branch contributions are nonzero and cancel. For two real branch profiles with $w_1=1$ and $w_2=-1$, this is the equality
\begin{equation*}
f_1(\theta_1)=f_2(\theta_2),
\qquad \forall(\theta_1,\theta_2)\in\calM_q.
\end{equation*}
Thus $\calM_q$ records which profile values are matched, before choosing a physical scale.
\vspace{2mm}

\noindent\textit{Step 2. Matching in spacetime: $\Gamma_\varepsilon$.}
We now pass from profile coordinates to spacetime. At scale $\varepsilon$, the tuple of phases and the spacetime matching region are
\begin{equation}\label{eq:matching-regions}
\left\{\begin{aligned}
\Theta_\varepsilon(t,x)&:=\bigl(\Theta_{1,\varepsilon}(t,x),\ldots,\Theta_{J,\varepsilon}(t,x)\bigr),\\
\Gamma_\varepsilon&:=\{(t,x)\in(0,T)\times\Omega:\Theta_\varepsilon(t,x)\in\calM_q\}.
\end{aligned}\right.
\end{equation}

\Needspace{6\baselineskip}
\noindent\textit{Step 3. Preservation of the balance by amplitudes $A_{\alpha,\varepsilon}$.}
After composition with the phases, the weights in \eqref{eq:ref-balance} become $w_\alpha\bigl(\Theta_\varepsilon(t,x)\bigr)$. A sufficient condition on $\Gamma_\varepsilon$ is
\begin{equation}\label{eq:balance-amp}
A_{\alpha,\varepsilon}(t,x)=a_\varepsilon(t,x)w_\alpha\bigl(\Theta_\varepsilon(t,x)\bigr),
\qquad a_\varepsilon(t,x)\in\R.
\end{equation}
The \textit{real factor} $a_\varepsilon(t,x)$ may depend on spacetime and scale. Substituting \eqref{eq:balance-amp} into \eqref{eq:reconstruct} and using \eqref{eq:ref-balance}, we obtain
\begin{equation}\label{eq:balance-transfer}
\calT_\varepsilon q
=a_\varepsilon\Real\sum_{\alpha=1}^J
w_\alpha(\Theta_\varepsilon)f_\alpha(\Theta_{\alpha,\varepsilon})=0
\quad\text{on }\Gamma_\varepsilon.
\end{equation}
When $\calT_\varepsilon q$ consists of several disjoint packets, the matching region for a given balance is restricted to the corresponding packet region.

\begin{example}\label{ex:matching}
In the two-branch case $J=2$, suppose the real profiles $f=L_1q$ and $g=L_2q$ take finitely many values. Write
\begin{gather*}
f=\sum_c c\,\1_{F_c},\qquad g=\sum_c c\,\1_{J_c},\\
F_c:=\{f=c\},\quad J_c:=\{g=c\}.
\end{gather*}
Choose $w_1=1$ and $w_2=-1$. Equal values give the reference matching region
\begin{equation*}
\mathcal M_q=\bigcup_c(F_c\times J_c),\qquad
f(\theta_1)-g(\theta_2)=0\quad\text{on }F_c\times J_c.
\end{equation*}
At scale $\varepsilon$, each product $F_c\times J_c$ gives the spacetime set
\begin{equation*}
\{(t,x):\Theta_{1,\varepsilon}(t,x)\in F_c,\ 
\Theta_{2,\varepsilon}(t,x)\in J_c\}.
\end{equation*}
Taking their union gives $\Gamma_\varepsilon$, where the two transported profile values agree. To retain their cancellation, suppose that the amplitudes are
\begin{equation*}
A_{1,\varepsilon}(t,x)=A_\varepsilon(t,x),\qquad
A_{2,\varepsilon}(t,x)=-A_\varepsilon(t,x),\qquad A_\varepsilon(t,x)\in\R.
\end{equation*}
Consequently,
\begin{equation*}
\calT_\varepsilon q(t,x)
=A_\varepsilon(t,x)\Bigl[f\bigl(\Theta_{1,\varepsilon}(t,x)\bigr)-g\bigl(\Theta_{2,\varepsilon}(t,x)\bigr)\Bigr]
=0\quad\text{on }\Gamma_\varepsilon.
\end{equation*}
\end{example}

\subsection{A scaled family of generated approximate velocities}\label{subsec:aos-scales}

Choose an admissible coordinated family of propagation parameters and label it by a scalar $\varepsilon$. Along a sequence $\varepsilon_k\to0^+$, define the \emph{generated approximate velocities}
\begin{equation*}
S_k:=\calT_{\varepsilon_k}q
=\Real\sum_{\alpha=1}^J A_{\alpha,\varepsilon_k}
f_\alpha(\Theta_{\alpha,\varepsilon_k}).
\end{equation*}
The phases $\Theta_{\alpha,\varepsilon_k}$ and amplitudes $A_{\alpha,\varepsilon_k}$ change with the scale.

In Example~\ref{ex:matching}, the reference matching region gives a physical region at every scale:
\begin{equation*}
\Gamma_k:=\{(t,x):\bigl(\Theta_{1,\varepsilon_k}(t,x),\Theta_{2,\varepsilon_k}(t,x)\bigr)\in\mathcal M_q\}.
\end{equation*}
Consequently,
\begin{equation*}
S_k=A_{\varepsilon_k}\bigl[f(\Theta_{1,\varepsilon_k})-g(\Theta_{2,\varepsilon_k})\bigr]=0
\quad\text{on }\Gamma_k.
\end{equation*}
Thus the matching cells can move and become finer; see Figure~\ref{fig:aos-family}.

\begin{figure}[htbp]
\centering
\includegraphics[width=.9\textwidth,
  trim={0mm 8mm 0mm 2mm},
  clip]{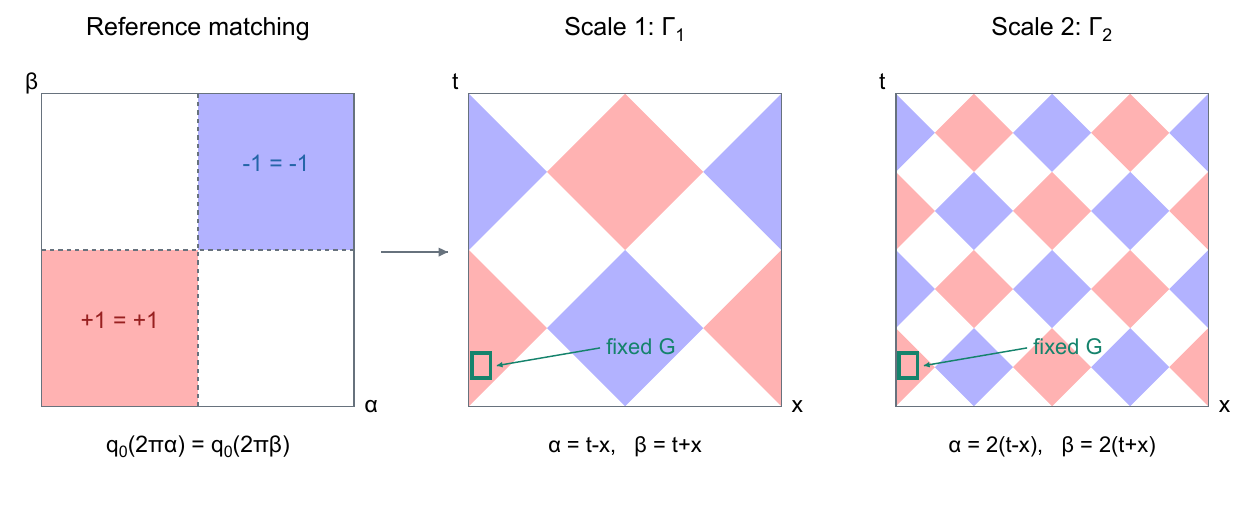}
\caption{One reference matching region, two physical scales. Red and blue denote matching values $+1$ and $-1$. The reference coordinates have period one, with $q_0(2\pi\alpha)=q_0(2\pi\beta)$. Different propagation phases give $\Gamma_1$ and $\Gamma_2$; the outlined patch illustrates a common part of a fixed observation region. The cells describe cancellation of generated approximate velocities.}\label{fig:aos-family}
\end{figure}

\begin{remark}\label{rem:scales}
In Section~\ref{sec:disc}, the profile $q$ and its half-turn relation remain fixed. The radial and temporal matching scale is $K_k^{-1}$, while the packet concentration scale and sector widths satisfy
\begin{equation*}
Q_k^{-1}\ll b_k\le a_k\longrightarrow0.
\end{equation*}
We choose the sectors, then the finite spectral cutoffs, and finally $K_k$ sufficiently large for exact-wave realization. We label the resulting family by $\varepsilon_k:=K_k^{-1}$.
\end{remark}

\subsection{Asymptotic realization}\label{subsec:aos-real}

The matching regions $\Gamma_k$ are now available, but observability concerns one fixed set $G$. We therefore arrange the matching regions together. Write $Q_T=(0,T)\times\Omega$. Each $S_k$ may contain several packets. Let $\mathcal U_k\subset Q_T$ be a measurable region containing all their supports, so that
\begin{equation*}
S_k=\1_{\mathcal U_k}S_k,\qquad\text{namely }S_k=0\text{ outside }\mathcal U_k.
\end{equation*}
In the radial disc construction, $\mathcal U_k$ is the union of the two packet regions, given by the spacetime lifts of opposite sectors; see Figure~\ref{fig:aos-packets}.

\begin{figure}[htbp]
\centering
\begin{tikzpicture}[x=0.95cm,y=0.95cm,>=stealth,font=\small]
  \node[anchor=west] at (0,4.7) {(a) Two packet regions at scale $k$};
  \node[anchor=west] at (8,4.7) {(b) Generated approximate velocity};

  \fill[gray!14] (0.6,0) -- (2,0) -- (3.2,4) -- (1.8,4) -- cycle;
  \fill[gray!14] (3.8,0) -- (5.2,0) -- (4.88,4) -- (3.48,4) -- cycle;
  \fill[red!30] (0.85,0) -- (1.12,0) -- (2.32,4) -- (2.05,4) -- cycle;
  \fill[blue!30] (1.48,0) -- (1.74,0) -- (2.94,4) -- (2.68,4) -- cycle;
  \draw[gray!65] (0.6,0) -- (1.8,4) (2,0) -- (3.2,4);
  \draw[gray!65] (3.8,0) -- (3.48,4) (5.2,0) -- (4.88,4);
  \draw[->] (-0.15,0) -- (6.1,0) node[right] {$x$};
  \draw[->] (0,-0.1) -- (0,4.35) node[left] {$t$};
  \draw[densely dashed] (0,2) node[left] {$t_*$} -- (5.7,2);
  \draw[->,gray!70] (5.75,2) -- (7.25,2);
  \node[align=center] at (6.7,2.45) {restrict to\\this section};
  \node at (2.5,4.17) {region 1};
  \node at (4.2,4.17) {region 2};
  \node[anchor=west] at (0,-0.55) {gray: $\mathcal U_k$};
  \fill[red!30] (2.7,-0.7) rectangle (2.95,-0.42);
  \fill[blue!30] (3.02,-0.7) rectangle (3.27,-0.42);
  \node[anchor=west] at (3.4,-0.55) {matching: $\Gamma_k$};

  \begin{scope}[xshift=7.6cm]
    \fill[gray!14] (1.2,0.7) rectangle (2.6,3.3);
    \fill[gray!14] (3.64,0.7) rectangle (5.04,3.3);
    \fill[red!30] (1.45,0.7) rectangle (1.72,3.3);
    \fill[blue!30] (2.08,0.7) rectangle (2.34,3.3);
    \draw[->] (0,2) -- (6.1,2) node[right] {$x$};
    \draw[->] (0,0.7) -- (0,3.65);
    \node[anchor=west] at (0.18,3.65) {$S_k(t_*,x)$};
    \node[left] at (0,2) {$0$};
    \draw[thick] (0,2) -- (1.2,2)
      .. controls (1.27,2.8) and (1.35,1.25) .. (1.45,2)
      -- (1.72,2)
      .. controls (1.81,2.8) and (1.95,1.15) .. (2.08,2)
      -- (2.34,2)
      .. controls (2.41,2.75) and (2.52,1.4) .. (2.6,2)
      -- (3.64,2)
      .. controls (3.72,2.75) and (3.79,2.75) .. (3.88,2)
      .. controls (3.98,1.15) and (4.08,1.15) .. (4.17,2)
      .. controls (4.27,3.05) and (4.37,3.05) .. (4.47,2)
      .. controls (4.57,1.15) and (4.67,1.15) .. (4.77,2)
      .. controls (4.87,2.65) and (4.96,2.65) .. (5.04,2)
      -- (5.7,2);
    \node at (3,0.1) {$S_k=\1_{\mathcal U_k}S_k$};
    \node at (3,-0.55) {$\1_{\Gamma_k}S_k=0$};
  \end{scope}
\end{tikzpicture}

\caption{Schematic at one fixed scale $k$. Left: two packet regions with union $\mathcal U_k$ (gray), and the matching region $\Gamma_k$ (red and blue) on a spacetime section. Right: a trace of $S_k$ at the marked time $t_*$, vanishing outside $\mathcal U_k$ and on $\Gamma_k$.}\label{fig:aos-packets}
\end{figure}

\Needspace{12\baselineskip}
It is useful to keep in view the four ``asymptotic features'' of our constructions:
\begin{itemize}[leftmargin=6mm]
\item \emph{A family of functions.} As $\varepsilon_k\to0^+$, the generated approximate velocities $S_k$ carry the same profile symmetry. At each scale we construct a corresponding exact wave $u_k$.
\item \emph{Converging observation regions.} We arrange observation regions $G^{(N)}$ for the first $N$ scales so that they approach one fixed region $G$.
\item \emph{Small additional observation region.} We may allow $G^{(N)}$ to meet a packet outside $\Gamma_k$, in a region $\calE_k$ where the observed energy of $S_k$ tends to zero.
\item \emph{Exact-wave realization with asymptotic error.} The difference $r_k:=\partial_tu_k-S_k$ tends to zero.
\end{itemize}

\subsubsection{Finite-stage arrangement to find $G^{(N)}$}\label{subsubsec:aos-stages}

We first consider observation only on the matching part of each packet. Let $G^{(N)}$ accommodate the first $N$ generated approximate velocities $S_1,\ldots,S_N$, retaining the earlier matching at every later stage:\footnote{All set inclusions are understood modulo spacetime null sets.}
\begin{equation}\label{eq:aos-stages}
G^{(N)}\cap\mathcal U_k\subseteq\Gamma_k,
\qquad \forall k\in\{1,\ldots,N\}.
\end{equation}
Since $S_k=0$ on $\Gamma_k$ and outside $\mathcal U_k$, this gives
\begin{equation*}
\1_{G^{(N)}}S_k=0,\qquad \forall k\in\{1,\ldots,N\}.
\end{equation*}
The index $k$ fixes one function and its scale; $N$ counts how many such functions the observation region accommodates. We choose the finite-stage regions so that
\begin{equation}\label{eq:aos-set-limit}
G^{(N)}\longrightarrow G
\quad\text{via}\quad
|G^{(N)}\mathbin\triangle G|\longrightarrow0.
\end{equation}
To pass to $G$, fix $k$ and let $N\to\infty$. Indeed,
\begin{equation*}
|(G\cap\mathcal U_k)\setminus\Gamma_k|
\le |G\mathbin\triangle G^{(N)}|\longrightarrow0.
\end{equation*}
Thus $G\cap\mathcal U_k\subseteq\Gamma_k$ and $\1_GS_k=0$. The corresponding passage in $L^2$ is
\begin{equation*}
\|(\1_{G^{(N)}}-\1_G)S_k\|_{L^2(Q_T)}^2
=\int_{G^{(N)}\triangle G}|S_k|^2\dd x\dd t
\longrightarrow0
\qquad\text{for each fixed }k.
\end{equation*}

\Needspace{11\baselineskip}
\begin{remark}\label{rem:aos-limit}
The radial construction here and the glancing construction in \cite{AOSII} use decreasing finite-stage observation regions. In that case,
\begin{equation*}
G=\bigcap_{N\ge1}G^{(N)}\subseteq G^{(k)}
\end{equation*}
makes the inclusion immediate.
\end{remark}

\subsubsection{Additional observation regions $\calE_k$}\label{subsubsec:aos-additional}

A geometric estimate may require observing part of a packet outside its matching region. We then allow an \emph{additional observation region} $\calE_k$. In our constructions, $|\calE_k|\to0$. We replace \eqref{eq:aos-stages} by
\begin{equation}\label{eq:aos-stages-add}
G^{(N)}\cap\mathcal U_k\subseteq\Gamma_k\cup\calE_k,
\qquad \forall k\in\{1,\ldots,N\}.
\end{equation}
Observation inside $\Gamma_k$ still contributes zero. Hence
\begin{equation*}
\|\1_{G^{(N)}}S_k\|_{L^2(Q_T)}\le \|\1_{\calE_k}S_k\|_{L^2(Q_T)},
\quad \forall k\in\{1,\ldots,N\}.
\end{equation*}
Fixing $k$ and passing to $G$ as above gives the same bound on the fixed observation region. We therefore require
\begin{equation}\label{eq:aos-region}
G\cap\mathcal U_k\subseteq\Gamma_k\cup\calE_k
\quad\text{ and }\quad \|\1_{\calE_k}S_k\|_{L^2(Q_T)}\longrightarrow0.
\end{equation}
Consequently,
\begin{equation*}
\|\1_GS_k\|_{L^2(Q_T)}\le\|\1_{\calE_k}S_k\|_{L^2(Q_T)}\longrightarrow0.
\end{equation*}

The torus and radial disc examples take $\calE_k=\varnothing$. For example, in the glancing reduction of \cite[Section~2]{AOSII}, this region is near the corners of the matching rectangles.

\begin{figure}[htbp]
\centering
\begin{tikzpicture}[x=1cm,y=1cm,>=stealth,font=\small]
  \fill[gray!14] (0,0) rectangle (6.4,1.25);
  \draw[gray!65] (0,0) rectangle (6.4,1.25);
  \fill[red!30] (0.6,0) rectangle (1.9,1.25);
  \fill[blue!30] (3,0) rectangle (4.3,1.25);
  \fill[orange!45] (4.3,0) rectangle (4.8,1.25);
  \node at (1.25,0.64) {$\Gamma_k$};
  \node at (3.65,0.64) {$\Gamma_k$};
  \node[anchor=south] at (4.55,1.5) {$\calE_k$};
  \draw[->] (4.55,1.5) -- (4.55,1.05);
  \node[anchor=west] at (0,1.65) {A section through $\mathcal U_k$};
  \node[anchor=west,align=left] at (7.05,1.05)
    {$G\cap\mathcal U_k\subseteq\Gamma_k\cup\calE_k$};
  \node[anchor=west,align=left] at (7.05,0.35)
    {$\|\1_GS_k\|_{L^2}\le\|\1_{\calE_k}S_k\|_{L^2}$};
  \node[anchor=west] at (0,-0.48) {matching observation: $S_k=0$};
  \node[anchor=west] at (7.05,-0.48) {additional observation: small in $L^2$};
\end{tikzpicture}

\caption{Additional observation on a schematic section through one packet region. Matching observation contributes zero; the orange part lies in $\calE_k$ and need not do so. Its contribution $\|\1_{\calE_k}S_k\|_{L^2(Q_T)}$ tends to zero.}\label{fig:aos-additional}
\end{figure}

\subsubsection{Exact waves $u_k$ and their asymptotic errors}\label{subsubsec:aos-waves}

It remains to realize the generated approximate velocities $S_k$ by exact waves $u_k$. We divide this realization into two steps.
\begin{itemize}[leftmargin=6mm]
\item \emph{Construction of exact waves.} For convenience of construction, we first seek exact waves $u_k$ such that
\begin{equation*}
\partial_tu_k=S_k+\widetilde r_k \textrm{ \;\; with \; }
 \|\widetilde r_k\|_{L^2(Q_T)}\longrightarrow0.
\end{equation*}
We also need $0<c\le E[u_k](0)\le C$, with constants independent of $k$.

\item \emph{Normalization.} Set
\begin{equation*}
c_k:=E[u_k](0)^{-1/2},
\qquad
(u_k,S_k)\longmapsto(c_ku_k,c_kS_k),
\textrm{ \; and \; }
  r_k:=c_k\widetilde r_k.
\end{equation*}
In \eqref{eq:reconstruct}, this means replacing the branch amplitudes by
\begin{equation*}
A_{\alpha,\varepsilon_k}\longmapsto c_kA_{\alpha,\varepsilon_k} \textrm{ \; and \; }
 \calT_{\varepsilon_k}\longmapsto c_k\calT_{\varepsilon_k}.
\end{equation*}
We continue to write $u_k$, $S_k$, and $\calT_{\varepsilon_k}$ for the normalized quantities. With this convention, asymptotic realization reads $E[u_k](0)=1$ and
\begin{equation}\label{eq:aos-real}
\partial_tu_k=S_k+r_k \textrm{ \; \; with \; }
\|r_k\|_{L^2(Q_T)}\longrightarrow0.
\end{equation}
\end{itemize}

\Needspace{22\baselineskip}
\subsection{Asymptotic observable symmetry}\label{subsec:aos-def}

We now collect the requirements on profile, symmetry, scale and asymptotics in the following definition.

\begin{defi}\label{def:aos}
Consider the operators $\{\calT_\varepsilon\}_{\varepsilon}$ of Definition~\ref{def:model}. Fix a profile $q\in X$ and write its branch profiles as $f_\alpha=L_\alpha q$. Let $(w_\alpha,\calM_q)$ be fixed symmetry data satisfying the nontrivial balance of Section~\ref{subsec:aos-symmetry}, in particular \eqref{eq:ref-balance}.

We say that a fixed measurable observation region $G\subset Q_T$ admits AOS if, along a sequence of scales $\varepsilon_k\to0^+$, the generated approximate velocities
\begin{equation*}
S_k:=\calT_{\varepsilon_k}q
=\Real\sum_{\alpha=1}^J A_{\alpha,\varepsilon_k}f_\alpha(\Theta_{\alpha,\varepsilon_k})
\end{equation*}
satisfy the following three properties.
\begin{enumerate}[label=\textup{(\roman*)},leftmargin=9mm]
\item \emph{The same symmetry at every scale.} The phases $\Theta_{\alpha,\varepsilon_k}$ transport the fixed branch profiles and symmetry data as in Section~\ref{subsec:aos-symmetry}. They give matching regions $\Gamma_k$ on which $S_k=0$.
\item \emph{One fixed observation region.} There are packet regions $\mathcal U_k$ supporting $S_k$ and additional observation regions $\calE_k$ such that
\begin{equation*}
G\cap\mathcal U_k\subseteq\Gamma_k\cup\calE_k \textrm{ \; and \; }
  \|\1_{\calE_k}S_k\|_{L^2(Q_T)}\longrightarrow0.
\end{equation*}
In our constructions, $G^{(N)}\to G$, and the same inclusion holds with $G^{(N)}$ in place of $G$ for $k\le N$, as in \eqref{eq:aos-stages-add}.
\item \emph{Asymptotic realization by exact waves.} There are solutions $u_k$ of \eqref{eq:wave} and remainders $r_k$ satisfying
\begin{equation*}
\left\{\begin{aligned}
& \partial_tu_k=S_k+r_k, \;\;\;\;\; \; E[u_k](0)=1, \\
&\|\1_GS_k\|_{L^2(Q_T)}\longrightarrow0,\\ 
&\|r_k\|_{L^2(Q_T)}\longrightarrow0.
\end{aligned}\right.
\end{equation*}
\end{enumerate}
\end{defi}

\subsection{Non-observability}\label{subsec:aos-nonobs}

AOS obstructs observability as follows.

\begin{prop}\label{prop:aos-obs}
If $G$ admits AOS in the sense of Definition~\ref{def:aos}, then \eqref{eq:obs} fails. More precisely, define
\begin{equation}\label{eq:zero-part}
Z_k:=\1_{\calE_k^c}S_k \textrm{ \; and \; }
  e_k:=r_k+\1_{\calE_k}S_k.
\end{equation}
Then $\partial_tu_k=Z_k+e_k$, $\1_GZ_k=0$, and $\|e_k\|_{L^2}\to0$.
\end{prop}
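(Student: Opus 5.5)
The argument is a direct decomposition followed by a contradiction with \eqref{eq:obs}; no new analysis is needed beyond the three properties in Definition~\ref{def:aos}.

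\emph{The decomposition.} Starting from property (iii), $\partial_tu_k=S_k+r_k$. We split $S_k=\1_{\calE_k^c}S_k+\1_{\calE_k}S_k$, which gives $\partial_tu_k=Z_k+e_k$ with $Z_k,e_k$ as in \eqref{eq:zero-part}.

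\emph{The identity $\1_GZ_k=0$.} Since $S_k=\1_{\mathcal U_k}S_k$, we have $\1_GZ_k=\1_{G\cap\mathcal U_k\cap\calE_k^c}S_k$. By property (ii), up to a null set, $G\cap\mathcal U_k\cap\calE_k^c\subseteq\Gamma_k$. By property (i), $S_k=0$ on $\Gamma_k$. Hence $\1_GZ_k=0$ almost everywhere.

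\emph{Smallness of $e_k$.} The triangle inequality gives
\begin{equation*}
\|e_k\|_{L^2(Q_T)}\le\|r_k\|_{L^2(Q_T)}+\|\1_{\calE_k}S_k\|_{L^2(Q_T)}.
\end{equation*}
Both terms tend to zero, by (iii) and (ii) respectively.

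\emph{Failure of \eqref{eq:obs}.} Restricting the decomposition to $G$ and using $\1_GZ_k=0$, we get $\1_G\partial_tu_k=\1_Ge_k$. Therefore
\begin{equation*}
\int_G|\partial_tu_k|^2\dd x\dd t\le\|e_k\|_{L^2(Q_T)}^2\longrightarrow0,
\qquad\text{while } E[u_k](0)=1.
\end{equation*}
So no constant $C_G$ can satisfy \eqref{eq:obs} for every $u_k$.

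The only point needing care, and the expected obstacle, is the almost-everywhere bookkeeping: the inclusion in (ii) holds only modulo null sets, and $S_k=0$ on $\Gamma_k$ should be read as holding a.e. These suffice because every quantity involved is an $L^2$ norm, which is insensitive to null sets.
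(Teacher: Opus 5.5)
Your proposal is correct and follows essentially the same route as the paper: it too derives $\1_GZ_k=0$ from the support of $S_k$ in $\mathcal U_k$, the inclusion $G\cap\mathcal U_k\subseteq\Gamma_k\cup\calE_k$, and the vanishing of $S_k$ on $\Gamma_k$, and then bounds $\|\1_G\partial_tu_k\|_{L^2}\le\|r_k\|_{L^2}+\|\1_{\calE_k}S_k\|_{L^2}\to0$ against $E[u_k](0)=1$. You simply spell out the set-inclusion and null-set bookkeeping that the paper compresses into the phrase ``the transported balance gives $\1_GZ_k=0$''.
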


\begin{proof}
The transported balance gives $\1_GZ_k=0$. Hence
\begin{equation*}
\|\1_G\partial_tu_k\|_{L^2}
\le\|e_k\|_{L^2}
\le\|r_k\|_{L^2}+\|\1_{\calE_k}S_k\|_{L^2}\longrightarrow0.
\end{equation*}
Since $E[u_k](0)=1$, no constant can satisfy \eqref{eq:obs}.
\end{proof}

\section{An explicit example of AOS on the disc}\label{sec:disc}

We now construct the disc example in Theorem~\ref{thm:disc}. The aim is to recover the profile matching of Section~\ref{sec:aos} while satisfying the circular boundary condition. The construction proceeds as follows.
\begin{itemize}[leftmargin=*,itemsep=3pt]
\item Section~\ref{subsec:disc-polar} prepares the exact Dirichlet modes from the wave equation in polar coordinates.
\item Section~\ref{subsec:disc-modes} coordinates their radial indices and angular amplitudes, obtaining propagated profiles with controlled approximation errors.
\item Section~\ref{subsec:disc-symmetry-geometry} determines the branch profiles. Proposition~\ref{prop:disc-support} gives sector support and profile matching for $S_k$; Proposition~\ref{prop:disc-observation} assembles the scales into $G^{(N)}\downarrow G$.
\item Proposition~\ref{prop:disc-real} constructs exact waves $u_k$ and proves the asymptotic realization formula. Theorem~\ref{thm:disc-gcc} then chooses the same parameters to obtain \hyperref[eq:gcc]{GCC} for $G$.
\end{itemize}

\setcounter{subsection}{-1}
\subsection{Obstructions to the earlier construction}\label{subsec:disc-obstruction}

Let $\xi\in\mathbb S^1$ and consider a wave in $\R^2$ of the form
\begin{equation*}
u(t,x)=F_+(t-x\cdot\xi)-F_-(t+x\cdot\xi).
\end{equation*}
Writing $f=F_+'$ and $g=F_-'$, we obtain
\begin{equation*}
\partial_tu(t,x)=f(t-x\cdot\xi)-g(t+x\cdot\xi).
\end{equation*}
Unlike a straight reflecting boundary, the circle has a normal that varies with the contact point. Restricting the preceding wave to it gives boundary values
\begin{equation*}
F_+\bigl(t-\cos(\theta-\theta_\xi)\bigr)-F_-\bigl(t+\cos(\theta-\theta_\xi)\bigr),
\qquad \xi=(\cos\theta_\xi,\sin\theta_\xi),
\end{equation*}
which generally fail the Dirichlet boundary condition. Rotation symmetry suggests separating the angular variable and allowing the radial equation to select oscillations compatible with the boundary.

\subsection{Dirichlet waves in polar coordinates}\label{subsec:disc-polar}

We collect the standard spectral facts for the disc used in the construction; see \cite{Watson}. For the Fourier--Bessel description of disc eigenfunctions in the context of Schr{\"o}dinger observability, see \cite[Section~1.3]{ALM2016}. Write $x=(r\cos\theta,r\sin\theta)$. In polar coordinates, the wave equation \eqref{eq:wave} becomes
\begin{equation}\label{eq:disc-polar}
\begin{cases}
u_{tt}-u_{rr}-\dfrac1r u_r-\dfrac1{r^2}u_{\theta\theta}=0,&\;\;\forall r\in(0,1),\\
u(t,1,\theta)=0,
\end{cases}
\end{equation}
where $t\in\R$ and $\theta\in\R/2\pi\Z$. Since the coefficients are independent of $\theta$, we expand a smooth wave as
\begin{equation}\label{eq:disc-angular-exp}
u(t,r,\theta)=\sum_{m\in\Z}u_m(t,r)e^{im\theta}.
\end{equation}

Each angular Fourier component satisfies
\begin{equation}\label{eq:disc-radial-wave}
\partial_t^2u_m-\partial_r^2u_m-\frac1r\partial_ru_m+\frac{m^2}{r^2}u_m=0
\quad\textrm{with}\quad u_m(t,1)=0.
\end{equation}
To find elementary oscillations, take a single wave frequency. The resulting real modes have the form
\begin{equation*}
u(t,r,\theta)=\Real[e^{-i\omega t}e^{im\theta}R(r)],  \textrm{ \; for some \; }
\omega>0,  \;\;  m\in\Z,
\end{equation*}
where $R$ solves
\begin{equation*}
R''+\frac1rR'+\left(\omega^2-\frac{m^2}{r^2}\right)R=0 \textrm{ \; with \; }
 R(1)=0.
\end{equation*}
We require $R(r)e^{im\theta}$ to extend smoothly across the origin. Among solutions of this radial equation, this selects the regular solution: $R(r)=O(r^{|m|})$ for $m\ne0$, while $R'(0)=0$ for $m=0$.
\vspace{2mm}

With $z=\omega r$ and $y(z)=R(z/\omega)$, the radial equation becomes Bessel's equation, with derivatives taken in the $z$-variable:
\begin{equation}\label{eq:bessel-ode}
z^2y''+zy'+(z^2-m^2)y=0.
\end{equation}
\begin{defi}\label{def:disc-bessel}
The \emph{Bessel function of the first kind of order $|m|$}, denoted by $J_{|m|}$, is the regular solution of \eqref{eq:bessel-ode} normalized by
\begin{equation*}
J_{|m|}(z)=\frac{z^{|m|}}{2^{|m|} |m|!}\bigl(1+O(z^2)\bigr)
\qquad\text{as }z\to0^+.
\end{equation*}
In particular, $J_0(0)=1$, whereas $J_{|m|}$ vanishes to order $|m|$ at the origin when $|m|\ge1$. Denote its positive zeros, in increasing order, by
\begin{equation*}
0<j_{|m|,1}<j_{|m|,2}<\cdots
\qquad\text{with}\qquad j_{|m|,n}\to\infty.
\end{equation*}
\end{defi}

\begin{lemma}\label{lem:bessel-radial}
Let $m\in\Z$ and $\omega>0$. Every solution of
\begin{equation}\label{eq:disc-radial-ode}
R''+\frac1rR'+\left(\omega^2-\frac{m^2}{r^2}\right)R=0
\end{equation}
that remains bounded at the origin has the form $R(r)=cJ_{|m|}(\omega r)$. A nonzero such solution satisfies $R(1)=0$ if and only if $\omega=j_{|m|,n}$ for some $n\ge1$. These positive zeros are simple, and
\begin{equation}\label{eq:bessel-norm}
\int_0^1J_{|m|}(j_{|m|,n}r)^2r\dd r
=\frac12J_{|m|+1}(j_{|m|,n})^2>0.
\end{equation}
\end{lemma}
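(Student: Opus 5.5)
The plan is to reduce \eqref{eq:disc-radial-ode} to Bessel's equation \eqref{eq:bessel-ode} by the substitution $z=\omega r$, $y(z)=R(z/\omega)$, and then treat the three claims in turn: regularity selection, the zero condition, and the norm identity.

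\textbf{Bounded solutions.} Equation \eqref{eq:bessel-ode} has a regular singular point at $z=0$ with indicial roots $\pm|m|$. Frobenius' method gives one solution of the form $z^{|m|}$ times a power series. After normalization, this solution is $J_{|m|}$. For the second solution I would invoke reduction of order via the Wronskian. Abel's formula gives $W(z)=c/z$, and $W\ne0$ for any second independent solution. So a second solution behaves like $z^{-|m|}$ when $m\ne0$ and like $\log z$ when $m=0$; in both cases it is unbounded at $0$. Any solution bounded at the origin therefore has zero coefficient on this second solution, which gives $R=cJ_{|m|}(\omega r)$. For nonzero $R$ we have $c\neq0$, so $R(1)=0$ holds iff $J_{|m|}(\omega)=0$, that is, iff $\omega=j_{|m|,n}$ for some $n$. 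Simplicity of the zeros follows from uniqueness for the ODE at a regular point $z_0>0$. If $J_{|m|}(z_0)=J_{|m|}'(z_0)=0$, then $J_{|m|}\equiv0$, which is a contradiction.

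\textbf{Norm identity.} Set $\lambda=j_{|m|,n}$ and $R_\mu(r)=J_{|m|}(\mu r)$. Each $R_\mu$ solves $(rR_\mu')'+(\mu^2r-m^2/r)R_\mu=0$. Multiplying the equations for $R_\mu$ and $R_\lambda$ crosswise and integrating over $(0,1)$ gives
\begin{equation*}
(\mu^2-\lambda^2)\int_0^1R_\mu R_\lambda r\dd r=\bigl[r(R_\mu'R_\lambda-R_\mu R_\lambda')\bigr]_0^1=-\mu J_{|m|}(\mu)\,\lambda J_{|m|}'(\lambda)\cdot(-1)\ldots
\end{equation*}
More precisely, the right-hand side at $r=1$ equals $\mu J_{|m|}'(\mu)J_{|m|}(\lambda)-\lambda J_{|m|}(\mu)J_{|m|}'(\lambda)=-\lambda J_{|m|}(\mu)J_{|m|}'(\lambda)$, because $J_{|m|}(\lambda)=0$. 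The boundary term at $r=0$ vanishes by regularity. I would then divide by $\mu^2-\lambda^2$ and let $\mu\to\lambda$, using L'Hôpital. This yields
\begin{equation*}
\int_0^1J_{|m|}(\lambda r)^2r\dd r=\tfrac12J_{|m|}'(\lambda)^2.
\end{equation*}
Finally, the recurrence $J_\nu'(z)=\frac{\nu}{z}J_\nu(z)-J_{\nu+1}(z)$ gives $J_{|m|}'(\lambda)=-J_{|m|+1}(\lambda)$ at a zero $\lambda$. To prove this recurrence, I would check that $z^{-\nu}J_\nu$ differentiates to $-z^{-\nu}J_{\nu+1}$, either from the series or by verifying that both sides solve the same equation with the same leading term. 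Positivity of the norm then follows from simplicity of the zero, since $J_{|m|}'(\lambda)\ne0$.

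\textbf{Main obstacle.} The only delicate points are the limiting step and the justification of the boundary terms at $r=0$. For the boundary terms, $rR_\mu'R_\lambda=O(r^{2|m|})$ when $m\neq0$, and it is $O(r^2)$ when $m=0$, so the terms vanish. For the limiting step, the integrand is smooth in $\mu$, so dominated convergence applies. Everything else is standard; the paper cites \cite{Watson}, so these facts could alternatively be quoted directly.
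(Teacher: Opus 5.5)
Your treatment of bounded solutions and of simplicity of the zeros matches the paper's. You also supply the reduction-of-order detail behind the paper's bare assertion that the second solution is logarithmic, or of order $z^{-|m|}$, at the origin.

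For the norm identity you take a different route. The paper integrates the exact derivative $\frac{\dd}{\dd z}\bigl[\tfrac12\bigl(z^2J_{|m|}'(z)^2+(z^2-|m|^2)J_{|m|}(z)^2\bigr)\bigr]=zJ_{|m|}(z)^2$ from $0$ to $j_{|m|,n}$ and divides by $j_{|m|,n}^2$. That uses a single zero and no limit. Your Lommel-type cross-multiplication costs an extra parameter $\mu$ and a L'H\^opital step. In exchange, taking $\mu=j_{|m|,n'}$ gives for free the orthogonality of $J_{|m|}(j_{|m|,n}r)$ and $J_{|m|}(j_{|m|,n'}r)$ in $L^2((0,1),r\dd r)$, which the paper invokes separately in Lemma~\ref{lem:disc-basis}.

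As written, however, your identity has the wrong sign. Since $R_\lambda(rR_\mu')'-R_\mu(rR_\lambda')'=-(\mu^2-\lambda^2)rR_\mu R_\lambda$, the correct statement is
\begin{equation*}
(\mu^2-\lambda^2)\int_0^1R_\mu R_\lambda\, r\dd r=\bigl[r(R_\mu R_\lambda'-R_\mu'R_\lambda)\bigr]_0^1=\lambda J_{|m|}(\mu)J_{|m|}'(\lambda).
\end{equation*}
L'H\^opital then gives $+\tfrac12J_{|m|}'(\lambda)^2$.

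With your sign, the boundary value $-\lambda J_{|m|}(\mu)J_{|m|}'(\lambda)$ leads to $-\tfrac12J_{|m|}'(\lambda)^2<0$ for a positive integral. So the conclusion you state does not follow from the computation you display. Fix the sign and delete the unfinished expression trailing the first display; with that correction the argument is complete.
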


\begin{proof}
The change of variable above reduces the equation to Bessel's equation. Its second independent solution is unbounded at zero, logarithmically for $|m|=0$ and as $z^{-|m|}$ for $|m|\ge1$. 
Thus boundedness selects $J_{|m|}$, and the boundary condition selects its positive zeros.

If $J_{|m|}$ and $J_{|m|}'$ both vanished at a positive point, uniqueness for the ordinary differential equation would give $J_{|m|}\equiv0$. Hence every positive zero is simple.
For the normalization, Bessel's equation gives
\begin{equation*}
\frac{\dd}{\dd z}
\left[\frac12\bigl(z^2J_{|m|}'(z)^2+(z^2-|m|^2)J_{|m|}(z)^2\bigr)\right]
=zJ_{|m|}(z)^2.
\end{equation*}
Integrating from $0$ to $j=j_{|m|,n}$, we use the local expansion to see that the contribution at zero vanishes. Since $J_{|m|}(j)=0$ and $J_{|m|}'(j)=-J_{|m|+1}(j)$, division by $j^2$ gives \eqref{eq:bessel-norm}.
\end{proof}

The Bessel zeros are therefore the frequencies allowed by the circular boundary. Define the normalized radial functions solving \eqref{eq:disc-radial-ode}, with $\omega=\omega_{m,n}$, and their wave frequencies by
\begin{equation}\label{eq:disc-radial-norm}
R_{|m|,n}(r):=\frac{\sqrt2J_{|m|}(j_{|m|,n}r)}{|J_{|m|+1}(j_{|m|,n})|}
\qquad\text{and}\qquad \omega_{m,n}:=j_{|m|,n}.
\end{equation}
These functions are real, regular at the origin, vanish at $r=1$, and satisfy
\begin{equation*}
\int_0^1|R_{|m|,n}(r)|^2r\dd r=1.
\end{equation*}
Combining them with the angular Fourier modes gives the following spectral basis.

\begin{lemma}[Dirichlet eigenfunctions in the disc]\label{lem:disc-basis}
Define
\begin{equation}\label{eq:bessel-basis}
\Phi_{m,n}(r,\theta):=\frac1{\sqrt{2\pi}}R_{|m|,n}(r)e^{im\theta}
=\frac{J_{|m|}(j_{|m|,n}r)}{\sqrt\pi\,|J_{|m|+1}(j_{|m|,n})|}e^{im\theta},
\quad \forall(m,n)\in\Z\times\Nat.
\end{equation}
These functions form an orthonormal basis of $L^2(\D;\mathbb C)$ and satisfy
\begin{equation*}
-\Delta\Phi_{m,n}=j_{|m|,n}^2\Phi_{m,n}\quad\textrm{with}\quad
\Phi_{m,n}|_{\partial\D}=0.
\end{equation*}
For a fixed $n$, the modes $m$ and $-m$ have the same wave frequency $\omega_{m,n}=\omega_{-m,n}$ and radial factor $R_{|m|,n}$, and $\Phi_{-m,n}=\overline{\Phi_{m,n}}$.
\end{lemma}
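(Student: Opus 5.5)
The statement to prove is Lemma~\ref{lem:disc-basis}: the $\Phi_{m,n}$ are Dirichlet eigenfunctions, form an orthonormal basis of $L^2(\D;\mathbb C)$, and satisfy the stated symmetry under $m\mapsto-m$. I would prove it by separation of variables combined with the Sturm--Liouville theory of each radial problem.

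\textbf{Eigenfunction identity.} Write $\Delta=\partial_r^2+r^{-1}\partial_r+r^{-2}\partial_\theta^2$. Then $-\Delta(R(r)e^{im\theta})=\bigl(-R''-r^{-1}R'+m^2r^{-2}R\bigr)e^{im\theta}$. By Lemma~\ref{lem:bessel-radial}, with $\omega=j_{|m|,n}$, this equals $j_{|m|,n}^2R\,e^{im\theta}$, and $R_{|m|,n}(1)=0$. Smoothness across the origin follows from $J_{|m|}(z)=z^{|m|}\cdot(\text{even entire function})$, since $r^{|m|}e^{im\theta}$ is the polynomial $(x_1\pm ix_2)^{|m|}$.

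The symmetry statements are immediate from the definitions. The factor $R_{|m|,n}$ depends only on $|m|$ and is real, so $\Phi_{-m,n}=\overline{\Phi_{m,n}}$ and $\omega_{-m,n}=\omega_{m,n}$.

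\textbf{Orthonormality.} In polar coordinates, $\langle\Phi_{m,n},\Phi_{m',n'}\rangle=\frac1{2\pi}\int_0^{2\pi}e^{i(m-m')\theta}\dd\theta\int_0^1R_{|m|,n}R_{|m'|,n'}r\dd r$. The angular integral vanishes unless $m=m'$. For $m=m'$ and $n\neq n'$, I would use the radial operator $L_m R=-(rR')'+m^2r^{-1}R$. It is symmetric on regular functions vanishing at $r=1$: integrating by parts, the boundary terms vanish at $r=1$ by the Dirichlet condition and at $r=0$ by regularity. Hence eigenfunctions with distinct eigenvalues $j_{|m|,n}^2$ are orthogonal in $L^2(r\dd r)$. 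The case $n=n'$ is exactly the normalization \eqref{eq:bessel-norm}.

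\textbf{Completeness (main obstacle).} First, the angular Fourier series gives $L^2(\D)=\bigoplus_m L^2((0,1),r\dd r)\otimes e^{im\theta}$. It then suffices to show that, for each fixed $m$, the family $\{R_{|m|,n}\}_n$ is complete in $L^2((0,1),r\dd r)$. I would obtain this abstractly. The Dirichlet Laplacian on $\D$ has compact resolvent (Rellich), so $L^2(\D)$ has an orthonormal eigenbasis. Each eigenspace is finite-dimensional and invariant under rotations, so it decomposes into angular modes $e^{im\theta}$. For each such component, its radial factor solves \eqref{eq:disc-radial-ode}, is bounded at $0$ (it is the Fourier coefficient of an $H^1_0$ eigenfunction, smooth by elliptic regularity), and vanishes at $r=1$. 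By Lemma~\ref{lem:bessel-radial} it is therefore a multiple of some $J_{|m|}(j_{|m|,n}r)$. Consequently every eigenfunction lies in the closed span of the $\Phi_{m,n}$, and completeness follows.

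The delicate point is justifying this rotational decomposition and the boundedness of the radial Fourier coefficients. Alternatively, one could cite the classical Fourier--Bessel completeness theorem \cite{Watson}, which I would mention as the standard reference.
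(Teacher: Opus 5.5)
Your proposal is correct. For the eigenvalue identity, for orthonormality (angular Fourier orthogonality, symmetry of the radial Sturm--Liouville operator, and \eqref{eq:bessel-norm}) and for the $m\mapsto-m$ symmetry, it matches the paper's argument. The difference lies in completeness.

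The paper reduces, via Fourier expansion in $\theta$, to completeness of $\{R_{|m|,n}\}_n$ in $L^2((0,1),r\dd r)$. It then cites the classical Fourier--Bessel expansion theorem \cite{Watson}, a result about a Sturm--Liouville problem that is singular at $r=0$. You instead deduce completeness from the spectral theorem for the Dirichlet Laplacian on $\D$, which has compact resolvent. Rotation invariance and the uniqueness part of Lemma~\ref{lem:bessel-radial} then show that every Dirichlet eigenfunction is a finite combination of the $\Phi_{m,n}$.

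Your route is more self-contained. The singular endpoint never needs separate analysis, because the two-dimensional problem is regular. The only ODE input is the fact, already proved in the paper, that boundedness at the origin selects $J_{|m|}$. Projecting onto each angular mode also recovers the radial Fourier--Bessel completeness as a by-product. The price is reliance on Rellich compactness and elliptic regularity instead of a single citation.

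The step you flag as delicate is routine. Let $u$ be an eigenfunction with eigenvalue $\lambda>0$, and set $P_mu:=(2\pi)^{-1}\int_0^{2\pi}e^{-im\alpha}u(r,\theta+\alpha)\dd\alpha=u_m(r)e^{im\theta}$. This projection stays in the same finite-dimensional eigenspace, because rotations commute with the Dirichlet Laplacian. Regularity up to the smooth boundary makes it continuous on $\overline{\D}$. Hence $u_m$ is bounded at $0$, solves the radial equation with $\omega=\sqrt\lambda$ on $(0,1)$, and satisfies $u_m(1)=0$. Finally, the $P_m$ are mutually orthogonal projections summing to $\Id$, so only finitely many are nonzero on that eigenspace and $u=\sum_mP_mu$.
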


\begin{proof}
The eigenvalue equation follows from separation of variables. Angular Fourier orthogonality, radial Sturm--Liouville orthogonality and \eqref{eq:bessel-norm} give orthonormality.
Completeness follows by Fourier expansion in $\theta$ and the completeness of the regular radial Dirichlet eigenfunctions in $L^2((0,1),r\dd r)$; this is the Fourier--Bessel expansion \cite{Watson}. The final assertion follows because the radial equation depends on $m^2$.
\end{proof}

Consequently, every finite-energy Dirichlet wave has the expansion
\begin{equation}\label{eq:disc-wave-exp}
\begin{aligned}
u(t,r,\theta)
&=\sum_{m\in\Z}\sum_{n\ge1}
\left[a_{m,n}\cos(\omega_{m,n}t)+b_{m,n}\frac{\sin(\omega_{m,n}t)}{\omega_{m,n}}\right]\Phi_{m,n}(r,\theta)\\
&=\frac12\sum_{m\in\Z}\sum_{n\ge1}\left[
\left(a_{m,n}+\frac{ib_{m,n}}{\omega_{m,n}}\right)e^{-i\omega_{m,n}t}
+\left(a_{m,n}-\frac{ib_{m,n}}{\omega_{m,n}}\right)e^{i\omega_{m,n}t}
\right]\Phi_{m,n}(r,\theta),
\end{aligned}
\end{equation}
where $a_{m,n}$ and $b_{m,n}$ are the coefficients of $u(0)$ and $\partial_tu(0)$ in this basis. Its time derivative is
\begin{equation}\label{eq:disc-vel-exp}
\begin{aligned}
\partial_tu(t,r,\theta)
=\frac12\sum_{m\in\Z}\sum_{n\ge1}\Bigl[
&(b_{m,n}-i\omega_{m,n}a_{m,n})e^{-i\omega_{m,n}t}\\
\;\;\;\;\;\;\;\;\;\;\;    {}+{}&(b_{m,n}+i\omega_{m,n}a_{m,n})e^{i\omega_{m,n}t}
\Bigr]\Phi_{m,n}(r,\theta).
\end{aligned}
\end{equation}
The expansions converge in $H_0^1(\D)$ and $L^2(\D)$, respectively, uniformly on bounded time intervals. For real initial data, one has $a_{-m,n}=\overline{a_{m,n}}$ and $b_{-m,n}=\overline{b_{m,n}}$.
We also use the classical fixed-order asymptotics for the Bessel functions and their positive zeros; see \cite{Watson}.

\begin{lemma}\label{lem:bessel-asym}
For every fixed $m\in\Z$,
\begin{align}
J_{|m|}(z)&=\left(\frac2{\pi z}\right)^{1/2}
\left[\cos\left(z-\frac{|m|\pi}2-\frac\pi4\right)+O_{|m|}(z^{-1})\right],
&&z\to\infty,\label{eq:bessel-asym}\\
j_{|m|,n}&=\pi\left(n+\frac{|m|}2-\frac14\right)+O_{|m|}(n^{-1}),
&&n\to\infty.\label{eq:bessel-zeros}
\end{align}
Fix $\delta\in(0,1)$. With $z=\omega r$, the error inside the brackets in \eqref{eq:bessel-asym} is $O_{|m|,\delta}(\omega^{-1})$, uniformly for $r\in[\delta,1]$ as $\omega\to\infty$.
\end{lemma}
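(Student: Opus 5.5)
The plan is to derive both asymptotics from the classical Hankel expansion, reduced by a Liouville transformation to a perturbed harmonic oscillator. This makes the uniformity in $r\in[\delta,1]$ transparent. Fix $\nu=|m|$ and set $v(z):=z^{1/2}J_\nu(z)$. Bessel's equation \eqref{eq:bessel-ode} becomes
\begin{equation*}
v''+\Bigl(1-\frac{\nu^2-1/4}{z^2}\Bigr)v=0 .
\end{equation*}
We treat $(\nu^2-\tfrac14)z^{-2}v$ as a forcing term. Variation of constants on $[z,\infty)$ then shows that every solution has the form
\begin{equation*}
v(z)=A\cos(z-\phi)+O_\nu(z^{-1}),
\end{equation*}
where the amplitude $A$ and the phase $\phi$ are determined by the solution; the integrable tail $\int_z^\infty s^{-2}\,\dd s=z^{-1}$ produces the error. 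A Gronwall argument first shows that $v$ is bounded, and then the tail estimate gives exactly the $O(z^{-1})$ rate. It remains to identify $A=(2/\pi)^{1/2}$ and $\phi=\nu\pi/2+\pi/4$ for $J_\nu$. For this I would invoke the classical integral representation (Hankel contour / Poisson integral) and stationary phase, or simply cite \cite{Watson}. This identification is the only nonelementary input, and the plan is to quote it rather than rederive it. This gives \eqref{eq:bessel-asym}.

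Uniformity is then immediate. With $z=\omega r$ and $r\in[\delta,1]$ we have $z\ge\delta\omega$. Hence the bracket error $O_\nu(z^{-1})$ is bounded by $C_\nu\delta^{-1}\omega^{-1}$ once $\omega\delta$ exceeds the threshold beyond which the estimate holds. This is $O_{|m|,\delta}(\omega^{-1})$.

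For the zeros \eqref{eq:bessel-zeros}, write $J_\nu$ as a positive prefactor times $\cos(z-\phi)+\epsilon(z)$ with $\epsilon=O(z^{-1})$. I also need derivative control: differentiating the variation-of-constants formula gives
\begin{equation*}
v'(z)=-A\sin(z-\phi)+O(z^{-1}).
\end{equation*}
For large $z$, consider each interval $|z-(\phi+\pi/2+k\pi)|\le Cz^{-1}$. On such an interval the bracket changes sign, and its derivative has size $\approx 1$, so it is strictly monotone there. Hence there is exactly one zero in each interval. Outside the union of these intervals, $|\cos(z-\phi)|\gtrsim z^{-1}$ dominates the error, so there are no other zeros. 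The large zeros are therefore
\begin{equation*}
z_k=\pi\bigl(k+\tfrac{\nu}{2}+\tfrac34\bigr)+O(k^{-1}).
\end{equation*}

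The main obstacle is the labeling: matching $z_k$ with $j_{\nu,n}$ in order to get the shift $n-\tfrac14$, that is, $k=n-1$. Asymptotics alone cannot fix this integer offset. I would count zeros on $(0,z_*]$ by Sturm comparison and a Prüfer angle argument, starting from the initial behavior $J_\nu\sim z^\nu/(2^\nu\nu!)$ of Definition~\ref{def:disc-bessel}. Continuity of the Prüfer phase in $z$, together with the known initial phase, pins down the count. Alternatively, and more simply, I would cite McMahon's expansion from \cite{Watson}, which states exactly this labeled result.
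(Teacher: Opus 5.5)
Your proof is correct, and it is more hands-on than the paper's treatment. The paper gives no argument: it quotes the Hankel asymptotic \eqref{eq:bessel-asym} and McMahon's labeled formula \eqref{eq:bessel-zeros} directly from \cite{Watson}, and the uniformity for $r\in[\delta,1]$ is exactly your remark that $z=\omega r\ge\delta\omega$.

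Your Liouville form $v=z^{1/2}J_\nu$ with variation of constants buys two things. It explains the error structure: only the integrable tail of $(\nu^2-\tfrac14)z^{-2}$ enters, so the bracket error is $O_\nu(z^{-1})$ with one constant valid for all $z\ge z_0$. It also gives the derivative asymptotic $v'=-A\sin(z-\phi)+O(z^{-1})$, which your sign-change and monotonicity argument needs to place exactly one large zero in each window. What it does not remove is the nonelementary content. The constants $A=(2/\pi)^{1/2}$ and $\phi=\nu\pi/2+\pi/4$, and the integer labeling of the zeros, still come from \cite{Watson}. So your proof rests on the same classical reference as the paper's, with more of the mechanism made explicit.

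If you want to avoid quoting McMahon for the labeling, note that the Prüfer sketch as written does not yet fix the offset. Write $v=\rho\sin\vartheta$ and $v'=\rho\cos\vartheta$ with $\rho>0$ and $\vartheta(0^+)=0$. The $n$-th positive zero is where $\vartheta=n\pi$, but your asymptotics only give $\vartheta(z)=z-\phi+\pi/2+2\pi\kappa+o(1)$ for some unknown $\kappa\in\Z$. Since $\vartheta'=1-(\nu^2-\tfrac14)z^{-2}\sin^2\vartheta$, this integer is fixed by the global quantity $-(\nu^2-\tfrac14)\int_0^\infty s^{-2}\sin^2\vartheta(s)\dd s$. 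Continuity in $z$ and the initial phase say nothing about that integral.

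The missing input is a deformation in the order. The integral depends continuously on $\nu\ge0$, so $\kappa$ is locally constant and hence constant. At $\nu=\tfrac12$ one has $z^{1/2}J_{1/2}(z)=(2/\pi)^{1/2}\sin z$, hence $\vartheta(z)=z$ and $\kappa=0$. With $\kappa=0$, your zero localization gives $j_{\nu,n}=\pi(n+\nu/2-\tfrac14)+O_\nu(n^{-1})$. This step still uses the cited phase $\phi(\nu)=\nu\pi/2+\pi/4$ for real $\nu$.
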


\subsection{From Dirichlet modes to profiles and approximate velocities}\label{subsec:disc-modes}

Throughout Sections~\ref{sec:disc}--\ref{sec:disc-class}, we use the profile space
\begin{equation*}
X:=\{q\in L^2(\R/2\pi\Z;\R):q(s+\pi)=-q(s)\ \text{a.e.}\}.
\end{equation*}
Its elements have only odd Fourier frequencies. Fix a nonzero $q\in X$.\footnote{We fix Borel representatives when evaluating profiles pointwise.} With the convention $\widehat q_p=(2\pi)^{-1}\int_0^{2\pi}q(s)e^{ips}\dd s$, its Fourier expansion is
\begin{equation*}
q(s)=2\Real\sum_{\substack{p>0\\p\ \mathrm{odd}}}\widehat q_p e^{-ips}
\qquad\text{in }L^2(\R/2\pi\Z).
\end{equation*}
Reality gives $\widehat q_{-p}=\overline{\widehat q_p}$, and antiperiodicity implies zero mean.

This subsection prepares the parameters and estimates used in Sections~\ref{subsec:disc-symmetry-geometry}--\ref{subsec:disc-real}. On a first reading, one may read Section~\ref{subsubsec:disc-mode-plan} and then proceed to Section~\ref{subsec:disc-symmetry-geometry}.

\setcounter{subsubsection}{-1}
\subsubsection{From the profile formula to Dirichlet modes}\label{subsubsec:disc-mode-plan}

We seek solutions of \eqref{eq:wave} whose leading time derivatives have the profile form described in \eqref{eq:intro-disc-prof}. The modes $\Phi_{m,n}$ of Section~\ref{subsec:disc-polar} already satisfy the boundary condition. The question is how to combine them to recover both the prescribed profile and the concentration in sectors.

\vspace{2mm}
\noindent{\it The target expression.} For $q\in X$, we preview the full expression to be constructed in Section~\ref{subsubsec:disc-generated}:
\begin{equation}\label{eq:disc-target}
\begin{split}
S_k[q](t,r,\theta)={}&A_{\varepsilon_k}(r,\theta)\1_{\calA_k}(\theta)\Bigl[q\bigl(K_k(t-r)\bigr)-q\bigl(K_k(t+r)+\pi/2\bigr)\Bigr]\\
&{}+A_{\varepsilon_k}(r,\theta)\1_{\calB_k}(\theta)\Bigl[\calH q\bigl(K_k(t-r)+\pi/2\bigr)+\calH q\bigl(K_k(t+r)\bigr)\Bigr].
\end{split}
\end{equation}
This reduces to \eqref{eq:intro-disc-prof} in the first sector $\calA_k$. Here $\calA_k,\calB_k$ are angular intervals centered at $\psi_k,\psi_k+\pi$, with half-widths $a_k,b_k$, and $\varepsilon_k=K_k^{-1}$. The operator $\calH$ is the periodic Hilbert transform.\footnote{The time-independent amplitude $A_{\varepsilon_k}(r,\theta)$ is consistent with the transport equation in Appendix~\ref{subsec:supp-phase}: its radial factor $r^{-1/2}$ satisfies the transport equations for both phases $t-r$ and $t+r$ on $r>0$.}

Section~\ref{subsubsec:disc-branches} derives the opposite packet and the shifts in these formulas. The two indicators make $S_k[q]$ exactly zero outside the sectors.

\vspace{2mm}
\noindent{\it Radial indices and profile frequencies.}\footnote{Notation on different type of frequencies: we use $p$ for the profile Fourier frequency in $e^{-ips}$, $m$ for the angular index in $e^{im\theta}$, and $n$ for the radial index numbering the positive zeros of $J_{|m|}$. The wave frequency is $\omega=j_{|m|,n}$.}
The branch in $t-r$ is \emph{outgoing}, namely propagating radially toward the boundary; the branch in $t+r$ is incoming, toward the center. Suppressing the subscript $k$, the outgoing term in \eqref{eq:disc-target} has the Fourier expansion
\begin{equation}\label{eq:disc-target-fourier}
A(r,\theta)q\bigl(K(t-r)\bigr)
=2\Real\sum_{\substack{p>0\\p\ \mathrm{odd}}}
\widehat q_p\bigl[A(r,\theta)e^{ipKr}\bigr]e^{-ipKt}.
\end{equation}
Since $A$ is real and independent of time, the required temporal frequency is still $pK$. Comparing with the factor $e^{-ij_{|m|,n}t}$ in a Dirichlet mode, we therefore seek radial indices $n_{p,|m|}$ such that
\begin{equation}\label{eq:disc-freq-match}
j_{|m|,n_{p,|m|}}-pK\longrightarrow0.
\end{equation}
This condition should hold for every participating angular index $m$, so the radial index depends on both $p$ and $|m|$.

\vspace{2mm}
\noindent{\it Angular concentration and the cutoff $Q$.} We now recover the spatial amplitude in \eqref{eq:disc-target} from the selected modes. The integer condition on the radial indices pairs $p\equiv1\pmod4$ with even $m$ and $p\equiv3\pmod4$ with odd $m$. Thus the angular sums with $|m|\le2Q$ initially give two real amplitudes $E_Q,O_Q$.

Their mass concentrates near two opposite directions. In the angular $L^2$ norm, $O_Q$ becomes close to $E_Q$ in the first sector and to $-E_Q$ in the opposite sector. They give the common real amplitude
\begin{equation*}
A_{\varepsilon_k}(r,\theta)
=\frac{C_k}{2\sqrt\pi}
\underbrace{r^{-1/2}}_{\text{Bessel amplitude}}
\underbrace{E_{Q_k}(\theta-\psi_k)}_{\text{angular sum}},
\end{equation*}
where $C_k>0$ will normalize the wave energy. Before these replacements, the spectral sum gives two naturally concentrated packets without sector cutoffs; Lemma~\ref{lem:disc-leading} identifies their expression. 
Choosing $Q_k\min\{a_k,b_k\}\to\infty$ makes the angular tails small. Thus the replacements and cutoffs turn angular concentration into the exact sector support of $S_k[q]$.

\vspace{2mm}
\noindent{\it Profile truncation and the cutoff $P$.} To obtain exact wave solutions, we also use a finite sum of profile frequencies:
\begin{equation*}
q^{(P)}(s):=2\Real\sum_{\substack{0<p\le P\\p\ \mathrm{odd}}}\widehat q_p e^{-ips},\qquad
\|q^{(P)}-q\|_{L^2(0,2\pi)}\longrightarrow0\quad\text{as }P\to\infty.
\end{equation*}
Thus $P$ truncates the profile, whereas $Q$ limits the angular indices. With $P,Q$ fixed, we choose a sufficiently large base radial index $n_0$ to control the frequency and Bessel errors. 
Section~\ref{subsec:disc-real} coordinates these choices to approximate $S_k[q]$ by $\partial_tu_k$.

Although $P_k$ does not enter the formula for $S_k[q]$ directly, it can affect the choices of $K_k$ and $C_k$.

We now prepare these choices and estimates in four steps.
\begin{itemize}[leftmargin=*,itemsep=3pt]
\item Read the $t-r$ and $t+r$ branches from the Bessel asymptotics.
\item Choose $n_{p,|m|}$, determine the allowed angular parity and explain the choice of $X$.
\item Construct and compare the angular amplitudes $E_Q,O_Q$.
\item Control the radial approximation on the whole disc.
\end{itemize}

\begin{defi}\label{def:disc-indices}
Given integers $P,Q\ge1$, let $\mathcal I_{P,Q}$ be the set of pairs $(p,m)\in\mathbb N\times\Z$ with $p$ odd, satisfying
\begin{equation}\label{eq:disc-cutoffs}
0<p\le P \textrm{ \; and \; }  |m|\le2Q,
\end{equation}
and the parity restriction
\begin{equation}\label{eq:disc-parities}
\begin{cases}
m\in2\Z,&p\equiv1\pmod4,\\
m\in2\Z+1,&p\equiv3\pmod4.
\end{cases}
\end{equation}
For each such pair and $n_0\in\mathbb N$, define
\begin{equation}\label{eq:disc-mode-scales}
K=K(n_0):=\pi(n_0-\tfrac14) \textrm{ \; and \; }
n_{p,|m|;n_0}:=pn_0-\frac{|m|}2-\frac{p-1}4.
\end{equation}
We write $n_{p,|m|}$ for $n_{p,|m|;n_0}$ when the base index is understood.
\end{defi}

\begin{prop}\label{prop:disc-modes}
Fix integers $P,Q\ge1$ and use the indices and scale from Definition~\ref{def:disc-indices}. They have the following properties.
\begin{enumerate}[label=(\roman*),leftmargin=*]
\item \textit{Choice of radial indices.} For sufficiently large $n_0$, the indices $n_{p,|m|}$ are positive integers. The corresponding wave frequencies satisfy
\begin{equation*}
\omega=j_{|m|,n_{p,|m|}}=pK+O_{P,Q}(n_0^{-1}).
\end{equation*}
The difference $\omega-pK$ tends to zero for every pair $(p,m)$ satisfying \eqref{eq:disc-cutoffs}--\eqref{eq:disc-parities}, uniformly for fixed $P,Q$ as $n_0\to\infty$.

\item \textit{Real angular amplitudes.} Define
\begin{equation}\label{eq:angular}
\left\{\begin{aligned}
E_Q(\varphi)&:=\frac1{\sqrt{2Q+1}}\sum_{j=-Q}^Qe^{2ij\varphi}
=\frac{\sin\bigl((2Q+1)\varphi\bigr)}{\sqrt{2Q+1}\sin\varphi},\\[4pt]
O_Q(\varphi)&:=\frac1{\sqrt{2Q}}\sum_{j=-Q}^{Q-1}e^{i(2j+1)\varphi}
=\frac{\sin(2Q\varphi)}{\sqrt{2Q}\sin\varphi}.
\end{aligned}\right.
\end{equation}
These sums provide angular localization while recovering a common real amplitude near $\varphi=0$. More precisely,
\begin{gather*}
E_Q(\varphi+\pi)=E_Q(\varphi)\textrm{ \; and \; }
 O_Q(\varphi+\pi)=-O_Q(\varphi),\\
\|E_Q-O_Q\|_{L^2(-w,w)}\to0 \textrm{ \; and \; }
\|E_Q+O_Q\|_{L^2(\pi-w,\pi+w)}\to0
\end{gather*}
when $Q\to\infty$, $w\to0^+$ and $Qw\to\infty$. Moreover, for $0<w<\pi/2$,
\begin{equation}\label{eq:angular-tail}
\int_{\dist(\varphi,\{0,\pi\})>w}(|E_Q|^2+|O_Q|^2)\dd\varphi
\le\frac C{Qw},
\end{equation}
where distance and integration are on $\R/2\pi\Z$, and $C$ is independent of $Q,w$.

\item \textit{Radial approximation.} The normalized radial functions in \eqref{eq:disc-radial-norm} satisfy
\begin{equation*}
\left\|R_{|m|,n_{p,|m|}}-\sqrt2r^{-1/2}\cos(pKr-|m|\pi/2-\pi/4)\right\|_{L^2((0,1),r\dd r)}\to0.
\end{equation*}
For fixed $P,Q$, the convergence is uniform for $(p,m)$ satisfying \eqref{eq:disc-cutoffs}--\eqref{eq:disc-parities} as $n_0\to\infty$.
\end{enumerate}
\end{prop}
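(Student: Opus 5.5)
The three parts are essentially independent, and each reduces to classical facts already recorded in Lemmas~\ref{lem:bessel-radial}--\ref{lem:bessel-asym}.

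\emph{Part (i).} I would first check integrality of $n_{p,|m|}=pn_0-\frac{|m|}2-\frac{p-1}4$ using the parity restriction \eqref{eq:disc-parities}.
\begin{itemize}[leftmargin=6mm]
\item If $p\equiv1\pmod4$, then $(p-1)/4\in\Z$ and $m$ is even, so both subtracted terms are integers.
\item If $p\equiv3\pmod4$, then $(p-1)/4\in\Z+\frac12$ and $m$ is odd, so $|m|/2\in\Z+\frac12$. The two half-integers sum to an integer.
\end{itemize}
Positivity holds once $n_0>2Q+P$. Substituting into \eqref{eq:bessel-zeros} gives
\begin{equation*}
j_{|m|,n}=\pi\Bigl(pn_0-\frac{p-1}4-\frac14\Bigr)+O_{|m|}(n^{-1})=pK+O_{|m|}(n^{-1}).
\end{equation*}
Since $n\ge n_0$ and only finitely many $|m|\le2Q$ occur, the error is $O_{P,Q}(n_0^{-1})$.

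\emph{Part (ii).} The closed forms are the usual geometric (Dirichlet-kernel) sums. The half-turn relations follow because $E_Q$ contains only even frequencies and $O_Q$ only odd ones.

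For the local comparison on $(-w,w)$, I would split
\begin{equation*}
E_Q-O_Q=\frac{\sin((2Q+1)\varphi)-\sin(2Q\varphi)}{\sqrt{2Q}\sin\varphi}
+\Bigl(\frac1{\sqrt{2Q+1}}-\frac1{\sqrt{2Q}}\Bigr)\frac{\sin((2Q+1)\varphi)}{\sin\varphi}.
\end{equation*}
\begin{itemize}[leftmargin=6mm]
\item The first numerator is at most $|\varphi|$. Since $|\varphi/\sin\varphi|\le C$ near $0$, the first term is $O(Q^{-1/2})$ pointwise. Its $L^2(-w,w)$ norm is therefore $O(\sqrt{w/Q})\to0$.
\item The prefactor of the second term is $O(Q^{-3/2})$. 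The kernel $\sin((2Q+1)\varphi)/\sin\varphi$ is bounded by $\min\{2Q+1,\,C/|\varphi|\}$, so its $L^2$ norm squared is $O(Q)$. The second term is thus $O(Q^{-1})$ in $L^2$.
\end{itemize}
Near $\pi$, I would substitute $\varphi\mapsto\varphi+\pi$. The half-turn relations turn $E_Q+O_Q$ there into $E_Q-O_Q$ near $0$, which is already controlled.

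The tail bound \eqref{eq:angular-tail} follows from $|E_Q|^2+|O_Q|^2\le C\,Q^{-1}\sin^{-2}\varphi$. Integrating over $\dist(\varphi,\{0,\pi\})>w$ gives at most $C/(Qw)$.

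\emph{Part (iii).} This is the step needing most care. I fix $\delta\in(0,1)$ and split $(0,1)=(0,\delta)\cup[\delta,1]$.

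On the inner piece, I would bound both functions in $L^2((0,\delta),r\dd r)$ separately.
\begin{itemize}[leftmargin=6mm]
\item For the model term, $\int_0^\delta2r^{-1}\cos^2(\cdot)\,r\dd r\le2\delta$.
\item For the Bessel term, the normalisation factor is $\sim j$, since $|J_{|m|+1}(j)|\sim(2/\pi j)^{1/2}$ by \eqref{eq:bessel-asym}. Combining this with $|J_{|m|}(z)|\le C_{|m|}\min\{1,z^{-1/2}\}$ and substituting $z=jr$ gives
\begin{equation*}
\frac{j}{j^2}\int_0^{j\delta}J_{|m|}(z)^2z\dd z\lesssim\delta.
\end{equation*}
\end{itemize}
Both bounds are uniform in $n_0$ and over the finitely many $|m|\le2Q$.

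On $[\delta,1]$, I would use the uniform asymptotics of Lemma~\ref{lem:bessel-asym} in two places.
\begin{itemize}[leftmargin=6mm]
\item At the boundary, evaluate $J_{|m|+1}(j)$ using $j=\pi(n+|m|/2-1/4)+o(1)$. The cosine becomes $\cos(\pi n-\pi)+o(1)=\pm1+o(1)$, so $|J_{|m|+1}(j)|=(2/\pi j)^{1/2}(1+o(1))$.
\item In the interior, this gives $R=\sqrt2\,r^{-1/2}\cos(jr-|m|\pi/2-\pi/4)+O_\delta(n_0^{-1})$, up to a global sign determined by $\sgn J_{|m|+1}(j)$.
\end{itemize}
I would then replace $jr$ by $pKr$, at a pointwise cost of $|j-pK|=O(n_0^{-1})$ from part (i).

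The point to verify carefully is that global sign, which the absolute value in \eqref{eq:disc-radial-norm} may introduce. If it is not $+1$, I would either absorb it into the choice of $n_0$ (for instance, restricting to a fixed parity) or show it is consistent across all $(p,m)$. Once the sign is settled, letting $n_0\to\infty$ and then $\delta\to0$ gives the claimed uniform convergence.
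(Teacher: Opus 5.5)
Your argument follows the paper's proof in all three parts. For (i) you use integrality plus the zero asymptotics \eqref{eq:bessel-zeros}; for (ii), the Dirichlet-kernel splitting, the half-turn relations and the bound $C/(\sqrt Q|\sin\varphi|)$; for (iii), the $\delta$-splitting with Bessel asymptotics on $[\delta,1]$. The differences are cosmetic. For instance, on $(0,\delta)$ you use the uniform bound $|J_\nu(z)|\le C_\nu\min\{1,z^{-1/2}\}$, where the paper uses the closed-form identity for $\int_0^\delta J_{|m|}(jr)^2r\dd r$; your version is slightly more elementary and equally effective. There is, however, one point you leave open in (iii), and the fallbacks you propose for it would not prove the statement.

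The ``global sign'' in (iii) does not exist. The normalization \eqref{eq:disc-radial-norm} divides by $|J_{|m|+1}(j)|>0$, not by $J_{|m|+1}(j)$. So on $[\delta,1]$,
\[
R_{|m|,n}(r)=\frac{\sqrt2\,(2/\pi jr)^{1/2}\bigl[\cos(jr-\delta_{|m|})+O_{|m|,\delta}(j^{-1})\bigr]}{(2/\pi j)^{1/2}\bigl(1+O_{|m|}(j^{-1})\bigr)}
=\sqrt2\,r^{-1/2}\cos(jr-\delta_{|m|})+O_{|m|,\delta}(j^{-1}).
\]
The sign of the cosine comes directly from \eqref{eq:bessel-asym} applied to $J_{|m|}(jr)$. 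The factor $(-1)^{n-1}=\sgn J_{|m|+1}(j)$ that you computed is exactly what the absolute value removes.

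This matters because neither remedy you suggest would work. Restricting the parity of $n_0$ gives convergence only along a subsequence, whereas the claim is for $n_0\to\infty$. A sign $(-1)^{n_{p,|m|}-1}$ would also not be consistent across pairs, since the parity of $n_{p,|m|}=pn_0-|m|/2-(p-1)/4$ depends on $(p,m)$.

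A smaller slip in (i): the inequality $n_{p,|m|}\ge n_0$ fails for $p=1$, $m\neq0$, where $n=n_0-|m|/2$. The bound $n_{p,|m|}\ge n_0-Q$ still gives $O_{|m|}(n^{-1})=O_{P,Q}(n_0^{-1})$, so the conclusion stands.
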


\subsubsection{Bessel modes and the propagation variables \texorpdfstring{$t\pm r$}{t +/- r}}\label{subsubsec:disc-propagation}

We first recover the phases $t\pm r$ and the radial amplitude in \eqref{eq:disc-target} from the Dirichlet modes. Multiplying \eqref{eq:disc-radial-wave} by $\sqrt r$ removes the first radial derivative:
\begin{equation*}
\left(\partial_t^2-\partial_r^2+\frac{m^2-\tfrac14}{r^2}\right)(\sqrt r\,u_m)=0.
\end{equation*}
By contrast, for any smooth functions $F_+,F_-$, the two translated branches satisfy
\begin{equation*}
(\partial_t^2-\partial_r^2)\bigl[F_+(t-r)+F_-(t+r)\bigr]=0.
\end{equation*}
The potential $(m^2-\tfrac14)/r^2$ is present even when $m=0$, so free translations do not directly give exact radial waves. We seek this propagation structure in the leading part of the Bessel modes.
\vspace{2mm}

Fix $m\in\Z$ and $\delta\in(0,1)$. For $r\in[\delta,1]$, the potential satisfies
\begin{equation*}
\frac{|m^2-\tfrac14|}{\omega^2r^2}
\le\frac{|m^2-\tfrac14|}{\omega^2\delta^2}\longrightarrow0
\qquad\text{as }\omega\to\infty.
\end{equation*}
This comparison applies away from the origin; the remaining region $r\in(0,\delta)$ requires a separate estimate. By Lemma~\ref{lem:bessel-asym}, the leading Bessel oscillation has two propagation branches. Indeed, set $\delta_{|m|}=|m|\pi/2+\pi/4$ and include the time factor:
\begin{equation}\label{eq:disc-bessel-branches}
e^{-i\omega t}\cos(\omega r-\delta_{|m|})
=\tfrac12e^{-i\delta_{|m|}}e^{-i\omega(t-r)}
+\tfrac12e^{i\delta_{|m|}}e^{-i\omega(t+r)}.
\end{equation}
The first term propagates outward along $t-r=\mathrm{constant}$; the second propagates inward along $t+r=\mathrm{constant}$. Both carry the same real radial amplitude proportional to $r^{-1/2}$.

\subsubsection{Choosing the profile \texorpdfstring{$q$}{q} and the radial indices \texorpdfstring{$n_{p,|m|}$}{n(p,|m|)}}\label{subsubsec:disc-frequencies}

Recall from \eqref{eq:disc-target-fourier} that recovering the outgoing term requires the frequency matching \eqref{eq:disc-freq-match}. We now derive the radial indices in Proposition~\ref{prop:disc-modes}(i) and the associated restrictions on the profile frequencies and angular indices.

\begin{proof}[Proof of Proposition~\ref{prop:disc-modes}(i)]
Let $n_0\ge1$. To choose the scaling parameter $K$, we use the radial index $n_0$ at the angular index $m=0$. Equation \eqref{eq:bessel-zeros} gives
\begin{equation*}
j_{0,n_0}=\pi\left(n_0-\frac14\right)+o(1).
\end{equation*}
Thus $K(n_0)$ in \eqref{eq:disc-mode-scales} is the leading term of $j_{0,n_0}$.

For another fixed angular index $m$, the leading term in \eqref{eq:bessel-zeros} is $\pi(n+|m|/2-1/4)$. To align it with $pK$, we require
\begin{equation*}
n_{p,|m|}+\frac{|m|}2-\frac14=p\left(n_0-\frac14\right).
\end{equation*}
Note that the indices $n_{p,|m|}$ and $|m|$ must be integers. Multiplying the last relation by four gives
\begin{equation*}
4n_{p,|m|}+2|m|-1=p(4n_0-1).
\end{equation*}
The left-hand side and $4n_0-1$ are odd, so $p$ must be odd. Solving for $n_{p,|m|}$ gives
\begin{equation}\label{eq:radial-index}
n_{p,|m|}=pn_0-\frac{|m|}2-\frac{p-1}4.
\end{equation}

The remaining integer condition determines the angular parity. For $p=1$ and $p=3$, the formula reads
\begin{equation*}
n_{1,|m|}=n_0-\frac{|m|}2 \textrm{ \; and \; }
n_{3,|m|}=3n_0-\frac{|m|+1}{2}.
\end{equation*}
Thus the integer condition gives precisely the parity rule \eqref{eq:disc-parities}. This explains the choice of $X$ in this spectral construction: the allowed profile frequencies are odd, equivalently $q(s+\pi)=-q(s)$ in $L^2$.

\vspace{2mm}
We now fix $P,Q$ and let $n_0\to\infty$ for the pairs satisfying \eqref{eq:disc-cutoffs}--\eqref{eq:disc-parities}. The indices $n_{p,|m|}$ are then positive for large $n_0$, and
\begin{equation}\label{eq:action}
n_{p,|m|}-\frac14+\frac{|m|}2=p\left(n_0-\frac14\right).
\end{equation}
Combining \eqref{eq:action} with \eqref{eq:bessel-zeros} gives
\begin{equation*}
j_{|m|,n_{p,|m|}}=pK+O_{P,Q}(n_0^{-1}),
\end{equation*}
where the implicit constant depends on $P,Q$ but is independent of $n_0$. This proves part (i).
\end{proof}

\begin{remark}\label{rem:disc-frequencies}
We call \eqref{eq:action} the \emph{common radial action}. It aligns the wave frequencies with $pK$ to leading order and assigns each profile Fourier mode to an angular parity through \eqref{eq:disc-parities}. The wave frequencies are close to $pK$; on a fixed time interval,
\begin{equation*}
\left|e^{-ij_{|m|,n_{p,|m|}}t}-e^{-ipKt}\right|
\le T|j_{|m|,n_{p,|m|}}-pK|=o(1),
\quad \forall t\in[0,T].
\end{equation*}
\end{remark}

\subsubsection{Constructing real angular amplitudes \texorpdfstring{$E_Q,O_Q$}{E_Q,O_Q}}\label{subsubsec:disc-amplitudes}

We now choose coefficients in the spectral expansion \eqref{eq:disc-wave-exp} to recover the angular part of the amplitude in \eqref{eq:disc-target}. Recall the two branches of each mode in \eqref{eq:disc-bessel-branches}. For each $p$, let $\alpha_{m,p}$ be real angular weights supported on the permitted indices. To recover the outgoing profile and center its angular concentration at $\psi$, choose the velocity coefficient
\begin{equation}\label{eq:disc-coeff}
c_{p,m}:=
\widehat q_p\,
\underbrace{\alpha_{m,p}}_{\text{angular weight}}\,
\underbrace{e^{-im\psi}}_{\text{angular translation}}\,
\underbrace{e^{i\delta_{|m|}}}_{\text{phase compensation}},
\end{equation}
where $\delta_{|m|}=|m|\pi/2+\pi/4$. Fix a permitted pair $(p,m)$ and set $j:=j_{|m|,n_{p,|m|}}$. By the normalization \eqref{eq:disc-radial-norm} and Lemma~\ref{lem:bessel-asym}, for each fixed $\delta\in(0,1)$,
\begin{equation*}
\Phi_{m,n_{p,|m|}}(r,\theta)
=\frac{e^{im\theta}}{\sqrt{\pi r}}\cos(jr-\delta_{|m|})+O_{|m|,\delta}(j^{-1}),
\quad \forall r\in[\delta,1].
\end{equation*}
Multiplying by the velocity coefficient and the time factor, we obtain
\begin{equation*}
c_{p,m}e^{-ijt}\Phi_{m,n_{p,|m|}}(r,\theta)
=\frac{\widehat q_p\alpha_{m,p}e^{im(\theta-\psi)}}{\sqrt{\pi r}}
e^{i\delta_{|m|}}e^{-ijt}\cos(jr-\delta_{|m|})
+O_{q,p,m,\delta}(j^{-1}),
\end{equation*}
with the angular weights fixed. The two oscillations satisfy
\begin{align*}
e^{i\delta_{|m|}}e^{-ijt}\cos(jr-\delta_{|m|})
&=\frac12e^{i\delta_{|m|}}
\left[e^{-i\delta_{|m|}}e^{-ij(t-r)}+e^{i\delta_{|m|}}e^{-ij(t+r)}\right]\\
&=\frac12\left[e^{-ij(t-r)}+i(-1)^{|m|}e^{-ij(t+r)}\right],
\end{align*}
since $e^{2i\delta_{|m|}}=i(-1)^{|m|}$. Thus the leading term of the full velocity mode is
\begin{equation}\label{eq:disc-mode-leading}
\frac{\widehat q_p\alpha_{m,p}e^{im(\theta-\psi)}}{2\sqrt{\pi r}}
\left[e^{-ij_{|m|,n_{p,|m|}}(t-r)}+i(-1)^{|m|}e^{-ij_{|m|,n_{p,|m|}}(t+r)}\right].
\end{equation}
The chosen coefficient cancels the outgoing phase in \eqref{eq:disc-bessel-branches}. The remaining incoming factor determines the reflected profile in Section~\ref{subsubsec:disc-branches}.

Using \eqref{eq:disc-freq-match}, we replace $j_{|m|,n_{p,|m|}}$ by $pK$ in the leading oscillations on bounded time intervals. Apart from the common radial factor, summing the outgoing terms over $m$ gives
\begin{equation*}
\widehat q_p e^{-ips}\sum_m\alpha_{m,p}e^{im\varphi},
\end{equation*}
where $s=K(t-r)$ and $\varphi=\theta-\psi$. Thus the angular concentration is determined by the Fourier sum. For this example, we use the same angular coefficients for every $p$ within each parity class:
\begin{equation}\label{eq:disc-angular-coefficients}
\alpha_{m,p}:=
\begin{cases}
\alpha_{m,Q}^{e},&p\equiv1\pmod4,\quad m\in2\Z,\\
\alpha_{m,Q}^{o},&p\equiv3\pmod4,\quad m\in2\Z+1,\\
0,&\text{otherwise},
\end{cases}
\end{equation}
with coefficients supported in $|m|\le2Q$. The superscripts $e,o$ refer to even and odd angular indices, respectively; the dependence of $\alpha_{m,p}$ on the fixed cutoff $Q$ is implicit. Their angular sums are
\begin{equation*}
\sum_m\alpha_{m,p}e^{im\varphi}
=\begin{cases}
E_Q(\varphi),&p\equiv1\pmod4,\\
O_Q(\varphi),&p\equiv3\pmod4.
\end{cases}
\end{equation*}
Summing the outgoing contributions now gives
\begin{equation*}
2\Real\left[
E_Q(\varphi)\sum_{\substack{p>0\\p\equiv1\ (4)}}\widehat q_p e^{-ips}
+O_Q(\varphi)\sum_{\substack{p>0\\p\equiv3\ (4)}}\widehat q_p e^{-ips}
\right].
\end{equation*}
If $O_Q(\varphi)=E_Q(\varphi)$, with both factors real, this expression would reduce to
\begin{equation*}
2\Real\left[E_Q(\varphi)\sum_{\substack{p>0\\p\ \mathrm{odd}}}\widehat q_p e^{-ips}\right]=E_Q(\varphi)q(s).
\end{equation*}
We therefore seek real angular sums that concentrate near $\varphi=0$ and become close there in $L^2$.

Within either parity class, equal coefficients on a finite symmetric set of angular indices reinforce one another at $\varphi=0$. This gives the choice of $E_Q,O_Q$ in \eqref{eq:angular}.
More precisely, we take
\begin{equation}\label{eq:disc-equal-coeff}
\left\{\begin{aligned}
\alpha_{m,Q}^{e}&=(2Q+1)^{-1/2},&&m=-2Q,-2Q+2,\ldots,2Q,\\
\alpha_{m,Q}^{o}&=(2Q)^{-1/2},&&m=\pm1,\pm3,\ldots,\pm(2Q-1),
\end{aligned}\right.
\end{equation}
with all other coefficients zero.
Both sums are real because each angular index is paired with its negative. The even and odd indices also have different half-turn relations:
\begin{equation}\label{eq:halfturn}
E_Q(\varphi+\pi)=E_Q(\varphi)\textrm{ \; and \; }
O_Q(\varphi+\pi)=-O_Q(\varphi).
\end{equation}
Thus the first and opposite packets concentrate near $\theta=\psi$ and $\theta=\psi+\pi$. Each contains an outgoing and an incoming branch, computed in Section~\ref{subsubsec:disc-branches}.
Adding and subtracting the angular sums separates these two concentrations. Lemma~\ref{lem:disc-leading} will combine them with the corresponding packet expressions $V_+,V_-$.

\begin{lemma}\label{lem:angular}
Let $w=w(Q)>0$ satisfy $w\to0^+$ and $Qw\to\infty$ as $Q\to\infty$. Then
\begin{equation}\label{eq:angular-local}
\|E_Q-O_Q\|_{L^2(-w,w)}\to0,
\qquad \|E_Q+O_Q\|_{L^2(\pi-w,\pi+w)}\to0.
\end{equation}
The first convergence also holds on any shrinking larger neighborhood of $0$. Moreover, the tail estimate \eqref{eq:angular-tail} holds.
\end{lemma}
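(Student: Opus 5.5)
We work directly from the closed forms in \eqref{eq:angular}. Write $\sigma(\varphi):=\sin\varphi$. Then
\begin{equation*}
E_Q-O_Q=\frac{1}{\sin\varphi}\left[\frac{\sin((2Q+1)\varphi)}{\sqrt{2Q+1}}-\frac{\sin(2Q\varphi)}{\sqrt{2Q}}\right].
\end{equation*}
For the first convergence we split the bracket as
\begin{equation*}
\frac{\sin((2Q+1)\varphi)-\sin(2Q\varphi)}{\sqrt{2Q+1}}+\sin(2Q\varphi)\left(\frac1{\sqrt{2Q+1}}-\frac1{\sqrt{2Q}}\right).
\end{equation*}
The first difference is $2\cos((4Q+1)\varphi/2)\sin(\varphi/2)$, so after dividing by $\sin\varphi$ it is bounded by $C/\sqrt Q$ on $|\varphi|\le\pi/2$. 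Its $L^2(-w,w)$ norm is therefore at most $C\sqrt{w/Q}\to0$.

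For the second piece the prefactor is $O(Q^{-3/2})$, while $\sin(2Q\varphi)/\sin\varphi$ is bounded by $2Q$ pointwise. This gives an $L^2(-w,w)$ norm of at most $CQ^{-1/2}w^{1/2}\to0$. Alternatively one can use $\|\sin(2Q\varphi)/\sin\varphi\|_{L^2(-\pi,\pi)}^2\sim 2\pi\cdot 2Q$ by Parseval, which gives $O(Q^{-1})$.

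The same bounds hold on any interval $(-w',w')$ with $w'\to0$, which gives the remark about shrinking larger neighborhoods. In fact they hold on all of $(-\pi/2,\pi/2)$ with bound $O(Q^{-1/2})$, which is even stronger, so neither $Qw\to\infty$ nor $w\to0$ is actually needed for this part.

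The statement near $\pi$ follows from the half-turn relation \eqref{eq:halfturn}. Substituting $\varphi=\pi+\phi$ gives $E_Q+O_Q=E_Q(\phi)-O_Q(\phi)$, which reduces to the first case. The half-turn identities themselves are immediate from the exponential sums, since the indices in $E_Q$ are even and those in $O_Q$ are odd.

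For the tail estimate \eqref{eq:angular-tail}, use $|\sin\varphi|\ge c\,\dist(\varphi,\{0,\pi\})$ together with $|\sin(\cdot)|\le1$. This gives
\begin{equation*}
|E_Q|^2+|O_Q|^2\le \frac{C}{Q\,\dist(\varphi,\{0,\pi\})^2}.
\end{equation*}
Integrating over $\dist(\varphi,\{0,\pi\})>w$, which is four half-lines truncated at distance $\pi/2$, yields $\le \frac{C}{Q}\int_w^\infty d^{-2}\,\dd d=\frac{C}{Qw}$. The constant $C$ is independent of $Q$ and $w$.

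The only step needing care is the first convergence, namely a uniform bound for $(\sin((2Q+1)\varphi)-\sin(2Q\varphi))/\sin\varphi$. The sum-to-product identity handles this, so no genuine obstacle is expected. The hypothesis $Qw\to\infty$ enters only when these local statements are combined with the tail bound in later use, for instance in Proposition~\ref{prop:disc-modes}(ii).
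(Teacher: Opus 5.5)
Your proof is correct and follows essentially the same route as the paper: the sum-to-product identity reducing $\bigl(\sin((2Q+1)\varphi)-\sin(2Q\varphi)\bigr)/\sin\varphi$ to $\cos((4Q+1)\varphi/2)/\cos(\varphi/2)$, the reduction near $\pi$ via the half-turn relations, and the pointwise bound $C/(\sqrt Q|\sin\varphi|)$ for the tails. The only cosmetic difference is where the normalization mismatch goes. You attach it to $\sin(2Q\varphi)/\sin\varphi$, bounded pointwise or by Parseval, whereas the paper attaches it to $E_Q$ and uses $\|E_Q\|_{L^2(0,2\pi)}^2=2\pi$.
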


\begin{proof}
Using \eqref{eq:angular}, we separate the difference of the normalizing factors:
\begin{align*}
E_Q(\varphi)-O_Q(\varphi)
&=\left(1-\sqrt{\frac{2Q+1}{2Q}}\right)E_Q(\varphi)
+\frac{\sin\bigl((2Q+1)\varphi\bigr)-\sin(2Q\varphi)}{\sqrt{2Q}\sin\varphi}\\
&=\left(1-\sqrt{\frac{2Q+1}{2Q}}\right)E_Q(\varphi)
+\frac{\cos\bigl((4Q+1)\varphi/2\bigr)}{\sqrt{2Q}\cos(\varphi/2)}.
\end{align*}
Since $\|E_Q\|_{L^2(0,2\pi)}^2=2\pi$, the first term has $L^2$ norm at most $C/Q$. For $0<w<\pi/2$, the second term is bounded by $C/\sqrt Q$ on $(-w,w)$. Thus
\begin{equation*}
\|E_Q-O_Q\|_{L^2(-w,w)}^2
\le C\left(\frac1{Q^2}+\frac wQ\right)\longrightarrow0.
\end{equation*}
This also proves convergence on any shrinking larger neighborhood of $0$. The comparison near $\pi$ follows from \eqref{eq:halfturn}.

For the tails, \eqref{eq:angular} gives $|E_Q|+|O_Q|\le C/(\sqrt Q|\sin\varphi|)$. Hence
\begin{equation*}
\int_{\dist(\varphi,\{0,\pi\})>w}(|E_Q|^2+|O_Q|^2)\dd\varphi
\le\frac C Q\int_w^{\pi/2}\frac{\dd s}{s^2}
\le\frac C{Qw},
\end{equation*}
which proves \eqref{eq:angular-tail}.
\end{proof}

\subsubsection{Controlling the radial approximation on the whole disc}\label{subsubsec:disc-radial}

The leading velocity modes in \eqref{eq:disc-mode-leading} have so far been justified away from the origin. To obtain the full $L^2(\D)$ approximation, we prove Proposition~\ref{prop:disc-modes}(iii). Recall the normalized radial functions \eqref{eq:disc-radial-norm} and split the radial squared norm as
\begin{equation*}
\|h\|_{L^2((0,1),r\dd r)}^2
=\int_0^\delta|h(r)|^2r\dd r+\int_\delta^1|h(r)|^2r\dd r.
\end{equation*}
The second term is controlled by the Bessel expansion; the first will be small with $\delta$.

\begin{lemma}\label{lem:bessel}
For every fixed pair $(p,m)$ satisfying \eqref{eq:disc-parities},
\begin{equation}\label{eq:bessel-limit}
\left\|R_{|m|,n_{p,|m|}}-\sqrt2r^{-1/2}\cos(pKr-|m|\pi/2-\pi/4)\right\|_{L^2((0,1),r\dd r)}\to0
\end{equation}
as $n_0\to\infty$. For fixed $P,Q$, the convergence is uniform over the pairs satisfying \eqref{eq:disc-cutoffs}--\eqref{eq:disc-parities}.
\end{lemma}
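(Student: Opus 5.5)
We split the radial squared norm at a small $\delta\in(0,1)$, as indicated just before the statement, and write $h:=R_{|m|,n_{p,|m|}}-\sqrt2r^{-1/2}\cos(pKr-\delta_{|m|})$ with $\delta_{|m|}=|m|\pi/2+\pi/4$. Given $\epsilon>0$, we first choose $\delta$ so that $\int_0^\delta|h|^2r\dd r<\epsilon$ for all large $n_0$, and then let $n_0\to\infty$ on $[\delta,1]$. Uniformity for fixed $P,Q$ is automatic, because only finitely many pairs $(p,m)$ satisfy \eqref{eq:disc-cutoffs}--\eqref{eq:disc-parities}. It therefore suffices to prove the convergence for one fixed pair, with constants depending on $|m|,p$.

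\emph{Region $r\in[\delta,1]$.} Set $j:=j_{|m|,n_{p,|m|}}$. By Proposition~\ref{prop:disc-modes}(i), $j=pK+O(n_0^{-1})$. By \eqref{eq:bessel-zeros} and the asymptotics \eqref{eq:bessel-asym} evaluated at $z=j$ with $\cos(j-\delta_{|m|}+\pi/2)=0+O(j^{-1})$, we get $|J_{|m|+1}(j)|=(2/\pi j)^{1/2}(1+O(j^{-1}))$. Inserting this together with the uniform expansion of Lemma~\ref{lem:bessel-asym} on $[\delta,1]$ into \eqref{eq:disc-radial-norm} gives
\begin{equation*}
R_{|m|,n_{p,|m|}}(r)=\sqrt2r^{-1/2}\cos(jr-\delta_{|m|})+O_{|m|,\delta}(j^{-1}).
\end{equation*}
Since $|\cos(jr-\delta_{|m|})-\cos(pKr-\delta_{|m|})|\le|j-pK|=O(n_0^{-1})$, we conclude $\sup_{[\delta,1]}|h|\le C_{\delta}n_0^{-1}$. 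Hence $\int_\delta^1|h|^2r\dd r\to0$.

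\emph{Region $r\in(0,\delta)$.} The comparison term is harmless: $\int_0^\delta 2r^{-1}\cos^2(\cdot)\,r\dd r\le2\delta$. The main obstacle is the Bessel term, which needs a bound on $\int_0^\delta R_{|m|,n}^2 r\dd r$ that is uniform in $n_0$ and small with $\delta$. We propose two routes.

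The first route uses a uniform pointwise bound $|J_\nu(z)|\le C_\nu z^{-1/2}$ for $z\ge1$, valid for all $z$ since $J_\nu$ is continuous and bounded near the origin ($|J_\nu|\le1$). Combined with $|J_{|m|+1}(j)|^{-2}\le Cj$, this gives
\begin{equation*}
\int_0^\delta R^2r\dd r\le Cj\left[\int_0^{1/j}r\dd r+\int_{1/j}^\delta\frac{1}{jr}\,r\dd r\right]\le C\bigl(j^{-1}+\delta\bigr).
\end{equation*}

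The second route avoids the global bound and uses \eqref{eq:bessel-norm} via scaling. Since $\int_0^{j}J_{|m|}(z)^2z\dd z=\frac{j^2}{2}J_{|m|+1}(j)^2$, we can compare this with $\int_0^{j\delta}J_{|m|}^2z\dd z$. The energy identity used in the proof of Lemma~\ref{lem:bessel-radial}, integrated to a general $Z$ and combined with \eqref{eq:bessel-asym}, gives $\int_0^ZJ_{|m|}^2z\dd z=Z/\pi+O(1)$. Therefore the ratio is $\delta+O(j^{-1})$, and so $\int_0^\delta R^2r\dd r\le2\delta+o(1)$.

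Either route shows that $\int_0^\delta|h|^2r\dd r\le C\delta+o(1)$ with $C$ independent of $n_0$. Choosing $\delta$ small and then $n_0$ large completes the argument.
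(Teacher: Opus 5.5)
Your proof is correct and follows the paper's argument: you split at $\delta$, use Lemma~\ref{lem:bessel-asym} together with $|j-pK|\to0$ on $[\delta,1]$, and your second route near the origin is exactly the paper's Lommel-type identity $\int_0^\delta J_{|m|}(jr)^2r\dd r=\frac{\delta^2}{2}\bigl[J_{|m|}(j\delta)^2-J_{|m|-1}(j\delta)J_{|m|+1}(j\delta)\bigr]$, written in terms of $J_{|m|}$ and $J_{|m|}'$. Your first route, through the pointwise bound $|J_\nu(z)|\le C_\nu\min\{1,z^{-1/2}\}$, is an equally valid and slightly more elementary alternative.

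One small slip: at the zero $j$, what you need is $\cos(j-\delta_{|m|})=O(j^{-1})$. Then $|\cos(j-\delta_{|m|}-\pi/2)|=|\sin(j-\delta_{|m|})|=1+O(j^{-2})$, which yields $|J_{|m|+1}(j)|=(2/\pi j)^{1/2}(1+O(j^{-1}))$. The quantity $\cos(j-\delta_{|m|}+\pi/2)$ that you wrote equals $\mp1+O(j^{-2})$, not $O(j^{-1})$.
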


\begin{proof}
Fix $\delta\in(0,1)$ and write $j=j_{|m|,n_{p,|m|}}$. Recall that $\delta_{|m|}=|m|\pi/2+\pi/4$. At the zero $j$, Lemma~\ref{lem:bessel-asym} gives
\begin{equation*}
\cos(j-\delta_{|m|})=O_{|m|}(j^{-1}),\qquad
|J_{|m|+1}(j)|=\left(\frac2{\pi j}\right)^{1/2}\bigl(1+O_{|m|}(j^{-1})\bigr).
\end{equation*}
Substituting in the normalized radial function, we obtain
\begin{equation*}
R_{|m|,n_{p,|m|}}(r)
=\sqrt2r^{-1/2}\cos(jr-\delta_{|m|})+O_{|m|,\delta}(j^{-1}),
\qquad \forall r\in[\delta,1].
\end{equation*}
Moreover, \eqref{eq:action} and \eqref{eq:bessel-zeros} imply
\begin{equation*}
\sup_{r\in[\delta,1]}
|\cos(jr-\delta_{|m|})-\cos(pKr-\delta_{|m|})|
\le |j-pK|\longrightarrow0.
\end{equation*}
This proves convergence on the fixed annulus.
\vspace{2mm}

Near the origin, we use the identity
\begin{equation*}
\int_0^\delta J_{|m|}(jr)^2r\dd r
=\frac{\delta^2}2\bigl[J_{|m|}(j\delta)^2-J_{|m|-1}(j\delta)J_{|m|+1}(j\delta)\bigr]
\end{equation*}
with $J_{-1}=-J_1$ when $|m|=0$. For fixed $\delta$, the bracket is $O_{|m|}\bigl((j\delta)^{-1}\bigr)$ as $j\to\infty$. Dividing by the normalization factor therefore gives
\begin{align*}
\limsup_{n_0\to\infty}\int_0^\delta|R_{|m|,n_{p,|m|}}(r)|^2r\dd r
&\le C_{|m|}\delta,\\
\int_0^\delta|\sqrt2r^{-1/2}\cos(pKr-\delta_{|m|})|^2r\dd r
&\le2\delta.
\end{align*}
Combining these bounds with convergence on $[\delta,1]$, we obtain
\begin{equation*}
\limsup_{n_0\to\infty}
\left\|R_{|m|,n_{p,|m|}}-\sqrt2r^{-1/2}\cos(pKr-\delta_{|m|})\right\|_{L^2((0,1),r\dd r)}^2
\le C_{|m|}\delta.
\end{equation*}

Letting $\delta\to0^+$ proves \eqref{eq:bessel-limit}. For fixed $P,Q$, there are only finitely many admissible pairs, so the convergence is uniform over them.
\end{proof}

\begin{proof}[Completion of the proof of Proposition~\ref{prop:disc-modes}]
Part (i) was proved in Section~\ref{subsubsec:disc-frequencies}. The definition \eqref{eq:angular}, the identities \eqref{eq:halfturn}, and Lemma~\ref{lem:angular} give part (ii). Lemma~\ref{lem:bessel} gives part (iii), uniformly over the retained modes for fixed $P,Q$.
\end{proof}

\subsection{Profile symmetry, scale, and observation region}\label{subsec:disc-symmetry-geometry}

Now we proceed as in Section~\ref{sec:aos}. For a fixed $q\in X$, we first compute the four branch profiles carried by the two opposite angular packets and their reference matching region. Lemma~\ref{lem:disc-leading} identifies the leading spectral expression, and Proposition~\ref{prop:disc-support} defines $S_k$ and $\Gamma_k$ for fixed sector pairs.

With disjoint sector pairs, Proposition~\ref{prop:disc-observation} assembles the scales into $G^{(N)}\downarrow G$, with $\1_GS_k=0$ for every $k$. The parameters remain available for exact-wave realization in Section~\ref{subsec:disc-real} and \hyperref[eq:gcc]{GCC} in Section~\ref{sec:disc-class}.

\subsubsection{Determining the branch profiles}\label{subsubsec:disc-branches}

We identify the outgoing and incoming profiles in the first and opposite packets, using the two-branch formula \eqref{eq:disc-mode-leading} and the parity rule \eqref{eq:disc-parities}.

\noindent{\it The first packet.} Recall that the coefficient $c_{p,m}$ in \eqref{eq:disc-coeff} compensates the outgoing Bessel phase. The remaining outgoing and incoming factors in \eqref{eq:disc-mode-leading} are $1$ and $i(-1)^{|m|}$. The parity rule gives
\begin{equation*}
(-1)^{|m|}=ie^{-ip\pi/2},
\qquad i(-1)^{|m|}=-e^{-ip\pi/2}.
\end{equation*}
The outgoing profile is therefore $q\bigl(K(t-r)\bigr)$. For the incoming profile, the extra Fourier factor produces a quarter-period shift:
\begin{equation*}
2\Real\sum_{\substack{p>0\\p\ \mathrm{odd}}}
\widehat q_p\bigl(-e^{-ip\pi/2}\bigr)e^{-ipK(t+r)}
=-q\bigl(K(t+r)+\pi/2\bigr).
\end{equation*}

\noindent{\it The opposite packet.} Writing $\varphi=\theta-\psi$, translation of the angle by $\pi$ gives
\begin{equation*}
e^{im(\varphi+\pi)}=(-1)^m e^{im\varphi}.
\end{equation*}
Thus the outgoing and incoming factors each acquire $(-1)^m$, becoming
\begin{equation*}
(-1)^m=ie^{-ip\pi/2},
\qquad
(-1)^m\,i(-1)^{|m|}=i,
\end{equation*}
respectively. To interpret multiplication by $i$, recall the periodic Hilbert transform $\calH$, with multiplier $-i\sgn n$ on $e^{ins}$. In our Fourier convention,
\begin{equation*}
P_+q(s):=\sum_{\substack{p>0\\p\ \mathrm{odd}}}\widehat q_p e^{-ips},
\qquad q=2\Real P_+q,\qquad \calH q=2\Real(iP_+q).
\end{equation*}
Consequently, the opposite outgoing and incoming profiles are
\begin{align*}
2\Real\sum_{\substack{p>0\\p\ \mathrm{odd}}}
i\widehat q_p e^{-ip\pi/2}e^{-ipK(t-r)}
&=\calH q\bigl(K(t-r)+\pi/2\bigr),\\
2\Real\sum_{\substack{p>0\\p\ \mathrm{odd}}}
i\widehat q_p e^{-ipK(t+r)}
&=\calH q\bigl(K(t+r)\bigr).
\end{align*}

\noindent{\it The branch profiles.} In the reference coordinates $(d_1,d_2)\in(\R/2\pi\Z)^2$, the four profiles above give the two packet expressions
\begin{equation}\label{eq:two-prof}
\begin{split}
V_+(d_1,d_2)&:=q(d_1)-q(d_2+\pi/2),\\
V_-(d_1,d_2)&:=\calH q(d_1+\pi/2)+\calH q(d_2).
\end{split}
\end{equation}
Their spacetime expressions are $V_\pm\bigl(K(t-r),K(t+r)\bigr)$. In the notation of Definition~\ref{def:model}, these four branch profiles are $f_\alpha=L_\alpha q$, with
\begin{equation*}
(L_1,L_2,L_3,L_4)=(\Id,\tau_{\pi/2},\tau_{\pi/2}\calH,\calH),
\qquad \tau_\alpha h(s):=h(s+\alpha).
\end{equation*}
The groups $I=\{1,2\}$ and $I=\{3,4\}$ give the first and opposite packets.

\subsubsection{The reference matching region}\label{subsubsec:disc-reference}

The two branch profiles of the first packet suggest the reference matching region
\begin{equation}\label{eq:disc-ref}
\calM_q:=\{(d_1,d_2)\in(\R/2\pi\Z)^2:q(d_1)=q(d_2+\pi/2)\}.
\end{equation}
In the four-branch notation of Section~\ref{subsec:aos-symmetry}, this balance has weights $(w_1,w_2,w_3,w_4)=(1,-1,0,0)$.
For the profile $q_0$ from \eqref{eq:square}, the two types of matching cells have the values
\begin{equation*}
\bigl(q_0(d_1),q_0(d_2+\pi/2)\bigr)=(1,1)\quad\hbox{or}\quad(-1,-1).
\end{equation*}
Thus the nonzero branch values cancel in $V_+(d_1,d_2)$. This is the reference balance of Section~\ref{subsec:aos-symmetry}, with its shift determined by disc propagation; see Figure~\ref{fig:disc-reference}.

\begin{figure}[htbp]
\centering
\includegraphics[width=.9\textwidth,
  trim={0mm 1mm 0mm 12mm},
  clip]{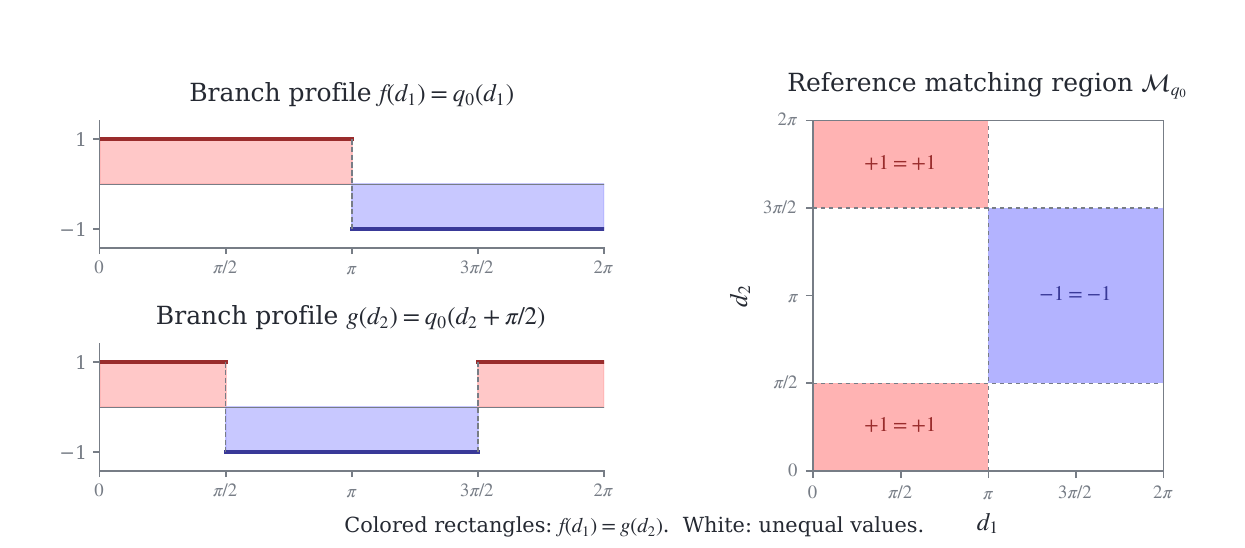}
\caption{Reference matching for $q_0(d_1)=q_0(d_2+\pi/2)$. Red and blue correspond to the common values $+1$ and $-1$, respectively.}\label{fig:disc-reference}
\end{figure}

\subsubsection{Generated approximate velocities, sector support, and matching regions}\label{subsubsec:disc-generated}

The following lemma proves velocity approximation for the leading profile expression before sector cutoffs. Its two packet expressions are those derived from the Bessel phases in \eqref{eq:two-prof}. See also the discussion of multi-branch velocity approximation in Appendix~\ref{subsubsec:supp-assembly}.

\begin{lemma}[Leading profile expression]\label{lem:disc-leading}
Let $q\in X$ be a finite Fourier sum. Let $T>0$. Fix an integer $Q\ge1$, $\psi\in\R/2\pi\Z$ and $C>0$. Set
\begin{equation}\label{eq:disc-packet-amp}
E_Q^+:=\frac{E_Q+O_Q}{2}
\qquad\text{and}\qquad
E_Q^-:=\frac{E_Q-O_Q}{2}.
\end{equation}
Let $n_0\in\Nat$. For $K=K(n_0)$ from \eqref{eq:disc-mode-scales}, define
\begin{equation}\label{eq:disc-leading-packets}
\begin{split}
\widetilde S[q](t,r,\theta):=\frac{C}{2\sqrt{\pi r}}\Bigl[
&E_Q^+(\theta-\psi)V_+\bigl(K(t-r),K(t+r)\bigr)\\
{}+{}&E_Q^-(\theta-\psi)V_-\bigl(K(t-r),K(t+r)\bigr)\Bigr].
\end{split}
\end{equation}
For all sufficiently large $n_0$, there are real solutions $u_{n_0}$ of \eqref{eq:wave}, each given by a finite sum of Dirichlet modes, such that
\begin{equation}\label{eq:disc-leading-real}
\|\partial_tu_{n_0}-\widetilde S[q]\|_{L^2((0,T)\times\D)}\longrightarrow0
\quad\text{ as } n_0\to\infty.
\end{equation}
\end{lemma}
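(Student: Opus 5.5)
The approach is to write $u_{n_0}$ explicitly as a finite sum of Dirichlet modes, using the velocity coefficients \eqref{eq:disc-coeff}. Let $P$ be the largest frequency with $\widehat q_p\ne0$. For $n_0$ large, Proposition~\ref{prop:disc-modes}(i) makes all indices $n_{p,|m|}$ with $(p,m)\in\mathcal I_{P,Q}$ positive integers. We then set
\begin{equation*}
\partial_tu_{n_0}(t,r,\theta):=C\,\Real\sum_{(p,m)\in\mathcal I_{P,Q}}\widehat q_p\,\alpha_{m,p}\,e^{-im\psi}e^{i\delta_{|m|}}\,e^{-ij_{|m|,n_{p,|m|}}t}\,\Phi_{m,n_{p,|m|}}(r,\theta),
\end{equation*}
with $\alpha_{m,p}$ from \eqref{eq:disc-angular-coefficients}--\eqref{eq:disc-equal-coeff}. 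This is the time derivative of the real finite-energy wave obtained by dividing each term by $-ij$ before taking real parts. It is real, satisfies the Dirichlet condition, and solves \eqref{eq:wave}, since each term is a real combination of $\cos(jt)\Phi$ and $\sin(jt)\Phi$ together with their conjugate modes.

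Next we compute the leading expression. We replace each $\Phi_{m,n}$ by $(2\pi)^{-1/2}e^{im\theta}\sqrt2\,r^{-1/2}\cos(pKr-\delta_{|m|})$ and each $e^{-ijt}$ by $e^{-ipKt}$. The resulting expression is exactly $\widetilde S[q]$. To check this, use \eqref{eq:disc-mode-leading} to split each term into outgoing and incoming branches. Then use the parity identities $(-1)^{|m|}=ie^{-ip\pi/2}$ from Section~\ref{subsubsec:disc-branches}, and regroup the $m$-sums into $E_Q(\varphi)$ and $O_Q(\varphi)$ according to $p\bmod 4$.

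The regrouping goes as follows. Write $E_Q=E_Q^++E_Q^-$ and $O_Q=E_Q^+-E_Q^-$. The $E_Q^+$-part collects every odd $p$ with factor $1$ outgoing and $-e^{-ip\pi/2}$ incoming, which gives $V_+$. In the $E_Q^-$-part, the class $p\equiv3$ acquires a sign. Since $e^{-ip\pi/2}=i$ for $p\equiv3$ and $-i$ for $p\equiv1$, the outgoing factor is $-i e^{-ip\pi/2}\cdot(\pm)$, which turns into $ie^{-ip\pi/2}$ uniformly. The incoming factor similarly becomes $i$. This reproduces the Hilbert-transform profiles of $V_-$ as derived in Section~\ref{subsubsec:disc-branches}. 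This bookkeeping is an exact algebraic identity, so it needs no limit.

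It remains to bound the error. Since there are finitely many pairs (at most $P(4Q+1)$, with $P,Q$ fixed), the triangle inequality reduces \eqref{eq:disc-leading-real} to a per-mode estimate with bounded coefficients. For a single mode we need
\begin{equation*}
\bigl\|e^{-ijt}R_{|m|,n}(r)-e^{-ipKt}\sqrt2r^{-1/2}\cos(pKr-\delta_{|m|})\bigr\|_{L^2((0,T)\times(0,1),\,r\dd r\dd t)}\to0 .
\end{equation*}
We split it into two parts. The first is $\|R_{|m|,n}\|\,\sup_{t\le T}|e^{-ijt}-e^{-ipKt}|\le T|j-pK|\to0$, by Remark~\ref{rem:disc-frequencies} and the unit normalization. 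The second is $\sqrt T$ times the radial error, which tends to zero by Proposition~\ref{prop:disc-modes}(iii), uniformly over the finite index set. The angular factor $e^{im\theta}$ has modulus one, so it does not affect these bounds.

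The main obstacle is the radial singularity at the origin, where the Bessel asymptotics fail and $r^{-1/2}$ blows up. Lemma~\ref{lem:bessel} already handles this point in weighted $L^2$, so the remaining care lies only in the sign bookkeeping of the $E_Q^-$ packet. Note that no limit in $Q$ is needed here.
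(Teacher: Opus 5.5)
Your route is the paper's: the same finite index set $\mathcal I_{P,Q}$, the same velocity coefficients $c_{p,m}$ from \eqref{eq:disc-coeff}, an exact algebraic identification of the leading expression with $\widetilde S[q]$, and a per-mode triangle inequality based on Remark~\ref{rem:disc-frequencies} and Lemma~\ref{lem:bessel}. You regroup directly via $E_Q=E_Q^++E_Q^-$ and $O_Q=E_Q^+-E_Q^-$, which replaces the paper's identities for $V_+\pm V_-$. Your sign bookkeeping is correct: the outgoing and incoming factors are $1$ and $-e^{-ip\pi/2}$ for $V_+$, and $ie^{-ip\pi/2}$ and $i$ for $V_-$.

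There is, however, a normalization slip that makes your claimed exact identity false as written. With $\partial_tu_{n_0}=C\Real\sum(\cdots)$, the leading expression is $\tfrac12\widetilde S[q]$, not $\widetilde S[q]$. Indeed, $\Phi_{m,n}\approx(\pi r)^{-1/2}e^{im\theta}\cos(pKr-\delta_{|m|})$, and splitting the cosine contributes a factor $\tfrac12$. The $E_Q^+$-part is therefore $\frac{C}{2\sqrt{\pi r}}E_Q^+\Real\sum_p\widehat q_p\bigl[e^{-ipK(t-r)}-e^{-ip\pi/2}e^{-ipK(t+r)}\bigr]=\frac{C}{4\sqrt{\pi r}}E_Q^+V_+$. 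This holds because $V_+$ carries the factor $2$ from $q=2\Real P_+q$, and the same happens for $V_-$. Your waves would then converge to $\tfrac12\widetilde S[q]$, and \eqref{eq:disc-leading-real} fails.

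The fix is to take $\partial_tu_{n_0}=2\Real\bigl[C\sum c_{p,m}e^{-ij_{|m|,n_{p,|m|}}t}\Phi_{m,n_{p,|m|}}\bigr]$, that is, the wave \eqref{eq:disc-waves}, as the paper does. This normalization also matters downstream. The paper computes $E[u_{n_0}](0)=2C^2\sum|\widehat q_p|^2$ for it, and Proposition~\ref{prop:disc-real} uses that identity to fix $C_k$.

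A smaller slip: the frequency term in your per-mode bound should be $\sqrt T\cdot T|j-pK|$, since the $L^2$ norm over $t\in(0,T)$ contributes a factor $\sqrt T$. This does not affect the conclusion.
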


By Lemma~\ref{lem:angular}, $E_Q^+$ and $E_Q^-$ concentrate near $0$ and $\pi$, respectively. We replace each by $E_Q$ in its own sector and by zero elsewhere. Using the full profile $q\in X$, this gives the expression below, with exact sector support and profile matching.

\begin{prop}\label{prop:disc-support}
Let $T>0$ and a nonzero $q\in X$. For each $k\ge1$, let $\psi_k\in\R/2\pi\Z$, $0<a_k,b_k<\pi/4$, $Q_k\in\Nat$, and $K_k,C_k>0$. Define the angular sectors on $\R/2\pi\Z$ by
\begin{equation}\label{eq:sectors}
\calA_k:=\{|\theta-\psi_k|<a_k\},\qquad
\calB_k:=\{|\theta-\psi_k-\pi|<b_k\},
\end{equation}
and the spacetime sectors by
\begin{equation}\label{eq:disc-spacetime-sectors}
\widehat{\calA}_k:=(0,T)\times(0,1)\times\calA_k,\qquad
\widehat{\calB}_k:=(0,T)\times(0,1)\times \calB_k.
\end{equation}
Set $\varepsilon_k=K_k^{-1}$.

With $E_{Q_k}$ from \eqref{eq:angular}, define the common real amplitude
\begin{equation}\label{eq:disc-amp}
A_{\varepsilon_k}(r,\theta):=\frac{C_kE_{Q_k}(\theta-\psi_k)}{2\sqrt{\pi r}},
\end{equation}
and, using $V_\pm$ from \eqref{eq:two-prof}, the generated approximate velocity
\begin{equation}\label{eq:disc-S}
\begin{aligned}
S_k(t,r,\theta)
&:=A_{\varepsilon_k}(r,\theta)\Bigl[
\1_{\calA_k}(\theta)V_+\bigl(K_k(t-r),K_k(t+r)\bigr)\\
&\hphantom{{}:=A_{\varepsilon_k}(r,\theta)\Bigl[}{}+\1_{\calB_k}(\theta)V_-\bigl(K_k(t-r),K_k(t+r)\bigr)\Bigr]\\
&=\frac{C_kE_{Q_k}(\theta-\psi_k)}{2\sqrt{\pi r}}\Bigl[
\1_{\{|\theta-\psi_k|<a_k\}}V_+\bigl(K_k(t-r),K_k(t+r)\bigr)\\
&\hphantom{{}=\frac{C_kE_{Q_k}(\theta-\psi_k)}{2\sqrt{\pi r}}\Bigl[}{}+\1_{\{|\theta-\psi_k-\pi|<b_k\}}V_-\bigl(K_k(t-r),K_k(t+r)\bigr)\Bigr].
\end{aligned}
\end{equation}
The two terms in \eqref{eq:disc-S} are the first and opposite packets, supported in $\widehat{\calA}_k$ and $\widehat{\calB}_k$, respectively. Set the value at the origin to zero.

Then $S_k\in L^2((0,T)\times\D)$ and the following properties hold.
\begin{enumerate}[label=(\roman*),leftmargin=*]
\item \emph{(Sector support).} With $\mathcal U_k:=\widehat{\calA}_k\cup\widehat{\calB}_k$, the two sector indicators $\1_{\calA_k}$ and $\1_{\calB_k}$ give the exact support property
\begin{equation*}
S_k=0\quad\text{on }\bigl((0,T)\times\D\bigr)\setminus\mathcal U_k.
\end{equation*}

\item \emph{(Additional cancellation through profile matching).} Define
\begin{equation}\label{eq:disc-match}
\Gamma_k:=\{(t,r,\theta)\in\widehat{\calA}_k:q\bigl(K_k(t-r)\bigr)=q\bigl(K_k(t+r)+\pi/2\bigr)\}.
\end{equation}
Then $\1_{\Gamma_k}S_k=0$. If, in addition,
\begin{equation*}
\left|\{(d_1,d_2)\in(\R/2\pi\Z)^2:q(d_1)=q(d_2+\pi/2)\ne0\}\right|>0,
\end{equation*}
then, for all sufficiently large $K_k$, $\Gamma_k$ contains a positive-measure set on which the matched profile values are nonzero.
\end{enumerate}
\end{prop}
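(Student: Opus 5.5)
The proof splits into three checks: that $S_k$ is in $L^2$, that the support and cancellation properties hold pointwise, and that the matching set has positive measure. Only the last needs an argument beyond reading off the formula \eqref{eq:disc-S}.

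\emph{Square integrability.} Write $S_k$ as the product of $A_{\varepsilon_k}$ with a bounded-in-$L^2$ combination of the profiles $V_\pm$. Since $E_{Q_k}$ is a trigonometric polynomial, it is bounded, so $|A_{\varepsilon_k}|^2 r\le C_k^2\|E_{Q_k}\|_\infty^2/(4\pi)$. The weight $r^{-1/2}$ is therefore harmless against the measure $r\,\dd r$. It remains to bound $\int_0^T\!\int_0^1|q(K_k(t-r))|^2\dd r\dd t$ and the analogous integrals for the other three branches. Change variables to $(d_1,d_2)=(K_k(t-r),K_k(t+r))$; the Jacobian is the constant $2K_k^2$. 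The image of $(0,T)\times(0,1)$ covers each period cell of $(\R/2\pi\Z)^2$ a bounded number of times, at most roughly $(K_kT/2\pi+1)(K_k/2\pi+1)$. Each branch integral is then bounded by a multiple of $\|q\|_{L^2}^2$ or $\|\calH q\|_{L^2}^2$, and these are equal because $\calH$ is an isometry on zero-mean functions. The value at the origin is irrelevant because it is a null set.

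\emph{Properties (i) and the first part of (ii).} Both follow directly from \eqref{eq:disc-S}. Outside $\widehat{\calA}_k\cup\widehat{\calB}_k$ both indicators vanish, which gives (i). The sectors are disjoint since $a_k,b_k<\pi/4<\pi/2$. Hence on $\Gamma_k\subset\widehat{\calA}_k$ only the first packet survives. It equals $A_{\varepsilon_k}\bigl[q(K_k(t-r))-q(K_k(t+r)+\pi/2)\bigr]$, and this vanishes by the defining equality of $\Gamma_k$. This is the transfer \eqref{eq:balance-transfer} with weights $(1,-1,0,0)$. Measurability of $\Gamma_k$ follows from fixing Borel representatives of $q$.

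\emph{Positive measure of the nonzero matching set (the main step).} Let $\calM^*:=\{(d_1,d_2):q(d_1)=q(d_2+\pi/2)\neq0\}$, and write $\Gamma_k^*$ for the corresponding subset of $\Gamma_k$. Using the same change of variables, $|\Gamma_k^*|$ equals $2a_k/(2K_k^2)$ times the measure of $\Phi_k\bigl((0,T)\times(0,1)\bigr)\cap\pi^{-1}(\calM^*)$. Here $\Phi_k$ is the linear map to $(d_1,d_2)$ and $\pi$ is the projection to the torus. The image is the rotated rectangle $\{0<d_2-d_1<2K_k,\ 0<d_1+d_2<2K_kT\}$. Once $K_k\min\{1,T\}$ exceeds a fixed multiple of $2\pi$, this rectangle contains a full translate of the fundamental square $[0,2\pi)^2$. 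On that square the lifted set has measure exactly $|\calM^*|>0$. Therefore $|\Gamma_k^*|\ge a_k|\calM^*|/K_k^2>0$ for all sufficiently large $K_k$, which gives the second part of (ii).

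The only delicate point is tracking the correct Jacobian and the covering count, and in fact only a lower bound is needed there. I expect no genuine obstacle in this proposition; the analytic difficulty lies in Lemma~\ref{lem:disc-leading}.
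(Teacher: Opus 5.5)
Your proposal is correct and follows essentially the paper's proof. Property (i) and the cancellation on $\Gamma_k$ are read off from the indicators and the defining equality of $\Gamma_k$, and positivity comes from the linear map $(t,r)\mapsto\bigl(K_k(t-r),K_k(t+r)\bigr)$ with determinant $2K_k^2$, whose image contains a full period cell once $K_k$ is large, multiplied by the interval $\calA_k$.

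The differences are only cosmetic:
\begin{itemize}
\item The paper obtains the $L^2$ bound more directly, by periodic integration in $t$ at fixed $r$.
\item Your torus-covering multiplicity is actually about $K_k^2T/(2\pi^2)$, roughly twice your stated count, although only finiteness is used.
\item Your explicit bound $a_k|\calM^*|/K_k^2$ is computed for $\dd t\,\dd r\,\dd\theta$ rather than for the spacetime measure $r\,\dd r\,\dd\theta\,\dd t$. This does not affect positivity, since $r>0$.
\end{itemize}
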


\begin{remark}[Parameters of the generated approximate velocity]\label{rem:disc-S-params}
Separating the overall scalar factor in \eqref{eq:disc-S}, we write its dependence as follows.\footnote{For readers' convenience, parameter remarks summarize the roles and dependencies of the construction parameters. The selection order is recalled in Step~1 of the proof of Proposition~\ref{prop:disc-real}. These summaries may be skipped on a first reading.}
\begin{equation*}
S_k=C_k\,S\!\left(q;\psi_k;K_k,a_k,b_k,Q_k\right).
\end{equation*}
The fixed profile $q$ determines $V_\pm$; $\psi_k$ gives the angular position. The remaining parameters correspond to the scales $K_k^{-1},a_k,b_k,Q_k^{-1}$.
\end{remark}

\begin{proof}[Proof of Lemma~\ref{lem:disc-leading}]
\noindent{\it Step 1. Choice of the spectral coefficients.}
Choose $P$ such that $\widehat q_p=0$ for $|p|>P$. We use the finite index set $\mathcal I_{P,Q}$ and radial indices from Definition~\ref{def:disc-indices}, together with the angular coefficients \eqref{eq:disc-angular-coefficients} and \eqref{eq:disc-equal-coeff}. By Proposition~\ref{prop:disc-modes}(i), every pair in $\mathcal I_{P,Q}$ satisfies
\begin{equation*}
n_{p,|m|}=pn_0-\frac{|m|}{2}-\frac{p-1}{4},
\qquad
j_{|m|,n_{p,|m|}}=pK+O_{P,Q}(n_0^{-1}).
\end{equation*}
The parity choice in $\alpha_{m,p}$ makes these radial indices integers, and they are positive for sufficiently large $n_0$.

Recall the velocity coefficients $c_{p,m}$ from \eqref{eq:disc-coeff}, which compensate the outgoing Bessel phase and translate the angular concentration to $\psi$.

\vspace{2mm}
\noindent{\it Step 2. Construction of an exact wave.}
Recall the exponential spectral expansion \eqref{eq:disc-wave-exp} and its velocity form \eqref{eq:disc-vel-exp} from Section~\ref{subsec:disc-polar}. We retain the modes selected in Step~1 and prescribe the coefficients of their negative-frequency velocity terms to be $Cc_{p,m}$. For a fixed pair, writing $j=j_{|m|,n_{p,|m|}}$, we have
\begin{equation*}
\partial_t\left(\frac{iCc_{p,m}}j e^{-ijt}\Phi_{m,n_{p,|m|}}(x)\right)
=Cc_{p,m}e^{-ijt}\Phi_{m,n_{p,|m|}}(x).
\end{equation*}
Thus we define the real wave by
\begin{equation}\label{eq:disc-waves}
u_{n_0}(t,x):=2\Real\left[iC\sum_{(p,m)\in\mathcal I_{P,Q}}
\frac{c_{p,m}}{j_{|m|,n_{p,|m|}}}e^{-ij_{|m|,n_{p,|m|}}t}\Phi_{m,n_{p,|m|}}(x)\right].
\end{equation}
By Lemma~\ref{lem:disc-basis}, each mode satisfies the Dirichlet wave equation exactly. A finite sum and its real part remain solutions, and differentiation gives
\begin{equation*}
\partial_tu_{n_0}(t,x)=2\Real\left[C\sum_{(p,m)\in\mathcal I_{P,Q}}
c_{p,m}e^{-ij_{|m|,n_{p,|m|}}t}\Phi_{m,n_{p,|m|}}(x)\right].
\end{equation*}

For later normalization, we identify the displacement and velocity coefficients in \eqref{eq:disc-wave-exp}. Recall from Lemma~\ref{lem:disc-basis} that $\overline{\Phi_{m,n}}=\Phi_{-m,n}$, and notice that $\mathcal I_{P,Q}$ is invariant under $m\mapsto-m$. The conjugate modes have the same radial index and frequency. Thus the coefficients of $u_{n_0}(0)$ and $\partial_tu_{n_0}(0)$ along $\Phi_{m,n_{p,|m|}}$ are, respectively,
\begin{equation*}
\frac{iC}{j_{|m|,n_{p,|m|}}}\bigl(c_{p,m}-\overline{c_{p,-m}}\bigr),
\qquad
C\bigl(c_{p,m}+\overline{c_{p,-m}}\bigr).
\end{equation*}
By orthonormality and the eigenvalue equation in Lemma~\ref{lem:disc-basis}, we obtain
\begin{equation*}
E[u_{n_0}](0)=\frac{C^2}{2}\sum_{(p,m)\in\mathcal I_{P,Q}}
\left(\bigl|c_{p,m}-\overline{c_{p,-m}}\bigr|^2
+\bigl|c_{p,m}+\overline{c_{p,-m}}\bigr|^2\right).
\end{equation*}
Relabeling $m$ as $-m$ in the conjugate contribution gives
\begin{equation}\label{eq:disc-leading-energy}
E[u_{n_0}](0)=2C^2\sum_{(p,m)\in\mathcal I_{P,Q}}|c_{p,m}|^2
=2C^2\sum_{\substack{0<p\le P\\p\ \mathrm{odd}}}|\widehat q_p|^2.
\end{equation}
The last equality uses $\sum_m|\alpha_{m,p}|^2=1$ for every positive odd $p$.

\Needspace{8\baselineskip}
\vspace{2mm}
\noindent{\it Step 3. The two branches and the packet expression.}
Write $\varphi=\theta-\psi$. Recall from \eqref{eq:bessel-basis} that $\Phi_{m,n}=(2\pi)^{-1/2}R_{|m|,n}e^{im\theta}$. We substitute the coefficient \eqref{eq:disc-coeff}, replace the wave frequency by $pK$ using Proposition~\ref{prop:disc-modes}(i), and replace the radial function by its leading cosine using part~(iii). The resulting leading expression is
\begin{equation*}
\frac{2C}{\sqrt{\pi r}}\Real\sum_{(p,m)\in\mathcal I_{P,Q}}
\widehat q_p\alpha_{m,p}e^{im\varphi}e^{i\delta_{|m|}}e^{-ipKt}
\cos(pKr-\delta_{|m|}),
\end{equation*}
where $\delta_{|m|}=|m|\pi/2+\pi/4$. Step~4 estimates the errors in these two replacements. We now identify this expression with $\widetilde S[q]$.
\vspace{2mm}

The compensated Bessel oscillation splits into two branches:
\begin{align*}
e^{i\delta_{|m|}}e^{-ipKt}\cos(pKr-\delta_{|m|})
&=\frac12\left[e^{-ipK(t-r)}+e^{2i\delta_{|m|}}e^{-ipK(t+r)}\right]\\
&=\frac12\left[e^{-ipK(t-r)}+i(-1)^{|m|}e^{-ipK(t+r)}\right].
\end{align*}
Substituting this identity and grouping the angular sums gives
\begin{equation*}
\frac{C}{\sqrt{\pi r}}\Real\sum_{\substack{0<p\le P\\p\ \mathrm{odd}}}
\widehat q_p\left[
e^{-ipK(t-r)}\sum_m\alpha_{m,p}e^{im\varphi}
+ie^{-ipK(t+r)}\sum_m(-1)^{|m|}\alpha_{m,p}e^{im\varphi}
\right].
\end{equation*}
Here the sums over $m$ use the coefficients \eqref{eq:disc-angular-coefficients}, which vanish outside the permitted angular indices. By \eqref{eq:disc-equal-coeff} and \eqref{eq:angular},
\begin{align*}
\sum_m\alpha_{m,p}e^{im\varphi}
&=\begin{cases}
E_Q(\varphi),&p\equiv1\pmod4,\\
O_Q(\varphi),&p\equiv3\pmod4,
\end{cases}\\
\sum_m(-1)^{|m|}\alpha_{m,p}e^{im\varphi}
&=\begin{cases}
E_Q(\varphi),&p\equiv1\pmod4,\\
-O_Q(\varphi),&p\equiv3\pmod4.
\end{cases}
\end{align*}

Thus the incoming coefficient is $i$ for the even angular sum and $-i$ for the odd angular sum. The leading expression becomes
\begin{equation}\label{eq:disc-finite-leading}
\begin{split}
\frac{C}{\sqrt{\pi r}}\Real\Bigl[
&E_Q(\varphi)\sum_{\substack{0<p\le P\\p\equiv1\ (4)}}\widehat q_p
\bigl(e^{-ipK(t-r)}+ie^{-ipK(t+r)}\bigr)\\
{}+{}&O_Q(\varphi)\sum_{\substack{0<p\le P\\p\equiv3\ (4)}}\widehat q_p
\bigl(e^{-ipK(t-r)}-ie^{-ipK(t+r)}\bigr)\Bigr].
\end{split}
\end{equation}
To identify its packets, set
\begin{equation*}
F_j(s):=\sum_{\substack{0<p\le P\\p\equiv j\ (4)}}\widehat q_p e^{-ips},
\qquad \forall j\in\{1,3\}.
\end{equation*}
For the reference coordinates $(d_1,d_2)$, the definitions of $q$ and $\calH q$ give
\begin{align*}
V_+(d_1,d_2)&=2\Real\bigl[F_1(d_1)+iF_1(d_2)+F_3(d_1)-iF_3(d_2)\bigr],\\
V_-(d_1,d_2)&=2\Real\bigl[F_1(d_1)+iF_1(d_2)-F_3(d_1)+iF_3(d_2)\bigr].
\end{align*}
Indeed, translation by $\pi/2$ multiplies $F_1$ by $-i$ and $F_3$ by $i$, while $\calH q=2\Real(iP_+q)$. Adding and subtracting these identities yields
\begin{align*}
V_+(d_1,d_2)+V_-(d_1,d_2)&=4\Real\bigl[F_1(d_1)+iF_1(d_2)\bigr],\\
V_+(d_1,d_2)-V_-(d_1,d_2)&=4\Real\bigl[F_3(d_1)-iF_3(d_2)\bigr].
\end{align*}
Evaluate these identities at $\bigl(K(t-r),K(t+r)\bigr)$. Since $E_Q,O_Q$ are real and $E_Q^\pm=(E_Q\pm O_Q)/2$, \eqref{eq:disc-finite-leading} equals
\begin{equation*}
\begin{split}
\frac{C}{2\sqrt{\pi r}}\Bigl[&E_Q^+(\varphi)V_+\bigl(K(t-r),K(t+r)\bigr)\\
&+E_Q^-(\varphi)V_-\bigl(K(t-r),K(t+r)\bigr)\Bigr]
=\widetilde S[q].
\end{split}
\end{equation*}
This is the expression in \eqref{eq:disc-leading-packets}.

\vspace{2mm}
\noindent{\it Step 4. Approximation on the whole disc.}
It remains to justify the two replacements used above: the wave frequency by $pK$, and the radial eigenfunction by its leading cosine. Recall from Remark~\ref{rem:disc-frequencies} that
\begin{equation*}
\sup_{t\in[0,T]}|e^{-ij_{|m|,n_{p,|m|}}t}-e^{-ipKt}|
\le T|j_{|m|,n_{p,|m|}}-pK|\longrightarrow0.
\end{equation*}
For the radial replacement, write
\begin{equation*}
\rho_{p,m}(n_0):=\left\|R_{|m|,n_{p,|m|}}-\sqrt2r^{-1/2}\cos(pKr-\delta_{|m|})\right\|_{L^2((0,1),r\dd r)}.
\end{equation*}
Lemma~\ref{lem:bessel} gives $\rho_{p,m}(n_0)\to0$ on the full radial interval, including the region near the origin. Since the exact eigenfunctions have norm one, the triangle inequality gives
\begin{equation*}
\|\partial_tu_{n_0}-\widetilde S[q]\|_{L^2((0,T)\times\D)}
\le2C\sqrt T\sum_{(p,m)\in\mathcal I_{P,Q}}
|c_{p,m}|\bigl(T|j_{|m|,n_{p,|m|}}-pK|+\rho_{p,m}(n_0)\bigr).
\end{equation*}
The coefficients and the finite set of active pairs are fixed as $n_0\to\infty$. Every term tends to zero, proving \eqref{eq:disc-leading-real}.
\end{proof}

\begin{proof}[Proof of Proposition~\ref{prop:disc-support}]

\noindent{\it Sector support.} Both indicators in \eqref{eq:disc-S} vanish outside $\mathcal U_k$, proving (i).

Each branch belongs to spacetime $L^2$: periodic integration controls its $t$-integral, $E_{Q_k}$ is a trigonometric polynomial, and the polar area density $r\dd r\dd\theta$ cancels the square of $r^{-1/2}$.
More precisely, for every $2\pi$-periodic $h\in L^2(0,2\pi)$ and $\beta\in\R$,
\begin{equation*}
\int_0^T\int_0^1
|r^{-1/2}h\bigl(K_k(t\pm r)+\beta\bigr)|^2r\dd r\dd t
\le\left(\frac T{2\pi}+\frac1{K_k}\right)\|h\|_{L^2(0,2\pi)}^2.
\end{equation*}

This is the generated approximate velocity $S_k=\calT_{\varepsilon_k}q$ of Section~\ref{subsec:aos-scales}; we also write $S_k[q]$ when emphasizing its profile dependence. In the notation of Section~\ref{subsec:aos-profiles}, the signed branch amplitudes are
\begin{equation*}
\begin{aligned}
(A_{1,\varepsilon_k},A_{2,\varepsilon_k})
&=\1_{\calA_k}A_{\varepsilon_k}(1,-1),\\
(A_{3,\varepsilon_k},A_{4,\varepsilon_k})
&=\1_{\calB_k}A_{\varepsilon_k}(1,1).
\end{aligned}
\end{equation*}
The first two propagation phases are $\Theta_{1,\varepsilon_k}(t,r)=K_k(t-r)$ and $\Theta_{2,\varepsilon_k}(t,r)=K_k(t+r)$. The third and fourth branches reuse these phases, respectively. Thus both packets use the same amplitude $A_{\varepsilon_k}$, with different branch signs.
\vspace{2mm}

\noindent{\it Additional cancellation through profile matching.} We now use equal profile values in the first sector; the opposite sector carries a different packet expression.

On the first sector, the formula takes the particularly simple form
\begin{equation*}
S_k(t,r,\theta)
=A_{\varepsilon_k}(r,\theta)
\Bigl[q\bigl(K_k(t-r)\bigr)-q\bigl(K_k(t+r)+\pi/2\bigr)\Bigr],
\qquad \forall(t,r,\theta)\in\widehat{\calA}_k.
\end{equation*}
On $\Gamma_k$, the four signed amplitudes satisfy $(A_{1,\varepsilon_k},A_{2,\varepsilon_k},A_{3,\varepsilon_k},A_{4,\varepsilon_k})=A_{\varepsilon_k}(1,-1,0,0)$. Thus their common real factor preserves the reference balance, as in \eqref{eq:balance-amp}.

The set $\Gamma_k$ pulls back $\calM_q$ through the two propagation phases. Hence
\begin{align*}
\1_{\Gamma_k}S_k
&=\1_{\Gamma_k}A_{\varepsilon_k}
\Bigl[q\bigl(K_k(t-r)\bigr)-q\bigl(K_k(t+r)+\pi/2\bigr)\Bigr]\\
&=0.
\end{align*}
The opposite sector carries $V_-$ and will be left unobserved.
\vspace{2mm}

For the positive-measure assertion, the map $(t,r)\mapsto\bigl(K_k(t-r),K_k(t+r)\bigr)$ has determinant $2K_k^2$. Its image of $(0,T)\times(0,1)$ contains a full periodicity cell when $K_k$ is sufficiently large. The additional hypothesis in (ii) therefore gives a positive-measure preimage with nonzero matching values. Multiplication by the angular interval $\calA_k$ proves the assertion in spacetime.

Inside $\calA_k$, the matching condition is independent of $\theta$ because the two branches share the same angular amplitude. The whole matching region still includes this angular restriction. In polar coordinates,
\begin{equation*}
\Gamma_k=
\bigl\{(t,r)\in(0,T)\times(0,1):
q\bigl(K_k(t-r)\bigr)=q\bigl(K_k(t+r)+\pi/2\bigr)\bigr\}\times\calA_k.
\qedhere
\end{equation*}
\end{proof}

\begin{remark}\label{rem:disc-support-matching}
A nonzero constant interval of $q$ suffices for the additional hypothesis, as for the square profile $q_0$. By contrast, $q(s)=\sin s$ has the same sector support and later exact-wave realization, but its reference matching set consists of curves and has zero area.
\end{remark}

\begin{figure}[htbp]
\centering
\includegraphics[width=\textwidth,
  trim={0mm 7mm 0mm 3mm},
  clip]{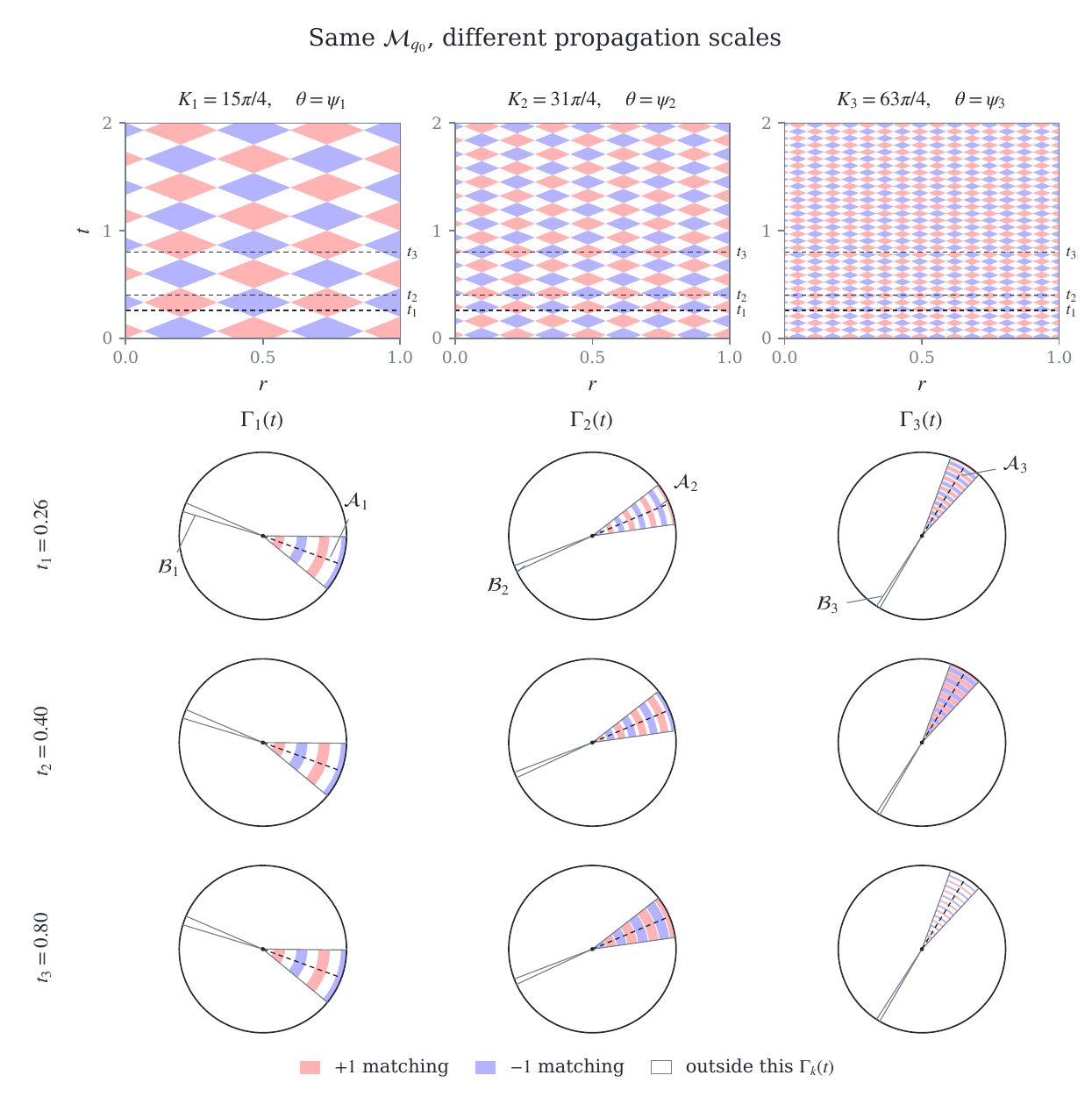}
\caption{Matching regions $\Gamma_k$ for $q=q_0$, with illustrative radial indices $n_{0,k}=4,8,16$. The horizontal lines in the top $(r,t)$-sections mark the three time slices below; the dashed radii locate these sections in the disc. The panels depict $\Gamma_k(t)$, with its complement in white; the observation region $G$ is defined in Section~\ref{subsubsec:disc-observation}.}\label{fig:disc-gamma-scales}
\end{figure}

\subsubsection{A fixed observation region}\label{subsubsec:disc-observation}

We now combine the scales into one observation region $G$. The new assumption is that different sector pairs are disjoint; it allows the matching rule for each $S_k$ to persist through all later stages.

\begin{prop}\label{prop:disc-observation}
Fix $T>0$ and $q\in X$ as in Proposition~\ref{prop:disc-support}. Let $S_k,\Gamma_k,\widehat{\calA}_k,\widehat{\calB}_k$ be given by Proposition~\ref{prop:disc-support}. Suppose, in addition, that the angular intervals $\calA_k,\calB_k$, $k\ge1$, are pairwise disjoint. Define
\begin{equation}\label{eq:disc-G}
\begin{split}
G^{(N)}&:=\Bigl(\bigl((0,T)\times\D\bigr)\setminus\textstyle\bigcup_{k\le N}(\widehat{\calA}_k\cup\widehat{\calB}_k)\Bigr)\cup\bigcup_{k\le N}\Gamma_k,\\
G&:=\bigcap_{N\ge1}G^{(N)}.
\end{split}
\end{equation}
These are Borel sets satisfying
\begin{gather*}
G^{(N+1)}\subset G^{(N)},\qquad G^{(N)}\downarrow G,\\
G^{(N)}\cap(\widehat{\calA}_k\cup\widehat{\calB}_k)=\Gamma_k,
\qquad \forall k\in\{1,\ldots,N\}.
\end{gather*}
Consequently,
\begin{equation}\label{eq:disc-zero}
\begin{aligned}
\1_{G^{(N)}}S_k&=0,\qquad &&\forall k\in\{1,\ldots,N\},\\
\1_GS_k&=0,\qquad &&\forall k\ge1.
\end{aligned}
\end{equation}
\end{prop}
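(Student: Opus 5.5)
The plan is a direct set-theoretic verification; no analysis beyond Proposition~\ref{prop:disc-support} is needed. Write $Q_T:=(0,T)\times\D$, $W_k:=\widehat{\calA}_k\cup\widehat{\calB}_k$ and $U_N:=\bigcup_{k\le N}W_k$, so that $G^{(N)}=(Q_T\setminus U_N)\cup\bigcup_{k\le N}\Gamma_k$.

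\emph{Borel measurability.} Each $W_k$ is a product of intervals in polar coordinates, hence Borel. Each $\Gamma_k$ is the preimage of the Borel set $\calM_q$ from \eqref{eq:disc-ref} under the continuous map $(t,r)\mapsto\bigl(K_k(t-r),K_k(t+r)\bigr)$, intersected with $\widehat{\calA}_k$. Here we use the fixed Borel representative of $q$, so $\calM_q$ is Borel as the zero set of a difference of Borel functions. Finite unions, complements and countable intersections then preserve Borelness.

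\emph{The key identity.} The main step is to show
\begin{equation*}
G^{(N)}\cap W_k=\Gamma_k\qquad\text{for }k\le N.
\end{equation*}
Since the angular intervals are pairwise disjoint, the spacetime sets $W_j$ are pairwise disjoint. Fix $k\le N$. First, $(Q_T\setminus U_N)\cap W_k=\varnothing$. Second, $\Gamma_j\subset\widehat{\calA}_j\subset W_j$, which is disjoint from $W_k$ for $j\ne k$. Third, $\Gamma_k\subset W_k$. Together these give $G^{(N)}\cap W_k=\Gamma_k$.

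\emph{Monotonicity.} Since $U_{N+1}=U_N\cup W_{N+1}$, we have
\begin{equation*}
G^{(N+1)}=\bigl((Q_T\setminus U_N)\setminus W_{N+1}\bigr)\cup\bigcup_{k\le N}\Gamma_k\cup\Gamma_{N+1}.
\end{equation*}
The first piece lies in $Q_T\setminus U_N$ and the middle piece appears in $G^{(N)}$. For the last piece, $\Gamma_{N+1}\subset W_{N+1}$, and $W_{N+1}$ is disjoint from $U_N$, so $\Gamma_{N+1}\subset Q_T\setminus U_N\subset G^{(N)}$. Hence $G^{(N+1)}\subset G^{(N)}$, and $G^{(N)}\downarrow G$ by definition of $G$.

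\emph{Vanishing.} For $k\le N$, Proposition~\ref{prop:disc-support}(i) gives $S_k=\1_{W_k}S_k$. Then
\begin{equation*}
\1_{G^{(N)}}S_k=\1_{G^{(N)}\cap W_k}S_k=\1_{\Gamma_k}S_k=0
\end{equation*}
by Proposition~\ref{prop:disc-support}(ii). For any fixed $k$, since $G\subset G^{(k)}$, it follows that $\1_GS_k=\1_G\1_{G^{(k)}}S_k=0$, as in Remark~\ref{rem:aos-limit}.

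The only point needing care is the disjointness bookkeeping in the key identity. Both it and monotonicity rest entirely on the pairwise disjointness of the sector pairs, which prevents a later stage from reopening an earlier sector. The origin, where $S_k$ is set to zero, is a null set and causes no difficulty.
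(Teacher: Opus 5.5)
Your proposal is correct and follows essentially the same route as the paper: disjointness of the sector pairs gives $\Gamma_{N+1}\subset W_{N+1}\subset Q_T\setminus U_N\subset G^{(N)}$, hence monotonicity, and it also gives the intersection identity $G^{(N)}\cap W_k=\Gamma_k$ for $k\le N$. From there, sector support, matching, and $G\subset G^{(k)}$ give the vanishing, while Borelness comes from the fixed Borel representative of $q$, the continuous phases, and countable set operations.
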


\begin{remark}[Parameters of the observation region]\label{rem:disc-params}
To make the dependence on the choices explicit, we write
\begin{equation}\label{eq:disc-G-param}
G=G\!\left(q,T;\{\psi_k\}_{k\ge1};\{K_k,a_k,b_k\}_{k\ge1}\right).
\end{equation}
For fixed $q$ and $T$, choosing the admissible families $\{\psi_k\}_{k\ge1}$ and $\{K_k,a_k,b_k\}_{k\ge1}$ determines $G$. The further choices serve two purposes:
\begin{itemize}[leftmargin=*,itemsep=3pt]
\item \emph{Non-observability.} In Section~\ref{sec:disc}, we choose the profile and angular cutoffs $P_k,Q_k$, and then take $K_k$ sufficiently large for the approximation estimates; see \eqref{eq:disc-real-params}. Proposition~\ref{prop:disc-real} then yields non-observability on the resulting fixed set $G$.
\item \emph{GCC.} For a finite-step profile and $T=T_\D$, Section~\ref{sec:disc-class} further restricts the sector widths by \eqref{eq:disc-widths}, makes their total width small, and chooses the frequencies so that $\sum_kK_k^{-1}$ is sufficiently small. These choices give a uniform positive observed time for the same set $G$.
\end{itemize}
\end{remark}

\begin{proof}[Proof of Proposition~\ref{prop:disc-observation}]
Recall that $\mathcal U_k=\widehat{\calA}_k\cup\widehat{\calB}_k$ and $\Gamma_k\subset\widehat{\calA}_k$. By \eqref{eq:disc-G}, outside the chosen sector pairs we observe everything. Inside each first sector we retain only $\Gamma_k$, and inside each opposite sector we observe nothing. Pairwise disjointness gives
\begin{equation*}
\Gamma_{N+1}\subset\mathcal U_{N+1}
\subset\bigl((0,T)\times\D\bigr)\setminus\bigcup_{j\le N}\mathcal U_j
\subset G^{(N)}.
\end{equation*}
Thus the next stage changes only the new pair:
\begin{equation*}
G^{(N+1)}=
\bigl(G^{(N)}\setminus(\widehat{\calA}_{N+1}\cup\widehat{\calB}_{N+1})\bigr)
\cup\Gamma_{N+1}\subset G^{(N)}.
\end{equation*}
The definition of $G$ now gives $G^{(N)}\downarrow G$.
\vspace{2mm}

For $k\le N$, the unmodified complement in \eqref{eq:disc-G} does not meet $\mathcal U_k$, and $\Gamma_j\cap\mathcal U_k=\varnothing$ for $j\ne k$. Hence
\begin{equation*}
G^{(N)}\cap\mathcal U_k=\Gamma_k,\qquad
G^{(N)}\cap\widehat{\calA}_k=\Gamma_k,\qquad
G^{(N)}\cap\widehat{\calB}_k=\varnothing.
\end{equation*}
These are the finite-stage relations of Section~\ref{subsec:aos-real}, with $\calE_k=\varnothing$. They persist at every later stage.

Passing to $G$ gives the observation rule in polar coordinates, for $t\in(0,T)$ and $r\in(0,1)$:
\begin{equation}\label{eq:disc-G-rule}
\1_G(t,r,\theta)=
\begin{cases}
\1_{\Gamma_k}(t,r,\theta),&\theta\in\calA_k,\\
0,&\theta\in\calB_k,\\
1,&\theta\notin\bigcup_j(\calA_j\cup\calB_j).
\end{cases}
\end{equation}
This formula separates the three contributions to observation that will be used in Section~\ref{sec:disc-class}.

Proposition~\ref{prop:disc-support}(i) gives $S_k=0$ outside $\mathcal U_k$, while Proposition~\ref{prop:disc-support}(ii) gives $\1_{\Gamma_k}S_k=0$. Combining these two conclusions with the intersection identity above, for $k\le N$ we obtain
\begin{equation*}
\1_{G^{(N)}}S_k=\1_{G^{(N)}\cap\mathcal U_k}S_k=\1_{\Gamma_k}S_k=0.
\end{equation*}
Since $G\subset G^{(N)}$, this proves \eqref{eq:disc-zero}. The Borel property follows from the fixed Borel profile, the continuous propagation phases, and countable set operations.
\end{proof}

For the later \hyperref[eq:gcc]{GCC} proof, we will place all $\calA_k$ in one short arc and take $b_k\ll a_k\to0^+$. 
The wider first sectors allow observed time there to compensate for time spent in the unobserved opposite sectors, while Figure~\ref{fig:disc-stages} illustrates the multiscale assembly of the observation region.


\begin{figure}[htbp]
\centering
\includegraphics[width=\textwidth,
  trim={0mm 6mm 0mm 2mm},
  clip]{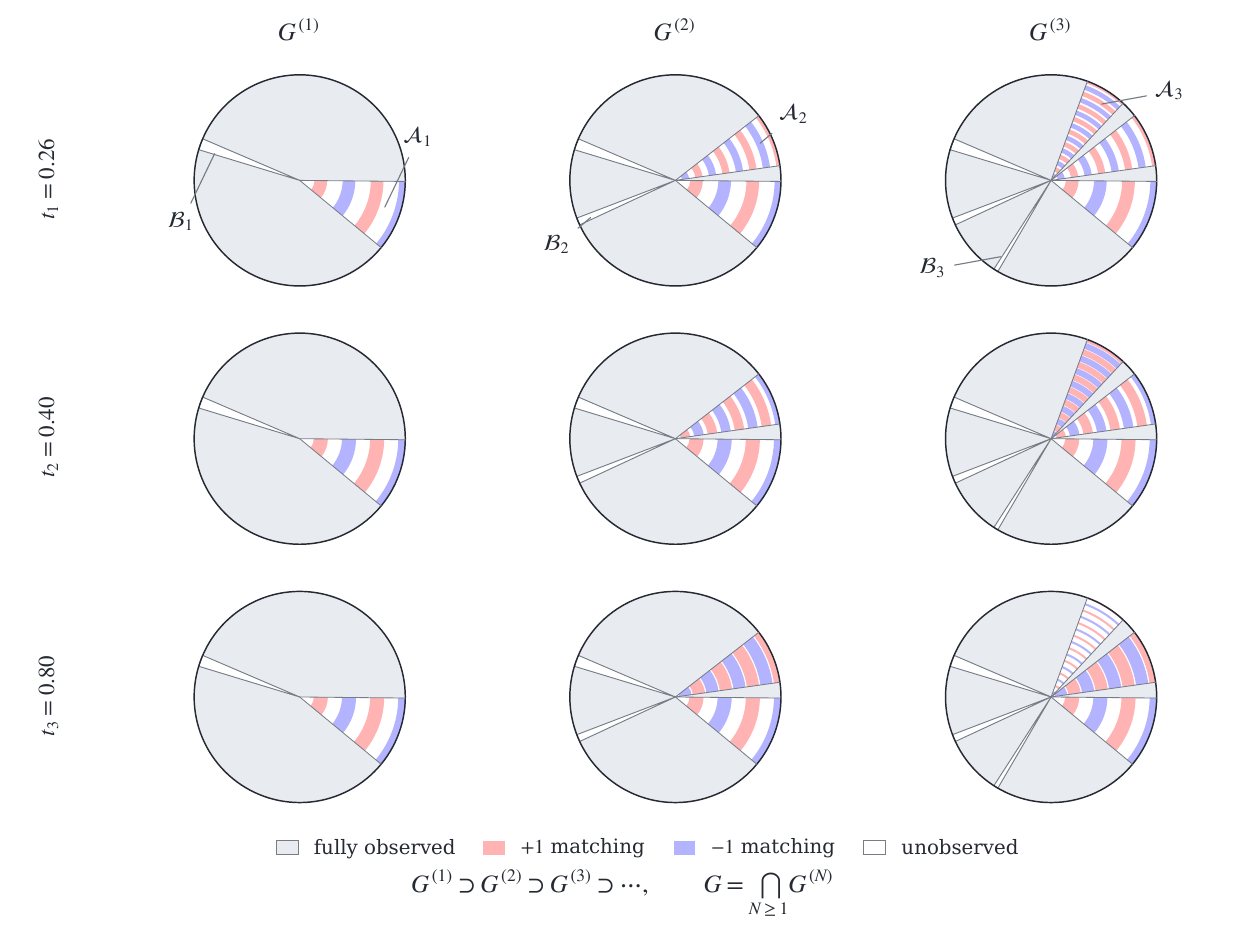}
\caption{Finite-stage observation regions $G^{(1)},G^{(2)},G^{(3)}$ at the same times and scales as Figure~\ref{fig:disc-gamma-scales}. Each column fixes the stage, and each row fixes the physical time. White is unobserved, including every opposite sector $\calB_k$ already introduced at that stage.}\label{fig:disc-stages}
\end{figure}

\subsection{Exact waves and the asymptotic realization formula}\label{subsec:disc-real}

We now construct solutions of \eqref{eq:wave} whose time derivatives approximate $S_k$ in \eqref{eq:disc-S}. The cancellation in \eqref{eq:disc-zero} already holds for every admissible choice of the parameters. It remains to choose them so that exact-wave realization holds as well. See also Appendix~\ref{subsec:supp-local} for a discussion of strict localization and velocity approximation.

We use the two-step procedure of Section~\ref{subsec:aos-real}: construct a finite spectral wave and then normalize its energy.

\Needspace{10\baselineskip}
\begin{prop}\label{prop:disc-real}
Fix a nonzero $q\in X$. Let $T>0$. Fix angular centers $\psi_k$ and widths $0<a_k,b_k<\pi/4$ such that $a_k,b_k\to0^+$ and the sectors $\calA_k,\calB_k$ in \eqref{eq:sectors} are pairwise disjoint. One can choose Fourier cutoffs $P_k$, angular Fourier cutoffs $Q_k$, as well as base radial indices $n_{0,k}\in\Nat$ and scalar factors $C_k>0$, such that
\begin{gather*}
P_k,Q_k,n_{0,k}\longrightarrow\infty,\qquad
Q_k\min\{a_k,b_k\}\longrightarrow\infty,\\
K_k=\pi(n_{0,k}-\tfrac14),\qquad
0<\inf_k C_k\le\sup_k C_k<\infty,
\end{gather*}

Use these choices in Propositions~\ref{prop:disc-support} and \ref{prop:disc-observation} to define $S_k,\Gamma_k$ and $G^{(N)}\downarrow G$. There are real solutions $u_k$ of \eqref{eq:wave}, each given by a finite sum of Dirichlet modes indexed by $(p,m)\in\mathcal I_{P_k,Q_k}$, with radial indices prescribed by Definition~\ref{def:disc-indices}, such that:
\begin{enumerate}[label=(\roman*),leftmargin=*]
\item \emph{(Asymptotic realization).} $E[u_k](0)=1$ and
\begin{equation}\label{eq:disc-real}
\partial_tu_k=S_k+r_k \textrm{ \; with \; } \|r_k\|_{L^2((0,T)\times\D)}\to0.
\end{equation}
\item \emph{(Vanishing observation on the fixed set).}
\begin{gather*}
\1_{G^{(N)}}\partial_tu_k=\1_{G^{(N)}}r_k,
\quad \forall k\in\{1,\ldots,N\},\\
\|\1_G\partial_tu_k\|_{L^2((0,T)\times\D)}
\le\|r_k\|_{L^2((0,T)\times\D)}\longrightarrow0.
\end{gather*}
Thus observability fails on this one fixed Borel set $G$.
\end{enumerate}
\end{prop}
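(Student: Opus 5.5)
The proof splits the realization error into four pieces and removes each by one parameter choice, taken in a definite order: first $Q_k$, then $P_k$, then $n_{0,k}$. The comparison object is the unlocalized leading expression $\widetilde S[q^{(P_k)}]$ of Lemma~\ref{lem:disc-leading}, built with $Q=Q_k$, $\psi=\psi_k$, $K=K_k$ and a constant $C_k$. We write
\begin{equation*}
\partial_tu_k-S_k=\bigl(\partial_tu_k-\widetilde S[q^{(P_k)}]\bigr)+\bigl(\widetilde S[q^{(P_k)}]-\widetilde S[q]\bigr)+\bigl(\widetilde S[q]-S_k\bigr).
\end{equation*}
The third term, the sector-localization error, contains only $Q_k$, $a_k$ and $b_k$ once the radial factor is integrated out. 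The second term is a profile-truncation error. The first term is handled by Lemma~\ref{lem:disc-leading}.

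\textbf{Step 1: choose $Q_k$.} Take $Q_k$ with $Q_k\min\{a_k,b_k\}\to\infty$, say $Q_k\ge k/\min\{a_k,b_k\}$. By the bound from the proof of Proposition~\ref{prop:disc-support}, every branch $r^{-1/2}h(K(t\pm r)+\beta)$ has $L^2((0,T)\times(0,1),r\dd r)$-norm bounded by $(T/2\pi+1)^{1/2}\|h\|_{L^2}$, uniformly in $K\ge1$. So $\widetilde S[q]-S_k$ is controlled by $C_k$ times $\|q\|_{L^2}$ times four angular quantities:
\begin{equation*}
\|E^+_{Q}-E_{Q}\|_{L^2(\calA_k)},\qquad \|E^-_Q+E_Q\|_{L^2(\calB_k)},
\end{equation*}
together with the angular $L^2$ norms of $E^+_Q$ off $\calA_k$ and of $E^-_Q$ off $\calB_k$. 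The sign in the second quantity matters: $V_-$ in \eqref{eq:disc-S} carries $+A_{\varepsilon_k}$, while $E_Q^-\approx -E_Q$ near $\pi$. If the signs do not reconcile, that is absorbed by the convention of \eqref{eq:two-prof}, so I would recheck that the target \eqref{eq:disc-S} is consistent with \eqref{eq:disc-leading-packets} on $\calB_k$.
\begin{itemize}
\item On $\calA_k$, $E_Q^+-E_Q=-(E_Q-O_Q)/2$, so the first quantity tends to zero by Lemma~\ref{lem:angular} with $w=a_k$.
\item Likewise $E_Q^-$ is small on $\calA_k$, and $E^+_Q$ is small on $\calB_k$ by \eqref{eq:angular-local} with $w=b_k$.
\item Away from both sectors, \eqref{eq:angular-tail} gives a bound $C/(Q_k\min\{a_k,b_k\})\to0$.
\end{itemize}
Hence $\|\widetilde S[q]-S_k\|_{L^2}\to0$, uniformly in $K_k$ and $P_k$.

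\textbf{Step 2: choose $P_k$.} Pick $P_k\to\infty$ with $\|q-q^{(P_k)}\|_{L^2}\le 1/k$. The maps $q\mapsto V_\pm$ are $L^2$-bounded, since translations and $\calH$ are bounded. The branch bound and $\|E_Q^\pm\|_{L^2(\T)}\le\sqrt{2\pi}$ then give $\|\widetilde S[q]-\widetilde S[q^{(P_k)}]\|\lesssim C_k/k$.

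\textbf{Step 3: choose $n_{0,k}$, and normalize.} With $P_k$ and $Q_k$ fixed, apply Lemma~\ref{lem:disc-leading} to the finite Fourier sum $q^{(P_k)}$, initially with constant $C=1$. Choose $n_{0,k}\to\infty$ so large that the resulting wave satisfies $\|\partial_t\tilde u_k-\widetilde S[q^{(P_k)}]\|\le 1/k$. By \eqref{eq:disc-leading-energy},
\begin{equation*}
E[\tilde u_k](0)=2\|P_+q^{(P_k)}\|^2_{\ell^2}\to 2\sum_{p>0}|\widehat q_p|^2=\|q\|^2_{L^2}/(2\pi)>0.
\end{equation*}
Set $C_k:=E[\tilde u_k](0)^{-1/2}$ and $u_k:=C_k\tilde u_k$. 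This is exactly the normalization of Section~\ref{subsubsec:aos-waves}, and the limit above shows $C_k$ is bounded above and below. Combining the three error bounds gives \eqref{eq:disc-real}.

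\textbf{Step 4: conclude (ii).} Part (ii) follows from \eqref{eq:disc-zero}. For $k\le N$, $\1_{G^{(N)}}\partial_tu_k=\1_{G^{(N)}}r_k$. Since $G\subset G^{(k)}$, we get $\|\1_G\partial_tu_k\|\le\|r_k\|\to0$. Because $E[u_k](0)=1$, no constant $C_G$ can satisfy \eqref{eq:obs}.

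The main obstacle is Step 1. The bound there must be uniform in $K_k$, and this uses the radial/temporal branch bound. The sign bookkeeping on the opposite packet also has to be made consistent. Everything else follows from earlier lemmas.
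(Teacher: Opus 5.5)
Your overall scheme matches the paper's: the same normalization, the same use of Lemma~\ref{lem:disc-leading} for the spectral error and Lemma~\ref{lem:angular} for localization, a $K$-uniform bound for the profile tail, and the same deduction of (ii). Your $C_k=E[\tilde u_k](0)^{-1/2}$ is exactly $(2\sum_{0<p\le P_k}|\widehat q_p|^2)^{-1/2}$ by \eqref{eq:disc-leading-energy}. The only structural difference is the order of comparison. You go $\widetilde S[q^{(P_k)}]\to\widetilde S[q]\to S_k$, restoring the full profile before localizing. The paper goes $\widetilde S_k\to\calT_{\varepsilon_k}q^{(P_k)}\to\calT_{\varepsilon_k}q$, localizing first and then using boundedness of $\calT_{\varepsilon_k}$. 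Both orders work, because the angular factor depends only on $\theta$ and $V_\pm$ only on $(t,r)$.

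However, Step 1, which you call the main obstacle, contains a sign error that you leave unresolved, and as written it does not close. Near $\varphi=\pi$, \eqref{eq:angular-local} gives $O_Q\approx -E_Q$. Hence $E_Q^-=(E_Q-O_Q)/2\approx +E_Q$ there and $E_Q^+\approx0$, not $E_Q^-\approx -E_Q$. So the quantity you list, $\|E_Q^-+E_Q\|_{L^2(\calB_k)}$ (in $\varphi=\theta-\psi_k$), does not tend to zero. It is close to $2\|E_Q\|_{L^2(\calB_k-\psi_k)}$, which tends to $2\sqrt\pi$, since half of $\|E_Q\|_{L^2}^2=2\pi$ concentrates near $\pi$ and $Q_kb_k\to\infty$. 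Your bullets never treat the main term on $\calB_k$, so the conclusion $\|\widetilde S[q]-S_k\|\to0$ is unsupported there. The appeal to ``the convention of \eqref{eq:two-prof}'' is not an argument.

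The fix is a one-line computation. The correct comparison on $\calB_k$ is $E_Q^--E_Q=-(E_Q+O_Q)/2$, which tends to zero in $L^2(\calB_k-\psi_k)$ by \eqref{eq:angular-local} with $w=b_k$. Thus \eqref{eq:disc-S}, with $+\1_{\calB_k}V_-$, is consistent with \eqref{eq:disc-leading-packets}; this is precisely the case $\theta\in\calB_k$ of \eqref{eq:disc-angular-error}.

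A smaller point: impose $P_k\ge p_*$ for a positive odd $p_*$ with $\widehat q_{p_*}\ne0$. This guarantees $q^{(P_k)}\ne0$, so $E[\tilde u_k](0)>0$ and $C_k$ is defined for every $k$, not only asymptotically. Your condition $\|q-q^{(P_k)}\|_{L^2}\le1/k$ alone allows $q^{(P_k)}=0$ when $\|q\|_{L^2}\le1/k$.
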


\noindent{\it Idea of approximation.}
Lemma~\ref{lem:disc-leading} approximates the derivative of an exact wave by the leading expression $\widetilde S[q^{(P_k)}]$ in \eqref{eq:disc-leading-packets}. Lemma~\ref{lem:angular} then controls the angular replacements and tails that give the sector-supported expression \eqref{eq:disc-S}. Finally, a uniform $L^2$ estimate restores the full profile $q$. The three comparisons in spacetime $L^2$ are
\begin{equation*}
\partial_tu_k
\underset{\substack{\text{Bessel}\\\text{asymptotics}}}{\approx}
\widetilde S[q^{(P_k)}]
\underset{\substack{\text{angular}\\\text{localization}}}{\approx}
\calT_{\varepsilon_k}q^{(P_k)}
\underset{\substack{\text{profile}\\\text{tail}}}{\approx}
S_k=\calT_{\varepsilon_k}q.
\end{equation*}

\vspace{2mm}
\begin{proof}
\noindent{\it Step 1. Finite-profile realization and normalization.}
Keep the prescribed sectors $\calA_k,\calB_k$ fixed. We choose $P_k$ to control the Fourier tail of $q$ and $Q_k$ to make the angular concentration finer than both sector widths. The energy normalization then determines $C_k$. With these choices fixed, we take the base radial index $n_{0,k}$ sufficiently large. The order of construction is
\begin{equation*}
(q,\psi_k,a_k,b_k)
\ \longrightarrow\ (P_k,Q_k,C_k)
\ \longrightarrow\ (n_{0,k},K_k)
\ \longrightarrow\ (u_k,S_k,\Gamma_k,G).
\end{equation*}

For $P_k\in\Nat$, define
\begin{equation*}
q^{(P_k)}:=2\Real P_{+,P_k}q
\qquad\text{and}\qquad
P_{+,P_k}q(s):=\sum_{\substack{0<p\le P_k\\p\ \mathrm{odd}}}\widehat q_p e^{-ips}.
\end{equation*}
Since $q\ne0$, choose a positive odd frequency $p_*$ with $\widehat q_{p_*}\ne0$. Choose integers $P_k\ge p_*$ with $P_k\to\infty$. Then $q^{(P_k)}\ne0$ for every $k$, and $q^{(P_k)}\to q$ in $L^2(0,2\pi)$. Choose angular cutoffs $Q_k\in\Nat$ such that
\begin{equation*}
Q_k\min\{a_k,b_k\}\longrightarrow\infty.
\end{equation*}
The energy identity \eqref{eq:disc-leading-energy} in the proof of Lemma~\ref{lem:disc-leading} prescribes the normalization
\begin{equation*}
C_k:=\left(2\sum_{\substack{0<p\le P_k\\p\ \mathrm{odd}}}|\widehat q_p|^2\right)^{-1/2}.
\end{equation*}
Moreover,
\begin{equation*}
0<|\widehat q_{p_*}|^2
\le\sum_{\substack{0<p\le P_k\\p\ \mathrm{odd}}}|\widehat q_p|^2
\le\sum_{\substack{p>0\\p\ \mathrm{odd}}}|\widehat q_p|^2<\infty.
\end{equation*}
Thus $C_k$ stays uniformly bounded above and below. For each $k$, we use this same value of $C_k$ in $\calT_{\varepsilon_k}h$ for every input $h\in X$.

\vspace{2mm}
Fix $k$, so that $P_k,Q_k,C_k$ and $\psi_k$ are now fixed. Recall from \eqref{eq:disc-mode-scales} that, for every $n_0\in\Nat$,
\begin{equation*}
K(n_0)=\pi(n_0-\tfrac14),\qquad
n_{p,|m|;n_0}=pn_0-\frac{|m|}{2}-\frac{p-1}{4}.
\end{equation*}
For every sufficiently large $n_0$, Lemma~\ref{lem:disc-leading} gives a real wave $u_{k,n_0}$ by \eqref{eq:disc-waves}, using $q^{(P_k)}$, $Q_k$, $\psi_k$ and $C_k$. By \eqref{eq:disc-leading-energy}, its energy is
\begin{equation*}
E[u_{k,n_0}](0)=2C_k^2\sum_{\substack{0<p\le P_k\\p\ \mathrm{odd}}}|\widehat q_p|^2=1.
\end{equation*}
Let $\widetilde S_{k,n_0}$ be the leading expression \eqref{eq:disc-leading-packets} for the profile $q^{(P_k)}$, with $K=K(n_0)$, $Q=Q_k$, $\psi=\psi_k$ and $C=C_k$. By \eqref{eq:disc-leading-real},
\begin{equation*}
\|\partial_tu_{k,n_0}-\widetilde S_{k,n_0}\|_{L^2((0,T)\times\D)}\longrightarrow0
\qquad\text{as }n_0\to\infty.
\end{equation*}
This limit is taken with $k$ fixed. Moreover, Proposition~\ref{prop:disc-modes}(i) gives, for the same finite set of pairs,
\begin{equation*}
\omega_{p,m;k,n_0}:=j_{|m|,n_{p,|m|;n_0}} = pK(n_0)+O_{P_k,Q_k}(n_0^{-1}),
\quad (p,m)\in\mathcal I_{P_k,Q_k}.
\end{equation*}

Since $p\ge1$, all these frequencies are at least $K(n_0)/2$ for sufficiently large $n_0$. For the fixed $P_k,Q_k,C_k$ and $\psi_k$, choose a threshold $N_k\in\Nat$ such that, for every $n_0\ge N_k$, all prescribed radial indices are positive and the realization error is at most $2^{-k}$, while every active frequency is at least $K(n_0)/2$. We then take $n_{0,k}\ge\max\{k,N_k\}$ and set
\begin{equation}\label{eq:disc-selected}
K_k:=K(n_{0,k}),\quad
u_k:=u_{k,n_{0,k}}, \textrm{ \; and \; }
\widetilde S_k:=\widetilde S_{k,n_{0,k}},
\end{equation}
so that
\begin{equation}\label{eq:disc-spectral-choice}
\begin{gathered}
\|\partial_tu_k-\widetilde S_k\|_{L^2((0,T)\times\D)}\le2^{-k},\\
\omega_{p,m,k}:=\omega_{p,m;k,n_{0,k}}\ge K_k/2,
\qquad (p,m)\in\mathcal I_{P_k,Q_k}.
\end{gathered}
\end{equation}
When choosing $n_{0,k}$, we may also impose the further lower bounds required for the \hyperref[eq:gcc]{GCC} estimate in Section~\ref{sec:disc-class}. The final parameter choices determine the fixed observation region $G$ through \eqref{eq:disc-G}.

\vspace{2mm}
\noindent{\it Step 2. Angular replacements and sector support.}
Write $\varphi=\theta-\psi_k$ and evaluate the packet expressions \eqref{eq:two-prof}, computed from $q^{(P_k)}$, at the propagation phases:
\begin{equation*}
V_{\pm,k}(t,r):=V_\pm\bigl(K_k(t-r),K_k(t+r)\bigr).
\end{equation*}
Recall the leading expression \eqref{eq:disc-leading-packets} and the sector-supported expression \eqref{eq:disc-S}:
\begin{align*}
\widetilde S_k
&=\frac{C_k}{2\sqrt{\pi r}}\bigl[E_{Q_k}^+(\varphi)V_{+,k}(t,r)+E_{Q_k}^-(\varphi)V_{-,k}(t,r)\bigr],\\
\calT_{\varepsilon_k}q^{(P_k)}
&=\frac{C_kE_{Q_k}(\varphi)}{2\sqrt{\pi r}}
\bigl[\1_{\calA_k}(\theta)V_{+,k}(t,r)+\1_{\calB_k}(\theta)V_{-,k}(t,r)\bigr].
\end{align*}
On $\calA_k$, we replace $(E_{Q_k}^+,E_{Q_k}^-)$ by $(E_{Q_k},0)$; on $\calB_k$, by $(0,E_{Q_k})$; outside both sectors, by $(0,0)$.
\vspace{2mm}

To compute the error on $\calA_k$, use \eqref{eq:disc-packet-amp}:
\begin{equation*}
E_{Q_k}^{+}-E_{Q_k}=\frac{O_{Q_k}-E_{Q_k}}2,
\qquad
E_{Q_k}^{-}=-\frac{O_{Q_k}-E_{Q_k}}2.
\end{equation*}
Thus the two terms combine into the factor $(O_{Q_k}-E_{Q_k})\bigl(V_{+,k}(t,r)-V_{-,k}(t,r)\bigr)$. The corresponding identities on $\calB_k$ give
\begin{equation}\label{eq:disc-angular-error}
\widetilde S_k-\calT_{\varepsilon_k}q^{(P_k)}
=\frac{C_k\bigl(V_{+,k}(t,r)-V_{-,k}(t,r)\bigr)}{4\sqrt{\pi r}}
\begin{cases}
O_{Q_k}(\varphi)-E_{Q_k}(\varphi),&\theta\in\calA_k,\\
E_{Q_k}(\varphi)+O_{Q_k}(\varphi),&\theta\in\calB_k.
\end{cases}
\end{equation}
Outside their union, the error is the discarded tail of $\widetilde S_k$. Thus Lemma~\ref{lem:angular} controls both the replacements and the tails. We quantify these errors together with the profile truncation below.

\vspace{2mm}
\noindent{\it Step 3. Control of the realization error.}
Recall that the full profile defines $S_k=\calT_{\varepsilon_k}q$. All parameters have now been fixed in Step~1. Since the same scalar $C_k$ is used for every input of the linear operator $\calT_{\varepsilon_k}$, we have
\begin{equation*}
\calT_{\varepsilon_k}q^{(P_k)}-\calT_{\varepsilon_k}q
=\calT_{\varepsilon_k}(q^{(P_k)}-q).
\end{equation*}
We therefore separate the realization error as
\begin{equation}\label{eq:disc-error-split}
\begin{aligned}
r_k&=\partial_tu_k-\calT_{\varepsilon_k}q\\
&=(\partial_tu_k-\widetilde S_k)
+(\widetilde S_k-\calT_{\varepsilon_k}q^{(P_k)})
+\calT_{\varepsilon_k}(q^{(P_k)}-q).
\end{aligned}
\end{equation}
The first term is bounded by $2^{-k}$ by \eqref{eq:disc-spectral-choice}. To estimate the second term, we first control the packet factors in \eqref{eq:disc-angular-error}.
\vspace{2mm}

\noindent{\it From profile bounds to spacetime bounds.} Recall the periodic integration used in the proof of Proposition~\ref{prop:disc-support}. For every $2\pi$-periodic $h\in L^2(0,2\pi)$, $K\ge1$ and $\beta\in\R$,
\begin{align*}
\int_0^T|h\bigl(K(t\pm r)+\beta\bigr)|^2\dd t
&=\frac1K\int_{\pm Kr+\beta}^{KT\pm Kr+\beta}|h(s)|^2\dd s\\
&\le\frac{\lfloor KT/(2\pi)\rfloor+1}{K}\|h\|_{L^2(0,2\pi)}^2\\
&\le\left(\frac{T}{2\pi}+\frac1K\right)\|h\|_{L^2(0,2\pi)}^2,
\qquad \forall r\in(0,1).
\end{align*}
Integrating in $r\in(0,1)$ gives
\begin{equation*}
\int_0^T\int_0^1|h\bigl(K(t\pm r)+\beta\bigr)|^2\dd r\dd t
\le\left(\frac{T}{2\pi}+1\right)\|h\|_{L^2(0,2\pi)}^2,
\qquad \forall K\ge1,\quad\forall\beta\in\R.
\end{equation*}

We apply this estimate to the packet expressions \eqref{eq:two-prof}, with profile $q^{(P_k)}$ as in Step~2. For the first packet,
\begin{align*}
&\int_0^T\int_0^1
|V_+\bigl(K_k(t-r),K_k(t+r)\bigr)|^2\dd r\dd t\\
&\quad\le2\int_0^T\int_0^1
|q^{(P_k)}\bigl(K_k(t-r)\bigr)|^2\dd r\dd t\\
&\qquad+2\int_0^T\int_0^1
|q^{(P_k)}\bigl(K_k(t+r)+\pi/2\bigr)|^2\dd r\dd t\\
&\quad\le4\left(\frac{T}{2\pi}+1\right)
\|q^{(P_k)}\|_{L^2(0,2\pi)}^2
\le C_T\|q\|_{L^2(0,2\pi)}^2.
\end{align*}
For the opposite packet, the two profiles are translates of $\calH q^{(P_k)}$. Since $q^{(P_k)}$ has zero mean, the periodic Hilbert transform satisfies
\begin{equation*}
\|\calH q^{(P_k)}\|_{L^2(0,2\pi)}
=\|q^{(P_k)}\|_{L^2(0,2\pi)}.
\end{equation*}
The same calculation therefore gives the same bound for $V_-$. Recall the composed packet expressions $V_{\pm,k}$ from Step~2, still with profile $q^{(P_k)}$. Using $|V_{+,k}-V_{-,k}|^2\le2|V_{+,k}|^2+2|V_{-,k}|^2$, we obtain
\begin{equation}\label{eq:disc-packet-bound}
\int_0^T\int_0^1
\bigl(|V_{+,k}|^2+|V_{-,k}|^2+|V_{+,k}-V_{-,k}|^2\bigr)\dd r\dd t
\le C_T\|q\|_{L^2(0,2\pi)}^2,
\qquad\forall k\ge1.
\end{equation}

\noindent{\it Angular replacements and tails.} Recall that $\widehat{\calA}_k$ and $\widehat{\calB}_k$ are the spacetime sectors in \eqref{eq:disc-spacetime-sectors}. In \eqref{eq:disc-angular-error}, the angular factors depend only on $\theta$, whereas $V_{\pm,k}$ depend only on $(t,r)$. Moreover, squaring the radial factor $r^{-1/2}$ cancels the polar area density:
\begin{equation*}
\int_{\D}|r^{-1/2}F(r,\theta)|^2\dd x
=\int_0^{2\pi}\int_0^1|F(r,\theta)|^2\dd r\dd\theta.
\end{equation*}
Thus the error on the first sector satisfies
\begin{align*}
&\|\1_{\widehat{\calA}_k}(\widetilde S_k-\calT_{\varepsilon_k}q^{(P_k)})\|_{L^2((0,T)\times\D)}^2\\
&\quad=\frac{C_k^2}{16\pi}
\|O_{Q_k}-E_{Q_k}\|_{L^2(\calA_k-\psi_k)}^2
\int_0^T\int_0^1|V_{+,k}-V_{-,k}|^2\dd r\dd t\\
&\quad\le C_{T,q}\|O_{Q_k}-E_{Q_k}\|_{L^2(\calA_k-\psi_k)}^2\\
&\quad\le C_{T,q}\left(\frac1{Q_k^2}+\frac{a_k}{Q_k}\right).
\end{align*}
The first inequality uses \eqref{eq:disc-packet-bound} and the uniform upper bound for $C_k$ from Step~1. The second uses Lemma~\ref{lem:angular} on $\calA_k-\psi_k=(-a_k,a_k)$, interpreted modulo $2\pi$.
\vspace{2mm}

On the opposite sector, \eqref{eq:disc-angular-error} gives the angular factor $E_{Q_k}+O_{Q_k}$. Applying \eqref{eq:disc-packet-bound} and the estimate near $\pi$ in Lemma~\ref{lem:angular}, we similarly obtain
\begin{align*}
&\|\1_{\widehat{\calB}_k}(\widetilde S_k-\calT_{\varepsilon_k}q^{(P_k)})\|_{L^2((0,T)\times\D)}^2\\
&\quad\le C_{T,q}\|E_{Q_k}+O_{Q_k}\|_{L^2(\calB_k-\psi_k)}^2
\le C_{T,q}\left(\frac1{Q_k^2}+\frac{b_k}{Q_k}\right).
\end{align*}

It remains to estimate the error outside $\mathcal U_k:=\widehat{\calA}_k\cup\widehat{\calB}_k$. There $\calT_{\varepsilon_k}q^{(P_k)}=0$, so the error is $\widetilde S_k$. The angular factors in \eqref{eq:disc-packet-amp} satisfy
\begin{equation*}
|E_{Q_k}^+|^2+|E_{Q_k}^-|^2
=\frac12\bigl(|E_{Q_k}|^2+|O_{Q_k}|^2\bigr).
\end{equation*}
Using this identity, the leading expression in Step~2, and \eqref{eq:disc-packet-bound}, we deduce
\begin{align*}
\|\1_{\mathcal U_k^c}\widetilde S_k\|_{L^2((0,T)\times\D)}^2
&\le C_{T,q}\int_{\theta\notin\calA_k\cup\calB_k}
\bigl(|E_{Q_k}(\theta-\psi_k)|^2+|O_{Q_k}(\theta-\psi_k)|^2\bigr)\dd\theta\\
&\le\frac{C_{T,q}}{Q_k\min\{a_k,b_k\}}.
\end{align*}
For the last inequality, notice that $\dist(\theta-\psi_k,\{0,\pi\})\ge\min\{a_k,b_k\}$ outside the two angular sectors, and apply \eqref{eq:angular-tail}. Adding the squared norms on the two disjoint sectors and their complement gives
\begin{equation}\label{eq:disc-angular-bound}
\|\widetilde S_k-\calT_{\varepsilon_k}q^{(P_k)}\|_{L^2((0,T)\times\D)}^2
\le C_{T,q}\left(\frac1{Q_k^2}+\frac{a_k+b_k}{Q_k}
+\frac1{Q_k\min\{a_k,b_k\}}\right).
\end{equation}
Here $C_{T,q}$ is independent of $k$, the spectral cutoffs, and the sector widths.

\noindent{\it Restoring the full profile.} Keep the parameters and the scalar $C_k$ fixed, and let $h\in X$. Repeat the periodic calculation above for the two packets with profile $h$, using $\|\calH h\|_{L^2(0,2\pi)}=\|h\|_{L^2(0,2\pi)}$. In \eqref{eq:disc-S}, the two sector indicators have disjoint supports. Integrating the squared expression and canceling the radial factor as above, we obtain
\begin{align*}
\|\calT_{\varepsilon_k}h\|_{L^2((0,T)\times\D)}^2
&\le C_TC_k^2\left(\int_{\calA_k\cup\calB_k}|E_{Q_k}(\theta-\psi_k)|^2\dd\theta\right)\|h\|_{L^2(0,2\pi)}^2\\
&\le C_TC_k^2\|E_{Q_k}\|_{L^2(0,2\pi)}^2\|h\|_{L^2(0,2\pi)}^2\\
&\le C_{T,q}\|h\|_{L^2(0,2\pi)}^2,
\qquad \forall k\ge1,\quad\forall h\in X.
\end{align*}
The last inequality uses $\|E_{Q_k}\|_{L^2(0,2\pi)}^2=2\pi$ from Lemma~\ref{lem:angular} and the uniform bound for $C_k$ from Step~1. Taking $h=q^{(P_k)}-q$ gives
\begin{equation*}
\|\calT_{\varepsilon_k}(q^{(P_k)}-q)\|_{L^2((0,T)\times\D)}^2
\le C_{T,q}\|q^{(P_k)}-q\|_{L^2(0,2\pi)}^2\longrightarrow0.
\end{equation*}
Finally, the decomposition \eqref{eq:disc-error-split} gives
\begin{align*}
\|r_k\|_{L^2((0,T)\times\D)}^2
&\le3\|\partial_tu_k-\widetilde S_k\|_{L^2((0,T)\times\D)}^2\\
&\quad+3\|\widetilde S_k-\calT_{\varepsilon_k}q^{(P_k)}\|_{L^2((0,T)\times\D)}^2\\
&\quad+3\|\calT_{\varepsilon_k}(q^{(P_k)}-q)\|_{L^2((0,T)\times\D)}^2.
\end{align*}
Combining the profile-tail bound with \eqref{eq:disc-angular-bound} and \eqref{eq:disc-spectral-choice}, we obtain
\begin{equation}\label{eq:disc-real-error}
\begin{split}
\|r_k\|_{L^2((0,T)\times\D)}^2\le C_{T,q}\Bigl[
&\|q^{(P_k)}-q\|_{L^2(0,2\pi)}^2
+\frac1{Q_k^2}+\frac{a_k+b_k}{Q_k}\\
&+\frac1{Q_k\min\{a_k,b_k\}}+2^{-2k}\Bigr]\longrightarrow0.
\end{split}
\end{equation}
The profile tail tends to zero because $P_k\to\infty$. The next two terms tend to zero because $Q_k\to\infty$ and $a_k,b_k<\pi/4$. The angular tail tends to zero by $Q_k\min\{a_k,b_k\}\to\infty$. The last term comes from the choice of $n_{0,k}$ in \eqref{eq:disc-spectral-choice}. All these choices were made in Step~1, and the constants above are uniform along the resulting sequence. This proves (i).
\vspace{2mm}

\noindent{\it Vanishing observation.} Combining (i) with Proposition~\ref{prop:disc-observation}, we obtain
\begin{align*}
\1_{G^{(N)}}\partial_tu_k
&=\1_{G^{(N)}}S_k+\1_{G^{(N)}}r_k\\
&=\1_{G^{(N)}}r_k,
\qquad \forall k\in\{1,\ldots,N\}.
\end{align*}
Since $G\subset G^{(N)}$, it follows that
\begin{equation*}
\|\1_G\partial_tu_k\|_{L^2((0,T)\times\D)}
\le\|r_k\|_{L^2((0,T)\times\D)}\longrightarrow0.
\end{equation*}
This proves (ii).

\noindent\emph{Conclusion on the choice of parameters.}
For the prescribed sectors and a fixed positive odd $p_*$ with $\widehat q_{p_*}\ne0$, sufficient parameter conditions for the conclusions of this proposition are
\begin{equation}\label{eq:disc-real-params}
\begin{cases}
P_k\ge p_*,\qquad P_k\longrightarrow\infty,\\[1mm]
Q_k\min\{a_k,b_k\}\longrightarrow\infty,\\[1mm]
C_k=\displaystyle\left(2\sum_{\substack{0<p\le P_k\\p\ \mathrm{odd}}}|\widehat q_p|^2\right)^{-1/2},\\[3mm]
n_{0,k}\ge\max\{k,N_k\},\qquad K_k=\pi(n_{0,k}-\tfrac14).
\end{cases}
\end{equation}
Here $N_k$ is chosen in Step~1 after fixing the finite cutoffs $P_k,Q_k$ and the normalization factor $C_k$, so that the modes are well defined and \eqref{eq:disc-spectral-choice} holds for every $n_{0,k}\ge N_k$. Section~\ref{subsec:disc-uniform} will use this freedom to impose an additional bound on $\sum_kK_k^{-1}$. This finishes the proof.
\end{proof}

We finally verify that the generated velocities retain a nonvanishing spacetime $L^2$ norm, as required in Section~\ref{sec:aos}. The frequency bound in \eqref{eq:disc-spectral-choice} and $E[u_k](t)=1$ give $\|u_k(t)\|_{L^2(\D)}\le2\sqrt2/K_k$ and $\|\partial_tu_k(t)\|_{L^2(\D)}\le\sqrt2$ for every $t\in[0,T]$. The wave equation and the energy identity imply
\begin{equation*}
\frac{\dd}{\dd t}\int_{\D}u_k\partial_tu_k\dd x
=\|\partial_tu_k\|_{L^2(\D)}^2-\|\nabla u_k\|_{L^2(\D)}^2
=2\|\partial_tu_k\|_{L^2(\D)}^2-2.
\end{equation*}
Integrating in time and using $\|r_k\|_{L^2((0,T)\times\D)}\to0$, we obtain
\begin{equation}\label{eq:disc-mass}
\begin{aligned}
\|\partial_tu_k\|_{L^2((0,T)\times\D)}^2
&=T+\frac12\left[\int_{\D}u_k(t,x)\partial_tu_k(t,x)\dd x\right]_{t=0}^{t=T}
=T+O(K_k^{-1}),\\
\|S_k\|_{L^2((0,T)\times\D)}^2&=T+o(1).
\end{aligned}
\end{equation}
For a profile with nontrivial matching, such as $q_0$, these objects supply the AOS of Section~\ref{sec:aos}.

\section{GCC realization on the disc}\label{sec:disc-class}
\enlargethispage{2pt}

Section~\ref{sec:disc} gave a construction of fixed observation regions $G$. In this section, we restrict to nonzero real finite-step antiperiodic profiles $q\in X$. Within this construction, we now choose the sector widths $a_k,b_k$ and scaling parameters $K_k$ so that $G$ satisfies \eqref{eq:gcc}.

The proof has three steps.
\begin{itemize}[leftmargin=*,itemsep=3pt]
\item \emph{Step 1.} Section~\ref{subsec:disc-rays} estimates the observed time for rays away from diameters.
\item \emph{Step 2.} Sections~\ref{subsec:disc-matching}--\ref{subsec:disc-pairing} estimate the observed time for nearly diametral rays.
\item \emph{Step 3.} Section~\ref{subsec:disc-uniform} chooses the parameters and combines these estimates to obtain a uniform lower bound for the observed time.
\end{itemize}

\subsection{The non-observable region satisfying GCC}\label{subsec:disc-gcc}

Let $q$ be a nonzero real finite-step profile with\footnote{At every partition boundary point choose an adjacent essential value, consistently with antiperiodicity.}
\begin{equation}\label{eq:disc-step}
\begin{gathered}
q(s)=c_j\quad\text{for every }s\in(s_{j-1},s_j),\\
0=s_0<s_1<\cdots<s_{J_q}=2\pi\qquad\text{and}\qquad q(s+\pi)=-q(s).
\end{gathered}
\end{equation}
The branch profiles in the first packet are $f_1=q$ and $f_2=\tau_{\pi/2}q$. Their signed sum is the packet expression $V_+$ in \eqref{eq:two-prof}. Thus the matching region remains
\begin{equation}\label{eq:disc-step-match}
\Gamma_k=\{(t,r,\theta):\theta\in\calA_k,\ q\bigl(K_k(t-r)\bigr)=q\bigl(K_k(t+r)+\pi/2\bigr)\}.
\end{equation}
We use the pairwise disjoint sectors in \eqref{eq:sectors} and construct $G$ by \eqref{eq:disc-G}, with the parameter dependence in \eqref{eq:disc-G-param}.

\begin{theorem}\label{thm:disc-gcc}
Let $q$ be a nonzero real profile as in \eqref{eq:disc-step}. Let $T_\D=64$.
There are sector centers $\psi_k$, widths $a_k,b_k$, Fourier cutoffs $P_k$, angular Fourier cutoffs $Q_k$, base radial indices $n_{0,k}$ and $C_k>0$ satisfying the hypotheses and parameter conditions of Proposition~\ref{prop:disc-real}, with $K_k=\pi(n_{0,k}-\tfrac14)$, such that the following conclusions hold for the same choices.

Define $S_k,\Gamma_k$ by Proposition~\ref{prop:disc-support}, define $G^{(N)}\downarrow G$ by Proposition~\ref{prop:disc-observation}, and take the exact waves $u_k$ from Proposition~\ref{prop:disc-real}.
\begin{enumerate}[label=(\roman*),leftmargin=*,itemsep=3pt]
\item \emph{(Non-observability).} By Proposition~\ref{prop:disc-real},
\begin{equation*}
E[u_k](0)=1\qquad\text{and}\qquad
\|\1_G\partial_tu_k\|_{L^2((0,T_\D)\times\D)}\longrightarrow0.
\end{equation*}
Thus observability fails on this fixed Borel set $G$.
\item \emph{(GCC).} Moreover, the same set $G\subset(0,T_\D)\times\D$ satisfies the essential spacetime measurable GCC: there is $\eta_q>0$ such that
\begin{equation*}
\essinf_{\rho\in S^*\D}\int_0^{T_\D}\1_G(t,\pi\Phi_t\rho)\dd t\ge\eta_q,
\end{equation*}
where the essential infimum is relative to Liouville measure.
\end{enumerate}
\end{theorem}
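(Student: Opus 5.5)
Part (i) needs no new work. Once the parameters are fixed so that they satisfy \eqref{eq:disc-real-params}, Proposition~\ref{prop:disc-real} applies with $T=T_\D$. It gives $E[u_k](0)=1$ and $\|\1_G\partial_tu_k\|_{L^2}\le\|r_k\|_{L^2}\to0$. The whole task is therefore (ii). We must choose $\psi_k,a_k,b_k$, and the lower bounds on $n_{0,k}$, so that the extra constraints leave \eqref{eq:disc-real-params} intact. Since $Q_k$ is chosen after the widths and $n_{0,k}$ only needs to be large, any further requirement of the form "$a_k,b_k$ small and $n_{0,k}$ large" is compatible.

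The geometric setup is as follows. Place every $\calA_k$ inside one arc $I_\sigma:=\{|\theta-\psi|<\sigma\}$ and every $\calB_k$ inside $I_\sigma+\pi$, with $\sigma$ small. Take $b_k\le\kappa a_k$ with $\kappa\ll1$; this is the restriction I expect \eqref{eq:disc-widths} to encode. Also take $\sum_k a_k\le\sigma$. By \eqref{eq:disc-G-rule}, $G$ is full outside the double cone $\mathcal C:=\{\theta\in I_\sigma\cup(I_\sigma+\pi)\}$, equals $\Gamma_k$ on $\calA_k$, and is empty on $\calB_k$.

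A unit-speed billiard ray in $\D$ is determined by its impact parameter $d\in[0,1)$: each chord lies at distance $d$ from the center, and successive chords are rotated by $\pi-2\arcsin d$.

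\emph{Step 1: rays away from the diameter through $\psi$.} Split rays into two classes. The first class consists of rays with $d\ge d_0$, together with rays whose chords make an angle at least $\beta_0$ with the line through $\psi$ and $\psi+\pi$. For such a chord, the time spent inside $\mathcal C$ is $O(\sigma/\max(d_0,\beta_0))$. Indeed, for $d\ge d_0$ the chord avoids the disc of radius $d_0$, and there $\mathcal C$ has width $\lesssim\sigma$; for a transversal chord, $\mathcal C$ is crossed at angle $\gtrsim\beta_0$. A single chord has length at least $2\sqrt{1-d^2}$. We treat $d$ close to $1$ (glancing chords) by summing over the many short chords in $(0,T_\D)$: their angular positions rotate by a fixed step, so only a fraction $O(\sigma)$ of them meet $\mathcal C$. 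Hence the time spent in the fully observed complement is bounded below by a constant $c_1>0$, uniformly in this class, once $\sigma\ll\min(d_0,\beta_0)$.

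\emph{Step 2: nearly diametral rays.} The remaining rays have $d<d_0$ and are nearly aligned with the diameter through $\psi$. The bounce rotation is close to $\pi$, so the ray keeps crossing the disc near this diameter for many passes, and $T_\D=64$ gives many passes. On each pass the ray spends radial time $\approx1$ on the $\psi$ side and $\approx1$ on the $\psi+\pi$ side.

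On the $\psi+\pi$ side it stays inside a single $\calB_k$ only while $r\lesssim(\text{offset})/b_k$. Because $b_k\ll a_k$, this is much shorter than the time it stays inside $\calA_k$ on the $\psi$ side, which is up to $r\sim(\text{offset})/a_k$ from the $\calB_k$-scale. Between consecutive sectors lie fully observed gaps.

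Inside $\calA_k$ we use the matching structure. Suppose the ray is radial and outgoing. Then $t-r$ is constant, so $q(K_k(t-r))\equiv c_j$, while $q(K_k(t+r)+\pi/2)=q(2K_kt+\text{const})$ runs through the profile at speed $2K_k$. The time fraction in $\Gamma_k$ is therefore at least
\begin{equation*}
\mu_q:=\min_j\frac{|\{q=c_j\}|}{2\pi}>0,
\end{equation*}
up to an error $O(K_k^{-1})$ per traversal. The incoming leg is handled symmetrically. For a non-radial ray, write $\frac{\dd}{\dd t}(t-r)=1-\cos(\text{angle})$; this phase drift is multiplied by $K_k$. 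So I would compare with the radial case only over the portion where the ray is inside $\calA_k$, where the angle to the radial direction is $O(a_k)$ away from $r\lesssim d$. The losses near the center, and at the finitely many cell boundaries crossed per traversal, are summed over $k$. They are bounded by $C\sum_kK_k^{-1}$ plus $O(d_0)$, and both are small by the choice of $n_{0,k}$ and $d_0$.

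\emph{Step 3: pairing and conclusion (the main obstacle).} The delicate point is the uniform pairing on nearly diametral rays. We must show that the unobserved time in $\bigcup_k\calB_k$, together with the mismatch losses, is always dominated by $\mu_q$ times the time in $\bigcup_k\calA_k$ plus the time in the gaps. This must hold uniformly in the offset and the direction, including rays that cross several sector pairs at different radii. I would first try to prove a per-pass inequality:
\begin{equation*}
\text{observed time}\ \ge\ \mu_q\,\bigl|\{\text{time in } \calA\text{-side}\}\bigr| - C\bigl(\kappa+d_0+\textstyle\sum_kK_k^{-1}\bigr).
\end{equation*}
Here the $\calA$-side time on each pass is at least about $1/2$, since a nearly diametral pass on the $\psi$ side lies in $\mathcal C$ or in observed gaps. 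Then choose $\kappa$, $d_0$, and $\sum_kK_k^{-1}$ small relative to $\mu_q$, and take $\sigma\ll\min(d_0,\beta_0)$, which is compatible because $\sigma$ only constrains the placement of the sectors and not $d_0$. Combining this with Step 1 gives $\eta_q=\min(c_1,\mu_q/8)$ for a.e.\ ray. The exactly diametral rays, on which $\calB_k$ contains the whole half-pass, form a Liouville-null set and are discarded by the essential infimum.
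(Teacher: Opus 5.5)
You take a different route from the paper, and in your own framework Step~3 is not the real obstacle. Consider a ray with $|L|<d_0$ whose chords stay within $2\beta_0$ of the axis through $\psi$. On its $\psi$-side half-chord with $|y|\ge\sqrt{|L|}$, the polar angle stays within $2\beta_0+\sqrt{|L|}$ of $\psi$. So this half never meets $\bigcup_j\calB_j\subset I_\sigma+\pi$, and off $\bigcup_k\calA_k$ it is fully observed by \eqref{eq:disc-G-rule}. The per-pass inequality then follows directly from a matching bound inside the $\calA_k$. You need neither the paper's pairing estimate $T_B\le T_A+4+C_2\sqrt{|L|}$ of Proposition~\ref{prop:disc-pairing} nor any relation $b_k\le\kappa a_k$. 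The price is a three-parameter classification, where the paper uses a single threshold on $|L|$ together with Lemma~\ref{lem:disc-sector-time}. Two details in your setup need fixing. You need $d_0T\ll\beta_0$, because the chord line rotates by about $2|L|$ per bounce. In Step~1, ``$\max(d_0,\beta_0)$'' should be ``$\min(d_0,\beta_0)$''. (Also, a ray aligned with $\psi_k$ lies in $\calB_k$ where $r\gtrsim|L|/b_k$, not where $r\lesssim|L|/b_k$.)

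The genuine gap is the matching bound in Step~2, which must hold uniformly over all rays and all $k$. Your accounting is false: you claim finitely many slow cell boundaries per traversal, each costing $O(K_k^{-1})$, for a total loss $C\sum_kK_k^{-1}$. In fact the angle between the ray and the radial direction is $\arcsin(|L|/r)$, not $O(a_k)$. The slow phase moves at speed $K_k(1-|\dot r|)$, with $1-|\dot r|\approx L^2/(2r^2)$. A ray aligned with $\psi_k$ therefore crosses on the order of $K_k|L|a_k$ slow cells during one visit to $\calA_k$. This count is unbounded, because only lower bounds are imposed on $K_k$.

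What you need is a relative count. The speed ratio is $(1-|\dot r|)/(1+|\dot r|)=L^2/(r+|y|)^2\le|L|/4$ for $|y|\ge\sqrt{|L|}$. The number of slow-profile changes is at most $A_q(1+\text{slow variation})$, and the slow variation is at most this ratio times the fast variation. Each change spoils at most one fast period. So the loss is a multiple of the ratio times the elapsed time, plus $C_q/K_k$ per interval. Once $|L|$ is below a $q$-dependent threshold, a fixed matching fraction survives. This is exactly Lemma~\ref{lem:fastslow}. With it, and a bounded number of visits to each $\calA_k$ per half-chord, your Steps~2--3 close.
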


To see what will be estimated, fix a regular billiard ray $x(t)$.\footnote{For the essential condition \eqref{eq:gcc}, it suffices to consider regular billiard rays, which form a full-measure set in the interior phase space; see Sections~\ref{subsec:meas-flow}--\ref{subsec:meas-def}. Boundary-gliding rays are treated separately in Remark~\ref{rem:disc-completion}.} Away from the origin, write it in polar coordinates as
\begin{equation}\label{eq:disc-polar-ray}
x(t)=\bigl(r(t)\cos\theta(t),r(t)\sin\theta(t)\bigr).
\end{equation}
Recall its observed time $D_G=D_G(x(\cdot))$ from \eqref{eq:gcc} and split the time interval according to the angular sectors:
\begin{equation}\label{eq:disc-times}
\begin{cases}
T_A:=|\{t\in(0,T):\theta(t)\in\textstyle\bigcup_k\calA_k\}|,\\
T_B:=|\{t\in(0,T):\theta(t)\in\textstyle\bigcup_k\calB_k\}|,\\
T_{\rm out}:=T-T_A-T_B.
\end{cases}
\end{equation}
The definition of $G$ gives the exact observation-time identity
\begin{equation}\label{eq:disc-obs-time}
D_G=T_{\rm out}+\sum_k\int_{\{t\in(0,T):\theta(t)\in\calA_k\}}\1_{\Gamma_k}\bigl(t,x(t)\bigr)\dd t.
\end{equation}
Thus all time outside the sectors is observed. Inside $\calA_k$, only profile matching contributes; time in $\calB_k$ does not contribute directly.

\begin{figure}[htbp]
\centering
\includegraphics[width=\textwidth,
  trim={0mm 3mm 0mm 5mm},
  clip]{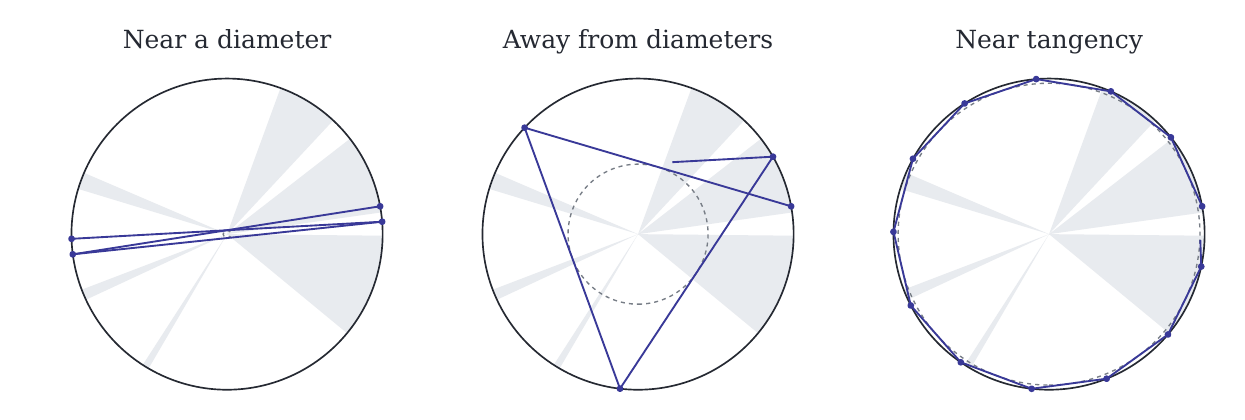}
\caption{Disc rays with $|L|=0.025,0.45,0.97$, each followed for six units of physical time. The dashed circles are their invariant caustics. Gray marks the angular sectors.}\label{fig:disc-gcc-overview}
\end{figure}

\subsection{Angular momentum and time spent in sectors}\label{subsec:disc-rays}

We estimate the time spent in angular sectors uniformly in the number of reflections.

\begin{defi}\label{def:disc-momentum}
For $(x,\xi)\in S^*\D$, the signed angular momentum is
\begin{equation}\label{eq:disc-momentum}
L:=x\wedge\xi=x_1\xi_2-x_2\xi_1.
\end{equation}
\end{defi}

Angular momentum is preserved by free motion and reflection. Indeed, along free motion, $\dot x=\xi$, $\dot\xi=0$ and
\begin{equation*}
\frac{\dd}{\dd t}(x\wedge\xi)=\xi\wedge\xi=0.
\end{equation*}
At a reflection point $|x|=1$, the incoming and outgoing velocities satisfy
\begin{equation*}
\xi^+=\xi^- -2(\xi^-\cdot x)x,
\qquad x\wedge\xi^+=x\wedge\xi^-.
\end{equation*}
For $0<|L|<1$, the distance from the line of each unit-speed ray segment to the origin is $|x\wedge\xi|=|L|$. Thus every complete segment\footnote{A complete ray segment joins two consecutive reflection points and is a chord of the disc. The time interval $[0,T]$ may contain only part of the initial and final segments.} is tangent to the same circle $r=|L|$, the invariant caustic; see Figure~\ref{fig:disc-gcc-overview}. When $L=0$, the ray follows a diameter.

\begin{lemma}\label{lem:disc-sector-time}
Fix $T>0$. There is a constant $C_T>0$, depending only on $T$, such that for every angular interval $I\subset\R/2\pi\Z$ and every regular billiard ray $x(t)$ with $0<|L|<1$,
\begin{equation}\label{eq:disc-interval-time}
|\{t\in(0,T):\theta(t)\in I\}|\le\frac{C_T|I|}{|L|}.
\end{equation}
\end{lemma}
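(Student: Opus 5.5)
We plan to use conservation of angular momentum to bound the angular speed from below, and then count how often the ray can revisit the interval $I$ within time $T$. Along the regular billiard flow, away from reflection instants (a discrete set) and away from the origin (never visited, since the ray stays at distance $|L|>0$ from the origin), we have $x\wedge\dot x=L$. In polar coordinates \eqref{eq:disc-polar-ray} this identity reads $r(t)^2\dot\theta(t)=L$. At a reflection $r=1$, the angle $\theta$ is continuous, because the position is continuous. Since $L$ is preserved by reflection, $\theta(t)$ is a continuous, piecewise smooth, strictly monotone function. Moreover $r\le1$, so
\begin{equation*}
|\dot\theta(t)|=\frac{|L|}{r(t)^2}\ge|L|
\end{equation*}
for almost every $t$.

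The key step is a change of variables $t\mapsto\theta$. Because $\theta$ is strictly monotone, we lift it to a continuous map $\widetilde\theta:(0,T)\to\R$. The set $\{t:\theta(t)\in I\}$ is then the preimage under $\widetilde\theta$ of the union of the lifted copies $I+2\pi j$. The image $\widetilde\theta((0,T))$ is an interval of length $\int_0^T|\dot\theta|\dd t\le T/|L|_{\min}$, where $|L|_{\min}$ is the infimum of $r^2/|L|$. This gives a bound that is no good when $r$ is small. I will not bound the length this way; instead I will bound it using the geometry of chords, as follows.

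Each complete chord is tangent to the caustic $r=|L|$ and sweeps an angle $\pi-2\arcsin|L|\le\pi$. A chord has length $2\sqrt{1-L^2}\ge0$, and consecutive reflections are separated in time by exactly this length. When $|L|$ is close to $1$, however, many short chords can occur in time $T$. Their number is about $T/(2\sqrt{1-L^2})$, and the total angle swept is then about $\pi T/(2\sqrt{1-L^2})$. This can be large, but the bound we need is $C_T|I|/|L|$ with $|L|\approx1$. So I will instead estimate directly:
\begin{equation*}
|\{t:\theta\in I\}|=\int_{\widetilde\theta^{-1}(\bigcup_j(I+2\pi j))}\dd t=\int_{\bigcup_j(I+2\pi j)\cap\widetilde\theta((0,T))}\frac{\dd\vartheta}{|\dot\theta|}.
\end{equation*}
The integrand satisfies $1/|\dot\theta|=r^2/|L|\le1/|L|$. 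Hence the time spent in $I$ is at most $|I|/|L|$ times the number of lifted copies $I+2\pi j$ that meet the swept interval. That number is at most $1+\text{(total swept angle)}/(2\pi)$, counted with the appropriate multiplicity.

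The main obstacle is bounding the total swept angle over $(0,T)$ uniformly in $L$. For $|L|\le1/2$, each chord takes time at least $\sqrt3$ and sweeps angle at most $\pi$. There are therefore at most $T/\sqrt3+2$ chords, partial ones included, and the swept angle is $O(1+T)$. For $|L|>1/2$ we have $r\ge|L|>1/2$ along the ray, so $|\dot\theta|\le4|L|\le4$. The swept angle is then at most $4T$. In both regimes the swept angle is $O(1+T)$. The number of copies of $I$ visited is therefore $O(1+T)$, which gives \eqref{eq:disc-interval-time} with $C_T=C(1+T)$.
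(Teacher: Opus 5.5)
Your proposal is correct and follows the paper's proof: a monotone lifted angle, a uniform bound on the total angular variation, a count of the lifted copies of $I$ met by the swept interval, and $|\dot\theta|^{-1}=r^2/|L|\le 1/|L|$ on each visit. The only difference is how you bound the swept angle. You split into cases, using chord counting for $|L|\le 1/2$ and $r\ge|L|$ for $|L|>1/2$, whereas the paper uses the single per-chord ratio $\arccos|L|/\sqrt{1-L^2}\le\pi/2$. Your aside claiming the swept angle can be large for $|L|$ near $1$ is false, since each short chord sweeps only $2\arccos|L|$, but your final argument does not use it.
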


\begin{proof}
We first bound the angular variation. On a ray segment between consecutive reflections, choose $y=0$ at the closest point to the origin, with $y$ increasing at unit speed. Along the segment,
\begin{equation}\label{eq:disc-ray}
\left\{\begin{aligned}
&r^2=L^2+y^2,\qquad -\sqrt{1-L^2}\le y\le\sqrt{1-L^2},\\
&\dot r=y/r,\qquad \dot\theta=L/r^2\qquad\text{for a.e. }t.
\end{aligned}\right.
\end{equation}
The travel time and the absolute polar-angle change along a complete segment are
\begin{equation*}
\tau(L)=2\sqrt{1-L^2}\qquad\text{and}\qquad
|\Delta\theta|=2\arccos|L|.
\end{equation*}
Their ratio satisfies
\begin{equation*}
\frac{|\Delta\theta|}{\tau(L)}
=\frac{\arccos|L|}{\sqrt{1-L^2}}\le\frac\pi2.
\end{equation*}
The complete segments in $[0,T]$ therefore contribute at most $\pi T/2$. The initial and final partial segments each contribute at most $\pi$. Hence a continuous lift of the angle satisfies
\begin{equation}\label{eq:ang-var}
\Var_{[0,T]}\theta\le\frac\pi2T+2\pi.
\end{equation}

Since $\dot\theta=L/r^2$ has a fixed sign, the lifted angle is monotone. Choose a representative interval $\widetilde I\subset\R$ of $I$. Each translate $\widetilde I+2\pi m$ is visited at most once, and \eqref{eq:ang-var} gives
\begin{equation*}
\#\{\text{visits to }I\}\le\frac{\Var_{[0,T]}\theta}{2\pi}+2
\le\frac T4+3.
\end{equation*}
On each visit, \eqref{eq:disc-ray} gives
\begin{equation*}
\left|\frac{\dd t}{\dd\theta}\right|=\frac{r^2}{|L|}\le\frac1{|L|}.
\end{equation*}
Adding the visits, we obtain
\begin{equation*}
|\{t\in(0,T):\theta(t)\in I\}|
\le C_T\int_I\frac{\dd\theta}{|L|}
=\frac{C_T|I|}{|L|}.
\end{equation*}
This proves \eqref{eq:disc-interval-time}.
\end{proof}

Recall that the sectors $\calA_k,\calB_k$ in \eqref{eq:sectors} are pairwise disjoint angular intervals with $0<a_k,b_k<\pi/4$, $|\calA_k|=2a_k$ and $|\calB_k|=2b_k$. Applying Lemma~\ref{lem:disc-sector-time} to each interval and summing, we obtain, for $0<|L|<1$,
\begin{equation}\label{eq:disc-sector-time}
T_A\le\frac{C_T}{|L|}\sum_k|\calA_k| \textrm{ \; and \; }
T_B\le\frac{C_T}{|L|}\sum_k|\calB_k|.
\end{equation}

\subsection{Symmetry matching time in $\calA_k$ near diameters}\label{subsec:disc-matching}

The bound in Lemma~\ref{lem:disc-sector-time} deteriorates as $L\to0$. We obtain a positive observed fraction of $T_A$ through profile matching.

\begin{prop}\label{prop:disc-matching}
Let $q$ be as in \eqref{eq:disc-step}. There are $L_0=L_0(q)\in(0,1/2]$ and $c=c(q)\in(0,1)$ such that, for every $T>0$, there is $C_1=C_1(T,q)>0$ with the following property.

For any pairwise disjoint sectors $\calA_k,\calB_k$ in \eqref{eq:sectors}, with $0<a_k,b_k<\pi/4$, and any $K_k>0$ satisfying $\sum_kK_k^{-1}<\infty$, define $G$ by \eqref{eq:disc-G}. Then every regular billiard ray $x(t)$ with $|L|\le L_0$ satisfies
\begin{equation}\label{eq:disc-obs-lower}
D_G(x(\cdot))\ge T_{\rm out}+cT_A-C_1\sqrt{|L|}-C_1\sum_kK_k^{-1}.
\end{equation}
\end{prop}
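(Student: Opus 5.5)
The plan is to start from the observation-time identity \eqref{eq:disc-obs-time}, so that it suffices to show
\[
\sum_k\int_{\{\theta(t)\in\calA_k\}}\1_{\Gamma_k}(t,x(t))\dd t\ge cT_A-C_1\sqrt{|L|}-C_1\sum_kK_k^{-1}.
\]
The constant $c$ comes from the finite-step structure \eqref{eq:disc-step}. Let $c_*:=\min_j|\{s\in\R/2\pi\Z:q(s)=c_j\}|/(2\pi)>0$; it is the same for the shifted profile $\tau_{\pi/2}q$. Then I would take $c=c_*/2$, leaving room for the errors below. Throughout I use the local description \eqref{eq:disc-ray} of each complete segment: $r^2=L^2+y^2$ and $t=t_*+y$. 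For $|L|\le L_0$, a time interval of length $T$ contains at most $C_T$ segments, each of length $2\sqrt{1-L^2}\ge\sqrt3$.

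\emph{Step 1 (discard the region near the origin).} I remove the times with $r(t)<\sqrt{|L|}$. On each segment this set is $\{|y|\lesssim\sqrt{|L|}\}$, of length $O(\sqrt{|L|})$. Summed over the $C_T$ segments, it costs at most $C\sqrt{|L|}$ of $T_A$. Off this set, $|\dot\theta|=|L|/r^2\le1$ and the total angular drift on each half-segment is at most $\int|L|/r^2\dd t\lesssim\sqrt{|L|}$. Hence each half-segment meets each sector $\calA_k$ in at most one time interval, and meets only boundedly many sectors in a nontrivial way. More precisely, the number of pairs (interval, $k$) is at most $C_T$ times the number of half-segments, with each $k$ counted $O(C_T)$ times.

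\emph{Step 2 (matching on an outgoing interval).} Fix $k$, write $K=K_k$, and consider a maximal interval $J\subset\{\theta\in\calA_k,\ r\ge\sqrt{|L|}\}$ on the outgoing half $y>0$. Set $d_1=K(t-r)$ and $d_2=K(t+r)$. Then
\[
\dot d_2=K(1+y/r)\in[2K(1-O(L)),2K],\qquad \dot d_1=K(r-y)/r\le KL^2/r^2\le K|L|.
\]
I cut $J$ into windows on which $d_2$ advances by exactly $2\pi$; such a window has length about $\pi/K$, and $d_1$ varies by at most $O(|L|)$ across it. If $d_1$ stays inside one constancy interval $(s_{j-1},s_j)$ of $q$ throughout a window, then $q(d_1)=c_j$ is fixed. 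As $d_2$ sweeps a full period, the fraction of the window on which $q(d_2+\pi/2)=c_j$, that is, on which the point lies in $\Gamma_k$, is at least $c_*(1-O(L))\ge c$ for $L_0$ small.

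The windows that go wrong are of two kinds. First, those in which $d_1$ crosses a jump of $q$. The total variation of $d_1$ on $J$ is at most $K\int L^2/r^2\dd t\lesssim K|L|^{3/2}$, so there are $O(J_q(K|L|+1))$ such windows, with total length $O(|L|+K^{-1})$. Second, the two incomplete end windows, of length $O(K^{-1})$. The incoming half $y<0$ is symmetric: now $d_1$ is fast, $d_2$ is slow, and one uses the level sets of $q$ in $d_1$ at the fixed value $q(d_2+\pi/2)$.

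\emph{Step 3 (summation).} Adding over intervals gives
\[
\int_J\1_{\Gamma_k}\ge c|J|-C(|L|+K_k^{-1}).
\]
Since each $k$ contributes $O(C_T)$ intervals, the $K_k^{-1}$ errors sum to $C_1\sum_kK_k^{-1}$. The $|L|$ errors are attached only to intervals that actually occur, and there are at most $C_T$ of those overall. This is because, away from the origin, the angle is nearly constant on each half-segment, so a half-segment contains $O(1)$ intervals in total, using the disjointness of the sectors. Thus these errors give $C_T|L|\le C_1\sqrt{|L|}$. Reflections at $r=1$ simply split intervals and are absorbed by the same counting.

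I expect the main obstacle to be Step 2, namely making the "slow phase" argument rigorous uniformly in $K_k$ when $K_kL$ is not small. The point to verify is that the slow phase drifts at rate $KL^2/r^2$, not $KL$, so that the number of jump crossings times the window length stays $O(|L|+K^{-1})$. A second delicate point is the counting of intervals per sector near the origin cutoff, which must not produce a sum over infinitely many $k$ without a $K_k^{-1}$ factor.
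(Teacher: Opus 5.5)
Your mechanism matches the paper's: fast-period windows, discarding the windows in which the slow phase crosses a jump of $q$, and removing $|y|\lesssim\sqrt{|L|}$. However, the summation in Step~3 rests on a false claim. You assert that a retained half-segment contains $O(1)$ intervals in total ``using the disjointness of the sectors''. In Step~1 you likewise assert that it meets only boundedly many sectors. Disjointness gives no such bound. The retained part of a half-segment sweeps an angular arc of positive length, of order $\sqrt{|L|}$. Infinitely many pairwise disjoint arcs $\calA_k$ with $a_k\to0$ must accumulate at some angle. So any ray whose retained arc contains that angle in its interior meets infinitely many $\calA_k$ on a single half-segment, and by rotation invariance such rays exist for every small $L\ne0$. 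This is exactly the configuration of Theorem~\ref{thm:disc-gcc}, where $a_k=a_*2^{-k}$ and all $\calA_k$ lie in one arc of length less than $\pi/2$. Your Step~2 charges each interval an additive loss $C|L|$ independent of $|J|$, so for such rays the total loss is infinite and the bound is vacuous. Only your per-$k$ count (each $k$ has $O(C_T)$ intervals) is valid, and it controls only the $K_k^{-1}$ terms.

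The defect originates in Step~2. There you bound the variation of $d_1$ on $J$ by its variation over the whole retained half-segment, $\lesssim K|L|^{3/2}$. Keep this bound localized. On the retained region $\dot d_1\le K_kL^2/r^2\le K_k|L|$, so $\Var_J d_1\le K_k|L|\,|J|$. Since each window has length at most $2\pi/K_k$, the jump-crossing windows in $J$ have total length at most $CJ_q\bigl(|L|\,|J|+K_k^{-1}\bigr)$. After shrinking $L_0(q)$, the part proportional to $|J|$ is absorbed into the matching fraction. This leaves an additive loss $C_q/K_k$ per interval. Your per-$k$ count then gives $C_1\sum_kK_k^{-1}$, and the central pieces give $C_1\sqrt{|L|}$. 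Alternatively, keep $\int_J L^2/r^2\dd t$ and sum it over the disjoint intervals of one half-segment, which gives $O(|L|^{3/2})$ per half-segment.

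This is how Lemma~\ref{lem:fastslow} is arranged in the paper. The hypothesis $|\Theta_{\rm s}'|\le\delta_q|\Theta_{\rm f}'|$ bounds the bad periods by $A_q(1+\delta_qV)$. The $\delta_qV$ part is absorbed into $c=\delta_q$, and the only additive error is $C_q/K$.

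Finally, when $L=0$ the slow phase is constant on each half-diameter and may sit exactly at a jump of $q$. Your criterion ``$d_1$ stays inside an open constancy interval'' then rejects every window. You need the convention that each jump point carries an adjacent essential value.
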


Recall from \eqref{eq:disc-step-match} and \eqref{eq:disc-G-rule} that, while $\theta(t)\in\calA_k$, the ray is observed precisely when
\begin{equation}\label{eq:disc-ray-match}
q\bigl(K_k(t-r(t))\bigr)=q\bigl(K_k(t+r(t))+\pi/2\bigr).
\end{equation}
We seek a lower bound for the time during which these two profile values agree. The inputs of $q$ are the two radial phases. Along the ray, they satisfy, for almost every $t$,
\begin{equation*}
\begin{cases}
\dfrac{\dd}{\dd t}\bigl[K_k(t-r(t))\bigr]=K_k(1-\dot r(t)),\\[2mm]
\dfrac{\dd}{\dd t}\bigl[K_k(t+r(t))+\pi/2\bigr]=K_k(1+\dot r(t)).
\end{cases}
\end{equation*}
On the nearly radial parts of a ray segment, $\dot r$ is close to $-1$ or $1$. Thus one phase moves at speed close to $2K_k$, while the other moves slowly. As long as the slow phase remains in one constancy interval of $q$, the slow-profile value is a constant $v$. Matching occurs whenever the fast profile also takes the value $v$.
\vspace{2mm}

For the finite essential range $\mathcal V(q)$, set
\begin{equation*}
L_*:=\min_{v\in\mathcal V(q)}\int_0^{2\pi}\1_{\{q(s)=v\}}\dd s>0.
\end{equation*}
The positivity follows from the finite-step assumption. Each complete fast period during which the slow-profile value remains fixed contributes at least $L_*$ of matching phase length. The following lemma converts this phase length into matching time.

\begin{lemma}[Fast and slow profile matching]\label{lem:fastslow}
Let $q$ be as in \eqref{eq:disc-step}. There are $\delta_q\in(0,1)$ and $C_q>0$ such that, for every $K>0$ and absolutely continuous monotone phases $\Theta_{\rm f},\Theta_{\rm s}:I\to\R$ on an interval $I$, the conditions
\begin{equation*}
K\le|\Theta_{\rm f}'|\le2K\qquad\text{and}\qquad |\Theta_{\rm s}'|\le\delta_q|\Theta_{\rm f}'|
\end{equation*}
imply
\begin{equation*}
|\{t\in I:q\bigl(\Theta_{\rm f}(t)\bigr)=q\bigl(\Theta_{\rm s}(t)+\alpha\bigr)\}|
\ge\delta_q|I|-C_q/K,
\end{equation*}
uniformly in $\alpha\in\R$.
\end{lemma}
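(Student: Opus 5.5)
We reduce the statement to a counting argument in the fast phase, cutting $I$ into pieces on which the slow profile value is frozen. Since $\Theta_{\rm f}$ is absolutely continuous and monotone with $|\Theta_{\rm f}'|\ge K>0$, it is strictly monotone. Partition $I$ into consecutive subintervals $I_1,\dots,I_M$ on which $\Theta_{\rm f}$ sweeps exactly one period $2\pi$, plus one leftover piece sweeping less than $2\pi$. Each full piece has length between $2\pi/(2K)$ and $2\pi/K$, so $M\ge K|I|/(2\pi)-1$. The leftover piece has length at most $2\pi/K$, which is absorbed into the error term.

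\textbf{Good pieces.} On each $I_j$, the slow phase moves by at most
\begin{equation*}
\int_{I_j}|\Theta_{\rm s}'|\le\delta_q\int_{I_j}|\Theta_{\rm f}'|=2\pi\delta_q .
\end{equation*}
Let $\ell_{\min}:=\min_j(s_j-s_{j-1})>0$ be the shortest constancy interval of $q$, and choose $\delta_q$ so that $2\pi\delta_q<\ell_{\min}$. Then $\Theta_{\rm s}(I_j)+\alpha$ meets at most two constancy intervals, that is, at most one jump point of $q$ modulo $2\pi$. Call $I_j$ \emph{good} if $\Theta_{\rm s}(I_j)+\alpha$ contains no jump point. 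Since $\Theta_{\rm s}$ is monotone, the sets $\Theta_{\rm s}(I_j)$ have disjoint interiors, and their total length is at most $\delta_q\cdot 2\pi(M+1)$. They are ordered monotonically, so each jump point of $q$ in each period lies in at most two of them. The number of periods crossed by the slow phase is at most $\delta_q(M+1)+1$. Hence the number of bad pieces is at most $2J_q(\delta_q(M+1)+2)$. Shrinking $\delta_q$ further, say $4J_q\delta_q\le1/2$, gives at least $M/2-C$ good pieces.

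\textbf{Matching on a good piece.} On a good $I_j$, $q(\Theta_{\rm s}+\alpha)\equiv v$ for some essential value $v$. By the change of variables $s=\Theta_{\rm f}(t)$ with $|\Theta_{\rm f}'|\le 2K$, the matching time is
\begin{equation*}
|\{t\in I_j:q(\Theta_{\rm f}(t))=v\}|\ge\frac{1}{2K}\,\bigl|\{s\in\text{one period}:q(s)=v\}\bigr|\ge\frac{L_*}{2K}.
\end{equation*}
A caveat: if $v$ is attained only at the chosen boundary representatives, this fails. With the convention in the footnote, however, $v$ is an essential value, since $\Theta_{\rm s}+\alpha$ stays inside an open constancy interval except on a null set of times where $\Theta_{\rm s}'=0$ on a plateau sitting at a jump point. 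That plateau case is handled by counting the piece as bad. Summing over good pieces gives
\begin{equation*}
\text{matching time}\ge\Bigl(\frac M2-C\Bigr)\frac{L_*}{2K}\ge\frac{L_*}{8\pi}|I|-\frac{C_q}{K}.
\end{equation*}
Finally, reduce $\delta_q$ so that $\delta_q\le L_*/(8\pi)$.

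\textbf{Main obstacle.} The delicate step is the bookkeeping for bad pieces: one must show that their number is a small fraction of $M$ plus a constant, uniformly in $\alpha$ and in $K$. This follows from monotonicity of $\Theta_{\rm s}$, which ensures that each jump point is crossed only boundedly often per period. Without monotonicity, oscillation of the slow phase across a jump point could spoil every piece.
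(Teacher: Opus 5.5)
Your route is the paper's: cut $I$ into complete fast periods, get $L_*/(2K)$ of matching time from each period on which the slow profile value is frozen, bound the number of remaining periods by a constant times $1+\delta_q M$, and take $\delta_q\le L_*/(8\pi)$. The gap is in your definition of a good piece and the count that follows from it.

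The hypotheses allow $\Theta_{\rm s}'=0$ on long time intervals. In fact $\Theta_{\rm s}$ may be constant on all of $I$, which is exactly what happens along diameters ($L=0$) in Proposition~\ref{prop:disc-matching}. Suppose $\Theta_{\rm s}+\alpha$ rests at a jump point of $q$ during a time interval of length $\tau$. Then at least $K\tau/(2\pi)-2$ consecutive pieces $I_j$ have the same degenerate image, a single point that contains the jump point. So your claim that each jump point lies in at most two of the sets $\Theta_{\rm s}(I_j)$ fails: disjoint interiors say nothing about degenerate intervals. With it goes the bound $2J_q(\delta_q(M+1)+2)$ on bad pieces.

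Your assertion that such plateaus occupy a null set of times is also wrong, since a plateau is a time interval of positive length. Hence ``counting the piece as bad'' does not close the argument. If $\Theta_{\rm s}+\alpha$ is identically a jump point on $I$, every piece is bad and you get no lower bound at all, even though the lemma is still true.

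The repair is what the paper does. Call a piece bad only when $t\mapsto q(\Theta_{\rm s}(t)+\alpha)$ fails to be constant on it.

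By the footnote convention, the value of $q$ at each jump point is an adjacent essential value. So on every good piece, including pieces inside a pause at a jump, the frozen value $v$ is essential and your change-of-variables bound $L_*/(2K)$ applies.

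Since $\Theta_{\rm s}+\alpha$ is continuous and monotone, the preimage of each jump level is an interval, a point, or empty. The slow value therefore changes at most twice per jump level met, on entering and on leaving it. At most $J_q(\Var_I\Theta_{\rm s}/(2\pi)+2)\le J_q(\delta_q(M+1)+2)$ jump levels are met, and each change time lies in at most two pieces. This gives a bound $A_q(1+\delta_q M)$ on bad pieces that is insensitive to pauses, and the rest of your computation then goes through unchanged.
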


\begin{proof}
We use $\Theta_{\rm f}$ as coordinate and divide its range into complete periods. On a time interval $J$ corresponding to one such period, suppose $q(\Theta_{\rm s}+\alpha)$ keeps the value $v$. Then
\begin{align*}
\int_J\1_{\{q(\Theta_{\rm f}(t))=v\}}\dd t
&=\int_{\Theta_{\rm f}(J)}\1_{\{q(\xi)=v\}}
\frac{\dd\xi}{|\Theta_{\rm f}'\bigl(\Theta_{\rm f}^{-1}(\xi)\bigr)|}\\
&\ge\frac1{2K}\int_0^{2\pi}\1_{\{q(\xi)=v\}}\dd\xi
\ge\frac{L_*}{2K}.
\end{align*}
Thus each complete period without a slow-profile change supplies a definite amount of matching time.

Write $V:=|\Theta_{\rm f}(I)|$. There are at least $V/(2\pi)-2$ complete periods. Since the slow phase is continuous and monotone, the inverse image of each jump level is an interval, a point, or the empty set. If it pauses at a jump, the slow profile can change value only when entering or leaving that level; throughout the pause it keeps the chosen essential value. Complete fast periods contained in the pause therefore contribute matching time as above.
\vspace{2mm}

There are only finitely many jump levels per period of $q$. Hence there is a counting constant $A_q>0$, depending only on $q$, such that the number $N_{\rm bad}$ of complete fast periods containing a slow-profile change satisfies
\begin{equation*}
N_{\rm bad}\le A_q(1+\Var_I\Theta_{\rm s})\le A_q(1+\delta_qV).
\end{equation*}
Discarding these periods, we obtain
\begin{equation*}
|\{t\in I:q\bigl(\Theta_{\rm f}(t)\bigr)=q\bigl(\Theta_{\rm s}(t)+\alpha\bigr)\}|
\ge\frac{L_*}{2K}\left(\frac V{2\pi}-2-A_q(1+\delta_qV)\right).
\end{equation*}

Choose $\delta_q\in(0,1)$ so that $A_q\delta_q\le1/(4\pi)$ and $\delta_q\le L_*/(8\pi)$. The preceding lower bound is at least
\begin{align*}
\frac{L_*}{2K}\left(\frac{V}{4\pi}-2-A_q\right)
&=\frac{L_*V}{8\pi K}-\frac{L_*(2+A_q)}{2K}\\
&\ge\delta_q|I|-\frac{C_q}K,
\end{align*}
where $C_q:=L_*(2+A_q)/2$ and we used $V\ge K|I|$. This proves the lemma.
\end{proof}

\begin{figure}[htbp]
\centering
\includegraphics[width=.9\textwidth,
  trim={0mm 3mm 0mm 2mm},
  clip]{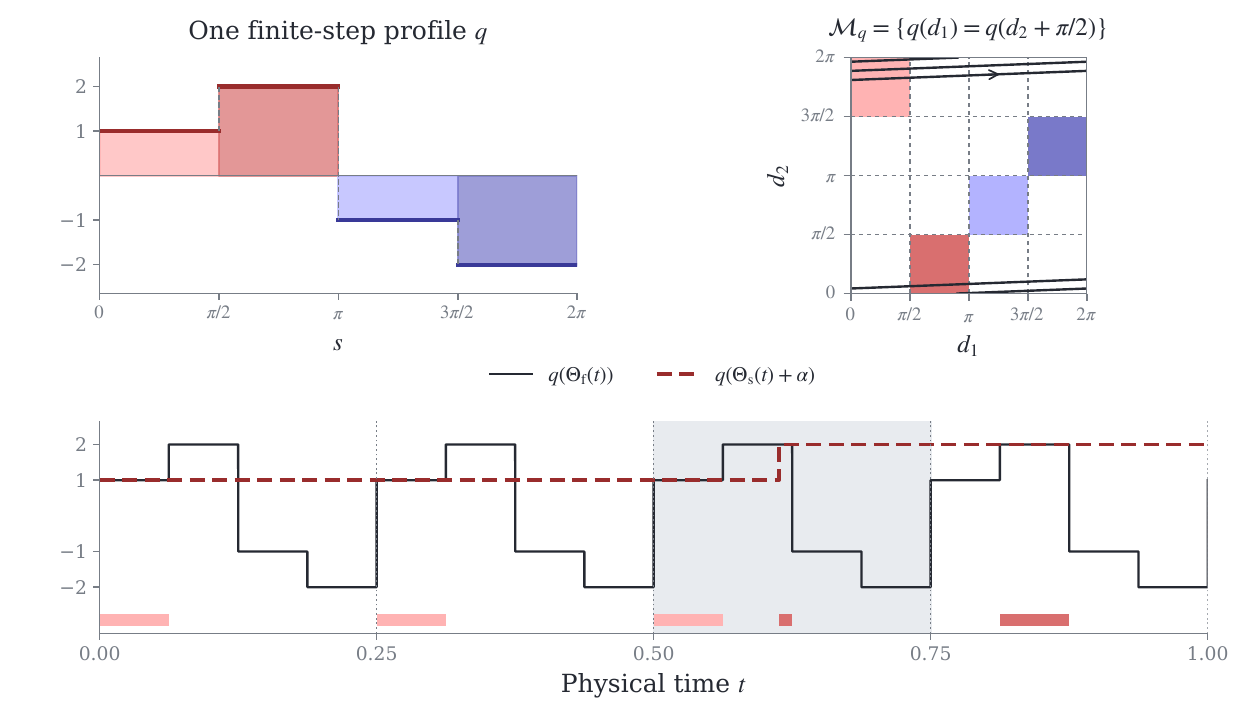}
\caption{Profile matching for a finite-step profile. The profile takes the values $1,2,-1,-2$ on consecutive quarter-periods. The phase trace is nearly horizontal in the reference variables, while the lower panel shows the corresponding profile values against physical time; here $\alpha=\pi/2$. Colored intervals mark equal values.}\label{fig:disc-fast-slow}
\end{figure}

\begin{proof}[Proof of Proposition~\ref{prop:disc-matching}]
Take $\delta_q$ from Lemma~\ref{lem:fastslow} and set
\begin{equation*}
L_0:=\min\{1/2,4\delta_q\},\qquad c=c(q):=\delta_q.
\end{equation*}
Suppose first that $0<|L|\le L_0$. We apply Lemma~\ref{lem:fastslow} to the matching condition \eqref{eq:disc-ray-match}. On the incoming and outgoing halves of a ray segment, respectively, the phases and shift in the lemma are
\begin{equation*}
(\Theta_{\rm f},\Theta_{\rm s},\alpha)=
\begin{cases}
\bigl(K_k(t-r),K_k(t+r),\pi/2\bigr),&y<0,\\
\bigl(K_k(t+r)+\pi/2,K_k(t-r),0\bigr),&y>0.
\end{cases}
\end{equation*}
Thus the incoming and outgoing halves exchange the roles of the two profiles. On either half-segment, \eqref{eq:disc-ray} gives
\begin{equation*}
|\Theta_{\rm f}'|=K_k(1+|\dot r|)\qquad\text{and}\qquad
|\Theta_{\rm s}'|=K_k(1-|\dot r|).
\end{equation*}
Thus $K_k\le|\Theta_{\rm f}'|\le2K_k$. The identity $r^2-y^2=L^2$ also gives
\begin{equation}\label{eq:speed-ratio}
\frac{|\Theta_{\rm s}'|}{|\Theta_{\rm f}'|}
=\frac{1-|\dot r|}{1+|\dot r|}
=\frac{L^2}{(r+|y|)^2}\le\frac{L^2}{4y^2}.
\end{equation}
For $0<|L|\le1/2$, omit $|y|<\sqrt{|L|}$ on each segment. On the remaining pieces, \eqref{eq:speed-ratio} gives
\begin{equation*}
\frac{|\Theta_{\rm s}'|}{|\Theta_{\rm f}'|}\le\frac{|L|}{4}\le\delta_q
\qquad\text{if }|L|\le4\delta_q.
\end{equation*}
This verifies the slow-phase hypothesis of Lemma~\ref{lem:fastslow} for $0<|L|\le L_0$, where $L_0$ depends only on $q$.
\vspace{2mm}

For $|L|\le1/2$, each complete segment has length
\begin{equation*}
\tau(L)=2\sqrt{1-L^2}\ge\sqrt3.
\end{equation*}
There are therefore at most $T/\sqrt3+2$ segments meeting $[0,T]$. Since $\dot y=1$, their omitted central pieces have total time at most
\begin{equation}\label{eq:disc-central-time}
2\sqrt{|L|}\left(T/\sqrt3+2\right)\le C_T\sqrt{|L|}.
\end{equation}

Cut each visit to $\calA_k$ at reflections and remove these central pieces. Denote the resulting intervals by $\mathcal J_k$. On each retained half-segment, the lifted angle is monotone and varies by less than $\pi$, so it meets at most two translates of a fixed angular interval. The segment count therefore gives
\begin{equation*}
\#\mathcal J_k\le4\left(T/\sqrt3+2\right)\le C_T,
\qquad\forall k\ge1.
\end{equation*}
Applying Lemma~\ref{lem:fastslow} on each $J\in\mathcal J_k$ and summing, we obtain
\begin{equation}\label{eq:disc-match-per-k}
\sum_{J\in\mathcal J_k}\int_J\1_{\Gamma_k}\bigl(t,x(t)\bigr)\dd t
\ge c\sum_{J\in\mathcal J_k}|J|-\frac{C_q\#\mathcal J_k}{K_k}
\ge c\sum_{J\in\mathcal J_k}|J|-\frac{C_{T,q}}{K_k}.
\end{equation}

Since the sectors are pairwise disjoint, their omitted central time is counted only once. By \eqref{eq:disc-central-time},
\begin{equation*}
\sum_k\sum_{J\in\mathcal J_k}|J|\ge T_A-C_T\sqrt{|L|}.
\end{equation*}
Sum \eqref{eq:disc-match-per-k} first over $k\le N$, and then let $N\to\infty$. Monotone convergence applies to the nonnegative time integrals and interval lengths, while the additive errors sum to $C_{T,q}\sum_kK_k^{-1}$. Choosing $C_1=C_1(T,q)$ to bound both error constants, we conclude that
\begin{equation}\label{eq:disc-count}
\sum_k\int_{\{t:\theta(t)\in\calA_k\}}\1_{\Gamma_k}\bigl(t,x(t)\bigr)\dd t
\ge cT_A-C_1\sqrt{|L|}-C_1\sum_kK_k^{-1}.
\end{equation}
The choice of $\delta_q$ in Lemma~\ref{lem:fastslow} absorbs the slow-phase variation into the positive fraction $c$. The series $C_1\sum_kK_k^{-1}$ sums the remaining additive errors over all cut intervals, including when a ray meets infinitely many sectors. When $L=0$, apply the lemma directly on the incoming and outgoing halves of each diameter: the fast phase has speed $2K_k$ and the slow phase is constant. The chosen values at jumps ensure that this constant slow-profile value is essential.

Combining \eqref{eq:disc-obs-time} and \eqref{eq:disc-count} proves \eqref{eq:disc-obs-lower}. This finishes the proof of Proposition~\ref{prop:disc-matching}.
\end{proof}

\subsection{Time spent in opposite sectors}\label{subsec:disc-pairing}

Proposition~\ref{prop:disc-matching} extracts an observed fraction of $T_A$. We now compare the time spent in $\calB_k$ with the time spent in $\calA_k$. On a complete ray segment, the points with coordinates $y$ and $-y$ have nearly opposite angles away from the center, and the map $y\mapsto-y$ preserves time. Choosing $\calB_k$ much narrower than $\calA_k$ lets us place the counterpart of most points of $\calB_k$ in $\calA_k$. This gives a bound for $T_B$ in terms of $T_A$, to which we will apply the preceding matching estimate.

For $a_*\in(0,1)$, consider widths satisfying
\begin{equation}\label{eq:disc-widths}
a_k=a_*2^{-k},\qquad 0<b_k\le a_k^4,\qquad\forall k\ge1.
\end{equation}
The following estimate is uniform in $a_*$. We will choose $a_*$ in Section~\ref{subsec:disc-uniform} together with the remaining parameters.
\begin{prop}\label{prop:disc-pairing}
Let $T>0$. There is $C_2=C_2(T)>0$ with the following property. For any $a_*\in(0,1)$ and any pairwise disjoint sectors $\calA_k,\calB_k$ in \eqref{eq:sectors} with widths satisfying \eqref{eq:disc-widths}, every regular billiard ray $x(\cdot)$ with $|L|\le1/2$ satisfies
\begin{equation}\label{eq:opposite-time}
T_B\le T_A+4+C_2\sqrt{|L|}.
\end{equation}
\end{prop}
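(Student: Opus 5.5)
We will pair each point of a ray in $\calB_k$ with a point at the same time-measure in $\calA_k$ through the reflection $y\mapsto -y$ on each complete segment. Parametrize a complete segment by $y\in[-\sqrt{1-L^2},\sqrt{1-L^2}]$ as in \eqref{eq:disc-ray}; the points at $y$ and $-y$ have polar angles $\theta_0\pm\arctan(y/L)$ relative to the angle $\theta_0$ of the closest point, so their angular difference is $\pi-2\arctan(|L|/|y|)$, i.e. it differs from $\pi$ by at most $2|L|/|y|$. Since $\dot y=1$, the map $y\mapsto-y$ preserves time. First we discard the central pieces $|y|<\sqrt{|L|}$ of each segment; as in \eqref{eq:disc-central-time} this costs at most $C_T\sqrt{|L|}$ of $\calB$-time. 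On the remaining part, a point with $\theta\in\calB_k$ has partner with angle within $2\sqrt{|L|}$ of $\theta+\pi$, hence within $b_k+2\sqrt{|L|}$ of $\psi_k$.

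The key case split is per $k$: if $2\sqrt{|L|}+b_k\le a_k$, the partner lies in $\calA_k$, so the $\calB_k$-time on complete segments (minus central pieces) is dominated by $\calA_k$-time; summing over these $k$ (with pairwise disjointness, the partner map is injective in time on each segment) gives a contribution $\le T_A$. If instead $a_k<2\sqrt{|L|}+b_k$, then by \eqref{eq:disc-widths} $a_k-a_k^4<2\sqrt{|L|}$, so $a_k\lesssim\sqrt{|L|}$, and for such $k$ we need a direct bound on the $\calB_k$-time. Here we cannot use Lemma~\ref{lem:disc-sector-time} naively (it gives $|\calB_k|/|L|$); instead count visits: the ray meets at most $C_T$ segments, on each half-segment away from the center the time spent in an angular interval of width $2b_k$ is bounded by $r^2|I|/|L|$ with $r\ge|y|$... which still has $1/|L|$. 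So I would rather bound the time in $\calB_k$ on the non-central part via $\dd t/\dd\theta=r^2/|L|$ and $\theta-\theta_0=\arctan(y/L)$: on $|y|\ge\sqrt{|L|}$, $\dd\theta/\dd y=|L|/r^2$, so an interval of width $2b_k$ in angle corresponds to $y$-length $\le 2b_k r^2/|L|$. Using $b_k\le a_k^4\lesssim L^2$, this is $\lesssim |L|$ per visit, and summing over the geometric sequence $a_k$ (only finitely many $k$ with $a_k\lesssim\sqrt{|L|}$ contribute nontrivially, with $\sum b_k\lesssim \sum_{a_k\lesssim\sqrt{|L|}}a_k^4\lesssim L^2$) gives total $\le C_T\sum_k b_k/|L|\le C_T|L|\le C_2\sqrt{|L|}$.

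Finally the partial initial and final segments: on these the pairing may fail because the partner time lies outside $(0,T)$; each partial segment has duration at most $2$, giving the additive constant $4$. Collecting: $T_B\le T_A+4+C_2\sqrt{|L|}$. The main obstacle is the second case, making the bound on time in narrow sectors $\calB_k$ uniform in $|L|$ without the $1/|L|$ loss; the choice $b_k\le a_k^4$ is what should make $\sum_{k:\,a_k\lesssim\sqrt{|L|}}b_k/|L|$ small, and the case $L=0$ (diameters, exactly opposite angles, so the first case always applies) is handled directly.
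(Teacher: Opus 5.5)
Your proposal is correct and follows the paper's proof essentially step for step: the same $y\mapsto -y$ time pairing on complete segments, removal of the central pieces $|y|<\sqrt{|L|}$, a per-$k$ split into wide and narrow sectors, a bound $C_T|L|^{-1}\sum b_k\le C_T|L|$ on the narrow ones, the additive $4$ from the incomplete segments, and direct treatment of $L=0$. The paper's split threshold is $a_k\ge 8\sqrt{|L|}$ instead of your $2\sqrt{|L|}+b_k\le a_k$. Your worry about Lemma~\ref{lem:disc-sector-time} is unfounded: the paper applies it directly via \eqref{eq:disc-sector-time}, because $\sum_{a_k\lesssim\sqrt{|L|}}b_k\lesssim L^2$ absorbs the $1/|L|$. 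Also, infinitely many $k$ (not finitely many) fall into the narrow case, and the geometric sum handles exactly this.
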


\begin{proof}
Suppose first that $L\ne0$. We use time pairing on each complete ray segment. Let $t_*$ be its midpoint time, so that $y(t)=t-t_*$. The paired time is
\begin{equation*}
t'=2t_*-t,\qquad y(t')=-y(t),\qquad |\dd t'|=\dd t.
\end{equation*}
This map is a measure-preserving involution of the segment's time interval. The paired points have nearly opposite polar angles: the deviation of their angular difference from $\pi$ is
\begin{equation*}
\dist_{\R/2\pi\Z}\bigl(\theta(t)-\theta(t'),\pi\bigr)
=2\arctan(|L|/|y|)\le2|L|/|y|,\qquad y\ne0.
\end{equation*}
We absorb this angular error in the wider sectors $\calA_k$ and use Lemma~\ref{lem:disc-sector-time} for the smaller sectors.
\vspace{2mm}

For $|y|\ge\sqrt{|L|}$, the angular error is at most $2\sqrt{|L|}$. If $a_k\ge8\sqrt{|L|}$ and $\theta(t)\in\calB_k$, then
\begin{equation*}
\dist_{\R/2\pi\Z}\bigl(\theta(t'),\psi_k\bigr)
\le b_k+2\sqrt{|L|}\le a_k/2+a_k/4<a_k.
\end{equation*}
Here $b_k\le a_k/2$ follows from $a_k<1/2$ and $b_k\le a_k^4$. Hence the paired point lies in $\calA_k$. On each complete segment, the same map $t\mapsto t'$ is used for all sector pairs. It is one-to-one, and different complete segments have disjoint time intervals. Thus, on the union of all retained times with $\theta(t)\in\calB_k$ and $a_k\ge8\sqrt{|L|}$, the pairing is a measure-preserving injection into the times counted by $T_A$. Their total duration is therefore at most $T_A$.

By \eqref{eq:disc-central-time}, the central pieces omitted from all complete segments have total duration at most $C_T\sqrt{|L|}$.

For the remaining sectors, let $k_0$ be the first index with $a_{k_0}<8\sqrt{|L|}$. Since $a_k=a_*2^{-k}$, geometric summation gives
\begin{equation*}
\sum_{a_k<8\sqrt{|L|}}b_k\le\sum_{k\ge k_0}a_k^4=\frac{a_{k_0}^4}{1-2^{-4}}\le\frac{(8\sqrt{|L|})^4}{1-2^{-4}}\le C|L|^2.
\end{equation*}
This estimate also covers $k_0=1$, with a constant independent of $a_*$. Applying \eqref{eq:disc-sector-time}, we obtain
\begin{equation*}
\bigl|\{t:\theta(t)\in\textstyle\bigcup_{a_k<8\sqrt{|L|}}\calB_k\}\bigr|
\le\frac{2C_T}{|L|}\sum_{a_k<8\sqrt{|L|}}b_k
\le C_T|L|.
\end{equation*}

The initial and final incomplete segments each last at most $2$. Combining the paired time and the three losses, we obtain
\begin{align*}
T_B&\le T_A+\underbrace{C_T\sqrt{|L|}}_{\text{central pieces}}
+\underbrace{C_T|L|}_{\text{small sectors}}
+\underbrace{4}_{\text{incomplete segments}}\\
&\le T_A+4+C_2\sqrt{|L|}.
\end{align*}
This proves \eqref{eq:opposite-time} for $L\ne0$. When $L=0$, the paired points on a complete diameter have exactly opposite angles, apart from the single time at the origin. Since $b_k<a_k$, every paired point from $\calB_k$ lies in $\calA_k$, and only the incomplete segments are lost. Thus $T_B\le T_A+4$, which proves the remaining case.
\end{proof}

\begin{figure}[tbp]
\centering
\includegraphics[width=.9\textwidth]{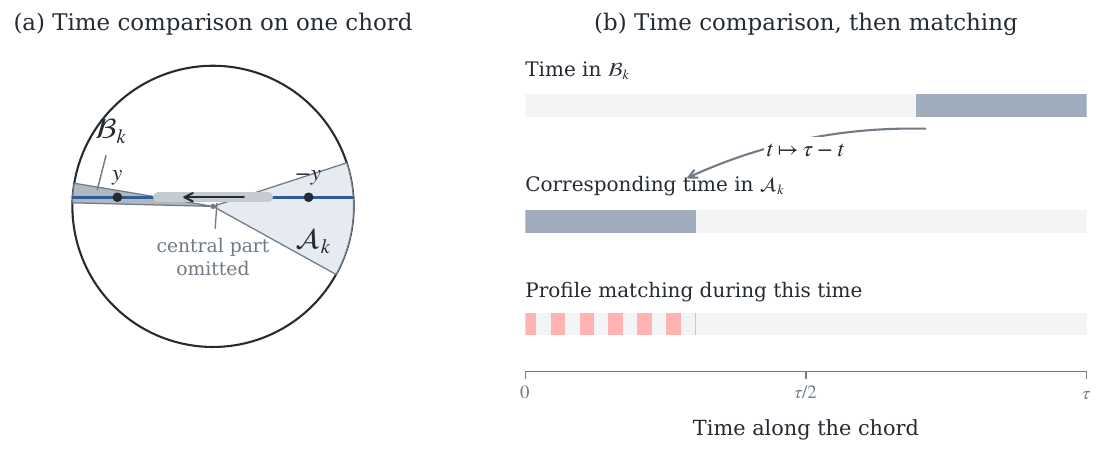}
\caption{Time comparison and profile matching on a nearly diametral ray segment. The map $y\mapsto-y$ relates the first two time strips; the colored part of the third strip is matching for $q_0$ at an illustrative radial scale.}\label{fig:disc-pairing}
\end{figure}

\subsection{Choosing the parameters and obtaining a uniform bound}\label{subsec:disc-uniform}

We combine the estimates of Sections~\ref{subsec:disc-rays}--\ref{subsec:disc-pairing} to obtain a uniform observed-time bound. The parameters should also satisfy Proposition~\ref{prop:disc-real}, so that GCC and non-observability hold on the same fixed set $G$. For $T=T_\D$, we choose one threshold $\ell_\varepsilon\in(0,L_0)$ and distinguish two ranges:
\begin{itemize}[leftmargin=*,itemsep=3pt]
\item \emph{Near diameters: $|L|<\ell_\varepsilon$.} We choose $\ell_\varepsilon$ so that $C_2\sqrt{\ell_\varepsilon}\le1$. Proposition~\ref{prop:disc-pairing} then gives $T_B\le T_A+5$, hence $2T_A+T_{\rm out}\ge T-5$. Proposition~\ref{prop:disc-matching} supplies the observed fraction $cT_A$; we make its errors small by the same choice of $\ell_\varepsilon$ and a small sum $\sum_kK_k^{-1}$.
\item \emph{Away from diameters: $|L|\ge\ell_\varepsilon$.} With $\ell_\varepsilon$ fixed, we choose $a_*$ small enough that the widths $a_k,b_k$ in \eqref{eq:disc-widths} give $T_A+T_B\le T/4$ by \eqref{eq:disc-sector-time}. Hence $D_G\ge T_{\rm out}\ge3T/4$.
\end{itemize}

\begin{proof}[Proof of Theorem~\ref{thm:disc-gcc}]
Fix $T=T_\D=64$. Recall $c\in(0,1)$ and $C_1=C_1(T,q)$ from Proposition~\ref{prop:disc-matching}, and $C_2=C_2(T)$ from Proposition~\ref{prop:disc-pairing}. Equations \eqref{eq:disc-times} and \eqref{eq:opposite-time} imply, for $|L|\le1/2$,
\begin{equation*}
T\le2T_A+T_{\rm out}+4+C_2\sqrt{|L|}.
\end{equation*}
Thus $T_A$ and $T_{\rm out}$ cannot both be small when $T$ is large. The positive terms in \eqref{eq:disc-obs-lower} satisfy
\begin{align*}
T_{\rm out}+cT_A
&\ge\frac c2(T_{\rm out}+2T_A)\\
&\ge\frac c2\bigl(T-4-C_2\sqrt{|L|}\bigr).
\end{align*}

We first choose the angular-momentum threshold $\ell_\varepsilon$ and the geometric parameters $a_*,a_k,b_k,\psi_k$. We then choose the spectral cutoffs $P_k,Q_k$, the normalization $C_k$ and the radial indices $n_{0,k}$ according to \eqref{eq:disc-real-params}, with the additional requirement that $\sum_kK_k^{-1}$ be small.

\noindent\emph{Step 1. Choice of $\ell_\varepsilon,a_*,a_k,b_k,\psi_k$.}
Choose $\ell_\varepsilon\in(0,L_0)$ small enough that
\begin{equation}\label{eq:disc-threshold}
C_1\sqrt{\ell_\varepsilon}\le cT/8\qquad\text{and}\qquad C_2\sqrt{\ell_\varepsilon}\le1.
\end{equation}
For the widths $a_k,b_k$ in \eqref{eq:disc-widths}, we have
\begin{equation*}
\sum_ka_k=a_*\qquad\text{and}\qquad
\sum_kb_k\le\sum_ka_k^4=\frac{a_*^4}{15}.
\end{equation*}
Recall $C_T$ from Lemma~\ref{lem:disc-sector-time}. Choose $a_*\in(0,\pi/4)$ small enough that
\begin{equation}\label{eq:disc-width-choice}
\frac{2C_T}{\ell_\varepsilon}\left(a_*+\frac{a_*^4}{15}\right)\le\frac T4.
\end{equation}
Fix $a_k=a_*2^{-k}$ and any $0<b_k\le a_k^4$. The factor $2$ in \eqref{eq:disc-width-choice} accounts for $|\calA_k|=2a_k$ and $|\calB_k|=2b_k$. Thus \eqref{eq:disc-sector-time} gives
\begin{equation*}
T_A+T_B\le\frac{2C_T}{|L|}\sum_k(a_k+b_k)
\le\frac{2C_T}{\ell_\varepsilon}\left(a_*+\frac{a_*^4}{15}\right)
\le\frac T4,
\qquad |L|\ge\ell_\varepsilon.
\end{equation*}
Since $\sum_k|\calA_k|=2a_*<\pi/2$, choose centers $\psi_k$ so that all $\calA_k$ are pairwise disjoint in one arc of length less than $\pi/2$. The opposite sectors $\calB_k$ are then also disjoint, since $b_k<a_k$, and lie in the opposite arc. This gives the sector placement described in Section~\ref{sec:disc}. The same $\ell_\varepsilon$ gives $T_B\le T_A+5$ whenever $|L|<\ell_\varepsilon$ by Proposition~\ref{prop:disc-pairing}.

\vspace{2mm}
\noindent\emph{Step 2. Choice of $P_k,Q_k,C_k,n_{0,k},K_k$ and verification of Proposition~\ref{prop:disc-real}.}
Fix these sectors. They are pairwise disjoint and satisfy
\begin{equation*}
0<a_k,b_k<\pi/4\qquad\text{and}\qquad a_k,b_k\longrightarrow0.
\end{equation*}
Thus they satisfy the geometric hypotheses of Proposition~\ref{prop:disc-real}. We now choose the remaining parameters in \eqref{eq:disc-real-params}.
\vspace{2mm}

Fix a positive odd $p_*$ with $\widehat q_{p_*}\ne0$. Choose integers $P_k\ge p_*$ with $P_k\to\infty$, and then angular cutoffs $Q_k$, such that
\begin{equation}\label{eq:disc-cutoff-choice}
\|q^{(P_k)}-q\|_{L^2(0,2\pi)}\le2^{-k} \textrm{ \; and \; }
 Q_k\min\{a_k,b_k\}\ge2^k.
\end{equation}
Set $C_k$ by the normalization in \eqref{eq:disc-real-params}. These choices provide the profile and angular approximation required there.
\vspace{2mm}

For each fixed $P_k,Q_k,C_k$ and $\psi_k$, take the threshold $N_k$ in \eqref{eq:disc-real-params}. The modes and the bounds \eqref{eq:disc-spectral-choice} are valid for every radial index beyond this threshold. We may therefore choose $n_{0,k}$ so that simultaneously
\begin{equation}\label{eq:disc-radial-choice}
n_{0,k}\ge\max\{k,N_k\},\qquad
K_k=\pi(n_{0,k}-\tfrac14), \textrm{ \; and \; }
C_1K_k^{-1}\le\frac{cT}{16}\,2^{-k}.
\end{equation}
Consequently,
\begin{equation}\label{eq:disc-freq-loss}
C_1\sum_kK_k^{-1}\le cT/16.
\end{equation}

Use these same parameters in Propositions~\ref{prop:disc-support} and \ref{prop:disc-observation} to define $S_k,\Gamma_k$ and $G^{(N)}\downarrow G$. Proposition~\ref{prop:disc-real} supplies exact waves satisfying
\begin{equation*}
E[u_k](0)=1\qquad\text{and}\qquad
\|\1_G\partial_tu_k\|_{L^2((0,T)\times\D)}
\le\|r_k\|_{L^2((0,T)\times\D)}\longrightarrow0.
\end{equation*}
Thus non-observability already holds on this fixed set $G$.

\vspace{2mm}
\noindent\emph{Step 3. Verification of GCC.}

\noindent\emph{Case 1. $|L|<\ell_\varepsilon$.} Proposition~\ref{prop:disc-pairing} and \eqref{eq:disc-threshold} give $2T_A+T_{\rm out}\ge T-5$. Combining Proposition~\ref{prop:disc-matching} with \eqref{eq:disc-threshold} and \eqref{eq:disc-freq-loss}, we obtain
\begin{align*}
D_G&\ge T_{\rm out}+cT_A-C_1\sqrt{\ell_\varepsilon}-C_1\sum_kK_k^{-1}\\
&\ge\frac c2(2T_A+T_{\rm out})-cT/8-cT/16\\
&\ge c\left(\frac{5T}{16}-\frac52\right)\ge cT/4,
\end{align*}
where the last inequality uses $T=64$.
\vspace{2mm}

\noindent\emph{Case 2. $|L|\ge\ell_\varepsilon$.} Equations \eqref{eq:disc-obs-time} and \eqref{eq:disc-width-choice} give
\begin{equation*}
D_G\ge T_{\rm out}\ge3T/4.
\end{equation*}
This estimate is uniform up to tangency because \eqref{eq:disc-sector-time} uses total angular variation.

Thus the same constant $\eta_q:=cT/4>0$ works in both ranges. In particular,
\begin{equation*}
\essinf_{\rho\in S^*\D}D_G(\rho)\ge\eta_q.
\end{equation*}
The essential infimum is relative to Liouville measure; Appendix~\ref{sec:meas} ensures that this statement is unchanged by modifying $G$ on a spacetime null set. This proves \eqref{eq:gcc}. Together with the vanishing observation established above, it finishes the proof.
\end{proof}

Taking $q=q_0$ proves Theorem~\ref{thm:disc}. The radial disc construction is now complete: the fixed profile symmetry is realized by exact waves asymptotically, and its fixed observation region satisfies \hyperref[eq:gcc]{GCC}.

\begin{remark}[Boundary completion and every generalized ray]\label{rem:disc-completion}
The disc counterexample does not rely on omitting boundary-gliding rays. Keep the Borel set $G$ in \eqref{eq:disc-G}, with the values of $q$ at jumps fixed as above, and set
\begin{equation*}
G^\sharp:=G\cup\bigl((0,T_\D)\times\partial\D\bigr).
\end{equation*}
The estimates in the proof give $D_G\ge\eta_q$ for every regular ray. In particular, \eqref{eq:disc-count} and \eqref{eq:opposite-time} include diameters; a constant slow phase at a profile jump is covered by the chosen jump value.
The open angular sectors already leave their side edges observed. The same estimates allow a transverse initial point on the circle, since the initial and final incomplete segments were included.

All glancing points of the disc are of second-order gliding type, since its boundary curvature is everywhere strictly positive. Thus a unit-speed generalized ray either follows these reflected segments or glides along the circle. The latter spends the whole time $T_\D$ in $G^\sharp$. Hence, writing $x_\gamma$ for its spatial projection, we obtain
\begin{equation*}
\inf_{\gamma\ \mathrm{generalized}}\int_0^{T_\D}
\1_{G^\sharp}\bigl(t,x_\gamma(t)\bigr)\dd t
\ge\min\{\eta_q,T_\D\}>0.
\end{equation*}
Since $G^\sharp\cap\bigl((0,T_\D)\times\D\bigr)=G$, the interior wave observation is unchanged and the same normalized waves still violate observability.

However, the completed set is not relatively open in $(0,T_\D)\times\overline\D$: adding the circle does not add an interior neighborhood; see Section~\ref{subsec:meas-classical}.
\end{remark}

\appendix
\numberwithin{figure}{section}
\section[Supplementary discussion on AOS]{Supplementary discussion on AOS: compatible profiles, velocity approximation, and localization}\label{sec:supp-aos}

In this appendix, we discuss three aspects of AOS:
\begin{itemize}[leftmargin=*,itemsep=2pt]
\item velocity approximation by profile-generated functions;
\item strict localization of the generated velocities;
\item the compatibility relations between branch profiles imposed by propagation.
\end{itemize}
The geometric-optics and energy arguments below are standard tools, organized here around these questions.

Sections~\ref{subsec:supp-free}--\ref{subsec:supp-local} treat velocity approximation and localization. Sections~\ref{subsec:supp-geometry}--\ref{subsec:supp-compat} connect boundary reflection and caustics with compatible profiles.

Throughout this appendix, we write
\begin{equation*}
\Box:=\partial_t^2-\Delta,\qquad Q_T:=(0,T)\times\Omega.
\end{equation*}
Here $\Omega=\R^d$ or a smooth bounded Euclidean domain, and the bounded-domain wave equation has the homogeneous Dirichlet condition. We use $\hbar>0$ as a small wavelength parameter.

\Needspace{18\baselineskip}
\subsection{Velocity approximation without a boundary}\label{subsec:supp-free}

A fixed periodic profile can be carried by a localized amplitude moving in its propagation direction. The following proposition gives both the approximation error and the relation between the two scales.

\begin{prop}\label{prop:supp-free}
Fix $T>0$, $\omega\in\R^d$ with $|\omega|=1$, and a nonzero real function $A\in C_c^\infty(\R^d)$. Let $q\in L^2(\R/2\pi\Z;\R)$ be nonzero and have zero mean. For $\hbar,\lambda\in(0,1]$, define
\begin{equation}\label{eq:supp-free-S}
\begin{split}
A_\lambda(x)&:=\lambda^{-d/2}A(x/\lambda),\\
S_{\hbar,\lambda}(t,x)&:=A_\lambda(x-t\omega)q\left(\frac{x\cdot\omega-t}{\hbar}\right).
\end{split}
\end{equation}
There is an exact real solution $u_{\hbar,\lambda}$ of $\Box u=0$ on $\R^d$ such that
\begin{equation}\label{eq:supp-free-err}
\begin{gathered}
\|\partial_tu_{\hbar,\lambda}-S_{\hbar,\lambda}\|_{L^2((0,T)\times\R^d)}
\le C_{T,A,q}\left(\frac{\hbar}{\lambda}+\frac{\hbar}{\lambda^2}\right), \;\;
\forall\hbar,\lambda\in(0,1].
\end{gathered}
\end{equation}
Moreover, the generated velocity is strictly localized:
\begin{equation}\label{eq:supp-free-support}
\supp S_{\hbar,\lambda}(t,\cdot)\subseteq
\bigl\{x=t\omega+\lambda y:\ y\in\supp A\bigr\},
\quad\forall t\in[0,T].
\end{equation}
As $\hbar/\lambda\to0$, the initial energies satisfy
\begin{equation}\label{eq:supp-free-ene}
E[u_{\hbar,\lambda}](0)\longrightarrow
\|A\|_{L^2(\R^d)}^2\frac1{2\pi}\int_0^{2\pi}|q(\theta)|^2\dd\theta>0.
\end{equation}
Consequently, if $\lambda\to0$ and $\hbar/\lambda^2\to0$, the velocities are approximated while their containing regions shrink.
\end{prop}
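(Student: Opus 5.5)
Since $q$ has zero mean, it has a periodic primitive $\mathcal Q\in H^1(\R/2\pi\Z)$ with $\mathcal Q'=q$ and zero mean. My plan is to take an explicit ansatz and correct it to an exact solution. For the ansatz I set
\begin{equation*}
v(t,x):=-\hbar\,A_\lambda(x-t\omega)\,\mathcal Q\!\left(\frac{x\cdot\omega-t}{\hbar}\right).
\end{equation*}
Its time derivative is $\partial_tv=S_{\hbar,\lambda}+\hbar(\omega\cdot\nabla A_\lambda)(x-t\omega)\mathcal Q(\cdot)$. The correction term has $L^2(\R^d)$ norm $\le\hbar\lambda^{-1}\|\nabla A\|_{L^2}\|\mathcal Q\|_{L^\infty}$ at each time. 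Here the $L^\infty$ bound on $\mathcal Q$ is fine because $\mathcal Q\in H^1$ of the circle.

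Next I apply $\Box$ to $v$. The phase $\phi=(x\cdot\omega-t)/\hbar$ satisfies the eikonal equation, so the $\hbar^{-1}$ term $\mathcal Q''$ cancels. The amplitude $A_\lambda(x-t\omega)$ is transported exactly, since $(\partial_t+\omega\cdot\nabla)A_\lambda(x-t\omega)=0$, so the $O(1)$ transport term $q\cdot(\ldots)$ also cancels. What remains is $\Box v=-\hbar(\Box A_\lambda(x-t\omega))\mathcal Q(\phi)$. Moreover $\Box[A_\lambda(x-t\omega)]=(\omega\cdot\nabla)^2A_\lambda-\Delta A_\lambda$, so $\|\Box v(t)\|_{L^2}\le C\hbar\lambda^{-2}$. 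This is the only place where $\mathcal Q''=q'$ might seem to appear; it does not, because the cancellation is exact. This step, checking that no derivative of $q$ survives, is the one I expect to need care, so I would keep $q$ only in $L^2$ and write $\Box v$ distributionally.

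Let $u$ be the exact free solution with the same Cauchy data as $v$ at $t=0$. Then $w:=u-v$ solves $\Box w=-\Box v$ with zero data. The energy estimate gives
\begin{equation*}
\sup_{t\le T}\|\partial_tw(t)\|_{L^2}\le\int_0^T\|\Box v\|_{L^2}\le CT\hbar\lambda^{-2}.
\end{equation*}
Combining this with the correction term from the first step gives
\begin{equation*}
\|\partial_tu-S_{\hbar,\lambda}\|_{L^2((0,T)\times\R^d)}\le C_{T,A,q}\bigl(\hbar/\lambda+\hbar/\lambda^2\bigr).
\end{equation*}
The solution $u$ is real because its data are real. The support property \eqref{eq:supp-free-support} is immediate from the factor $A_\lambda(x-t\omega)$.

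For the energy limit \eqref{eq:supp-free-ene}, I use $E[u](0)=E[v](0)$. The leading parts of $\partial_tv(0)$ and $\nabla v(0)$ are $A_\lambda(x)q(x\cdot\omega/\hbar)$ and $-\omega A_\lambda(x)q(x\cdot\omega/\hbar)$, respectively, up to $O(\hbar/\lambda)$ in $L^2$. Hence
\begin{equation*}
E[u](0)=\int A_\lambda(x)^2\,q(x\cdot\omega/\hbar)^2\,dx+o(1).
\end{equation*}
After rescaling $x=\lambda y$, this becomes $\int A(y)^2q(\lambda y\cdot\omega/\hbar)^2\,dy$. Since $\lambda/\hbar\to\infty$, weak convergence of the rapidly oscillating periodic function $q^2$ to its mean (Riemann--Lebesgue, with $A^2\in L^1$) gives the stated limit. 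This limit is positive because $A\ne0$ and $q\ne0$. The final sentence of the proposition then follows directly from the error bound together with the support statement.
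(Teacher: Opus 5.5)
Your proposal is correct and follows the paper's proof essentially step by step. The shared steps are the ansatz $-\hbar A_\lambda(x-t\omega)F\bigl((x\cdot\omega-t)/\hbar\bigr)$, the exact cancellation leaving $\Box v=\hbar(\Delta_\perp A_\lambda)(x-t\omega)F$, the energy correction to an exact wave with the same Cauchy data, and the leading-order energy computation. The only difference is in the averaging step: the paper integrates $|q(s/\varepsilon)|^2$ against the smooth weight $W(s)=\int_{\omega^\perp}|A(s\omega+y')|^2\,dy'$ by parts, using a bounded primitive of $|q|^2-m_q$. That is the precise form of your weak-convergence step; since $q^2$ is only in $L^1$ of the circle, the step uses the continuity of $W$, not merely $A^2\in L^1$.
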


\begin{proof}
We first construct a primitive of the generated velocity, then correct its wave-equation residual. Let $F\in H^1(\R/2\pi\Z)$ be the zero-mean periodic primitive of $q$. The one-dimensional Sobolev inequality gives
\begin{equation*}
F'=q\qquad\text{and}\qquad
\|F\|_{L^\infty(0,2\pi)}\le C\|q\|_{L^2(0,2\pi)}.
\end{equation*}
Set
\begin{equation}\label{eq:supp-free-v}
v_{\hbar,\lambda}(t,x):=-\hbar A_\lambda(x-t\omega)F\left(\frac{x\cdot\omega-t}{\hbar}\right).
\end{equation}
The minus sign compensates for the time derivative of the phase.
\vspace{2mm}

\noindent\textit{Step 1. Velocity and residual.}
Write $D_\omega:=\omega\cdot\nabla$ and $\Delta_\perp:=\Delta-D_\omega^2$. Differentiating \eqref{eq:supp-free-v}, we obtain
\begin{equation}\label{eq:supp-free-diff}
\partial_tv_{\hbar,\lambda}-S_{\hbar,\lambda}
=\hbar(D_\omega A_\lambda)(x-t\omega)F\left(\frac{x\cdot\omega-t}{\hbar}\right).
\end{equation}
Both factors in \eqref{eq:supp-free-v} are functions of $x-t\omega$. Thus $\Box$ acts on their product as $-\Delta_\perp$. Since $F\bigl((x\cdot\omega-t)/\hbar\bigr)$ has no transverse variation, this gives
\begin{equation}\label{eq:supp-free-res}
\Box v_{\hbar,\lambda}
=\hbar(\Delta_\perp A_\lambda)(x-t\omega)F\left(\frac{x\cdot\omega-t}{\hbar}\right).
\end{equation}
In particular, derivatives of the rapidly oscillating profile do not appear in this residual. This cancellation uses the relation between the phase and the motion of the amplitude.

The identities hold distributionally for $q\in L^2$. Indeed, approximate $q$ in periodic $L^2$ by smooth zero-mean profiles. Their zero-mean primitives converge uniformly, and their compositions with this linear phase converge in $L^2$ on bounded sets. We may therefore pass to the limit in both identities.
\vspace{2mm}

\noindent\textit{Step 2. Correction to an exact wave.}
Let $u_{\hbar,\lambda}$ be the exact wave with Cauchy data $v_{\hbar,\lambda}[0]$. Its difference from $v_{\hbar,\lambda}$ has zero Cauchy data and forcing $-\Box v_{\hbar,\lambda}$. The elementary energy estimate, proved in Lemma~\ref{lem:supp-corr} below, implies
\begin{equation*}
\|\partial_tu_{\hbar,\lambda}-\partial_tv_{\hbar,\lambda}\|_{L^2((0,T)\times\R^d)}
\le\sqrt T\|\Box v_{\hbar,\lambda}\|_{L^1(0,T;L^2(\R^d))}.
\end{equation*}
Using \eqref{eq:supp-free-diff}--\eqref{eq:supp-free-res} and the scaling of $A_\lambda$, we obtain the more explicit estimate
\begin{equation*}
\begin{split}
\|\partial_tu_{\hbar,\lambda}-S_{\hbar,\lambda}\|_{L^2((0,T)\times\R^d)}
\le\hbar\|F\|_\infty\left(
\frac{\sqrt T}{\lambda}\|D_\omega A\|_2
+\frac{T^{3/2}}{\lambda^2}\|\Delta_\perp A\|_2\right).
\end{split}
\end{equation*}
This proves \eqref{eq:supp-free-err}. The support inclusion \eqref{eq:supp-free-support} follows directly from the amplitude in \eqref{eq:supp-free-S}.
\vspace{2mm}

\noindent\textit{Step 3. Nondegenerate energy.}
At time zero, the leading terms in the Cauchy data are
\begin{equation*}
\partial_tv_{\hbar,\lambda}(0,x)=A_\lambda(x)q(x\cdot\omega/\hbar)+O_{L^2}(\hbar/\lambda),
\end{equation*}
and
\begin{equation*}
\nabla v_{\hbar,\lambda}(0,x)=-\omega A_\lambda(x)q(x\cdot\omega/\hbar)+O_{L^2}(\hbar/\lambda).
\end{equation*}
To average the squared leading term, put $\varepsilon:=\hbar/\lambda$. After $x=\lambda y$ and integration over $\omega^\perp$, it becomes
\begin{equation*}
\int_\R W(s)|q(s/\varepsilon)|^2\dd s \textrm{ \; with \; }
\qquad
W(s):=\int_{\omega^\perp}|A(s\omega+y')|^2\dd y'.
\end{equation*}
For $d=1$, this means $W(s)=|A(s\omega)|^2$. In every dimension, $W$ is smooth and compactly supported. Let $m_q:=(2\pi)^{-1}\int_0^{2\pi}|q|^2$. The periodic $L^1$ function $|q|^2-m_q$ has a bounded periodic primitive $G$. Integration by parts gives
\begin{equation*}
\int_\R W(s)\bigl(|q(s/\varepsilon)|^2-m_q\bigr)\dd s
=-\varepsilon\int_\R W'(s)G(s/\varepsilon)\dd s\longrightarrow0.
\end{equation*}
This also bounds the leading $L^2$ terms uniformly as $\varepsilon\to0$. Since $u_{\hbar,\lambda}[0]=v_{\hbar,\lambda}[0]$, combining the two Cauchy-data expansions proves \eqref{eq:supp-free-ene}. This finishes the proof of Proposition~\ref{prop:supp-free}.
\end{proof}

The longitudinal period in this example is $2\pi\hbar$. The amplitude $A$ determines the shape of the containing region, $\lambda$ determines its size, and $\omega$ determines its motion. Localization of the profile $q$ can make the actual support even smaller. The condition $\hbar/\lambda^2\to0$ is sufficient for this construction. In dimension one, $\Delta_\perp=0$, and the residual in \eqref{eq:supp-free-res} vanishes identically.

\subsection{Velocity approximation with a general phase}\label{subsec:supp-phase}

The preceding construction uses the linear oscillatory variable $(x\cdot\omega-t)/\hbar$. A smooth nonlinear phase gives the same approximation mechanism when the phase follows the wave equation's characteristic geometry and the amplitude is transported accordingly.

\begin{prop}\label{prop:supp-phase}
Fix $T>0$ and a nonzero real zero-mean $q\in L^2(\R/2\pi\Z)$. Let $\calA$ and $\varphi$ be smooth real functions on $[0,T]\times\Omega$. Assume that the spatial support of $\calA$ is contained in one compact set $K\Subset\Omega$, uniformly in time. On a neighborhood of the spacetime support of $\calA$, suppose that
\begin{equation}\label{eq:supp-eik}
(\partial_t\varphi)^2=|\nabla\varphi|^2
\qquad\text{and}\qquad |\partial_t\varphi|\ge c>0,
\end{equation}
and that
\begin{equation}\label{eq:supp-trans}
2\bigl(\partial_t\varphi\,\partial_t\calA-\nabla\varphi\cdot\nabla\calA\bigr)+\calA\Box\varphi=0.
\end{equation}
Define
\begin{equation}\label{eq:supp-phase-S}
\begin{aligned}
A(t,x)&:=\calA(t,x)\,\partial_t\varphi(t,x),\\
S_\hbar(t,x)&:=A(t,x)\,q\left(\frac{\varphi(t,x)}{\hbar}\right).
\end{aligned}
\end{equation}
There are exact real energy solutions $u_\hbar$ of $\Box u_\hbar=0$, with the Dirichlet condition when $\partial\Omega\ne\varnothing$, such that
\begin{equation}\label{eq:supp-phase-err}
\|\partial_tu_\hbar-S_\hbar\|_{L^2(Q_T)}\le C\hbar,
\quad\forall\hbar\in(0,1].
\end{equation}
The constant depends on the fixed data but not on $\hbar$. If $A(0,\cdot)\not\equiv0$, the waves can be chosen so that
\begin{equation}\label{eq:supp-phase-ene}
E[u_\hbar](0)=
\frac1{2\pi}\int_0^{2\pi}|q(\theta)|^2\dd\theta
\int_\Omega|A(0,x)|^2\dd x+O(\hbar),
\quad\hbar\to0^+.
\end{equation}
In particular, their initial energies have a positive limit.
\end{prop}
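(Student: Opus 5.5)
We would follow the two-step scheme of Proposition~\ref{prop:supp-free}: first build an approximate solution $v_\hbar$ whose time derivative is $S_\hbar$ up to $O(\hbar)$ and whose wave residual is $O(\hbar)$ in $L^1(0,T;L^2)$, then correct it to an exact Dirichlet wave by the energy estimate. Let $F$ be the zero-mean periodic primitive of $q$, so $F'=q$ and $\|F\|_\infty\le C\|q\|_{L^2}$. Set
\begin{equation*}
v_\hbar(t,x):=\hbar\,\calA(t,x)\,F\bigl(\varphi(t,x)/\hbar\bigr).
\end{equation*}
Then $\partial_tv_\hbar=\calA\,\partial_t\varphi\,q(\varphi/\hbar)+\hbar(\partial_t\calA)F(\varphi/\hbar)=S_\hbar+O_{L^2}(\hbar)$, using the definition $A=\calA\,\partial_t\varphi$ in \eqref{eq:supp-phase-S}.

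Next we compute $\Box v_\hbar$. Expanding, the $\hbar^{-1}$ term is $\calA\bigl((\partial_t\varphi)^2-|\nabla\varphi|^2\bigr)q'(\varphi/\hbar)$. This term vanishes by the eikonal equation \eqref{eq:supp-eik} on a neighborhood of $\supp\calA$, and it must be read distributionally, since $q'$ is only a distribution. The $\hbar^0$ term is $\bigl[2(\partial_t\varphi\,\partial_t\calA-\nabla\varphi\cdot\nabla\calA)+\calA\Box\varphi\bigr]q(\varphi/\hbar)$, which vanishes by the transport equation \eqref{eq:supp-trans}. What remains is $\hbar(\Box\calA)F(\varphi/\hbar)$, of size $O(\hbar)$ in $L^\infty L^2$.

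To make this rigorous for $q\in L^2$, we would verify the identities for smooth zero-mean profiles $q_j\to q$ and pass to the limit, exactly as in Step~1 of Proposition~\ref{prop:supp-free}. Uniform convergence of the primitives gives $L^2_{\rm loc}$ convergence of the compositions $q_j(\varphi/\hbar)$. Here we use $|\partial_t\varphi|\ge c$: for a.e.\ $x$, the map $t\mapsto\varphi$ is a diffeomorphism, so compositions with $L^2$ profiles stay in $L^2$ with constant $\lesssim c^{-1}$.

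Since $\supp\calA(t,\cdot)\subset K\Subset\Omega$, $v_\hbar$ vanishes near $\partial\Omega$, so its Cauchy data lie in $H_0^1\times L^2$. Let $u_\hbar$ be the Dirichlet wave with data $v_\hbar[0]$. The elementary energy estimate (Lemma~\ref{lem:supp-corr}) gives $\|\partial_t(u_\hbar-v_\hbar)\|_{L^2(Q_T)}\le\sqrt T\,\|\Box v_\hbar\|_{L^1L^2}=O(\hbar)$, which proves \eqref{eq:supp-phase-err}.

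For the energy, we have $\nabla v_\hbar(0)=\calA\nabla\varphi\,q(\varphi/\hbar)+O(\hbar)$. By the eikonal equation, $|\nabla\varphi|^2=(\partial_t\varphi)^2$, so the energy density is $|A(0,x)|^2|q(\varphi(0,x)/\hbar)|^2+O(\hbar)$. It remains to average the rapidly oscillating factor, and this is the step we expect to be the main obstacle. We need $\nabla\varphi(0,\cdot)\neq0$ on $\supp A(0,\cdot)$, which follows from the eikonal equation since $|\nabla\varphi|=|\partial_t\varphi|\ge c$. Locally we then use coarea or straightening coordinates with $s=\varphi(0,x)$ as one variable. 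We write $|q|^2-m_q=G'$ with $G$ bounded periodic and integrate by parts against the smooth compactly supported weight, as in Step~3 of Proposition~\ref{prop:supp-free}. This gives an $O(\hbar)$ error and yields \eqref{eq:supp-phase-ene}, which is positive when $A(0,\cdot)\not\equiv0$. A partition of unity on $\supp A(0,\cdot)$ handles the local coordinate changes.
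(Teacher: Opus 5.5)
Your proposal is correct and follows the paper's proof essentially step by step. It uses the same ansatz $v_\hbar=\hbar\calA F(\varphi/\hbar)$, cancels the $\hbar^{-1}$ and $\hbar^{0}$ terms of $\Box v_\hbar$ by the eikonal and transport equations, and extends to $L^2$ profiles by smooth approximation; it then corrects to an exact Dirichlet wave via Lemma~\ref{lem:supp-corr} and averages $|q(\varphi(0,\cdot)/\hbar)|^2$ through a bounded periodic primitive of $|q|^2-m_q$. The only difference is cosmetic: the paper integrates by parts once, globally, along the vector field $\nabla\Phi/|\nabla\Phi|^2$ with $\Phi=\varphi(0,\cdot)$, which replaces your straightening coordinates and partition of unity.
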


The eikonal equation \eqref{eq:supp-eik} cancels the highest-order oscillatory term. The transport equation \eqref{eq:supp-trans} cancels the next term. The amplitude of the approximate wave is $\calA$, whereas the amplitude of its leading velocity is $A=\calA\partial_t\varphi$.

\begin{lemma}\label{lem:supp-corr}
Let $v\in C([0,T];H_0^1(\Omega))\cap C^1([0,T];L^2(\Omega))$ have distributional residual $R:=\Box v\in L^1(0,T;L^2(\Omega))$. For $\Omega=\R^d$, interpret $H_0^1(\Omega)$ as $H^1(\R^d)$. Let $u$ be the exact wave with Cauchy data $u[0]=v[0]$ and the same homogeneous boundary condition. Then, for every $S\in L^2(Q_T)$,
\begin{equation}\label{eq:supp-corr}
\|\partial_tu-S\|_{L^2(Q_T)}
\le\|\partial_tv-S\|_{L^2(Q_T)}
+\sqrt T\|R\|_{L^1(0,T;L^2(\Omega))}.
\end{equation}
\end{lemma}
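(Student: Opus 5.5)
The plan is to compare $u$ with $v$ through the difference $w:=u-v$ and to control $\partial_tw$ in $L^2(Q_T)$ by the standard energy estimate. By linearity, $w$ has zero Cauchy data $w[0]=0$, satisfies the homogeneous Dirichlet condition (both $u(t)$ and $v(t)$ lie in $H_0^1(\Omega)$), and solves $\Box w=-R$ in the distributional sense with $R\in L^1(0,T;L^2(\Omega))$. The triangle inequality then gives
\begin{equation*}
\|\partial_tu-S\|_{L^2(Q_T)}\le\|\partial_tv-S\|_{L^2(Q_T)}+\|\partial_tw\|_{L^2(Q_T)},
\end{equation*}
so it suffices to show $\|\partial_tw\|_{L^2(Q_T)}\le\sqrt T\|R\|_{L^1(0,T;L^2(\Omega))}$.

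For this we would write $w$ by Duhamel's formula. Let $U(t)$ denote the Dirichlet wave group acting on $H_0^1(\Omega)\times L^2(\Omega)$, which preserves the energy norm $\|(f,g)\|_{\calE}^2:=\|\nabla f\|_{L^2}^2+\|g\|_{L^2}^2$; for $\Omega=\R^d$ we use the same group on $H^1(\R^d)\times L^2(\R^d)$. Then
\begin{equation*}
w[t]=-\int_0^tU(t-s)\bigl(0,R(s)\bigr)\dd s.
\end{equation*}
This mild solution agrees with the distributional solution by uniqueness of weak solutions, or equivalently by approximating $R$ in $L^1(0,T;L^2(\Omega))$ with smooth, compactly supported-in-time data and passing to the limit. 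Conservation of energy along the group gives
\begin{equation*}
\|\partial_tw(t)\|_{L^2(\Omega)}\le\|w[t]\|_{\calE}\le\int_0^t\|R(s)\|_{L^2(\Omega)}\dd s\le\|R\|_{L^1(0,T;L^2(\Omega))}
\end{equation*}
for every $t\in[0,T]$. Squaring and integrating over $(0,T)$ then yields the factor $\sqrt T$.

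The only step that needs some care is justifying that the exact wave $u$ with data $v[0]$, minus $v$, is exactly this Duhamel integral, given only the regularity $v\in C([0,T];H_0^1)\cap C^1([0,T];L^2)$ and a distributional residual. We plan to handle this by the uniqueness of finite-energy weak solutions of the inhomogeneous Dirichlet wave equation with $L^1L^2$ forcing, which follows from the standard transposition or duality argument. If one prefers to avoid that, regularizing $v$ in time by mollification gives smooth approximants with residuals converging in $L^1L^2$, and the estimate passes to the limit. The rest of the argument is routine.
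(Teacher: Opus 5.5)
Your proposal is correct and follows the paper's proof: you set $w=u-v$, apply the triangle inequality, bound $\sup_{t\in[0,T]}\|\partial_tw(t)\|_{L^2(\Omega)}$ by $\|R\|_{L^1(0,T;L^2(\Omega))}$, and integrate in time to get the factor $\sqrt T$. The only difference is how the pointwise-in-time bound is derived: the paper uses the multiplier identity $\frac{\dd}{\dd t}\sqrt{2E[e](t)}\le\|R(t)\|_{L^2(\Omega)}$, while you use Duhamel's formula with energy conservation of the group, an equivalent route; both handle the energy-class regularity by approximation.
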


\begin{proof}
Set $e:=u-v$. Then $\Box e=-R$, $e[0]=0$, and $e$ has zero boundary trace. Multiplication by $\partial_te$ and integration by parts give
\begin{equation*}
\frac{\dd}{\dd t}\sqrt{2E[e](t)}\le\|R(t)\|_{L^2(\Omega)}.
\end{equation*}
Hence
\begin{equation*}
\sup_{t\in[0,T]}\|\partial_te(t)\|_{L^2(\Omega)}
\le\int_0^T\|R(t)\|_{L^2(\Omega)}\dd t.
\end{equation*}
The triangle inequality proves \eqref{eq:supp-corr}. The argument for energy-class functions follows by approximation.
\end{proof}

\begin{proof}[Proof of Proposition~\ref{prop:supp-phase}]
Let $F$ be the zero-mean periodic primitive of $q$. We construct
\begin{equation}\label{eq:supp-phase-v}
v_\hbar(t,x):=\hbar\calA(t,x)F\left(\frac{\varphi(t,x)}{\hbar}\right).
\end{equation}
For a smooth profile, direct differentiation gives
\begin{equation}\label{eq:supp-phase-full}
\begin{split}
\Box v_\hbar={}&\hbar(\Box\calA)F(\varphi/\hbar)\\
&+\bigl[2(\varphi_t\calA_t-\nabla\varphi\cdot\nabla\calA)+\calA\Box\varphi\bigr]q(\varphi/\hbar)\\
&+\hbar^{-1}\calA\bigl[(\varphi_t)^2-|\nabla\varphi|^2\bigr]q'(\varphi/\hbar).
\end{split}
\end{equation}
The eikonal and transport equations cancel the last two lines. Thus
\begin{equation}\label{eq:supp-phase-res}
\begin{aligned}
\partial_tv_\hbar-S_\hbar&=\hbar\calA_tF(\varphi/\hbar),\\
\Box v_\hbar&=\hbar(\Box\calA)F(\varphi/\hbar).
\end{aligned}
\end{equation}
These identities also hold for $q\in L^2$: since $|\nabla\varphi|\ge c$ on the amplitude support, local phase coordinates allow smooth periodic approximation in $L^2$, while the corresponding primitives converge uniformly.
\vspace{2mm}

The amplitude vanishes near the boundary, so $v_\hbar$ has zero Dirichlet trace. Let $u_\hbar$ be the exact wave with Cauchy data $v_\hbar[0]$. The energy correction in Lemma~\ref{lem:supp-corr} gives
\begin{equation}\label{eq:supp-phase-bound}
\begin{split}
\|\partial_tu_\hbar-S_\hbar\|_{L^2(Q_T)}
\le\hbar\|F\|_\infty\Bigl(
\|\calA_t\|_{L^2(Q_T)} +\sqrt T\|\Box\calA\|_{L^1(0,T;L^2(\Omega))}\Bigr).
\end{split}
\end{equation}
This proves \eqref{eq:supp-phase-err}.

For the energy expansion, write $\Phi(x):=\varphi(0,x)$ and
\begin{equation*}
m_q:=\frac1{2\pi}\int_0^{2\pi}|q|^2 \textrm{ \; and \; }
Y:=\frac{\nabla\Phi}{|\nabla\Phi|^2}.
\end{equation*}
Let $G$ be a bounded periodic primitive of $|q|^2-m_q$. For a smooth compactly supported weight $a$ in the region where $\nabla\Phi\ne0$, integration by parts gives
\begin{align*}
\int_\Omega a(x)\Bigl(|q\bigl(\Phi(x)/\hbar\bigr)|^2-m_q\Bigr)\dd x
&=-\hbar\int_\Omega\operatorname{div}(aY)G(\Phi/\hbar)\dd x\\
&=O(\hbar).
\end{align*}
At time zero, \eqref{eq:supp-phase-v} also gives
\begin{align*}
\partial_tv_\hbar(0,\cdot)&=A(0,\cdot)q(\Phi/\hbar)+O_{L^2(\Omega)}(\hbar),\\
\nabla v_\hbar(0,\cdot)&=\calA(0,\cdot)\nabla\Phi\,q(\Phi/\hbar)+O_{L^2(\Omega)}(\hbar).
\end{align*}
Using the eikonal equation and then the averaging identity with $a=|A(0,\cdot)|^2$, we obtain
\begin{align*}
E[u_\hbar](0)
&=\int_\Omega|A(0,x)|^2|q\bigl(\Phi(x)/\hbar\bigr)|^2\dd x+O(\hbar)\\
&=m_q\|A(0,\cdot)\|_{L^2(\Omega)}^2+O(\hbar).
\end{align*}
This proves \eqref{eq:supp-phase-ene} and finishes the proof.
\end{proof}

\subsection{Strict localization}\label{subsec:supp-local}

Strict localization here concerns the generated velocity. For a measurable spacetime region $\calU_\hbar$, it means
\begin{equation}\label{eq:supp-strict}
S_\hbar=\1_{\calU_\hbar}S_\hbar
\qquad\text{and}\qquad
\|\partial_tu_\hbar-S_\hbar\|_{L^2(Q_T)}\longrightarrow0.
\end{equation}
All support statements for measurable functions are understood up to null sets. In particular, \eqref{eq:supp-strict} implies
\begin{equation*}
\|\1_{Q_T\setminus\calU_\hbar}\partial_tu_\hbar\|_{L^2(Q_T)}
\le\|\partial_tu_\hbar-S_\hbar\|_{L^2(Q_T)}\longrightarrow0.
\end{equation*}
We first localize through the amplitude, then explain how to remove a remaining small tail.

\subsubsection{Localization through the amplitude}\label{subsubsec:supp-amplitude}

In Proposition~\ref{prop:supp-free}, the amplitude $A$ determines a containing shape, $\lambda$ determines its size, and $\omega$ determines its motion. For a general phase, the corresponding motion is given by the ray flow. We first fix the geometry in which the localized amplitude will be transported.

\begin{defi}[Ray flow and an interior ray tube]\label{def:supp-ray-flow}
Fix $T>0$ and a smooth real phase $\varphi$ satisfying \eqref{eq:supp-eik} on $[0,T]\times U$, where $U\Subset\Omega$ is open. The associated ray velocity and ray flow are
\begin{equation}\label{eq:supp-ray-flow}
\begin{cases}
    V_\varphi:=-\frac{\nabla\varphi}{\partial_t\varphi}, \\
\frac{\dd}{\dd t}X_t(a)=V_\varphi\bigl(t,X_t(a)\bigr),
\qquad X_0(a)=a.
\end{cases}
\end{equation}
Choose $x_0\in U$ and $\lambda_0\in(0,1]$ so that this flow is defined for $0\le t\le T$ on $x_0+\lambda_0\overline{B(0,1)}$ and its trajectories form a compact interior ray tube:
\begin{equation*}
\bigcup_{0\le t\le T}X_t\bigl(x_0+\lambda_0\overline{B(0,1)}\bigr)\Subset U.
\end{equation*}
\end{defi}

\begin{prop}\label{prop:supp-local}
Fix the phase $\varphi$, flow $X_t$ and parameters $T,U,x_0,\lambda_0$ of Definition~\ref{def:supp-ray-flow}. Let $A_0\in C_c^\infty(B(0,1))$ be nonzero and real, and let $q\in L^2(\R/2\pi\Z;\R)$ be nonzero with zero mean.

For $\lambda\in(0,\lambda_0]$, define $\calA_\lambda$ by transporting the initial data
\begin{equation}\label{eq:supp-local-init}
\calA_\lambda(0,x):=
\frac{\lambda^{-d/2}A_0\bigl((x-x_0)/\lambda\bigr)}{\partial_t\varphi(0,x)},
\end{equation}
along $X_t$ according to \eqref{eq:supp-trans}, and extend it by zero outside their transported support. Define the velocity amplitude and generated velocity by
\begin{equation}\label{eq:supp-local-def}
\begin{aligned}
A_\lambda(t,x)&:=\calA_\lambda(t,x)\,\partial_t\varphi(t,x),\\
S_{\hbar,\lambda}(t,x)&:=A_\lambda(t,x)\,q\left(\frac{\varphi(t,x)}{\hbar}\right),
\end{aligned}
\end{equation}
with the same zero extension. Then
\begin{equation}\label{eq:supp-local-support}
\begin{gathered}
\supp S_{\hbar,\lambda}(t,\cdot)\subseteq
\calU_\lambda(t):=X_t\bigl(x_0+\lambda\supp A_0\bigr),\\
\diam\calU_\lambda(t)\le C\lambda,\qquad\forall t\in[0,T].
\end{gathered}
\end{equation}
Moreover, there are exact real waves $u_{\hbar,\lambda}$, with the homogeneous Dirichlet condition when $\partial\Omega\ne\varnothing$, such that
\begin{equation}\label{eq:supp-local-bound}
\|\partial_tu_{\hbar,\lambda}-S_{\hbar,\lambda}\|_{L^2(Q_T)}
\le C\frac{\hbar}{\lambda^2},
\quad\forall\hbar\in(0,1],\ \lambda\in(0,\lambda_0].
\end{equation}
The constants are independent of $\hbar$ and $\lambda$. Thus $\lambda\to0$ and $\hbar/\lambda^2\to0$ give velocity approximation in shrinking moving regions.
\end{prop}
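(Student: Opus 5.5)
The plan is to reduce to Proposition~\ref{prop:supp-phase} by choosing a suitable transported amplitude. In its notation, $\calA_\lambda$ solves the transport equation \eqref{eq:supp-trans}, and $\varphi$ satisfies the eikonal equation \eqref{eq:supp-eik} on the tube. Once the support statement \eqref{eq:supp-local-support} and $\hbar$-uniform bounds with explicit $\lambda$-dependence are in place, Lemma~\ref{lem:supp-corr} applied to $v=\hbar\calA_\lambda F(\varphi/\hbar)$ gives the error bound through \eqref{eq:supp-phase-res}.

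\emph{Step 1: support.} Rewrite \eqref{eq:supp-trans} as an ODE along the ray flow. Dividing by $2\partial_t\varphi$ gives
\begin{equation*}
\bigl(\partial_t+V_\varphi\cdot\nabla\bigr)\calA=-\frac{\Box\varphi}{2\partial_t\varphi}\,\calA .
\end{equation*}
The characteristics are therefore exactly the curves $t\mapsto X_t(a)$, and we obtain
\begin{equation*}
\calA_\lambda(t,X_t(a))=\calA_\lambda(0,a)\exp\Bigl(-\int_0^t\frac{\Box\varphi}{2\partial_t\varphi}(s,X_s(a))\dd s\Bigr).
\end{equation*}
The solution vanishes along characteristics starting outside $x_0+\lambda\supp A_0$, which gives the support inclusion. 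The map $X_t$ is smooth with bounded Jacobian on the compact tube, uniformly in $t\in[0,T]$. Hence it is Lipschitz, and $\diam\calU_\lambda(t)\le C\lambda$. Since the tube is compactly contained in $U\Subset\Omega$, the zero extension is smooth, vanishes near $\partial\Omega$, and satisfies the Dirichlet condition.

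\emph{Step 2: scaling of derivatives.} Write $\calA_\lambda(t,x)=\lambda^{-d/2}B(t,(X_t^{-1}(x)-x_0)/\lambda)\,m(t,x)$, where $m$ and $B$ are smooth in $x$ and $t$ uniformly in $\lambda$. Each spatial derivative then costs a factor $\lambda^{-1}$, and each time derivative also costs $\lambda^{-1}$, because $\partial_t$ of $X_t^{-1}(x)$ is bounded. Together with $|\calU_\lambda(t)|\lesssim\lambda^d$, this gives
\begin{equation*}
\|\partial_t\calA_\lambda(t)\|_{L^2}\lesssim\lambda^{-1},\qquad \|\Box\calA_\lambda(t)\|_{L^2}\lesssim\lambda^{-2},
\end{equation*}
uniformly in $t\in[0,T]$.

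\emph{Step 3: error bound.} Apply \eqref{eq:supp-phase-res} and \eqref{eq:supp-phase-bound} with $\|F\|_\infty\lesssim\|q\|_{L^2}$. The error is bounded by $\hbar(\lambda^{-1}+\lambda^{-2})\lesssim\hbar/\lambda^2$ for $\lambda\le\lambda_0\le1$. The constants depend only on $\varphi$, $T$, $A_0$ and $q$.

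\textbf{Main obstacle.} The delicate part is making the Step 2 derivative bounds uniform in $\lambda$. This requires checking that the inverse flow and the exponential factor have bounded derivatives on the whole tube; compactness of the tube provides this. It also requires justifying the identities \eqref{eq:supp-phase-res} for $L^2$ profiles. Here the phase coordinate argument of Proposition~\ref{prop:supp-phase} applies verbatim, since $|\nabla\varphi|=|\partial_t\varphi|\ge c$ on the tube.
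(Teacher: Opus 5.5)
Your proposal is correct and follows essentially the same route as the paper. You solve the transport equation along $X_t$ by the explicit exponential formula, which gives both the support transport and the $C\lambda$ diameter bound. You then derive $\sup_t\|\partial^\beta_{t,x}\calA_\lambda(t)\|_{L^2}\lesssim\lambda^{-|\beta|}$ for $|\beta|\le2$ from the bounded flow derivatives on the compact tube. Finally, you insert $v=\hbar\calA_\lambda F(\varphi/\hbar)$ into the energy-correction bound \eqref{eq:supp-phase-bound}. One minor simplification: in your Step 2 the factor $B$ is just $A_0$, and all the $t$-dependence sits in the $\lambda$-independent factor $m$.
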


\begin{proof}
We first write the amplitude along the ray flow. Dividing \eqref{eq:supp-trans} by $2\partial_t\varphi$, we obtain
\begin{equation*}
(\partial_t+V_\varphi\cdot\nabla)\calA_\lambda+c_\varphi\calA_\lambda=0  \textrm{ \; where \; }
c_\varphi:=\frac{\Box\varphi}{2\partial_t\varphi}.
\end{equation*}
Consequently,
\begin{equation}\label{eq:supp-local-transport}
\calA_\lambda\bigl(t,X_t(a)\bigr) =\frac{\lambda^{-d/2}A_0\bigl((a-x_0)/\lambda\bigr)}{\partial_t\varphi(0,a)}
\exp\left(-\int_0^t c_\varphi\bigl(\tau,X_\tau(a)\bigr)\dd\tau\right).
\end{equation}
The exponential never vanishes. Thus the amplitude support is precisely transported by $X_t$, and the profile can only make the support of $S_{\hbar,\lambda}$ smaller. The uniform Lipschitz bound for $X_t$ gives the diameter estimate in \eqref{eq:supp-local-support}.

The flow, its inverse and their derivatives are bounded on the fixed compact ray tube. Differentiating \eqref{eq:supp-local-transport} and changing variables from $x$ to $a$, we therefore obtain
\begin{equation}\label{eq:supp-local-der}
\sup_{t\in[0,T]} \|\partial_{t,x}^{\beta}\calA_\lambda(t,\cdot)\|_{L^2(\Omega)}
\le C_\beta\lambda^{-|\beta|}, \qquad |\beta|\le2.
\end{equation}
The zero extension is smooth and vanishes near $\partial\Omega$. Let $F$ be the zero-mean periodic primitive of $q$, put $v_{\hbar,\lambda}:=\hbar\calA_\lambda F(\varphi/\hbar)$, and choose the exact wave with Cauchy data $v_{\hbar,\lambda}[0]$. Applying the explicit energy-correction estimate \eqref{eq:supp-phase-bound}, we deduce
\begin{align*}
\|\partial_tu_{\hbar,\lambda}-S_{\hbar,\lambda}\|_{L^2(Q_T)}
&\le C\hbar\bigl(\lambda^{-1}+\lambda^{-2}\bigr)\\
&\le C'\hbar\lambda^{-2}.
\end{align*}
This proves \eqref{eq:supp-local-bound} and finishes the proof.
\end{proof}

For $\varphi=x\cdot\omega-t$, the flow is $X_t(a)=a+t\omega$ and the velocity amplitude is
\begin{equation*}
A_\lambda(t,x)=\lambda^{-d/2}A_0\left(\frac{x-t\omega-x_0}{\lambda}\right).
\end{equation*}
Thus Proposition~\ref{prop:supp-local} recovers the moving region in Proposition~\ref{prop:supp-free}.

\Needspace{17\baselineskip}
\subsubsection{Localization by removing small tails}\label{subsubsec:supp-tail}

A second mechanism begins with a generated velocity that already concentrates in a region. We then delete its small tail.

\begin{cor}\label{cor:supp-tail}
Let $u_\hbar$ be exact waves and let $\widetilde S_\hbar\in L^2(Q_T)$ satisfy
\begin{equation*}
\|\partial_tu_\hbar-\widetilde S_\hbar\|_{L^2(Q_T)}\longrightarrow0.
\end{equation*}
Suppose measurable regions $\calU_\hbar\subset Q_T$ satisfy
\begin{equation*}
\|\1_{Q_T\setminus\calU_\hbar}\widetilde S_\hbar\|_{L^2(Q_T)}\longrightarrow0.
\end{equation*}
Then $S_\hbar:=\1_{\calU_\hbar}\widetilde S_\hbar$ is strictly localized in $\calU_\hbar$ and is approximated by the same exact waves:
\begin{equation}\label{eq:supp-tail}
\begin{split}
\|\partial_tu_\hbar-S_\hbar\|_{L^2(Q_T)}
\le{}&\|\partial_tu_\hbar-\widetilde S_\hbar\|_{L^2(Q_T)}\\
&+\|\1_{Q_T\setminus\calU_\hbar}\widetilde S_\hbar\|_{L^2(Q_T)}\longrightarrow0.
\end{split}
\end{equation}
\end{cor}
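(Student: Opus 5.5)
This is a direct consequence of the triangle inequality, applied to the decomposition
\[
\partial_tu_\hbar-S_\hbar=(\partial_tu_\hbar-\widetilde S_\hbar)+(\widetilde S_\hbar-S_\hbar).
\]

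\emph{Strict localization.} By definition $S_\hbar=\1_{\calU_\hbar}\widetilde S_\hbar$, and $\1_{\calU_\hbar}^2=\1_{\calU_\hbar}$. Hence $\1_{\calU_\hbar}S_\hbar=S_\hbar$, which is the first identity in \eqref{eq:supp-strict}. Since $\widetilde S_\hbar\in L^2(Q_T)$ and $\calU_\hbar$ is measurable, $S_\hbar\in L^2(Q_T)$ as well.

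\emph{Approximation.} Observe that
\[
\widetilde S_\hbar-S_\hbar=(1-\1_{\calU_\hbar})\widetilde S_\hbar=\1_{Q_T\setminus\calU_\hbar}\widetilde S_\hbar,
\]
so the second term in the decomposition is exactly the discarded tail. The triangle inequality in $L^2(Q_T)$ then gives
\[
\|\partial_tu_\hbar-S_\hbar\|_{L^2(Q_T)}\le\|\partial_tu_\hbar-\widetilde S_\hbar\|_{L^2(Q_T)}+\|\1_{Q_T\setminus\calU_\hbar}\widetilde S_\hbar\|_{L^2(Q_T)},
\]
which is the inequality in \eqref{eq:supp-tail}. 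Both terms on the right tend to zero by the two hypotheses, so the left-hand side tends to zero. This gives the second condition in \eqref{eq:supp-strict}.

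There is no real obstacle. The only point needing care is that all identities are understood modulo null sets. The argument uses no property of $u_\hbar$ beyond the stated approximation, so the same exact waves serve for $S_\hbar$. As a remark, this is the mechanism behind Step~2 of the proof of Proposition~\ref{prop:disc-real}: there the tail estimate \eqref{eq:angular-tail} controls $\|\1_{\mathcal U_k^c}\widetilde S_k\|_{L^2}$.
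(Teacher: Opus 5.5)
Your proof is correct and matches the paper's, which likewise rests on the identity $\widetilde S_\hbar-S_\hbar=\1_{Q_T\setminus\calU_\hbar}\widetilde S_\hbar$ followed by the triangle inequality in $L^2(Q_T)$. The strict-localization identity $\1_{\calU_\hbar}S_\hbar=S_\hbar$ that you check explicitly is left implicit there, since it holds by definition.
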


\begin{proof}
Since $\widetilde S_\hbar-S_\hbar=\1_{Q_T\setminus\calU_\hbar}\widetilde S_\hbar$, the triangle inequality gives \eqref{eq:supp-tail}.
\end{proof}

The cutoff acts on the generated velocity in $L^2(Q_T)$, while the exact wave remains unchanged. More generally, the same argument applies whenever $\|S_\hbar-\widetilde S_\hbar\|_{L^2(Q_T)}\to0$, allowing both removal of tails and replacement by a simpler localized expression.

\subsection{Boundary reflection and caustics}\label{subsec:supp-geometry}

The previous sections treat a smooth phase in an interior ray tube. We now examine two mechanisms that connect different phase branches: \emph{reflection} at the physical boundary and turning at a \emph{caustic}. The disc makes the distinction explicit.

\subsubsection{Boundary reflection}\label{subsubsec:supp-reflection}

At a non-glancing specular reflection, the tangential momentum is unchanged and the normal momentum reverses. If $\mathbf n$ is the unit normal at the reflection point, then
\begin{equation*}
\xi_{\rm ref}=\xi_{\rm in}-2(\xi_{\rm in}\cdot\mathbf n)\mathbf n.
\end{equation*}
This local relation applies to both flat and curved boundaries; see Figure~\ref{fig:supp-rays}. The boundary condition links the incident and reflected waves.

\begin{figure}[htbp]
\centering
\begin{tikzpicture}[x=1cm,y=1cm,>=Stealth,font=\small]
\definecolor{rayblue}{RGB}{48,96,154}
\definecolor{rayred}{RGB}{171,70,56}
\foreach \shift/\curved/\heading in {0/0/{(a) Flat boundary},7.6/1/{(b) Curved boundary}}{
\begin{scope}[xshift=\shift cm]
\node[font=\bfseries] at (0,3.25) {\heading};
\ifnum\curved=0
  \fill[gray!10] (-3,-.68) rectangle (3,0);
  \draw[very thick] (-3,0)--(3,0);
\else
  \fill[gray!10] plot[domain=-3:3,samples=65] (\x,{.08*\x*\x}) -- (3,-.68)--(-3,-.68)--cycle;
  \draw[very thick] plot[domain=-3:3,samples=65] (\x,{.08*\x*\x});
  \draw[gray,densely dashed] (-2.4,0)--(2.4,0);
  \node[gray,below] at (2.0,0) {tangent};
\fi
\draw[densely dashed,gray] (0,0)--(0,2.65);
\node[above] at (0,2.65) {normal};
\draw[rayblue,very thick,-{Stealth[length=2.6mm]}] (-2.05,2.45)--(-.035,.042);
\draw[rayred,very thick,-{Stealth[length=2.6mm]}] (0,0)--(2.05,2.45);
\fill (0,0) circle (1.5pt);
\node[below left] at (0,0) {$p$};
\node[rayblue] at (-2.05,1.65) {incident};
\node[rayred] at (2.08,1.65) {reflected};
\draw (50.1:.82) arc[start angle=50.1,end angle=90,radius=.82];
\draw (90:.82) arc[start angle=90,end angle=129.9,radius=.82];
\node at (-.36,1.03) {$\theta$};
\node at (.36,1.03) {$\theta$};
\node at (0,-.52) {boundary};
\end{scope}
}
\end{tikzpicture}

\caption{Specular reflection at a flat boundary (left) and a curved boundary (right). The incoming and outgoing directions make equal angles with the normal at $p$. The boundary is solid black; the gray region lies outside the domain.}\label{fig:supp-rays}
\end{figure}

The half-space $\{x_d>0\}$ gives a direct example. Choose $\nu>0$ and $\omega'\in\R^{d-1}$ with $|\omega'|^2+\nu^2=1$, and set
\begin{equation}\label{eq:supp-ref-phases}
\begin{aligned}
\varphi_{\rm in}&:=x'\cdot\omega'-\nu x_d-t,\\
\varphi_{\rm ref}&:=x'\cdot\omega'+\nu x_d-t.
\end{aligned}
\end{equation}
Both phases satisfy the eikonal equation, and they agree on the boundary. For a smooth periodic profile $q$ with primitive $F$, define
\begin{equation}\label{eq:supp-ref-wave}
v_\hbar:=-\hbar\bigl[F(\varphi_{\rm in}/\hbar)-F(\varphi_{\rm ref}/\hbar)\bigr].
\end{equation}
Direct differentiation gives
\begin{equation*}
\begin{gathered}
\Box v_\hbar=0\qquad\text{and}\qquad v_\hbar\big|_{x_d=0}=0,\\
\partial_tv_\hbar=q(\varphi_{\rm in}/\hbar)-q(\varphi_{\rm ref}/\hbar).
\end{gathered}
\end{equation*}
Thus the same profile appears in both branches, with opposite signs already included in their difference.

On a curved boundary, the reflected phase and amplitude follow the local geometry, while the boundary condition again relates the traces of the connected branches. A shift between their normalized boundary phases appears as a translation between the profiles. We use this relation in Section~\ref{subsubsec:supp-profiles}.

\subsubsection{Caustics}\label{subsubsec:supp-caustic}

A caustic is an envelope of a family of rays. We use the disc to connect this geometry to the wave equation: the squared radial momentum of the rays is also the leading coefficient in the radial wave equation. Its simple zero gives the Airy model.

Glancing propagation and its Airy description are classical parts of microlocal wave analysis; see Friedlander \cite{Friedlander1976} and Melrose--Taylor \cite{MelroseTaylor1986}. Their role in boundary concentration, dispersion and Strichartz estimates has been studied extensively; see, for instance, \cite{SmithSogge2007,Ivanovici2010,ILP2014}. Here we give an explicit disc calculation to explain the relation between the two branches and the resulting constraints on compatible profiles. We refer to \cite[Section~1.5]{AOSII} for further background and its connection with the glancing construction.

\noindent\textit{The two radial branches.}
Consider unit-speed rays in the unit disc, with one fixed counterclockwise angular momentum
\begin{equation*}
\ell:=x_1\xi_2-x_2\xi_1,\qquad 0<\ell<1.
\end{equation*}
This quantity is preserved both along each straight segment and at reflection on the circle. In polar coordinates $(r,\vartheta)$, the tangential velocity is $\ell/r$. Thus the radial momentum satisfies
\begin{equation}\label{eq:supp-disc-mom}
p_r^2+\frac{\ell^2}{r^2}=1,
\qquad
p_r^\pm=\pm\sqrt{1-\frac{\ell^2}{r^2}},
\qquad r\in(\ell,1).
\end{equation}
At each point with $r>\ell$, the two signs describe inward and outward radial motion with the same counterclockwise orientation. They correspond to the two tangents drawn from $P$ in Figure~\ref{fig:supp-caustic}(a).
\vspace{2mm}

At $r=\ell$, the radial momentum vanishes and the two directions coincide with the tangent to the inner circle. No ray in this family enters $r<\ell$. Hence $r=\ell$ is the caustic. A ray continues along a straight line through its closest point to the center; its radial motion changes from inward to outward there. Reflection occurs instead at the physical boundary $r=1$. If $\theta$ is the incidence angle measured from the boundary normal, then $\ell=\sin\theta$. Glancing corresponds to $\ell\to1$, when the caustic approaches the boundary.

\begin{figure}[htbp]
\centering
\begin{tikzpicture}[x=1cm,y=1cm,>=Stealth,font=\small]
\definecolor{rayblue}{RGB}{48,96,154}
\definecolor{rayred}{RGB}{171,70,56}
\begin{scope}[xshift=.3cm,yshift=3.0cm]
\node[font=\bfseries] at (0,2.8) {(a) A caustic in the disc};
\def\outerrad{2.2}
\def\innerrad{1.45}
\pgfmathsetmacro{\halflength}{sqrt(\outerrad*\outerrad-\innerrad*\innerrad)}
\fill[gray!6] (0,0) circle (\innerrad);
\foreach \ang in {0,30,...,330}{
  \draw[rayblue!30,thin]
  ({\innerrad*cos(\ang)+\halflength*sin(\ang)},{\innerrad*sin(\ang)-\halflength*cos(\ang)})
  --({\innerrad*cos(\ang)-\halflength*sin(\ang)},{\innerrad*sin(\ang)+\halflength*cos(\ang)});
}
\pgfmathsetmacro{\advanceangle}{2*acos(\innerrad/\outerrad)}
\foreach \j in {0,1}{
  \pgfmathsetmacro{\startangle}{115+\j*\advanceangle}
  \pgfmathsetmacro{\endangle}{\startangle+\advanceangle}
  \draw[rayblue!70,thick,postaction={decorate},decoration={markings,mark=at position .65 with {\arrow{Stealth}}}]
    (\startangle:\outerrad)--(\endangle:\outerrad);
}
\pgfmathsetmacro{\contactangle}{acos(\innerrad/1.85)}
\foreach \sign in {-1,1}{
  \pgfmathsetmacro{\ang}{\sign*\contactangle}
  \draw[rayblue,very thick,postaction={decorate},decoration={markings,mark=at position .68 with {\arrow{Stealth}}}]
    ({\innerrad*cos(\ang)+\halflength*sin(\ang)},{\innerrad*sin(\ang)-\halflength*cos(\ang)})
    --({\innerrad*cos(\ang)-\halflength*sin(\ang)},{\innerrad*sin(\ang)+\halflength*cos(\ang)});
  \fill[rayred] (\ang:\innerrad) circle (1.4pt);
}
\draw[very thick] (0,0) circle (\outerrad);
\draw[rayred,thick,dashed] (0,0) circle (\innerrad);
\fill (1.85,0) circle (1.6pt);
\node[anchor=west,fill=white,inner sep=1.2pt] at (1.94,0) {$P$};
\node[text=rayred,fill=white,inner sep=2pt,align=center] at (0,.18) {caustic\\$r=\ell$};
\draw[rayred,thin] (-.1,.66)--(-.55,1.32);
\node[anchor=east,align=center] at (-1.9,1.8) {boundary\\$r=1$};
\draw[thin] (-1.91,1.58)--(-1.55,1.56);
\draw[rayblue,thin] (2.03,.23)--(2.57,.68);
\node[anchor=west,text=rayblue,fill=white,inner sep=1pt] at (2.58,.68) {$p_r>0$};
\draw[rayblue,thin] (2.03,-.23)--(2.57,-.68);
\node[anchor=west,text=rayblue,fill=white,inner sep=1pt] at (2.58,-.68) {$p_r<0$};
\node[align=center,text=rayblue] at (0,-2.62) {two directions at $P$; one tangent at the caustic};
\end{scope}
\begin{scope}[xshift=8.0cm,yshift=2.3cm]
\node[font=\bfseries] at (0,3.5) {(b) Boundary coordinates};
\fill[gray!6] (-3,2.55)--plot[domain=-3:3,samples=65] (\x,{8-sqrt(6.55*6.55-\x*\x)})--(3,3.05)--(-3,3.05)--cycle;
\fill[gray!10] plot[domain=-3:3,samples=65] (\x,{8-sqrt(64-\x*\x)})--(3,-.62)--(-3,-.62)--cycle;
\begin{scope}
\clip (-3,0) rectangle (3,2.8);
\foreach \tangentx in {-1.3,0,1.3}{
  \pgfmathsetmacro{\slope}{\tangentx/sqrt(6.55*6.55-\tangentx*\tangentx)}
  \pgfmathsetmacro{\tangenty}{8-sqrt(6.55*6.55-\tangentx*\tangentx)}
  \draw[rayblue!65,thick,postaction={decorate},decoration={markings,mark=at position .62 with {\arrow{Stealth}}}]
    plot[domain=-3:3,samples=2] (\x,{\tangenty+\slope*(\x-\tangentx)});
}
\end{scope}
\draw[very thick] plot[domain=-3:3,samples=65] (\x,{8-sqrt(64-\x*\x)});
\draw[rayred,thick,dashed] plot[domain=-3:3,samples=65] (\x,{8-sqrt(6.55*6.55-\x*\x)});
\draw[->,gray!80] (-2.4,.3685)--(-1.65,2.7534);
\node[left] at (-1.65,2.75) {$y$};
\draw[->,gray!80] plot[domain=-.7:1.4,samples=30] (\x,{8-sqrt(64-\x*\x)-.22});
\node[below] at (1.4,-.0965) {$s$};
\node[fill=white,inner sep=1.5pt] at (.55,.57) {$\zeta>0$: oscillatory};
\node[fill=white,inner sep=1.5pt] at (.55,2.72) {$\zeta<0$: decaying};
\node[text=rayred,fill=white,inner sep=1.5pt,align=center] at (.45,2.07) {$\zeta=0$\\$y=y_c(s)$};
\draw[rayred,thin] (.45,1.83)--(.45,1.47);
\node at (0,-.68) {boundary: $y=0$};
\end{scope}
\node[anchor=north,inner sep=0pt] at (4.1,-.45) {\includegraphics[width=14.4cm]{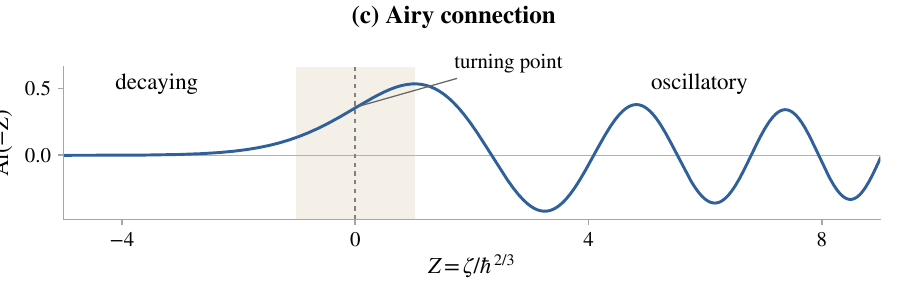}};
\end{tikzpicture}

\caption{From a ray caustic to the Airy connection. (a) Unit-disc rays with angular momentum $\ell$ are tangent to $r=\ell$; the two highlighted directions at $P$ have opposite radial momenta and the same counterclockwise orientation. (b) Inward boundary coordinates separate the physical boundary $y=0$ from the turning curve $y=y_c(s)$; for the unit disc, $y_c=1-\ell$. (c) The Airy function $\Ai(-Z)$, with $Z=\zeta/\hbar^{2/3}$, connects the decaying and oscillatory sides smoothly.}\label{fig:supp-caustic}
\end{figure}

\noindent\textit{The same coefficient in the wave equation.}
Separate time and angular variables in a complex mode,
\begin{equation*}
u(t,r,\vartheta)=e^{-it/\hbar}e^{im\vartheta}v(r),
\qquad m\in\Nat\setminus\{0\},\qquad \ell=m\hbar.
\end{equation*}
Substitution into $\Box u=0$ gives the exact radial equation
\begin{equation}\label{eq:supp-disc-rad}
\hbar^2\left(v''+\frac1r v'\right)+ \left(1-\frac{\ell^2}{r^2}\right)v=0.
\end{equation}
The coefficient in parentheses is precisely the squared radial momentum in \eqref{eq:supp-disc-mom}. The Dirichlet condition is imposed at the outer boundary through $v(1)=0$.

To remove the first derivative, set $v(r)=r^{-1/2}W(r)$. Equation~\eqref{eq:supp-disc-rad} becomes
\begin{equation}\label{eq:supp-disc-normal}
\hbar^2W''+ \left(1-\frac{\ell^2}{r^2}+\frac{\hbar^2}{4r^2}\right)W=0.
\end{equation}
Thus the same function controls both the ray geometry and the leading radial oscillations of the wave.
\vspace{2mm}

\noindent\textit{A simple turning point.}
Near the caustic,
\begin{equation}\label{eq:supp-disc-turn}
1-\frac{\ell^2}{r^2} = \frac2\ell(r-\ell)+O\bigl((r-\ell)^2\bigr).
\end{equation}
The coefficient has a simple zero at $r=\ell$. On the oscillatory side, define the radial action and the two propagation phases by
\begin{equation}\label{eq:supp-disc-action}
\begin{aligned}
R_\ell(r)&:=\int_\ell^r\sqrt{1-\frac{\ell^2}{\rho^2}}\dd\rho,\\
\varphi_\pm(t,r,\vartheta)&:=\ell\vartheta-t\pm R_\ell(r).
\end{aligned}
\end{equation}
Indeed, $\partial_r\varphi_\pm=p_r^\pm$ and
\begin{equation*}
|\nabla\varphi_\pm|^2 = \bigl(R_\ell'(r)\bigr)^2+\frac{\ell^2}{r^2}=1
=(\partial_t\varphi_\pm)^2.
\end{equation*}
Away from $r=\ell$, the two phases therefore fit the eikonal equation used in Section~\ref{subsec:supp-phase}.
\vspace{2mm}

For the leading radial equation, the separate geometric-optics expressions have the form
\begin{equation*}
W_\pm(r)\approx
\left(1-\frac{\ell^2}{r^2}\right)^{-1/4}e^{\pm iR_\ell(r)/\hbar}.
\end{equation*}
Their amplitude factor becomes singular as $r\to\ell^+$. The two-branch representation breaks down at the turning point, even though the radial equation \eqref{eq:supp-disc-normal} has smooth coefficients there. We need one expression that describes both branches and their transition.

\noindent\textit{The Airy normal coordinate.}
The simple zero in \eqref{eq:supp-disc-turn} allows a smooth increasing coordinate $\zeta=\zeta(r)$ near $r=\ell$ satisfying
\begin{equation}\label{eq:supp-disc-zeta}
\zeta(\ell)=0,\quad \zeta(\zeta')^2=1-\frac{\ell^2}{r^2},
\quad \zeta'>0.
\end{equation}
On the oscillatory side, its definition is equivalently
\begin{equation*}
R_\ell(r)=\frac23\zeta(r)^{3/2},
\quad \zeta(r)=\left(\frac2\ell\right)^{1/3}(r-\ell)
+O\bigl((r-\ell)^2\bigr).
\end{equation*}
Thus the action in the new coordinate is the original radial action.

Set
\begin{equation*}
W(r)=\bigl(\zeta'(r)\bigr)^{-1/2}V\bigl(\zeta(r)\bigr).
\end{equation*}
The first-derivative terms in $V$ cancel, and \eqref{eq:supp-disc-normal} becomes
\begin{equation}\label{eq:supp-airy-reduced}
\hbar^2V''(\zeta)+\zeta V(\zeta)+ \hbar^2c_\ell(\zeta)V(\zeta)=0,
\end{equation}
where $c_\ell$ is smooth near $\zeta=0$. The leading normal equation is therefore the Airy model \cite{Olver},
\begin{equation}\label{eq:supp-airy-model}
\hbar^2V''(\zeta)+\zeta V(\zeta)=0.
\end{equation}
Equation~\eqref{eq:supp-airy-reduced} records the lower-order term separately from this model.

\noindent\textit{One expression across the turning point.}
Balancing the two terms in \eqref{eq:supp-airy-model} gives the scale $\zeta=\hbar^{2/3}Z$. More precisely, with $\mathcal V(Z):=V(\hbar^{2/3}Z)$, that equation becomes
\begin{equation*}
\mathcal V''(Z)+Z\mathcal V(Z)=0.
\end{equation*}
The solution that decays as $Z\to-\infty$ is, up to a constant factor,
\begin{equation*}
V_\hbar(\zeta)=\Ai\left(-\frac\zeta{\hbar^{2/3}}\right).
\end{equation*}
Figure~\ref{fig:supp-caustic}(c) shows its oscillations for $\zeta>0$, decay for $\zeta<0$ and smooth transition at $\zeta=0$. The transition width $O(\hbar^{2/3})$ follows from the equation, rather than from an additional choice of scale.

The same simple-turning-point scale occurs in the Airy description of whispering-gallery concentration near a curved boundary; see \cite{SmithSogge2007}. This normal scale alone does not determine the spacetime estimates, which also depend on how the waves propagate and concentrate over time.

On the oscillatory side, write $R:=\frac23\zeta^{3/2}=R_\ell(r)$. The Airy expansion is
\begin{equation}\label{eq:supp-airy}
\begin{gathered}
\Ai(-\zeta/\hbar^{2/3})
=\pi^{-1/2}\hbar^{1/6}\zeta^{-1/4}
\left[\sin\left(\frac{R}{\hbar}+\frac\pi4\right)
+O\left(\frac{\hbar}{\zeta^{3/2}}\right)\right],\\
\zeta/\hbar^{2/3}\longrightarrow+\infty.
\end{gathered}
\end{equation}
After multiplication by $(\zeta')^{-1/2}$, its oscillatory amplitude recovers the earlier factor:
\begin{equation*}
(\zeta')^{-1/2}\zeta^{-1/4}
=\left(1-\frac{\ell^2}{r^2}\right)^{-1/4}.
\end{equation*}
Thus one smooth Airy expression generates the two phase branches away from the caustic. Their relative coefficients are fixed by the sine in \eqref{eq:supp-airy}; they are not independently chosen.
\vspace{2mm}

\noindent\textit{Boundary coordinates and variable curvature.}
In the unit disc, inward boundary distance gives
\begin{equation*}
y=1-r,\qquad y_c=1-\ell,\qquad r-\ell=y_c-y.
\end{equation*}
Consequently, near the turning curve, the signs of the coordinate in Figure~\ref{fig:supp-caustic}(b) are
\begin{equation*}
\begin{cases}
\zeta>0 &\text{on the oscillatory side, }0<y<y_c,\\
\zeta=0 &\text{on the turning curve, }y=y_c,\\
\zeta<0 &\text{on the decaying side, }y>y_c.
\end{cases}
\end{equation*}
For a variable-curvature boundary, we use a chart $x=x(s,y)$, with $s$ the arclength and $y_c(s)$ the turning depth. The next section combines the Airy connection at this turning curve with the Dirichlet relation at $y=0$ to determine compatible profiles.

\subsection{Compatible profiles and multi-branch velocity approximation}\label{subsec:supp-compat}

In free space, separate waves may carry independently chosen profiles. Branches belonging to one prescribed reflection or caustic construction are instead linked by that construction. We call the resulting requirements \emph{profile compatibility}. We first read the relations from the Airy and boundary expressions, then combine the profiles into an approximate wave whose time derivative has the required leading velocity.

\subsubsection{Compatibility relations}\label{subsubsec:supp-profiles}

To read the relation between their profiles, let $C(x)-t$ be the common tangential propagation phase and let $n>0$ be a profile Fourier frequency. The two branches have phases $C-t+R$ and $C-t-R$. Writing the sine as two exponentials gives
\begin{equation}\label{eq:supp-airy-factors}
\begin{split}
2i e^{in(C-t)/\hbar}\sin\left(\frac{nR}{\hbar}+\frac\pi4\right)
={}&e^{i\pi/4}e^{in(C-t+R)/\hbar}\\
&-e^{-i\pi/4}e^{in(C-t-R)/\hbar}.
\end{split}
\end{equation}
The same Fourier coefficient generates both branches, with relative factors $e^{i\pi/4}$ and $e^{-i\pi/4}$ fixed by the Airy connection. The factor $2i$ arises from time differentiation in the wave construction of Section~\ref{subsubsec:supp-assembly}.
\vspace{2mm}

We now use period-one reference variables, as in the glancing construction; a period-$2\pi$ profile is written as $q_{\rm ref}(\xi):=q(2\pi\xi)$, so that $q(\varphi/\hbar)=q_{\rm ref}\bigl(\varphi/(2\pi\hbar)\bigr)$.

Let $H(\xi)=\sum_{n>0}d_ne^{2\pi in\xi}$ be a positive-frequency $L^2$ function, and define the real profiles
\begin{equation}\label{eq:supp-fg}
f:=\Real(e^{i\pi/4}H)\quad \text{and} \quad
g:=\Real(e^{-i\pi/4}H).
\end{equation}
With the periodic Hilbert transform convention
\begin{equation*}
\calH(e^{2\pi in\xi}):=-i\sgn(n)e^{2\pi in\xi},\qquad n\ne0,
\end{equation*}
the ratio $e^{-i\pi/4}/e^{i\pi/4}=-i$ of the positive-frequency factors in \eqref{eq:supp-fg} gives
\begin{equation}\label{eq:supp-hilbert}
g=\calH f.
\end{equation}
The negative frequencies are their conjugates because the profiles are real. The boundary condition imposes a second relation on the same pair. Suppose the chosen normalized boundary phases satisfy $\psi=\phi+\beta$ modulo $1$, and the leading branch difference is $f(\phi)-g(\psi)$ with a common amplitude. Cancellation of that boundary expression for all phase values requires
\begin{equation*}
f=\tau_\beta g, \quad \text{ where }\tau_\beta g(\xi):=g(\xi+\beta).
\end{equation*}
Together with \eqref{eq:supp-hilbert}, this gives $f=\tau_\beta\calH f$. In the glancing construction, the scale is coordinated so that $\beta=1/4$. Thus the compatible profile space is
\begin{equation}\label{eq:supp-X}
X:=\left\{f\in L^2(\R/\Z;\R):\int_0^1f=0,\quad f=\tau_{1/4}\calH f\right\}.
\end{equation}
For $n>0$, its Fourier condition is
\begin{equation*}
\widehat f(n)=-ie^{\pi in/2}\widehat f(n),
\quad\text{ hence }\quad
\widehat f(n)=0\quad\text{unless }n\equiv1\pmod4.
\end{equation*}
The quarter-period shift is selected by the boundary phase coordination; the Hilbert-transform relation comes from the Airy connection.

\begin{remark}\label{rem:supp-match}
Compatibility and matching answer different questions. For example,
\begin{equation*}
f(\xi)=\cos(2\pi\xi) \quad\text{ and } \quad g(\xi)=\sin(2\pi\xi)
\end{equation*}
satisfy $g=\calH f$ and $f=\tau_{1/4}g$. However,
\begin{equation*}
\bigl|\{(\xi,\eta)\in(\R/\Z)^2:f(\xi)=g(\eta)\}\bigr|=0.
\end{equation*}
Indeed, for each fixed $\xi$, the equality has only finitely many solutions in $\eta$, and Fubini's theorem applies. Thus compatibility alone does not produce a positive-measure matching region. The diamond construction in \cite[Section~2]{AOSII} addresses the additional problem of finding compatible profiles with the nonzero plateaus needed for such matching.
\end{remark}

\subsubsection{Multi-branch velocity approximation}\label{subsubsec:supp-assembly}

We now explain how the branch profiles enter a common wave expression. Away from a caustic, this is visible directly through primitives. Across a caustic, the same role is played by a uniform Airy expression rather than by its singular two-branch expansion.

\noindent\textit{Regular phase branches.}
Let $f_\alpha$ be finitely many real zero-mean period-$2\pi$ profiles, and let $F_\alpha'=f_\alpha$ be their zero-mean primitives. For smooth branch data $(\calA_\alpha,\varphi_\alpha)$, set
\begin{equation}\label{eq:supp-assembly}
\begin{split}
v_\hbar&:=\hbar\sum_{\alpha=1}^J \calA_\alpha F_\alpha(\varphi_\alpha/\hbar),\\
S_\hbar&:=\sum_{\alpha=1}^J A_\alpha f_\alpha(\varphi_\alpha/\hbar),
\qquad A_\alpha:=\calA_\alpha\partial_t\varphi_\alpha.
\end{split}
\end{equation}
If each branch satisfies the eikonal and transport equations on its support, the calculation in Proposition~\ref{prop:supp-phase} gives
\begin{equation}\label{eq:supp-assembly-res}
\begin{split}
\partial_tv_\hbar-S_\hbar
&=\hbar\sum_{\alpha=1}^J (\partial_t\calA_\alpha)F_\alpha(\varphi_\alpha/\hbar),\\
\Box v_\hbar
&=\hbar\sum_{\alpha=1}^J (\Box \calA_\alpha)F_\alpha(\varphi_\alpha/\hbar).
\end{split}
\end{equation}
For fixed smooth branch data with uniformly compact spatial support and finite $J$, both errors are $O(\hbar)$ in the norms of Lemma~\ref{lem:supp-corr}. When the assembled $v_\hbar$ has zero boundary trace, that lemma produces an exact Dirichlet wave approximating $S_\hbar$.

The profile maps $f_\alpha=L_\alpha q$ in AOS encode the boundary and caustic relations between these branches. The wave construction combines the compatible profiles with the phases and amplitudes in \eqref{eq:supp-assembly}.
\vspace{2mm}

\noindent\textit{Airy assembly across a turning curve.}
There is an equally explicit way to read the two profiles from a common expression. Use the boundary chart $x=x(s,y)$ from Figure~\ref{fig:supp-caustic}(b), with $s$ the boundary arclength and $y$ the inward normal distance. The smooth turning coordinate $\zeta$ vanishes on $y=y_c(s)$ and is positive between that curve and the boundary. Let $C$ and $b$ be smooth real spatial functions, and take a finite positive-frequency profile
\begin{equation*}
H(\xi)=\sum_{n\in\mathcal N}d_ne^{2\pi in\xi},
\qquad\mathcal N\subset\Nat\setminus\{0\}\text{ finite}.
\end{equation*}
Define $f,g$ by \eqref{eq:supp-fg}. Before imposing the geometric equations and correcting the residual, consider the Airy ansatz
\begin{equation}\label{eq:supp-airy-v}
v_\hbar^{\rm Ai}:= -2\sqrt\pi\,\Real\sum_{n\in\mathcal N} d_n\left(\frac\hbar n\right)^{5/6}b\,
\Ai\left(-\frac{\zeta}{(\hbar/n)^{2/3}}\right)e^{in(C-t)/\hbar}.
\end{equation}
This expression is smooth through $\zeta=0$ for every fixed $\hbar>0$. On the oscillatory side, put
\begin{equation}\label{eq:supp-airy-phases}
R:=\tfrac23\zeta^{3/2},\quad \phi:=\frac{C-t+R}{2\pi\hbar}
\quad\text{ and }\quad \psi:=\frac{C-t-R}{2\pi\hbar}.
\end{equation}
Time differentiation of \eqref{eq:supp-airy-v}, followed by \eqref{eq:supp-airy} with $\hbar$ replaced by $\hbar/n$, gives
\begin{equation}\label{eq:supp-airy-leading}
\partial_tv_\hbar^{\rm Ai}
=b\zeta^{-1/4}\bigl[f(\phi)-g(\psi)\bigr]+O(\hbar).
\end{equation}
Here the error is uniform on every fixed compact set where $\zeta\ge\zeta_0>0$, with a constant depending on that set and the finite Fourier data. Indeed, the time derivative changes the factor $-2\sqrt\pi(\hbar/n)^{5/6}$ into $2i\sqrt\pi(\hbar/n)^{-1/6}$; the leading Airy amplitude cancels this power, leaving exactly \eqref{eq:supp-airy-factors} times $b\zeta^{-1/4}$.

The glancing constructions in \cite[Sections~2--3]{AOSII} choose $C,\zeta,b$ to control the wave residual and use Airy zeros to impose the Dirichlet condition. They use \eqref{eq:supp-airy-leading} in the oscillatory region and \eqref{eq:supp-airy-v} near the turning curve. 
The common limiting tangential direction allows one moving envelope to localize both branches; estimates near and beyond the turning curve control the normal tail, which is then removed as in Corollary~\ref{cor:supp-tail}. Fourier approximation and uniform tail estimates recover the full profiles used in the generated velocities and their matching regions.

\section{Spacetime measurable observation and GCC}\label{sec:meas}

For readers' convenience, we first recall generalized rays and classical open-set GCC. We then define the regular billiard flow and the essential spacetime measurable GCC \eqref{eq:gcc}, including its invariance under spacetime null modifications. We explain the difficulty in extending the open-set observation argument to measurable sets and treat the countable limits of observation regions. Remark~\ref{rem:disc-completion} discusses boundary completion for the disc construction.

\subsection{Generalized rays and classical GCC}\label{subsec:meas-classical}

Throughout this appendix, $\Omega$ is a smooth bounded planar domain, not necessarily simply connected, and $T>0$ is fixed. We recall the boundary propagation of Melrose--Sj\"ostrand \cite{MS1978}; see also \cite[Section~1C1]{LRLTT2017}.

A bicharacteristic is a phase-space curve for the principal symbol $p(t,x;\tau,\xi)=|\xi|^2-\tau^2$. Away from the boundary it is an integral curve of the Hamilton field $H_p$ in $\{p=0\}\setminus0$. Its spatial projection, parametrized by physical time, travels at unit speed. We call the spatial projection of a generalized bicharacteristic a \emph{generalized ray}.
\vspace{2mm}

Near the boundary, let $y$ be inward normal distance, $s$ boundary arclength, and $(\tau,\sigma,\eta)$ the covariables of $(t,s,y)$. Write
\begin{equation*}
p=\eta^2+r(t,s,y;\tau,\sigma),\qquad\text{where }r_0:=r|_{y=0}.
\end{equation*}
At a boundary covector, $r_0<0$ gives two distinct normal roots and defines the hyperbolic region; $r_0=0$ gives the glancing region, where $\eta=0$. The elliptic region $r_0>0$ has no real characteristic lift.
\vspace{2mm}

At the boundary, the compressed cotangent variables $(t,s,y;\tau,\sigma,y\eta)$ identify the two normal roots at a reflection. Generalized bicharacteristics are continuous in this compressed space. Their basic local behaviors are as follows.

\begin{enumerate}[label=(\roman*),leftmargin=*]
\item \emph{Interior propagation.} The curve follows $H_p$; for the Euclidean wave equation its spatial projection is a straight line.
\item \emph{Reflection at hyperbolic points.} The ray reaches the boundary transversely. The incoming and outgoing unit velocities satisfy
\begin{equation}\label{eq:meas-reflection}
\xi^+=\xi^--2\bigl(\xi^-\cdot\nu_{\rm out}(x)\bigr)\nu_{\rm out}(x),
\end{equation}
where $\nu_{\rm out}$ is the outward unit normal. The position and tangential velocity are continuous, while the normal velocity changes sign.
\item \emph{Diffractive glancing.} At a second-order glancing point with $H_p^2y>0$, the ordinary bicharacteristic touches the boundary and lies in the interior on both sides of the contact. It continues along $H_p$, with no boundary-gliding interval. A line tangent to the inner circle of an annulus illustrates this case. Here \emph{diffractive} refers to smooth-boundary tangency, not diffraction at a corner.
\item \emph{Gliding.} At a second-order glancing point with $H_p^2y<0$, the free tangent bicharacteristic lies outside the domain on both sides of the contact. The generalized ray instead remains on the boundary locally. Its boundary cotangent variables follow $H_{r_0}$; in the planar Euclidean setting its spatial projection travels along the boundary at unit arclength speed. The circular boundary of the disc supports such trajectories.
\end{enumerate}
A generalized ray may exhibit several of these local behaviors.

\begin{remark}[Higher-order contact and uniqueness]\label{rem:meas-contact}
At higher-order glancing points, where $H_p^2y=0$, interior and boundary propagation may join according to the generalized-bicharacteristic rules; see \cite{MS1978,LRLTT2017} for the classification. In the classical smooth theory, a compressed generalized bicharacteristic with no infinite-order boundary contact is uniquely determined by any one of its points \cite[Proposition~1.5]{LRLTT2017}.

Burq--Dehman--Le Rousseau \cite{BDLR2024} formulate geometric control on \emph{all} generalized trajectories for $C^1$ metrics and prove observability under their condition. The essential condition below uses the full-measure regular billiard flow on arbitrary smooth bounded planar domains.
\end{remark}

\noindent\emph{Classical open-set GCC.} Classical GCC requires every unit-speed generalized ray to reach the observation region within the prescribed time \cite{BLR1992}. For an interior observation cylinder, one may use a relatively open set $\widehat\omega\subset\overline\Omega$, with actual interior observation region $\omega:=\widehat\omega\cap\Omega$. The geometric condition is imposed on $\widehat\omega$.

A boundary point in $\widehat\omega$ therefore comes with an observed interior neighborhood, not just an assigned boundary value. For example, in the disc the collar
\begin{equation*}
\widehat\omega=\{x\in\overline\D:|x|>1-\delta\},\qquad\text{with }0<\delta<1,
\end{equation*}
includes the circle and an interior neighborhood of it. The observation integral is still taken over $(0,T)\times\omega$, with no boundary integral. The analogous spacetime formulation uses a relatively open subset of $(0,T)\times\overline\Omega$; see \cite[Definition~1.6 and Remark~1.7(4)]{LRLTT2017}.

\subsection{Regular billiards and observed time}\label{subsec:meas-flow}

Initial states are taken at time zero in the interior phase space
\begin{equation*}
\begin{gathered}
S^*\Omega=\Omega\times S^1,\qquad
\rho=(x_0,\xi_0),\qquad |\xi_0|=1,\\
\dd\mu_L=\dd x\,\dd\sigma(\xi),\quad\text{where }\sigma(S^1)=2\pi.
\end{gathered}
\end{equation*}
Between impacts, the ray moves with constant unit velocity and obeys \eqref{eq:meas-reflection} at each transverse impact. We call a trajectory regular when its impacts are transverse and do not accumulate in finite time.

Write $\Phi_t\rho$ for the regular billiard flow and $\pi$ for its position projection. At a reflection instant, we use the outgoing velocity. The following lemma collects the properties needed to define and compare observed times.

\begin{lemma}[Regular billiard flow]\label{lem:meas-flow}
Let $\Omega$ be a smooth bounded planar domain. The regular billiard flow has the following properties.
\begin{enumerate}[label=(\roman*),leftmargin=*]
\item For $\mu_L$-almost every initial state in $S^*\Omega$, the trajectory is defined for all $t\in\R$ and has only finitely many impacts, all transverse, on every bounded time interval.
\item The map $(t,\rho)\mapsto\Phi_t\rho$ is jointly measurable on the full-measure set of regular initial states.
\item For every fixed $t\in\R$, the map $\Phi_t$, defined $\mu_L$-almost everywhere in $S^*\Omega$, preserves $\mu_L$.
\end{enumerate}
\end{lemma}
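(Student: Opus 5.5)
We have to prove Lemma~\ref{lem:meas-flow}: regular billiard flow properties on a smooth bounded planar domain. The plan is to reduce to the boundary billiard map and to standard facts about its invariant measure.

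\emph{Step 1: the boundary map and its invariant measure.} Parametrize the outgoing boundary phase space by $(s,\varphi)$, with $s$ the arclength on each boundary component and $\varphi\in(-\pi/2,\pi/2)$ the angle of the outgoing velocity from the inward normal. Let $B$ be the map sending one outgoing transverse impact to the next transverse impact after straight-line motion and the reflection law \eqref{eq:meas-reflection}. Away from the closed set of tangencies $|\varphi|=\pi/2$, and away from the preimages of tangential hits, $B$ is a local diffeomorphism. The standard generating-function computation shows that $B$ preserves $\cos\varphi\,\dd s\,\dd\varphi$. For the flow we use the suspension (Santal\'o/Liouville) identity: the map $(s,\varphi,\tau)\mapsto(x(s)+\tau\xi(s,\varphi),\xi(s,\varphi))$, for $0<\tau<$ free-path length, has Jacobian $\cos\varphi$. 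Hence it pushes $\cos\varphi\,\dd s\,\dd\varphi\,\dd\tau$ to $\mu_L$. Every state of $S^*\Omega$ whose backward line meets $\partial\Omega$ transversally lies in this image, and the remaining states (backward tangential hit) form a $\mu_L$-null set. These are lower-dimensional, since tangency is a codimension-one condition.

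\emph{Step 2: full-measure regularity, part (i).} There are two ways a trajectory can fail to be regular.

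First, it may hit a tangency at some impact. The set of states whose forward or backward straight line is tangent to $\partial\Omega$ has $\mu_L$-measure zero. Taking the countable union over iterates $B^{n}$, $n\in\Z$, and using the invariance from Step 1, the set of states that ever produce a tangential impact is also null.

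Second, impacts may accumulate in finite time. This is the main obstacle, and the approach is the following. Let $\ell(s,\varphi)$ be the free path to the next impact. Since $\mu_L(S^*\Omega)=|\Omega|\cdot 2\pi<\infty$, the suspension identity gives $\int \ell\,\cos\varphi\,\dd s\,\dd\varphi=2\pi|\Omega|<\infty$. Finite impact accumulation on $[0,T]$ forces infinitely many consecutive impacts with $\sum\ell(B^n\cdot)<\infty$. By the Poincar\'e recurrence theorem for the finite measure $\cos\varphi\,\dd s\,\dd\varphi$, almost every boundary point returns infinitely often to a set $\{\ell>\epsilon\}$ of positive measure; choose $\epsilon$ so that this set carries any prescribed mass. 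Taking a countable union over $\epsilon=1/j$ covers almost all points with $\ell>0$, so almost every orbit has $\sum_n\ell(B^n)=\infty$ in both directions. Transferring through the suspension gives (i). (In the disc, conservation of $L$ makes this immediate, but the general statement needs recurrence.)

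\emph{Step 3: parts (ii) and (iii).} On the regular set, $\Phi_t$ is obtained by following the suspension flow $(s,\varphi,\tau)\mapsto(s,\varphi,\tau+t)$ modulo the identification $(s,\varphi,\ell)\sim(B(s,\varphi),0)$. Each piece of this description is Borel: $\ell$ is continuous off tangencies, $B$ is Borel, and the impact counts are measurable functions. Hence $(t,\rho)\mapsto\Phi_t\rho$ is jointly measurable, which gives (ii). The suspension flow preserves $\cos\varphi\,\dd s\,\dd\varphi\,\dd\tau$ because $B$ preserves the base measure and translation in $\tau$ preserves $\dd\tau$. Pushing this forward by the Jacobian identity of Step 1 gives $\mu_L$-invariance of $\Phi_t$ for each fixed $t$, which is (iii).
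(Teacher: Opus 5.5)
Your proposal is correct and follows essentially the paper's argument. The paper also passes to the outgoing boundary section with the flux measure $(\xi\cdot\mathbf n)\,\dd\sigma_{\partial\Omega}\,\dd\sigma(\xi)$, which is your $\cos\varphi\,\dd s\,\dd\varphi$; removes the null grazing set and its countably many preimages under the invariant next-collision map; applies Poincar\'e recurrence to the sets $\{\tau\ge1/m\}$ in both time directions to rule out accumulating impacts; and transfers everything to $S^*\Omega$ through the flow-box measure $\dd\nu\,\dd t$, which is your suspension identity, while measurability and invariance come, as in your Step~3, from the countable decomposition by number of reflections and the invariance of the flux under straight-line motion and specular reflection.
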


\begin{proof}
\noindent\emph{Almost-everywhere regularity.} Use the outgoing boundary section
\begin{equation*}
\begin{gathered}
\Sigma:=\{(x,\xi):x\in\partial\Omega,\ |\xi|=1,\ \xi\cdot\mathbf n(x)>0\},\\
\text{ with } \quad \dd\nu=\bigl(\xi\cdot\mathbf n(x)\bigr)\,\dd\sigma_{\partial\Omega}(x)\,\dd\sigma(\xi).
\end{gathered}
\end{equation*}
Here $\mathbf n$ is the inward unit normal and $\nu(\Sigma)<\infty$. Grazing segments form a null set: they are parametrized by a boundary point, either tangent direction and a parameter along the tangent line, a two-dimensional family in the three-dimensional interior phase space. Flow boxes at transverse impacts transfer this null property to the boundary flux.

Outside this set, the next-collision map $\mathcal B$ is invertible and preserves $\nu$, because straight-line propagation between reflections and specular reflection preserve the boundary flux. Removing the countably many forward and backward preimages of the grazing set leaves all its iterates defined.

Let $\tau(\rho)>0$ be the time to the next reflection from $\rho\in\Sigma$. Poincar\'e recurrence on the finite-measure sets $\{\tau\ge1/m\}$ shows that for $\nu$-almost every $\rho$, some such set is revisited infinitely often, both forwards and backwards. For this $m$,
\begin{equation*}
\sum_{j=0}^{N-1}\tau(\mathcal B^j\rho)
\ge\frac1m\#\{j\in\{0,\ldots,N-1\}:\tau(\mathcal B^j\rho)\ge1/m\}
\longrightarrow\infty.
\end{equation*}
Applying the same argument to $\mathcal B^{-1}$ gives divergence backwards in time. Thus impacts do not accumulate in finite time for almost every boundary state. The flow-box measure $\dd\nu\,\dd t$ gives the corresponding full-measure assertion in $S^*\Omega$.

\smallskip\noindent\emph{Measurability and invariance.} We separate trajectories according to their finitely many transverse reflections between time zero and a given time. The straight-line propagation maps between reflections and the successive transverse hitting times are smooth on each such local piece. These pieces, together with the measurable graphs of the impact times, give a countable measurable decomposition. This proves joint measurability. For each fixed time, the initial states having an impact at that time form a null set by the flow-box representation. Invariance follows from volume preservation during straight-line propagation and the invariant specular reflection flux.
\end{proof}

Null exceptional initial states may be assigned arbitrary measurable trajectories; their assignment will not enter the essential condition below. Values at reflection instants likewise do not affect time integrals.

For a Borel set $G\subset(0,T)\times\Omega$, extend $\1_G$ by zero to the boundary and define the observed time by
\begin{equation*}
x_\rho(t):=\pi\Phi_t\rho\qquad\text{and}\qquad
D_G(\rho):=\int_0^T\1_G\bigl(t,x_\rho(t)\bigr)\dd t
=|\{t\in(0,T):\bigl(t,x_\rho(t)\bigr)\in G\}|.
\end{equation*}
Lemma~\ref{lem:meas-flow} and Tonelli's theorem imply that $D_G$ is measurable, with $0\le D_G\le T$.

\subsection{Essential GCC and invariance under null modifications}\label{subsec:meas-def}

Every Lebesgue measurable $G\subset(0,T)\times\Omega$ differs from a Borel set by a spacetime null set. Choose such a Borel set to define $D_G$; this is what is meant by choosing a Borel representative. The proposition below shows that the resulting essential condition does not depend on this choice.

\begin{defi*}[Essential spacetime measurable GCC]
The set $G$ satisfies \eqref{eq:gcc} on $[0,T]$ if there exists $\eta>0$ such that
\begin{equation*}
\essinf_{\rho\in S^*\Omega}D_G(\rho)\ge\eta,
\end{equation*}
where the essential infimum is taken with respect to $\mu_L$. Equivalently, there is a measurable exceptional set $\mathcal N\subset S^*\Omega$ such that
\begin{equation*}
\mu_L(\mathcal N)=0\qquad\text{and}\qquad
D_G(\rho)\ge\eta\quad\text{for every }\rho\in S^*\Omega\setminus\mathcal N.
\end{equation*}
\end{defi*}

The constant $\eta$ may depend on $\Omega,T,G$, but is uniform over initial states outside $\mathcal N$. Changing $G$ on a spacetime null set leaves the wave observation integral unchanged. The following proposition gives the corresponding invariance of \eqref{eq:gcc}.

\begin{prop}[Invariance under null modifications]\label{prop:representative}
For Borel sets $G,\widetilde G\subset(0,T)\times\Omega$,
\begin{equation}\label{eq:rep-bound}
\|D_G-D_{\widetilde G}\|_{L^1(S^*\Omega,\mu_L)} \le 2\pi|G\mathbin\triangle\widetilde G|.
\end{equation}
Consequently, if $|G\mathbin\triangle\widetilde G|=0$, then $D_G=D_{\widetilde G}$ for Liouville-almost every initial state, and their essential infima agree.
\end{prop}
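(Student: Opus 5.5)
The plan is to write $D_G-D_{\widetilde G}$ as a time integral and integrate over phase space, exchanging the order of integration with Tonelli. Pointwise we have
\begin{equation*}
|D_G(\rho)-D_{\widetilde G}(\rho)|\le\int_0^T\1_{G\triangle\widetilde G}\bigl(t,\pi\Phi_t\rho\bigr)\dd t,
\end{equation*}
since $|\1_G-\1_{\widetilde G}|=\1_{G\triangle\widetilde G}$. By the joint measurability in Lemma~\ref{lem:meas-flow}(ii), the integrand is jointly measurable in $(t,\rho)$ on the full-measure set of regular initial states. The null exceptional set does not affect $L^1$ norms. Tonelli therefore gives
\begin{equation*}
\|D_G-D_{\widetilde G}\|_{L^1(\mu_L)}\le\int_0^T\int_{S^*\Omega}\1_{G\triangle\widetilde G}\bigl(t,\pi\Phi_t\rho\bigr)\dd\mu_L(\rho)\dd t.
\end{equation*}

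Next, for each fixed $t$, I use the invariance in Lemma~\ref{lem:meas-flow}(iii): $\Phi_t$ preserves $\mu_L$. Hence the inner integral equals
\begin{equation*}
\int_{S^*\Omega}\1_{E_t}(\pi\rho)\dd\mu_L(\rho),
\qquad E_t:=\{x:(t,x)\in G\triangle\widetilde G\}.
\end{equation*}
Since $\dd\mu_L=\dd x\,\dd\sigma(\xi)$ with $\sigma(S^1)=2\pi$, this is $2\pi|E_t|$. Integrating over $t$ and using Fubini on the spacetime set yields $2\pi|G\triangle\widetilde G|$, which is \eqref{eq:rep-bound}.

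For the consequence: if $|G\triangle\widetilde G|=0$, then the $L^1$ norm vanishes, so $D_G=D_{\widetilde G}$ $\mu_L$-a.e. Essential infima ignore null sets, so they agree.

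The only delicate step is justifying the pushforward identity. One must apply invariance to the a.e.-defined map $\Phi_t$ and know that $\rho\mapsto\1_{E_t}(\pi\Phi_t\rho)$ is measurable for a.e. $t$. Both follow from parts (ii)–(iii) of the lemma together with Tonelli's theorem, since $G$ and $\widetilde G$ are Borel.
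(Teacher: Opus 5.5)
Your proposal is correct and matches the paper's proof step for step: the pointwise bound by $\int_0^T\1_{G\triangle\widetilde G}(t,\pi\Phi_t\rho)\dd t$, then Tonelli, then invariance of $\mu_L$ under each fixed $\Phi_t$, then the factor $\sigma(S^1)=2\pi$ to reach $2\pi|G\triangle\widetilde G|$. Your remarks on the measurability of the time sections and the null exceptional set spell out details that the paper leaves implicit.
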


\begin{proof}
The absolute difference of the two time integrals is bounded by the integral of $\1_{G\triangle\widetilde G}$. Integrating in $\rho$ and applying Tonelli's theorem and invariance of Liouville measure, we obtain
\begin{align*}
\int_{S^*\Omega}|D_G-D_{\widetilde G}|\dd\mu_L& \le\int_0^T\int_{S^*\Omega}\1_{G\triangle\widetilde G}(t,\pi\Phi_t\rho)\dd\mu_L\dd t\\
&=\int_0^T\int_\Omega\int_{S^1}\1_{G\triangle\widetilde G}(t,x)\dd\sigma\dd x\dd t
=2\pi|G\triangle\widetilde G|.
\end{align*}
If $|G\triangle\widetilde G|=0$, then $D_G=D_{\widetilde G}$ almost everywhere, proving the assertion.
\end{proof}

A spacetime null set can contain an entire ray graph. Indeed, for a regular initial state $\rho$, its spacetime graph satisfies
\begin{equation*}
\mathcal N_\rho:=\{\bigl(t,x_\rho(t)\bigr):0<t<T,\ x_\rho(t)\in\Omega\},\qquad
|\mathcal N_\rho|=0\qquad\text{and}\qquad D_{\mathcal N_\rho}(\rho)=T.
\end{equation*}
Thus a null modification can change the whole time integral on one ray. This explains the essential infimum in \eqref{eq:gcc}.

\subsection{Measurable observation and the classical argument}\label{subsec:meas-argument}

The classical high-frequency argument combines localization of the limiting energy with propagation along generalized rays; see \cite{BLR1992,LRLTT2017}. We isolate the interior localization step. Set $Q_T:=(0,T)\times\Omega$, and let $v_k=\partial_tu_k$ be a bounded, weakly null sequence in $L^2(Q_T)$. After passing to a subsequence, there is a nonnegative Radon measure $\mu$ on $Q_T$ such that
\begin{equation*}
|v_k|^2\dd x\dd t\rightharpoonup\mu,\qquad
\int_{Q_T}a|v_k|^2\dd x\dd t\longrightarrow\int_{Q_T}a\dd\mu
\quad\text{for every }a\in C_c(Q_T).
\end{equation*}
This measure records the limiting spacetime energy density; the microlocal defect measure also records the propagation directions. If $G\subset Q_T$ is open and the observation tends to zero, then every nonnegative $a\in C_c^\infty(G)$ satisfies
\begin{equation*}
\int a\dd\mu
=\lim_{k\to\infty}\int_{Q_T}a|v_k|^2\dd x\dd t
\le\|a\|_{L^\infty}\lim_{k\to\infty}\int_G|v_k|^2\dd x\dd t=0.
\end{equation*}
Thus $\mu(G)=0$: smooth weights supported inside the open observation region detect all limiting energy there. This supplies the local vanishing used in the propagation argument.

For a merely measurable set, such interior weights need not detect its observation; a positive-measure set may have empty interior. Weak convergence alone does not give
\begin{equation*}
\int_G|v_k|^2\dd x\dd t\longrightarrow\mu(G).
\end{equation*}
To see what smooth approximation would require, choose weights satisfying
\begin{equation*}
b_\varepsilon\in C_c^\infty(Q_T),\qquad
0\le b_\varepsilon\le1,\qquad
\|b_\varepsilon-\1_G\|_{L^1(Q_T)}\longrightarrow0.
\end{equation*}
For each fixed $\varepsilon$, weak convergence applies to $b_\varepsilon$. However, comparison with the actual observation gives
\begin{equation*}
\left|\int_G|v_k|^2\dd x\dd t
-\int_{Q_T}b_\varepsilon|v_k|^2\dd x\dd t\right|
\le\int_{Q_T}|\1_G-b_\varepsilon|\,|v_k|^2\dd x\dd t.
\end{equation*}
Thus this approximation route needs additional control of the weighted error, for example
\begin{equation*}
\lim_{\varepsilon\to0}\limsup_{k\to\infty}
\int_{Q_T}|\1_G-b_\varepsilon|\,|v_k|^2\dd x\dd t=0.
\end{equation*}
Lebesgue approximation and boundedness of $v_k$ in $L^2(Q_T)$ do not imply this estimate. High-frequency energy may concentrate where the approximation error is large, even when that region has small Lebesgue measure.

\begin{remark}\label{rem:meas-argument}
This difficulty already arises for cylindrical observation $G=(0,T)\times\omega$ when $\omega$ is merely measurable. Separately, the Liouville-almost-everywhere ray bound in \eqref{eq:gcc} need not hold almost everywhere for a microlocal defect measure, which may be singular with respect to Liouville measure.
Establishing a sufficient geometric criterion adapted to measurable observation would require further, more refined analysis of the interaction between high-frequency waves and the observation set.
\end{remark}

\subsection{Countable constructions and decreasing sets}\label{subsec:meas-limits}

For decreasing Borel observation regions, bounded convergence on each regular ray gives
\begin{equation*}
G^{(N)}\downarrow G\quad \Longrightarrow \quad
D_{G^{(N)}}(\rho)\downarrow D_G(\rho).
\end{equation*}
If all finite stages have the same lower bound, intersect their countably many full-measure sets. On this intersection,
\begin{equation*}
D_G(\rho)=\lim_{N\to\infty}D_{G^{(N)}}(\rho)\ge\eta.
\end{equation*}
Thus the uniform bound supplied by the geometric proof passes to the fixed observation region.

More generally, the approximation used in Remark~\ref{rem:aos-limit} satisfies
\begin{equation*}
|G^{(N)}\mathbin\triangle G|\to0
\quad\Longrightarrow\quad
\|D_{G^{(N)}}-D_G\|_{L^1(S^*\Omega,\mu_L)}\to0
\end{equation*}
by \eqref{eq:rep-bound}. A common almost-everywhere lower bound then passes to the limit by a subsequence. In both passages, the geometric argument supplies a common lower bound $\eta>0$.

The same countable-intersection convention applies to all null modifications of phases, profile representatives, and observation regions used in the constructions.

\bibliographystyle{abbrv}
\bibliography{references}

@article{Friedlander1976,
  author = {Friedlander, F. G.},
  title = {The wave front set of the solution of a simple initial-boundary value problem with glancing rays},
  journal = {Math. Proc. Cambridge Philos. Soc.}, volume = {79}, number = {1}, pages = {145--159}, year = {1976},
  doi = {10.1017/S0305004100052166}
}

@article{MelroseTaylor1986,
  author = {Melrose, R. B. and Taylor, M. E.},
  title = {The radiation pattern of a diffracted wave near the shadow boundary},
  journal = {Comm. Partial Differential Equations}, volume = {11}, number = {6}, pages = {599--672}, year = {1986},
  doi = {10.1080/03605308608820439}
}

@article{SmithSogge2007,
  author = {Smith, H. F. and Sogge, C. D.},
  title = {On the {$L^p$} norm of spectral clusters for compact manifolds with boundary},
  journal = {Acta Math.}, volume = {198}, pages = {107--153}, year = {2007},
  doi = {10.1007/s11511-007-0014-z}
}

@article{Ivanovici2010,
  author = {Ivanovici, Oana},
  title = {Counterexamples to {Strichartz} estimates for the wave equation in domains},
  journal = {Math. Ann.}, volume = {347}, pages = {627--673}, year = {2010},
  doi = {10.1007/s00208-009-0454-1}
}

@article{ILP2014,
  author = {Ivanovici, Oana and Lebeau, Gilles and Planchon, Fabrice},
  title = {Dispersion for the wave equation inside strictly convex domains {I}: the {Friedlander} model case},
  journal = {Ann. of Math. (2)}, volume = {180}, number = {1}, pages = {323--380}, year = {2014},
  doi = {10.4007/annals.2014.180.1.7}
}

@inproceedings{WangICM2026,
  author = {Wang, Gengsheng},
  title = {Geometric characteristics of observable regions},
  booktitle = {Proceedings of the International Congress of Mathematicians 2026},
  volume = {7}, pages = {249--266},
  publisher = {Society for Industrial and Applied Mathematics}, year = {2026},
  doi = {10.1137/25M1800809}
}

@article{BBZ2013,
  author = {Bourgain, Jean and Burq, Nicolas and Zworski, Maciej},
  title = {Control for {Schr\"odinger} operators on 2-tori: rough potentials},
  journal = {J. Eur. Math. Soc. (JEMS)}, volume = {15}, number = {5}, pages = {1597--1628}, year = {2013},
  doi = {10.4171/JEMS/399}
}

@unpublished{BPP2026,
  author = {Beauchard, Karine and Perrin, Thomas and Pozzoli, Eugenio},
  title = {Approximate controllability of a bilinear wave equation and minimum time},
  note = {arXiv:2601.19544}, year = {2026}
}

@article{BLR1992,
  author = {Bardos, Claude and Lebeau, Gilles and Rauch, Jeffrey},
  title = {Sharp sufficient conditions for the observation, control, and stabilization of waves from the boundary},
  journal = {SIAM J. Control Optim.}, volume = {30}, pages = {1024--1065}, year = {1992}
}

@article{LRLTT2017,
  author = {Le Rousseau, J{\'e}r{\^o}me and Lebeau, Gilles and Terpolilli, Peppino and Tr{\'e}lat, Emmanuel},
  title = {Geometric control condition for the wave equation with a time-dependent observation domain},
  journal = {Anal. PDE}, volume = {10}, pages = {983--1015}, year = {2017}
}

@book{TW2009,
  author = {Tucsnak, Marius and Weiss, George},
  title = {Observation and Control for Operator Semigroups},
  publisher = {Birkh{\"a}user}, address = {Basel}, year = {2009}
}

@book{Lions1988,
  author = {Lions, Jacques-Louis},
  title = {Contr{\^o}labilit{\'e} exacte, perturbations et stabilisation de syst{\`e}mes distribu{\'e}s. {Tome 1}: Contr{\^o}labilit{\'e} exacte},
  publisher = {Masson}, address = {Paris}, year = {1988}
}

@article{BG1997,
  author = {Burq, Nicolas and G{\'e}rard, Patrick},
  title = {Condition n{\'e}cessaire et suffisante pour la contr{\^o}labilit{\'e} exacte des ondes},
  journal = {C. R. Acad. Sci. Paris S{\'e}r. I Math.},
  volume = {325}, number = {7}, pages = {749--752}, year = {1997},
  doi = {10.1016/S0764-4442(97)80053-5}
}

@article{CCM2014,
  author = {Castro, Carlos and C{\^\i}ndea, Nicolae and M{\"u}nch, Arnaud},
  title = {Controllability of the linear one-dimensional wave equation with inner moving forces},
  journal = {SIAM J. Control Optim.},
  volume = {52}, number = {6}, pages = {4027--4056}, year = {2014},
  doi = {10.1137/140956129}
}

@article{Shao2019,
  author = {Shao, Arick},
  title = {On {Carleman} and observability estimates for wave equations on time-dependent domains},
  journal = {Proc. Lond. Math. Soc. (3)},
  volume = {119}, number = {4}, pages = {998--1064}, year = {2019},
  doi = {10.1112/plms.12253}
}

@article{BG2020,
  author = {Burq, Nicolas and G{\'e}rard, Patrick},
  title = {Stabilization of wave equations on the torus with rough dampings},
  journal = {Pure Appl. Anal.},
  volume = {2}, number = {3}, pages = {627--658}, year = {2020},
  doi = {10.2140/paa.2020.2.627}
}

@article{Burq1998,
  author = {Burq, Nicolas},
  title = {D{\'e}croissance de l'{\'e}nergie locale de l'{\'e}quation des ondes pour le probl{\`e}me ext{\'e}rieur et absence de r{\'e}sonance au voisinage du r{\'e}el},
  journal = {Acta Math.}, volume = {180}, number = {1}, pages = {1--29}, year = {1998},
  doi = {10.1007/BF02392877}
}

@article{BZ2004,
  author = {Burq, Nicolas and Zworski, Maciej},
  title = {Geometric control in the presence of a black box},
  journal = {J. Amer. Math. Soc.}, volume = {17}, number = {2}, pages = {443--471}, year = {2004},
  doi = {10.1090/S0894-0347-04-00452-7}
}

@incollection{Lebeau1996,
  author = {Lebeau, Gilles},
  title = {{\'E}quation des ondes amorties},
  booktitle = {Algebraic and Geometric Methods in Mathematical Physics ({Kaciveli}, 1993)},
  series = {Math. Phys. Stud.}, volume = {19}, pages = {73--109},
  publisher = {Kluwer Acad. Publ.}, address = {Dordrecht}, year = {1996}
}

@article{NWXKdV2026,
  author = {Niu, Jingrui and Wang, Ming and Xiang, Shengquan},
  title = {The periodic {KdV} with control on space-time measurable sets},
  journal = {J. Funct. Anal.}, volume = {291}, number = {11}, pages = {111621}, year = {2026},
  doi = {10.1016/j.jfa.2026.111621}
}

@book{Coron2007,
  author = {Coron, Jean-Michel},
  title = {Control and Nonlinearity},
  series = {Mathematical Surveys and Monographs}, volume = {136},
  publisher = {American Mathematical Society}, address = {Providence, RI}, year = {2007}
}

@unpublished{NWX,
  author = {Niu, Jingrui and Wang, Ming and Xiang, Shengquan},
  title = {Symmetry conditions for spacetime observability of wave equations on the torus},
  note = {arXiv:2512.09873v2}, year = {2025}
}

@unpublished{HX,
  author = {Huang, Bin and Xiang, Shengquan},
  title = {Observable symmetry for spacetime observability
           of wave equations on the interval},
  note = {arXiv:2609.27507},
  year = {2026}
}

@book{Watson,
  author = {Watson, G. N.}, title = {A Treatise on the Theory of Bessel Functions},
  edition = {Second}, publisher = {Cambridge University Press}, year = {1944}
}

@book{Olver,
  author = {Olver, F. W. J.}, title = {Asymptotics and Special Functions},
  publisher = {Academic Press}, year = {1974}
}

@article{Russell1971a,
  author = {Russell, David L.},
  title = {Boundary value control of the higher-dimensional wave equation},
  journal = {SIAM J. Control}, volume = {9}, pages = {29--42}, year = {1971}
}

@article{RT1974,
  author = {Rauch, Jeffrey and Taylor, Michael},
  title = {Exponential decay of solutions to hyperbolic equations in bounded domains},
  journal = {Indiana Univ. Math. J.}, volume = {24}, pages = {79--86}, year = {1974},
  doi = {10.1512/iumj.1974.24.24004}
}

@article{Zhang2000,
  author = {Zhang, Xu},
  title = {Explicit observability inequalities for the wave equation with lower order terms by means of {Carleman} inequalities},
  journal = {SIAM J. Control Optim.}, volume = {39}, number = {3}, pages = {812--834}, year = {2000},
  doi = {10.1137/S0363012999350298}
}

@article{DZZ2008,
  author = {Duyckaerts, Thomas and Zhang, Xu and Zuazua, Enrique},
  title = {On the optimality of the observability inequalities for parabolic and hyperbolic systems with potentials},
  journal = {Ann. Inst. H. Poincar\'e C Anal. Non Lin\'eaire},
  volume = {25}, number = {1}, pages = {1--41}, year = {2008},
  doi = {10.1016/j.anihpc.2006.07.005}
}

@article{Tataru1995,
  author = {Tataru, Daniel},
  title = {Unique continuation for solutions to {PDE}'s; between {H\"ormander}'s theorem and {Holmgren}'s theorem},
  journal = {Comm. Partial Differential Equations}, volume = {20}, number = {5-6}, pages = {855--884}, year = {1995},
  doi = {10.1080/03605309508821117}
}

@article{RZ1998,
  author = {Robbiano, Luc and Zuily, Claude},
  title = {Uniqueness in the {Cauchy} problem for operators with partially holomorphic coefficients},
  journal = {Invent. Math.}, volume = {131}, number = {3}, pages = {493--539}, year = {1998},
  doi = {10.1007/s002220050212}
}

@article{LL2019,
  author = {Laurent, Camille and L\'eautaud, Matthieu},
  title = {Quantitative unique continuation for operators with partially analytic coefficients. {Application} to approximate control for waves},
  journal = {J. Eur. Math. Soc. (JEMS)}, volume = {21}, number = {4}, pages = {957--1069}, year = {2019},
  doi = {10.4171/JEMS/854}
}

@article{DE2017,
  author = {Dehman, Belhassen and Ervedoza, Sylvain},
  title = {Observability estimates for the wave equation with rough coefficients},
  journal = {C. R. Math. Acad. Sci. Paris}, volume = {355}, number = {5}, pages = {499--514}, year = {2017},
  doi = {10.1016/j.crma.2017.03.011}
}

@misc{DEZ2025,
  author = {Dehman, Belhassen and Ervedoza, Sylvain and Zuazua, Enrique},
  title = {Regional and partial observability and control of waves},
  year = {2025}, note = {arXiv:2504.18976}
}

@misc{BurqZhu2025,
  author = {Burq, Nicolas and Zhu, Hui},
  title = {Observability of {Schr\"odinger} propagators on tori in rough settings},
  year = {2025}, note = {arXiv:2509.23965}
}

@article{MS1978,
  author = {Melrose, Richard B. and Sj\"ostrand, Johannes},
  title = {Singularities of boundary value problems. {I}},
  journal = {Comm. Pure Appl. Math.}, volume = {31}, number = {5}, pages = {593--617}, year = {1978},
  doi = {10.1002/cpa.3160310504}
}

@misc{BDLR2024,
  author = {Burq, Nicolas and Dehman, Belhassen and Le Rousseau, J{\'e}r{\^o}me},
  title = {Measure and continuous vector field at a boundary {I}: propagation equations and wave observability},
  year = {2024}, note = {arXiv:2407.02255},
  doi = {10.48550/arXiv.2407.02255}
}

@book{Komornik1994,
  author = {Komornik, Vilmos},
  title = {Exact Controllability and Stabilization: The Multiplier Method},
  series = {RAM: Research in Applied Mathematics},
  publisher = {Masson and John Wiley \& Sons}, address = {Paris and Chichester}, year = {1994}
}

@article{Zuazua1990,
  author = {Zuazua, Enrique},
  title = {Exact controllability for the semilinear wave equation},
  journal = {J. Math. Pures Appl. (9)}, volume = {69}, number = {1}, pages = {1--31}, year = {1990}
}

@article{LL2016,
  author = {Laurent, Camille and L\'eautaud, Matthieu},
  title = {Uniform observability estimates for linear waves},
  journal = {ESAIM Control Optim. Calc. Var.}, volume = {22}, number = {4}, pages = {1097--1136}, year = {2016},
  doi = {10.1051/cocv/2016046}
}

@article{JL2013,
  author = {Joly, Romain and Laurent, Camille},
  title = {Stabilization for the semilinear wave equation with geometric control condition},
  journal = {Anal. PDE}, volume = {6}, number = {5}, pages = {1089--1119}, year = {2013},
  doi = {10.2140/apde.2013.6.1089}
}

@article{KX2024,
  author = {Krieger, Joachim and Xiang, Shengquan},
  title = {Semi-global controllability of a geometric wave equation},
  journal = {Pure Appl. Math. Q.}, volume = {20}, number = {4}, pages = {1763--1824}, year = {2024},
  doi = {10.4310/pamq.2024.v20.n4.a9}
}

@article{CKX2025,
  author = {Coron, Jean-Michel and Krieger, Joachim and Xiang, Shengquan},
  title = {Global controllability and stabilization of the wave maps equation from a circle to a sphere},
  journal = {Calc. Var. Partial Differential Equations}, volume = {64}, number = {4}, pages = {Paper No. 108}, year = {2025},
  doi = {10.1007/s00526-025-02961-3}
}

@misc{CKXgeneral2025,
  author = {Coron, Jean-Michel and Krieger, Joachim and Xiang, Shengquan},
  title = {Wave maps from circle to {Riemannian} manifold: global controllability is equivalent to homotopy},
  year = {2025}, note = {arXiv:2509.12779}
}

@article{BHHR2015,
  author = {Beauchard, Karine and Helffer, Bernard and Henry, Rapha\"el and Robbiano, Luc},
  title = {Degenerate parabolic operators of {Kolmogorov} type with a geometric control condition},
  journal = {ESAIM Control Optim. Calc. Var.}, volume = {21}, number = {2}, pages = {487--512}, year = {2015},
  doi = {10.1051/cocv/2014035}
}

@article{WWZ2019,
  author = {Wang, Gengsheng and Wang, Ming and Zhang, Yubiao},
  title = {Observability and unique continuation inequalities for the {Schr\"odinger} equation},
  journal = {J. Eur. Math. Soc. (JEMS)}, volume = {21}, number = {11}, pages = {3513--3572}, year = {2019},
  doi = {10.4171/JEMS/908}
}

@article{BM2023,
  author = {Burq, Nicolas and Moyano, Iv\'an},
  title = {Propagation of smallness and control for heat equations},
  journal = {J. Eur. Math. Soc. (JEMS)}, volume = {25}, number = {4}, pages = {1349--1377}, year = {2023},
  doi = {10.4171/JEMS/1213}
}

@article{EV2018,
  author = {Egidi, Michela and Veseli\'c, Ivan},
  title = {Sharp geometric condition for null-controllability of the heat equation on {$\mathbb{R}^d$} and consistent estimates on the control cost},
  journal = {Arch. Math. (Basel)}, volume = {111}, number = {1}, pages = {85--99}, year = {2018},
  doi = {10.1007/s00013-018-1185-x}
}

@article{WWZZ2019,
  author = {Wang, Gengsheng and Wang, Ming and Zhang, Can and Zhang, Yubiao},
  title = {Observable set, observability, interpolation inequality and spectral inequality for the heat equation in {$\mathbb{R}^n$}},
  journal = {J. Math. Pures Appl. (9)}, volume = {126}, pages = {144--194}, year = {2019},
  doi = {10.1016/j.matpur.2019.04.009}
}

@article{BZ2019,
  author = {Burq, Nicolas and Zworski, Maciej},
  title = {Rough controls for {Schr\"odinger} operators on 2-tori},
  journal = {Ann. H. Lebesgue}, volume = {2}, pages = {331--347}, year = {2019},
  doi = {10.5802/ahl.19}
}

@article{PW2013,
  author = {Phung, Kim Dang and Wang, Gengsheng},
  title = {An observability estimate for parabolic equations from a measurable set in time and its applications},
  journal = {J. Eur. Math. Soc. (JEMS)}, volume = {15}, number = {2}, pages = {681--703}, year = {2013},
  doi = {10.4171/JEMS/371}
}

@article{AEWZ2014,
  author = {Apraiz, Jone and Escauriaza, Luis and Wang, Gengsheng and Zhang, Can},
  title = {Observability inequalities and measurable sets},
  journal = {J. Eur. Math. Soc. (JEMS)}, volume = {16}, number = {11}, pages = {2433--2475}, year = {2014},
  doi = {10.4171/JEMS/490}
}

@unpublished{AOSII,
  author = {Niu, Jingrui and Xiang, Shengquan},
  title = {Asymptotic observable symmetry for spacetime observability of waves {II}: Smooth bounded domains},
  note = {Manuscript},
  year = {2026}
}

@article{Zhang2000Potential,
  author = {Zhang, Xu},
  title = {Explicit observability estimate for the wave equation
           with potential and its application},
  journal = {Proc. R. Soc. Lond. A},
  volume = {456},
  number = {1997},
  pages = {1101--1115},
  year = {2000},
  doi = {10.1098/rspa.2000.0553}
}

@unpublished{LLZ2026,
  author = {Liao, Zhonghua and L{\"u}, Qi and Zhang, Xu},
  title = {Observability of wave equations with time- and
           space-dependent potentials: a counterexample under
           {GCC} and a positive criterion on the disk},
  note = {arXiv:2610.25327},
  year = {2026}
}

@article{ALM2016,
  author = {Anantharaman, Nalini and L{\'e}autaud, Matthieu
            and Maci{\`a}, Fabricio},
  title = {{Wigner} measures and observability for the
           {Schr{\"o}dinger} equation on the disk},
  journal = {Inventiones Mathematicae},
  volume = {206},
  number = {2},
  pages = {485--599},
  year = {2016},
  doi = {10.1007/s00222-016-0658-4}
}

\end{document}